\newtheorem{thm}{Theorem}[section]
\newtheorem{cor}[thm]{Corollary}
\newtheorem{claim}[thm]{Claim}
\newtheorem{fact}[thm]{Fact}
\newtheorem{lemma}[thm]{Lemma}
\newtheorem{prop}[thm]{Proposition}
\theoremstyle{definition}
\newtheorem{definition}[thm]{Definition}
\newtheorem{ex}[thm]{Example}
\newtheorem{remark}[thm]{Remark}
\newtheorem{question}[thm]{Question}
\newtheorem{problem}[thm]{Problem}
\title{Quasi-isometrically rigid subgroups in right-angled Coxeter groups}
\date{\today}
\author{Anthony Genevois}
\begin{document}

\maketitle

\begin{abstract}
In the spirit of peripheral subgroups in relatively hyperbolic groups, we exhibit a simple class of quasi-isometrically rigid subgroups in graph products of finite groups, which we call \emph{eccentric subgroups}. As an application, we prove that, if two right-angled Coxeter groups $C(\Gamma_1)$ and $C(\Gamma_2)$ are quasi-isometric, then for any minsquare subgraph $\Lambda_1 \leq \Gamma_1$ there exists a minsquare subgraph $\Lambda_2 \leq \Gamma_2$ such that the right-angled Coxeter groups $C(\Lambda_1)$ and $C(\Lambda_2)$ are quasi-isometric as well. Various examples of non-quasi-isometric groups are deduced. Our arguments are based on a study of non-hyperbolic Morse subgroups in graph products of finite groups. As a by-product, we are able to determine precisely when a right-angled Coxeter group has all its infinite-index Morse subgroups hyperbolic, answering a question of Russell, Spriano and Tran.
\end{abstract}

\tableofcontents

\newpage

\section{Introduction}

\noindent
In this article, we are interested in the large-scale geometry of graph product of finite groups. More precisely, if $\Gamma$ is a simplicial graph and $\mathcal{G}= \{ G_u \mid u \in V(\Gamma)\}$ a collection of (finite) groups indexed by the vertex-set $V(\Gamma)$ of $\Gamma$, then the \emph{graph product} $\Gamma \mathcal{G}$ is defined as the quotient
$$\left( \underset{u \in V(\Gamma)}{\ast} G_u \right) / \langle \langle [g,h]=1, g \in G_u, h \in G_v \ \text{if} \ (u,v) \in E(\Gamma) \rangle \rangle$$
where $E(\Gamma)$ denotes the edge-set of $\Gamma$. For instance, $\Gamma \mathcal{G}$ is the direct sum of $\mathcal{G}$ if $\Gamma$ is a complete graph, and the free product of $\mathcal{G}$ if $\Gamma$ has no edges. Thus, graph products define an interpolation between free products and direct sums of groups. Now the question is, given two finite simplicial graphs $\Gamma_1, \Gamma_2$ and two collections of finite groups $\mathcal{G}_1, \mathcal{G}_2$ indexed by $V(\Gamma_1), V(\Gamma_2)$ respectively, to determine whether the groups $\Gamma_1 \mathcal{G}_1$ and $\Gamma_2 \mathcal{G}_2$ are quasi-isometric. For instance, the seven graph products given by Figure \ref{Intro} are pairwise non-quasi-isometric. (It is not difficult to show that the quasi-isometry class of a graph product of finite groups depends only on the cardinality of the vertex-groups, see for instance \cite[Fact 8.25]{Qm}. Therefore, we labelled the vertices of the graphs of Figure \ref{Intro} by the cardinalities of the corresponding vertex-groups.)

\medskip \noindent
In the specific case where all our finite groups are cyclic of order two, the problem we are interested in amounts to classifying right-angled Coxeter groups up to quasi-isometry. We refer to the survey \cite{QIandRACG} and references therein for more information on this well-known and difficult problem. More general graph products of finite groups have been less studied, but the study of right-angled buildings led to several interesting results, including M. Bourdon's seminal work \cite{Bourdon}. 

\medskip \noindent
In this article, we exhibit a surprising rigidity phenomenon by showing that a quasi-isometry between two graph products of finite groups always preserves a simple class of subgroups. As these subgroups turn out to be (smaller) graph products of finite groups, such a rigidity may allow us to reduce the complexity of a quasi-isometry problem. 

\medskip \noindent
But before stating our theorem, we need the following definition. Given a simplicial graph $\Gamma$, a subgraph $\Lambda \leq \Gamma$ is \emph{square-complete} if every induced square of $\Gamma$ containing two opposite vertices in $\Lambda$ must be entirely included into $\Lambda$. A \emph{minsquare subgraph} of $\Gamma$ is a subgraph which is minimal among all the square-complete subgraphs of $\Gamma$ containing at least one induced square. Now, the main result of our article is:

\begin{thm}\label{thm:IntroMain}
Let $\Gamma_1,\Gamma_2$ be two finite simplicial graphs and $\mathcal{G}_1, \mathcal{G}_2$ two collections of finite groups indexed by $V(\Gamma_1), V(\Gamma_2)$ respectively. Assume that there exists a quasi-isometry $\Phi : \Gamma_1 \mathcal{G}_1 \to \Gamma_2 \mathcal{G}_2$. For every minsquare subgraph $\Lambda_1 \subset \Gamma_1$, there exist an element $g \in \Gamma_2 \mathcal{G}_2$ and a minsquare subgraph $\Lambda_2 \subset \Gamma_2$ such that $\Phi$ sends $\langle \Lambda_1 \rangle$ at finite Hausdorff distance from $g \langle \Lambda_2 \rangle$, where $\langle \Lambda_1 \rangle, \langle \Lambda_2 \rangle$ denote the subgroups generated by the groups labelling the vertices of $\Lambda_1, \Lambda_2$ respectively. 
\end{thm}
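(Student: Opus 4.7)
The strategy splits naturally into two parts: first, identifying $\langle \Lambda_1 \rangle$ as a member of a class of subgroups that any quasi-isometry between graph products of finite groups is forced to preserve, and second, recognising that the subgroups produced on the $\Gamma_2 \mathcal{G}_2$ side by this rigidity statement must again come from minsquare subgraphs. As the abstract announces, the natural candidate class is that of \emph{eccentric subgroups}. My plan is therefore to (a) give a combinatorial characterisation of eccentric subgroups inside a graph product of finite groups in terms of the defining graph, (b) prove that eccentric subgroups are quasi-isometrically rigid, in the sense that a QI sends an eccentric subgroup at finite Hausdorff distance from a coset of an eccentric subgroup in the target, and (c) match the two characterisations so that eccentric subgroups correspond precisely to parabolic subgroups $\langle \Lambda \rangle$ attached to minsquare subgraphs.

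For step (a), I would work inside the CAT(0) cube complex / Davis-type complex on which $\Gamma \mathcal{G}$ acts, and translate square-completeness of $\Lambda$ into a closure property for the hyperplanes stabilised by $\langle \Lambda \rangle$: if two commuting hyperplane stabilisers belong to $\langle \Lambda \rangle$ and are the corners of a square, then the whole square lies in $\langle \Lambda \rangle$. The presence of at least one induced square forces $\langle \Lambda \rangle$ to contain a flat and thus to be non-hyperbolic, while a standard argument shows square-completeness is exactly what is needed for $\langle \Lambda \rangle$ to be Morse in $\Gamma \mathcal{G}$. Minimality among square-complete subgraphs carrying a square should then make $\langle \Lambda \rangle$ an \emph{irreducible} Morse non-hyperbolic subgroup; this is where I would expect the eccentric condition to fit tightly.

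Step (b) is the main obstacle. The plan is to exploit that any Morse subgroup in a cubulated group has a quasi-isometrically invariant structure: it carries a distinguished flat/contracting skeleton visible through the Morse boundary, and a QI must send its image to a subset of $\Gamma_2 \mathcal{G}_2$ exhibiting the same flat-plus-contracting pattern. Using the fact that in a graph product of finite groups the only such subsets are cosets of parabolic subgroups $\langle \Lambda_2 \rangle$ for $\Lambda_2$ square-complete, one gets $\Phi(\langle \Lambda_1 \rangle)$ within bounded Hausdorff distance of some coset $g \langle \Lambda_2 \rangle$ with $\Lambda_2$ square-complete and carrying a square. The eccentricity hypothesis should be engineered precisely so that this two-sided matching goes through without requiring the ambient group to be hyperbolic, thick, or irreducible.

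Step (c) is then bookkeeping: the same argument applied to $\Phi^{-1}$ on $g \langle \Lambda_2 \rangle$ lands at finite Hausdorff distance of $\langle \Lambda_1 \rangle$, and since $\Lambda_1$ is minimal among square-complete subgraphs containing a square, the resulting $\Lambda_2$ cannot be properly refined either, hence is itself minsquare. The hardest technical point throughout will be the passage between the combinatorial notion (minsquare subgraph) and the purely geometric notion (eccentric subgroup), and in particular justifying that the QI invariant extracted in (b) really is the coset of a parabolic rather than a larger Hausdorff neighbourhood of one.
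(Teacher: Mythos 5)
Your overall route is the paper's: characterise the eccentric subspaces of a graph product of finite groups as exactly the subspaces at finite Hausdorff distance from cosets $g\langle\Lambda\rangle$ with $\Lambda$ minsquare, observe that eccentricity is a quasi-isometry invariant, and apply the characterisation on both sides. But there is a genuine gap where you write ``using the fact that in a graph product of finite groups the only such subsets are cosets of parabolic subgroups $\langle \Lambda_2\rangle$ for $\Lambda_2$ square-complete''. That ``fact'' is the technical heart of the entire argument, and nothing in your sketch produces it. The paper proves it (Fact \ref{fact:ForConverse}) by: extracting a thick flat rectangle from any non-hyperbolic Morse subgraph and locating it inside a join parabolic (Lemmas \ref{lem:FlatRinX} and \ref{lem:SquareEccentric}); an absorption lemma saying that a Morse gated subgraph meeting a product of two essential unbounded gated subgraphs in sufficiently many hyperplanes must contain the whole product (Lemma \ref{lem:InterInclusion}); and an iterative saturation argument with slabbed subgraphs, whose oriented graph is made acyclic via Corollary \ref{cor:CollectionMinus} so that the process terminates at a square-complete subgraph (Claim \ref{claim:sink}). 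Your appeal to the Morse boundary and a ``flat-plus-contracting skeleton'' does not yield the quantitative control --- a Hausdorff bound depending only on the Morse and eccentric gauges --- that the definition of eccentricity demands, and it is unclear how it would rule out a Morse hull strictly larger than every parabolic coset, a phenomenon that genuinely occurs (see the remark following Theorem \ref{thm:eccentricsub}).

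A secondary point: you locate the main obstacle in your step (b), the quasi-isometric rigidity of eccentric subspaces, but that step is essentially free. Eccentricity is defined purely metrically (Morse, non-hyperbolic, plus a gauged Hausdorff-distance condition), and each ingredient is transported by a quasi-isometry with controlled change of gauges; this is precisely why the notion is set up that way, and the paper's proof of the theorem is then a three-line application of Theorem \ref{thm:eccentricsub} on both sides, with no Morse-boundary input. The hard work all sits in your step (a), which you treat as routine. Note also that for general finite vertex groups the correct geometric model is the quasi-median Cayley graph $X(\Gamma,\mathcal{G})$ rather than a CAT(0) cube complex, so the Davis-complex framing of your step (a) only covers the right-angled Coxeter case.
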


\noindent
We emphasize that the subgroups $\langle \Lambda_1 \rangle$ and $\langle \Lambda_2 \rangle$ are naturally graph products of groups themselves. Therefore, the philosophy behind Theorem \ref{thm:IntroMain} is that we deduce from a quasi-isometry problem between two graph products of finite groups a quasi-isometry problem between two smaller (and hopefully simpler) graph products of finite groups. 

\medskip \noindent
Theorem \ref{thm:IntroMain} should be compared with the following statement, which is an easy combination of the quasi-isometric rigidity of peripheral subgroups in relatively hyperbolic groups \cite{BDM} and the description of a minimal collection of peripheral subgroups in graph products of finite groups \cite{Qm} (see \cite{BHSC} or \cite{coningoff} for the particular case of right-angled Coxeter groups).

\begin{thm}\label{thm:IntroRH}
Let $\Gamma_1,\Gamma_2$ be two finite simplicial graphs and $\mathcal{G}_1, \mathcal{G}_2$ two collections of finite groups indexed by $V(\Gamma_1), V(\Gamma_2)$ respectively. Assume that there exists a quasi-isometry $\Phi : \Gamma_1 \mathcal{G}_1 \to \Gamma_2 \mathcal{G}_2$. For every subgraph $\Lambda_1 \in \mathfrak{J}^\infty( \Gamma_1)$, there exist an element $g \in \Gamma_2 \mathcal{G}_2$ and a subgraph $\Lambda_2 \in \mathfrak{J}^\infty( \Gamma_2)$ such that $\Phi$ sends $\langle \Lambda_1 \rangle$ at finite Hausdorff distance from $g \langle \Lambda_2 \rangle$, where $\langle \Lambda_1 \rangle, \langle \Lambda_2 \rangle$ denote the subgroups generated by the groups labelling the vertices of $\Lambda_1, \Lambda_2$ respectively. 
\end{thm}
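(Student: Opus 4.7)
The statement follows the philosophy of Theorem \ref{thm:IntroMain} but replaces the minsquare peripheral structure by the finer relatively hyperbolic structure carried by a graph product of finite groups. My plan is to take seriously the parenthetical hint in the statement and assemble the result from two off-the-shelf ingredients.

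First I would invoke the description obtained in \cite{Qm} of a canonical relatively hyperbolic structure on $\Gamma \mathcal{G}$: for every finite simplicial graph $\Gamma$ and every collection $\mathcal{G}$ of finite vertex groups, the group $\Gamma \mathcal{G}$ is hyperbolic relative to the family $\{\langle \Lambda \rangle : \Lambda \in \mathfrak{J}^\infty(\Gamma)\}$, and this family is minimal in the sense that each peripheral subgroup $\langle \Lambda \rangle$ is thick, hence not itself relatively hyperbolic with respect to any collection of proper subgroups. For right-angled Coxeter groups this is precisely the content of \cite{BHSC, coningoff}, so the appeal to \cite{Qm} is only genuinely needed in the general graph-product setting.

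Next I would apply the quasi-isometric rigidity of peripheral subgroups due to Behrstock, Drutu and Mosher \cite{BDM}: if two finitely generated groups are each relatively hyperbolic with respect to a family of unconstricted (in particular, thick) subgroups, then any quasi-isometry between them coarsely permutes the peripheral left cosets, in the sense that every peripheral coset on one side lies at finite Hausdorff distance from a peripheral coset on the other side. The thickness established in the first step is exactly what places us within the hypotheses of this theorem. Applied to the quasi-isometry $\Phi : \Gamma_1 \mathcal{G}_1 \to \Gamma_2 \mathcal{G}_2$ and to the peripheral subgroup $\langle \Lambda_1 \rangle$, this immediately produces the element $g \in \Gamma_2\mathcal{G}_2$ and the subgraph $\Lambda_2 \in \mathfrak{J}^\infty(\Gamma_2)$ required by the conclusion.

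The only non-automatic point in this plan is the verification that the peripheral structure induced by $\mathfrak{J}^\infty(\Gamma)$ is the right one: i.e.\ that the peripheral subgroups $\langle \Lambda \rangle$ are thick (or at least non-relatively-hyperbolic), so that \cite{BDM} can be invoked without further ado. This is the main obstacle in principle, but it is handled externally in \cite{Qm}, so in practice the theorem reduces to a direct concatenation of the two quoted references.
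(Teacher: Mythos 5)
Your proposal is correct and follows essentially the same route as the paper: establish via \cite{Qm} that $\Gamma\mathcal{G}$ is hyperbolic relative to $\{\langle\Lambda\rangle \mid \Lambda \in \mathfrak{J}^\infty(\Gamma)\}$ with non-relatively-hyperbolic peripherals, then invoke the quasi-isometric rigidity of such peripheral structures from \cite{BDM}. The only cosmetic difference is that the paper uses the one-sided form of the \cite{BDM} theorem and then identifies the induced peripheral structure on $\Gamma_2\mathcal{G}_2$ with the canonical one via a separate minimality argument (Corollary \ref{cor:RH}, relying on \cite{DrutuSapir} and almost-malnormality), a step you absorb into your citation of the two-sided ``coarse matching'' form of \cite{BDM}.
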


\noindent
Given a simplicial graph $\Gamma$, $\mathfrak{J}^\infty(\Gamma)$ is a collection of subgraphs of $\Gamma$ which encodes the relative hyperbolicity of any graph product of finite groups defined over $\Gamma$. We refer to Section \ref{section:RH} for a precise definition.  

\medskip \noindent
Up to our knowledge, Theorems \ref{thm:IntroMain} and \ref{thm:IntroRH} provide the only two known classes of quasi-isometrically rigid subgroups in graph products of finite groups (or even in right-angled Coxeter groups).
\begin{figure}
\begin{center}
\includegraphics[scale=0.35]{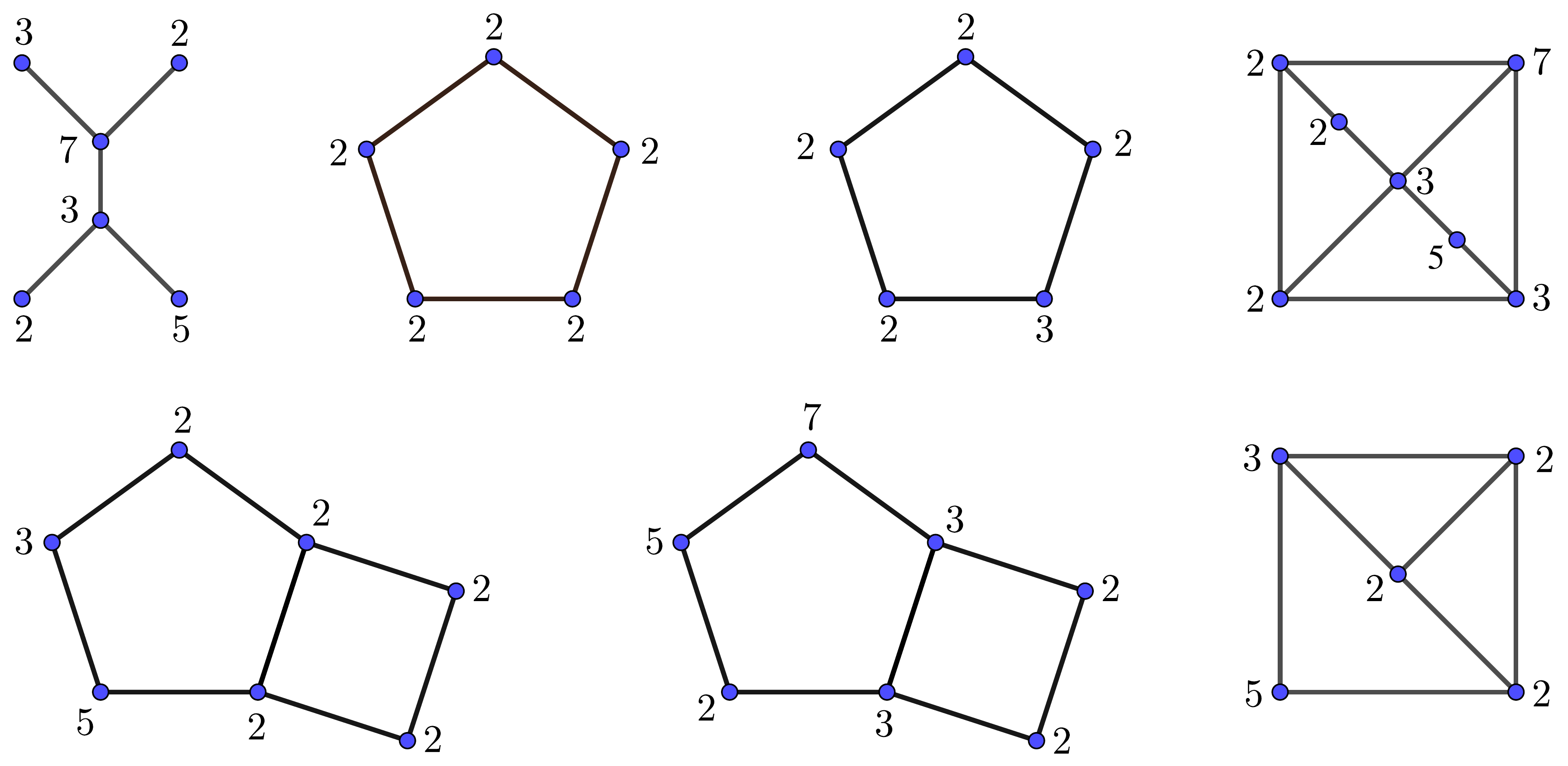}
\caption{The first graph product is virtually free \cite{VirtuallyFreeGP}; the second one is virtually a surface group; the third one is hyperbolic with a boundary containing infinitely many circles; the fourth one is not relatively hyperbolic and has superlinear divergence \cite{MinasyanOsin,Sistohypembed}; the fifth one is toral relatively hyperbolic; the sixth one is hyperbolic relative to a subgroup virtually $\mathbb{F}_2 \times \mathbb{F}_2$; the seventh one is quasi-isometric to $\mathbb{F}_2 \times \mathbb{F}_2$.}
\label{Intro}
\end{center}
\end{figure}

\medskip \noindent
Interestingly, Theorem \ref{thm:IntroMain} allows us to construct quasi-isometric invariants. For instance, it can be proved that:

\begin{prop}\label{thm:IntroQIinv}
Let $\Gamma_1,\Gamma_2$ be two finite simplicial graphs and $\mathcal{G}_1, \mathcal{G}_2$ two collections of finite groups indexed by $V(\Gamma_1), V(\Gamma_2)$ respectively. Assume that the graph products $\Gamma_1 \mathcal{G}_1$ and $\Gamma_2 \mathcal{G}_2$ are quasi-isometric. 
\begin{itemize}
	\item If $\Gamma_1$ is a minsquare graph, then $\Gamma_2$ decomposes as the join of a minsquare subgraph and a complete graph.
	\item If $\Gamma_1$ contains an induced square whose vertices are labelled by $\mathbb{Z}/2\mathbb{Z}$'s  and which is square-complete, then so does $\Gamma_2$. 
\end{itemize}
\end{prop}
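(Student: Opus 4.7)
\emph{Proof proposal.} My plan is to derive both items from Theorem~\ref{thm:IntroMain} combined with a structural analysis of the image subgroup in $\Gamma_2 \mathcal{G}_2$.

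For the first item, I would apply Theorem~\ref{thm:IntroMain} to $\Lambda_1 = \Gamma_1$ (itself minsquare by hypothesis), obtaining a minsquare subgraph $\Lambda_2 \subseteq \Gamma_2$ and an element $g \in \Gamma_2 \mathcal{G}_2$ with $\Phi(\Gamma_1 \mathcal{G}_1)$ at finite Hausdorff distance from $g \langle \Lambda_2 \rangle$. Since $\Phi$ is coarsely surjective, $\langle \Lambda_2 \rangle$ must have finite index in $\Gamma_2 \mathcal{G}_2$. I would then establish the following algebraic lemma: if $\langle \Lambda \rangle \leq \Gamma \mathcal{G}$ has finite index in a graph product of finite groups, then $\Gamma = \Lambda \ast K$ for some clique $K$. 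Using the standard normal form for graph products, the subgroup $\langle \Gamma \setminus \Lambda \rangle$ meets $\langle \Lambda \rangle$ trivially, so finite index forces it to be finite, which in turn forces $\Gamma \setminus \Lambda$ to be a clique. Moreover, any non-adjacent pair $u \in \Lambda$, $v \in \Gamma \setminus \Lambda$ would span an infinite dihedral group meeting $\langle \Lambda \rangle$ only in the finite group $\langle u \rangle$, contradicting the finite index of $\langle \Lambda \rangle$; hence every vertex of $\Gamma \setminus \Lambda$ is joined to every vertex of $\Lambda$.

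For the second item, given a square-complete induced square $S \subseteq \Gamma_1$ with $\mathbb{Z}/2\mathbb{Z}$ labels, I first observe that $S$ is itself minsquare: any proper induced subgraph of $S$ has at most three vertices and thus contains no induced square. Applying Theorem~\ref{thm:IntroMain} with $\Lambda_1 = S$ yields a minsquare $\Lambda_2 \subseteq \Gamma_2$ such that $\langle \Lambda_2 \rangle$ is quasi-isometric to $\langle S \rangle \cong D_\infty \times D_\infty$, hence virtually $\mathbb{Z}^2$. The remaining task is to identify $\Lambda_2$ as a $\mathbb{Z}/2\mathbb{Z}$-labelled induced square; its square-completeness in $\Gamma_2$ then follows automatically from being minsquare.

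To pin down the structure of $\Lambda_2$, I would fix an induced square $S' \subseteq \Lambda_2$ (which exists since $\Lambda_2$ is minsquare) and argue in three steps. First, because $\langle \Lambda_2 \rangle$ has polynomial growth and therefore contains no copy of $\mathbb{F}_2$, every non-adjacent pair $\{u, v\}$ in $\Lambda_2$ must satisfy $|G_u| = |G_v| = 2$ (otherwise the free product $G_u \ast G_v$ would contain $\mathbb{F}_2$); in particular all vertices of $S'$ carry $\mathbb{Z}/2\mathbb{Z}$ labels. Second, any extra vertex $e \in \Lambda_2 \setminus S'$ must be adjacent to all of $S'$, because a non-adjacency $e \not\sim s$ combined with the diagonal non-edge of $S'$ through $s$ produces a free product subgroup containing $\mathbb{F}_2$; likewise any two extra vertices must be adjacent to each other, else their commutativity with $S'$ would embed a virtually $\mathbb{Z}^3$ subgroup into $\langle \Lambda_2 \rangle$. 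Thus $\Lambda_2 = S' \ast K$ for some clique $K$. Third, minimality of $\Lambda_2$ forces $K = \emptyset$: any $k \in K$ is adjacent to every other vertex of $\Lambda_2$, so no induced square of $\Gamma_2$ contained in $\Lambda_2$ can include $k$; hence $\Lambda_2 \setminus \{k\}$ is still square-complete in $\Gamma_2$ while still containing $S'$, contradicting minimality. I expect the main obstacle to be this last structural analysis, which relies on the interplay between the graph-product normal form, the absence of $\mathbb{F}_2$ and virtually $\mathbb{Z}^3$ subgroups in a virtually $\mathbb{Z}^2$ group, and the minimality condition defining minsquare subgraphs.
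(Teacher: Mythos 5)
Your proof is correct, and it follows the same top-level strategy as the paper (reduce everything to the rigidity of minsquare parabolics via Theorem~\ref{thm:IntroMain}/Theorem~\ref{thm:eccentricsub}), but the two finishing arguments are genuinely different. For the first item, where you deduce finite index of $\langle \Lambda_2 \rangle$ and then prove an ad hoc normal-form lemma (the complement is a clique joined to everything), the paper instead observes that $\langle \Lambda_2 \rangle$ is quasi-dense and invokes Corollary~\ref{cor:ParabolicHausdorff}, which already packages the statement ``two parabolics at finite Hausdorff distance have the same non-clique join factor''; your lemma is a special case of that corollary, proved by elementary graph-product combinatorics rather than by the hyperplane arguments behind Lemma~\ref{lem:ParabolicHausdorff}. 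For the second item the divergence is larger: the paper only extracts a $\mathbb{Z}/2\mathbb{Z}$-labelled square $\Xi \subset \Lambda_2$ by the same polynomial-growth argument you use, notes that $\langle \Xi \rangle \subset \langle \Lambda_2 \rangle$ are both quasi-isometric to $\mathbb{Z}^2$ so are at finite Hausdorff distance, and again concludes $\Lambda_2 = \Xi$ from Corollary~\ref{cor:ParabolicHausdorff}; you instead classify $\Lambda_2$ outright as $S' \ast K$ by ruling out $\mathbb{F}_2$ and virtually-$\mathbb{Z}^3$ subgroups, and then kill $K$ using the minimality in the definition of minsquare. Your route is more self-contained and makes the combinatorial structure of $\Lambda_2$ explicit; the paper's is shorter because it reuses the Hausdorff-distance rigidity of parabolic subgroups as a black box. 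One small point worth making explicit in your write-up: the intersection facts you use (``$\langle \Gamma \setminus \Lambda \rangle$ meets $\langle \Lambda \rangle$ trivially'' and $\langle u, v \rangle \cap \langle \Lambda \rangle = G_u$) rest on the standard result that intersections of parabolic subgroups of a graph product are parabolic on the intersection of the defining subgraphs, which the paper never states and which you should cite.
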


\noindent
From another point of view, given a simplicial graph $\Gamma$ and a collection of groups $\mathcal{G}$ indexed by $V(\Gamma)$, we define the \emph{electrification} $\mathbb{E}(\Gamma, \mathcal{G})$ as the Cayley graph of $\Gamma \mathcal{G}$ constructed from the generating set which is the union of the groups of $\mathcal{G}$ and the subgroups $\langle \Lambda \rangle$ where $\Lambda$ is a minsquare subgraph of $\Gamma$. As a consequence of Theorem \ref{thm:IntroMain}, the electrification defines a quasi-isometric invariant. More precisely:

\begin{prop}
Let $\Gamma_1,\Gamma_2$ be two simplicial graphs and $\mathcal{G}_1, \mathcal{G}_2$ two collections of finite groups indexed by $V(\Gamma_1), V(\Gamma_2)$ respectively. Any quasi-isometry $\Gamma_1 \mathcal{G}_1 \to \Gamma_2 \mathcal{G}_2$ induces a quasi-isometry $\mathbb{E}(\Gamma_1,\mathcal{G}_1) \to \mathbb{E}(\Gamma_2,\mathcal{G}_2)$.
\end{prop}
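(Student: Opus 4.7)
The plan is to deduce the statement directly from Theorem \ref{thm:IntroMain}. Fix an $(L,C)$-quasi-isometry $\Phi : \Gamma_1 \mathcal{G}_1 \to \Gamma_2 \mathcal{G}_2$ with $(L,C)$-quasi-inverse $\Psi$. Since adding generators can only shrink distances, the identity map $\Gamma \mathcal{G} \to \mathbb{E}(\Gamma,\mathcal{G})$ is $1$-Lipschitz; hence, to show that $\Phi$ is coarsely Lipschitz as a map $\mathbb{E}(\Gamma_1,\mathcal{G}_1) \to \mathbb{E}(\Gamma_2,\mathcal{G}_2)$, it suffices (by summing along a geodesic of $\mathbb{E}(\Gamma_1,\mathcal{G}_1)$) to bound $d_{\mathbb{E}(\Gamma_2,\mathcal{G}_2)}(\Phi(x),\Phi(y))$ by a uniform constant whenever $x$ and $y$ are adjacent in $\mathbb{E}(\Gamma_1,\mathcal{G}_1)$.

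An edge of $\mathbb{E}(\Gamma_1,\mathcal{G}_1)$ arises either from right-multiplication by an element of a vertex-group $G_u$, in which case $x$ and $y$ are at distance one in the ordinary Cayley graph and therefore $d_{\mathbb{E}(\Gamma_2,\mathcal{G}_2)}(\Phi(x),\Phi(y)) \leq d_{\Gamma_2 \mathcal{G}_2}(\Phi(x),\Phi(y)) \leq L+C$; or from right-multiplication by an element of $\langle \Lambda_1 \rangle$ for some minsquare subgraph $\Lambda_1 \leq \Gamma_1$, in which case $x$ and $y$ lie in a common left coset $x\langle \Lambda_1 \rangle$. In the second case I would apply Theorem \ref{thm:IntroMain} to the quasi-isometry $\Phi \circ L_x$, where $L_x$ denotes isometric left-translation by $x$. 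This provides a minsquare subgraph $\Lambda_2 \leq \Gamma_2$ and an element $g \in \Gamma_2 \mathcal{G}_2$ such that $\Phi(x \langle \Lambda_1 \rangle)$ lies at Hausdorff distance at most $D$ from $g \langle \Lambda_2 \rangle$ in $\Gamma_2 \mathcal{G}_2$, for some $D$ depending only on $(L,C)$ and on the finite combinatorics of $\Gamma_1$ and $\Gamma_2$. Since $g \langle \Lambda_2 \rangle$ has $\mathbb{E}(\Gamma_2,\mathcal{G}_2)$-diameter at most one, the image $\Phi(x\langle \Lambda_1\rangle)$ has $\mathbb{E}(\Gamma_2,\mathcal{G}_2)$-diameter at most $2D+1$, giving the required bound.

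Running the same argument with $\Psi$ in place of $\Phi$ produces a coarse Lipschitz map $\mathbb{E}(\Gamma_2,\mathcal{G}_2) \to \mathbb{E}(\Gamma_1,\mathcal{G}_1)$; and since $\Psi \circ \Phi$ is within bounded distance of the identity in $\Gamma_1 \mathcal{G}_1$ it is so also in $\mathbb{E}(\Gamma_1,\mathcal{G}_1)$ (and symmetrically for $\Phi \circ \Psi$), so $\Phi$ does define a quasi-isometry between the electrifications. The main bookkeeping issue is the uniformity of the Hausdorff constant $D$ across all cosets $x \langle \Lambda_1 \rangle$: because left-translation is an isometry of $\Gamma_1 \mathcal{G}_1$, the quasi-isometry $\Phi \circ L_x$ has the same constants as $\Phi$, so what is really needed is that Theorem \ref{thm:IntroMain} holds with a Hausdorff constant depending only on the quasi-isometry data and on the two (finite) defining graphs. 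This is the expected form of such a rigidity statement, but one must check it is the form actually proved; this is where I anticipate the only real obstacle.
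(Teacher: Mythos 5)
Your proposal is correct and follows essentially the same route as the paper: the paper merely packages your edge-by-edge Lipschitz estimate into two general cone-off lemmas (Lemmas \ref{lem:QIone} and \ref{lem:QItwo}) and then feeds in the rigidity of eccentric subspaces. The uniformity of the Hausdorff constant $D$ that you rightly flag as the only real obstacle is exactly what the paper supplies: the cosets $x\langle\Lambda_1\rangle$ fall into finitely many $\Gamma_1\mathcal{G}_1$-orbits and hence share a common Morse- and eccentric-gauge, these gauges transform controllably under the $(L,C)$-quasi-isometry $\Phi$, and Fact \ref{fact:Improvement} (the quantitative form of Theorem \ref{thm:IntroMain} recorded in the proof of Theorem \ref{thm:eccentricsub}) then bounds the Hausdorff distance to a coset $g\langle\Lambda_2\rangle$ by a constant depending only on $\Gamma_2$ and these gauges.
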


\noindent
Consequently, studying the large-scale geometry of the electrification provides other quasi-isometric invariants. Our main result in this direction is the following characterisation of the hyperbolicity of the electrification:

\begin{prop}\label{prop:IntroElecHyp}
Let $\Gamma$ be a finite simplicial graph and $\mathcal{G}$ a collection of groups indexed by $V(\Gamma)$. The electrification $\mathbb{E}(\Gamma, \mathcal{G})$ is hyperbolic if and only if every induced square of $\Gamma$ is included into some minsquare subgraph. 
\end{prop}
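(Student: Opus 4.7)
\emph{Plan.} I will prove both implications, with the ``not hyperbolic'' direction by exhibiting a persistent quasi-flat and the ``hyperbolic'' direction by a cube-complex cone-off argument.

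\emph{Necessity ($\Rightarrow$).} Argue by contrapositive: suppose $S \subseteq \Gamma$ is an induced square on vertices $a,b,c,d$ (with $\{a,c\}$ and $\{b,d\}$ the opposite pairs) that is not contained in any minsquare subgraph. The subgroup $\langle S \rangle \cong (G_a \ast G_c) \times (G_b \ast G_d)$ is a product of two infinite groups, so it contains an undistorted quasi-flat in $\Gamma \mathcal{G}$, generated for instance by the commuting infinite-order elements $ac$ and $bd$. I would show that this quasi-flat survives in $\mathbb{E}(\Gamma,\mathcal{G})$. The key algebraic input is that, by square-completeness, no minsquare $\Lambda$ can contain two opposite vertices of $S$ (else $\Lambda \supseteq S$, contradicting the hypothesis on $S$). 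Combined with the standard intersection formula $\langle A \rangle \cap \langle B \rangle = \langle A \cap B \rangle$ for parabolic subgroups of a graph product, this forces every intersection $\langle S \rangle \cap g \langle \Lambda \rangle$ to be a coset of a \emph{finite} subgroup aligned with a single factor of the product decomposition of $\langle S \rangle$. Coning off such aligned, finite pieces cannot collapse the product structure; a direct bookkeeping argument via parabolic projections should then show that $\langle S \rangle \hookrightarrow \mathbb{E}$ is a quasi-isometric embedding, whence $\mathbb{E}$ cannot be hyperbolic.

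\emph{Sufficiency ($\Leftarrow$).} Assume every induced square of $\Gamma$ is contained in some minsquare subgraph. The plan is to use the action of $\Gamma \mathcal{G}$ on its standard CAT(0) cube complex $X$ (the cubulation of a graph product): each minsquare parabolic $\langle \Lambda \rangle$ stabilizes a convex subcomplex $X_\Lambda \subseteq X$, and $\mathbb{E}$ is quasi-isometric to the cone-off of $X$ over the family of translates of the $X_\Lambda$. Any quasi-flat in $X$ arises from an induced square of $\Gamma$; the hypothesis ensures each such square lies in some minsquare, hence every quasi-flat is contained in some $X_\Lambda$ and is collapsed in the cone-off. I would then conclude hyperbolicity of $\mathbb{E}$ via a standard coning-off criterion for cube complexes (a contact-graph style argument, or a Drutu--Sapir-type asymptotic cone argument). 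An alternative and perhaps cleaner route is to verify that the family of minsquare subgroups refines the peripheral collection $\mathfrak{J}^\infty(\Gamma)$ of Theorem \ref{thm:IntroRH} under the hypothesis, reducing hyperbolicity of $\mathbb{E}$ to the relative hyperbolicity of $\Gamma \mathcal{G}$ with respect to $\{\langle \Lambda \rangle : \Lambda \in \mathfrak{J}^\infty(\Gamma)\}$ already established in \cite{Qm}.

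The main obstacle is the sufficiency direction: turning the ``all flats are coned off'' heuristic into a rigorous thin-triangles condition in $\mathbb{E}$ requires a careful analysis of how geodesic configurations in $X$ interact with the coned-off convex subcomplexes, and the cleanest formulation will likely come from choosing the right intermediate relative-hyperbolicity statement rather than arguing directly inside the cube complex.
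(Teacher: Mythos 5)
Your overall strategy for the ``only if'' direction matches the paper's: an induced square $S$ not absorbed by any minsquare subgraph meets every minsquare subgraph in at most a complete subgraph (square-completeness would otherwise force $S\subseteq\Lambda$), and one then shows that $\langle S\rangle$ quasi-isometrically embeds into $\mathbb{E}(\Gamma,\mathcal{G})$. However, the justification you offer --- that each intersection $\langle S\rangle\cap g\langle\Lambda\rangle$ is finite and ``aligned,'' so coning cannot collapse the product --- is not sufficient as stated: a coned-off coset can shortcut between two points of $\langle S\rangle$ without meeting $\langle S\rangle$ at all. The paper's Lemma \ref{lem:QIembed} closes exactly this gap with a hyperplane argument: take a maximal chain of pairwise tangent hyperplanes separating $x,y\in\langle S\rangle$ and show that no coned-off coset $g\langle\Xi\rangle$ can cross two consecutive tangent hyperplanes of the chain, since their labels would be two non-adjacent vertices lying in $\Xi\cap S$; Dilworth's theorem then converts the chain length into a lower bound on the electrified distance. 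Some argument of this kind is needed to make your ``bookkeeping via parabolic projections'' rigorous.

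The ``if'' direction has two more serious problems. First, your main route asserts that every quasi-flat lies in a single coned-off piece because every induced square does; but by Lemma \ref{lem:FlatRinX} a thick flat rectangle lives in a coset of $\langle\Lambda_1\ast\Lambda_2\rangle$ for a join that in general contains many induced squares, and the hypothesis only places each of them in \emph{some} minsquare subgraph, a priori different ones. The heart of the paper's proof is the chaining step showing they all land in the \emph{same} one: for non-adjacent $u_1,v_1\in\Lambda_1$ and $u_2,v_2\in\Lambda_2$, the square on $u_1,u_2,v_1,v_2$ lies in some minsquare subgraph $\Xi$, and any other vertex $w\in\Lambda_1$ together with a non-neighbour $w^*\in\Lambda_1$ spans a square sharing the opposite pair $u_2,v_2$ with the first, so square-completeness of $\Xi$ absorbs $w$; hence $\Lambda_1\ast\Lambda_2\subseteq\Xi$, the rectangle has uniformly bounded diameter in $\mathbb{E}(\Gamma,\mathcal{G})$, and \cite[Proposition 8.38]{Qm} concludes. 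Second, your proposed ``cleaner route'' through relative hyperbolicity cannot work: the proposition covers graphs for which $\Gamma\mathcal{G}$ is \emph{not} hyperbolic relative to the minsquare parabolics (two minsquare subgraphs may intersect non-completely, so the conditions of Proposition \ref{prop:RH} fail); the graph $\Gamma_4$ of Figure \ref{Five} is exactly such an example, with $C(\Gamma_4)$ not relatively hyperbolic yet $\mathbb{E}(\Gamma_4)$ hyperbolic. Hyperbolicity of the cone-off over the coarser collection $\mathfrak{J}^\infty(\Gamma)$ does not descend to the cone-off over the smaller minsquare cosets.
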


\noindent
In Section \ref{section:ex}, various examples of right-angled Coxeter groups are showed not to be quasi-isometric by applying the quasi-isometric invariants discussed above.

\paragraph{Morse subgroups and their geometries.}
Theorem \ref{thm:IntroMain} follows from a study of non-hyperbolic \emph{Morse subgroups} in graph products of finite groups. 

\medskip \noindent
The good properties satisfied by quasiconvex subgroups in hyperbolic groups motivated several definitions of subgroups embedded in some ``hyperbolic'' way, including relatively quasiconvex subgroups in relatively hyperbolic groups (see the survey \cite{HruskaRH} and references therein) and hyperbolically embedded subgroups \cite{DGO} (leading to the theory of acylindrically hyperbolic groups \cite{OsinAcyl}). The two most recent definitions of such subgroups are \emph{stable subgroups} \cite{StableSub} and \emph{Morse subgroups} \cite{StronglyQC, MoiHypCube}, the latter being a generalisation of the former.

\begin{definition}
Let $X$ be a geodesic metric space. Then $Y \subset X$ is a \emph{Morse subspace} if, for every $A>0$ and $B \geq 0$, there exists some $M(A,B) \geq 0$ such that any $(A,B)$-quasigeodesic between two points of $Y$ lies in the $M(A,B)$-neighborhood of $Y$. The map $M$ is referred to as the \emph{Morse gauge} of $Y$. If $X$ is the Cayley graph of a group $G$ constructed from a finite generating set and $Y$ a subgroup $H$ of $G$, one says that $H$ is a \emph{Morse subgroup} of $G$; and if $H= \langle g \rangle$, one says that $g$ is a Morse element of~$G$. 
\end{definition}

\noindent
Although Morse elements have been studied for several decades, Morse subgroups were introduced only very recently as a notion of independent interest. First appearing implicitly in \cite{Sistohypembed}, Morse subgroups are introduced independently in \cite{StronglyQC} (as \emph{strongly quasiconvex subgroups}) and in \cite{MoiHypCube}. In \cite{MoiHypCube}, they appear as a very convenient tool in the study of relative hyperbolicity of groups acting on CAT(0) cube complexes; and, in \cite{StronglyQC}, it is shown that they satisfy most of the good properties holding for quasiconvex subgroups in hyperbolic groups. 

\medskip \noindent
In the article, we are interested in specific Morse subspaces, and the goal will be to show that, in graph products of finite groups, they are at finite Hausdorff distance from subgroups.

\begin{definition}
Let $X$ be a geodesic metric space. A subspace $Y \subset X$ is \emph{eccentric} if it is Morse, non-hyperbolic, and if, for every map $m : (0,+ \infty) \times [0,+ \infty) \to \mathbb{R}$, the Hausdorff distance between $Y$ and any non-hyperbolic Morse subspace $Z \subset Y$ with Morse-gauge $m$ is bounded above by a finite constant $E(m)$. The map $E$ is referred to as the \emph{eccentric-gauge} of $Y$. 
\end{definition}

\noindent
Now, Theorem \ref{thm:IntroMain} is an easy consequence of the following statement:

\begin{thm}\label{thm:IntroEccentric}
Let $\Gamma$ be a finite simplicial graph and $\mathcal{G}$ a collection of finite groups indexed by $V(\Gamma)$. A subspace $M \subset \Gamma \mathcal{G}$ is eccentric if and only if there exists a minsquare subgraph $\Lambda \subset \Gamma$ such that $M$ is at finite Hausdorff distance from a coset of $\langle \Lambda \rangle$.
\end{thm}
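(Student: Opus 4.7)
My plan is to reduce Theorem \ref{thm:IntroEccentric} to a structural classification lemma, which I expect to be established elsewhere in the paper: every non-hyperbolic Morse subspace $N$ of $\Gamma\mathcal{G}$ lies at finite Hausdorff distance from a coset $g\langle\Lambda'\rangle$ where $\Lambda'\subset\Gamma$ is square-complete and contains at least one induced square, with both the Hausdorff bound and the choice of coset depending only on the Morse gauge of $N$. Granting such a classification, each direction of the theorem should follow from a short argument built around the definition of minsquare.

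For the ``if'' direction, take $\Lambda$ minsquare and set $Y=\langle\Lambda\rangle$. That $Y$ is Morse should follow from an independent characterisation of Morse parabolic subgroups in graph products via square-completeness. Non-hyperbolicity is immediate: any induced square of $\Lambda$ with opposite pairs $\{u,v\},\{w,z\}$ yields a subgroup $(G_u\ast G_v)\times(G_w\ast G_z)\leq Y$ and hence a $2$-flat. To verify eccentricity, take any non-hyperbolic Morse subspace $Z\subset Y$ with gauge $m$; the classification provides $g,\Lambda'$ with $Z$ Hausdorff-close to $g\langle\Lambda'\rangle$, uniformly in $m$. Since $Z\subset Y$, this parabolic coset sits in a bounded neighbourhood of $Y$, and the standard rigidity of parabolic cosets in graph products allows me to conclude, after adjusting $g$ by an element of $\langle\Lambda\rangle$, that $\Lambda'\subset\Lambda$. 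Minimality of $\Lambda$ then forces $\Lambda'=\Lambda$, so $Z$ is Hausdorff close to $Y$ with a bound $E(m)$ depending only on $m$.

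For the ``only if'' direction, suppose $M$ is eccentric. The classification produces $g,\Lambda'$ with $M$ Hausdorff-close to $g\langle\Lambda'\rangle$ and $\Lambda'$ square-complete containing a square. If $\Lambda'$ were not minsquare, some $\Lambda''\subsetneq\Lambda'$ would still be square-complete and contain an induced square, so $g\langle\Lambda''\rangle$ would be a non-hyperbolic Morse subspace with a fixed Morse gauge $m_0$ depending only on $\Lambda''$. Up to a bounded perturbation one has $g\langle\Lambda''\rangle\subset M$, so eccentricity would imply that $g\langle\Lambda''\rangle$ is $E(m_0)$-Hausdorff close to $M$, and therefore to $g\langle\Lambda'\rangle$. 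This is absurd, as two distinct nested parabolic cosets in a graph product of finite groups are at infinite Hausdorff distance. Hence $\Lambda'$ must be minsquare.

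The real work goes into the classification lemma, which I expect to form the technical heart of the paper. I would work inside the CAT(0) cube complex canonically associated to $\Gamma\mathcal{G}$, a natural generalisation of the Davis and Niblo--Reeves complexes: non-hyperbolicity of $N$ produces a coarse flat inside $N$, which cubically corresponds to two transverse infinite families of parallel hyperplanes labelled by a pair of opposite vertices of an induced square of $\Gamma$. The Morse condition then has to play a double role, first promoting this local flat data to a globally consistent parabolic coset $g\langle\Lambda'\rangle$ that shadows $N$, and second forcing $\Lambda'$ to be square-complete, since any ``missing'' square would allow one to build a quasi-geodesic escaping every bounded neighbourhood of $N$. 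The technical delicacy is to control every constant uniformly in the Morse gauge of $N$, which is exactly what is required to recover the eccentricity bound $E(m)$.
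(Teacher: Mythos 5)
Your argument hinges on a ``classification lemma'' asserting that every non-hyperbolic Morse subspace $N$ of $\Gamma\mathcal{G}$ lies at \emph{finite Hausdorff distance} from a coset $g\langle\Lambda'\rangle$ of a square-complete parabolic. This two-sided statement is false, and the paper exhibits an explicit counterexample in the remark following its proof of the theorem: for a suitable $\Gamma$ the union $\langle\Lambda\rangle\cup\bigcup_{g\in\langle\Lambda\rangle} g\langle\Xi\rangle$ is a gated, non-hyperbolic Morse subgraph that is not at finite Hausdorff distance from any parabolic coset. What is actually true (Fact \ref{fact:ForConverse} in the paper) is only the one-sided inclusion: some $g\langle\Lambda'\rangle$ with $\Lambda'$ square-complete and containing a square lies in a $K$-neighborhood of $N$, with $K$ controlled by the Morse gauge. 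The two-sided closeness is recovered \emph{only} for eccentric $M$, and only by invoking eccentricity itself: since $g\langle\Lambda'\rangle$ is a non-hyperbolic Morse subspace coarsely contained in $M$, the eccentric gauge of $M$ bounds the Hausdorff distance. Your ``only if'' direction silently uses the false two-sided version where it should instead feed the one-sided output back into the definition of eccentricity. (Your ``if'' direction happens to survive with the one-sided version, because $Z\subset Y$ supplies the missing inclusion; but as written it also relies on the false lemma.)

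Beyond that, your sketch of the classification lemma's proof omits its real content. Producing one flat, hence one induced square $\Xi$ with $g\langle\Xi\rangle$ coarsely inside $N$, is the easy part (Lemmas \ref{lem:SquareEccentric} and \ref{lem:InterInclusion} of the paper). The difficulty is to enlarge $\Xi$ to a \emph{square-complete} subgraph whose parabolic still sits inside $N$: each square meeting the current subgraph in a diagonal forces a larger parabolic into $N$, but the new squares may only appear in translated copies, so the process must be organised and shown to terminate. The paper does this with the ``slabbed subgraph'' formalism, collapsing oriented cycles in an auxiliary directed graph until every sink is square-complete. Nothing in your proposal plays this role, so the ``globally consistent parabolic coset that shadows $N$'' is not constructed. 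A smaller inaccuracy: your claim that two distinct nested parabolic cosets are always at infinite Hausdorff distance fails when they differ by a join with a clique (Corollary \ref{cor:ParabolicHausdorff}); the correct conclusion is that $\Lambda'$ is a join of a minsquare subgraph with a complete graph, which is then close enough for the theorem.
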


\noindent
The theorem motivates the following definition. Given a simplicial graph $\Gamma$ and a collection of finite groups $\mathcal{G}$ indexed by $V(\Gamma)$, a subgroup $H \leq \Gamma \mathcal{G}$ is \emph{eccentric} if $H= g \langle \Lambda \rangle g^{-1}$ for some element $g \in \Gamma \mathcal{G}$ and some minsquare subgraph $\Lambda \leq \Gamma$.  

\medskip \noindent
As a by-product of our study of eccentric subgroups, we are able to answer a question of \cite{ConvexityHHS}. Although right-angled Artin groups contain few Morse subgroups, as infinite-index Morse subgroups in freely irreducible right-angled Artin groups turn out to be free \cite{StronglyQC, MoiHypCube} (see also Remark \ref{remark:MorseRAAG} below for an alternative argument), right-angled Coxeter groups may contain various Morse subgroups (see for instance \cite[Theorem~F]{ConvexityHHS}). A sufficient but not necessary condition for the infinite-index Morse subgroups of a right-angled Coxeter group to be all hyperbolic is given in \cite{ConvexityHHS}, and \cite[Question 2]{ConvexityHHS} naturally asks for a necessary and sufficient condition. Our next statement answers this question.

\begin{thm}\label{thm:MorseSubHyp}
Let $\Gamma$ be a simplicial graph and $\mathcal{G}$ a collection of finite groups indexed by $V(\Gamma)$. The infinite-index Morse subgroups of $\Gamma \mathcal{G}$ are all hyperbolic if and only if $\Gamma$ is square-free or if it decomposes as the join of a minsquare subgraph and a complete graph.  
\end{thm}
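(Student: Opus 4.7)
The plan is to prove the two implications separately. The forward direction is handled by producing an explicit infinite-index non-hyperbolic Morse subgroup whenever $\Gamma$ violates the structural condition, while the converse direction splits into the square-free and join cases and invokes either hyperbolicity of $\Gamma\mathcal{G}$ or the uniqueness of the minsquare subgraph combined with Theorem~\ref{thm:IntroEccentric}.

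For the forward direction, I argue the contrapositive. Suppose $\Gamma$ contains an induced square and does not decompose as a join of a minsquare subgraph and a complete graph. Since $\Gamma$ is finite, the non-empty collection of square-complete subgraphs of $\Gamma$ containing an induced square admits a minimal element, some minsquare subgraph $\Lambda$. By Theorem~\ref{thm:IntroEccentric}, $\langle\Lambda\rangle$ is eccentric and in particular a non-hyperbolic Morse subgroup; it remains to check $[\Gamma\mathcal{G} : \langle\Lambda\rangle] = \infty$. By hypothesis, $\Gamma \neq \Lambda * K$ for every complete $K$, so either two vertices $v, w \in V(\Gamma)\setminus V(\Lambda)$ are non-adjacent, or some $v \in V(\Gamma)\setminus V(\Lambda)$ is non-adjacent to some $u \in V(\Lambda)$. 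In both situations the subgroup generated by the two vertices (either $G_v * G_w$ or $G_v * G_u$) is infinite and meets $\langle\Lambda\rangle$ in a finite subgroup by the standard intersection formula for graph products, so its left orbit on the cosets of $\langle\Lambda\rangle$ is infinite, whence $\langle\Lambda\rangle$ has infinite index.

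For the converse, suppose first that $\Gamma$ is square-free. Then $\Gamma\mathcal{G}$ acts properly and cocompactly on a CAT(0) cube complex containing no isometrically embedded $2$-flat, so it is hyperbolic; every Morse subgroup of a hyperbolic group is quasi-convex and thus hyperbolic. Suppose instead that $\Gamma = \Lambda * K$ with $\Lambda$ minsquare and $K$ a non-empty complete graph; then $\Gamma\mathcal{G} = \langle\Lambda\rangle \times \langle K\rangle$ with $\langle K\rangle$ finite, so $\langle\Lambda\rangle$ has finite index. The essential observation is that $\Lambda$ is then the unique minsquare subgraph of $\Gamma$: every vertex of $K$ is adjacent to all other vertices, hence no induced square of $\Gamma$ can include a vertex of $K$; any minsquare subgraph is therefore contained in $\Lambda$, and minimality of $\Lambda$ forces equality. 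If $H \leq \Gamma\mathcal{G}$ is any infinite-index non-hyperbolic Morse subgroup, a key auxiliary lemma (underlying the proof of Theorem~\ref{thm:IntroEccentric}) furnishes an eccentric subspace $Y \subset H$; by Theorem~\ref{thm:IntroEccentric} and the uniqueness just noted, $Y$ lies at finite Hausdorff distance from a coset $g\langle\Lambda\rangle$, which is coarsely dense because of its finite index. Hence $H \supset Y$ is coarsely dense, and a coarsely dense subgroup of a finitely generated group has finite index, contradicting the infinite-index assumption. The main obstacle is precisely this ``coarse containment'' step: the claim that every non-hyperbolic Morse subspace of $\Gamma\mathcal{G}$ contains an eccentric subspace does not follow formally from the definition of eccentric and must be extracted from the combinatorial analysis of Morse subspaces in graph products of finite groups that underlies the proof of Theorem~\ref{thm:IntroEccentric}.
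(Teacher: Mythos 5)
Your proof is correct, and the ``only if'' direction takes a genuinely different route from the paper's. The paper argues directly: assuming all infinite-index Morse subgroups are hyperbolic and $\Gamma$ is not square-free, it picks a minsquare subgraph $\Lambda$, observes via Propositions \ref{prop:hyp} and \ref{prop:ParabolicMorse} that $\langle \Lambda \rangle$ is a non-hyperbolic Morse subgroup, deduces that it has finite index, and then extracts the join decomposition of $\Gamma$ from Corollary \ref{cor:ParabolicHausdorff} (finite index means finite Hausdorff distance from $\langle \Gamma \rangle$, which pins down the join structure). You instead argue the contrapositive and certify infinite index by hand: if $\Gamma$ is not a join of $\Lambda$ with a complete graph, you exhibit a non-adjacent pair $\{v,w\}$ or $\{v,u\}$ generating an infinite free product meeting $\langle \Lambda \rangle$ in a finite parabolic, so the coset space is infinite. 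This is more elementary but imports the intersection formula $\langle A \rangle \cap \langle B \rangle = \langle A \cap B \rangle$ for parabolics of graph products, a standard fact not stated in the paper; the paper's route stays entirely inside its own toolbox and gets the join decomposition as a byproduct rather than as a case analysis. Your ``if'' direction matches the paper's: square-free gives hyperbolicity of the whole group, and in the join case the unique minsquare subgraph yields a quasi-dense coset inside any non-hyperbolic Morse subgroup. You are right to flag the one delicate point there: the statement of Theorem \ref{thm:IntroEccentric} alone does not give you an eccentric subspace coarsely contained in an arbitrary non-hyperbolic Morse subgroup $H$. The paper's own proof cites Theorem \ref{thm:eccentricsub} somewhat loosely for this; what is actually used is the intermediate Fact \ref{fact:ForConverse} established inside that proof (every non-hyperbolic Morse subspace coarsely contains a coset of a square-complete parabolic containing a square), and note that only this one-sided containment, not finite Hausdorff distance, is needed to conclude quasi-density. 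With that fact in hand, your argument closes completely.
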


\noindent
A natural auxiliary problem would to determine when the infinite-index Morse subgroups are all free. Unfortunately, we were not able to answer this question. See Question \ref{question:MorseFree}.

\paragraph{Organisation of the article.} Section \ref{section:GPandQM} is essentially a discussion about quasi-median graphs, which are our geometric models when dealing with graph products of finite groups. The first two subsections mainly come from \cite{Qm}, except for a few preliminary lemmas; and the third subsection is an adaptation of arguments of \cite{coningoff} for quasi-median graphs. The most original part of the article is Section~\ref{section:eccentric}, which contains the proof of Theorem \ref{thm:IntroEccentric}. Theorem \ref{thm:IntroRH} is proved in Section \ref{section:RH} by combining two results of \cite{BDM} and \cite{Qm}. Next, the electrification is studied in Section \ref{section:elec}. More precisely, its invariance under quasi-isometries (which is almost an immediate consequence of Theorem \ref{thm:IntroEccentric}) is proved in Subsection \ref{subsection:elec}; and we characterise its hyperbolicity in Subsection \ref{subsection:elechyp} by proving Proposition \ref{prop:IntroElecHyp} thanks to methods from \cite{coningoff}. The main theoretical application of our work, namely Theorem \ref{thm:MorseSubHyp}, is proved in Section \ref{section:MorseSubHyp}, answering a question of Russell, Spriano and Tran. We conclude the article by a few explicit examples in Section \ref{section:ex} and a few open questions in Section \ref{section:questions}.

\paragraph{Acknowledgments.} This work was supported by a public grant as part of the Fondation Math\'ematique Jacques Hadamard.

\section{Graph products and quasi-median geometry}\label{section:GPandQM}

\noindent
In order to study the large-scale geometry of graph products of finite groups, we will exploit the geometric model introduced in \cite{Qm}. More precisely, the Cayley graph of a graph product of finite groups with the union of the vertex-groups as a generating set turns out to be a \emph{quasi-median graph}, whose geometry is very close to the geometry of CAT(0) cube complexes, generalising the well-known fact that the Cayley graph of a right-angled Coxeter group with respect to its canonical generating set defines the one-skeleton of a CAT(0) cube complex. In Section \ref{section:QM}, we give basic definitions and properties related to quasi-median graphs; and in Section \ref{section:GPandQM}, we describe quasi-median graphs associated to graph products. Finally, in Section \ref{section:gated}, we focus on Morse subgraphs in quasi-median graphs, proving a few preliminary statements which will be fundamental in the sequel.

\subsection{Quasi-median graphs}\label{section:QM}

\noindent
There exist several equivalent definitions of quasi-median graphs, see for instance \cite{quasimedian}. Below is the definition used in \cite{Qm}.

\begin{definition}
A graph $X$ is \emph{quasi-median} if it does not contain $K_4^-$ and $K_{3,2}$ as induced subgraphs, and if it satisfies the following two conditions:
\begin{description}
	\item[(triangle condition)] for every vertices $a, x,y \in X$, if $x$ and $y$ are adjacent and if $d(a,x)=d(a,y)$, then there exists a vertex $z \in X$ which adjacent to both $x$ and $y$ and which satisfies $d(a,z)=d(a,x)-1$;
	\item[(quadrangle condition)] for every vertices $a,x,y,z \in X$, if $z$ is adjacent to both $x$ and $y$ and if $d(a,x)=d(a,y)=d(a,z)-1$, then there exists a vertex $w \in X$ which adjacent to both $x$ and $y$ and which satisfies $d(a,w)=d(a,z)-2$.
\end{description}
\end{definition}

\noindent
The graph $K_{3,2}$ is the bipartite complete graph, corresponding to two squares glued along two adjacent edges; and $K_4^-$ is the complete graph on four vertices minus an edge, corresponding to two triangles glued along an edge. The triangle and quadrangle conditions are illustrated by Figure \ref{Quadrangle}.
\begin{figure}
\begin{center}
\includegraphics[scale=0.45]{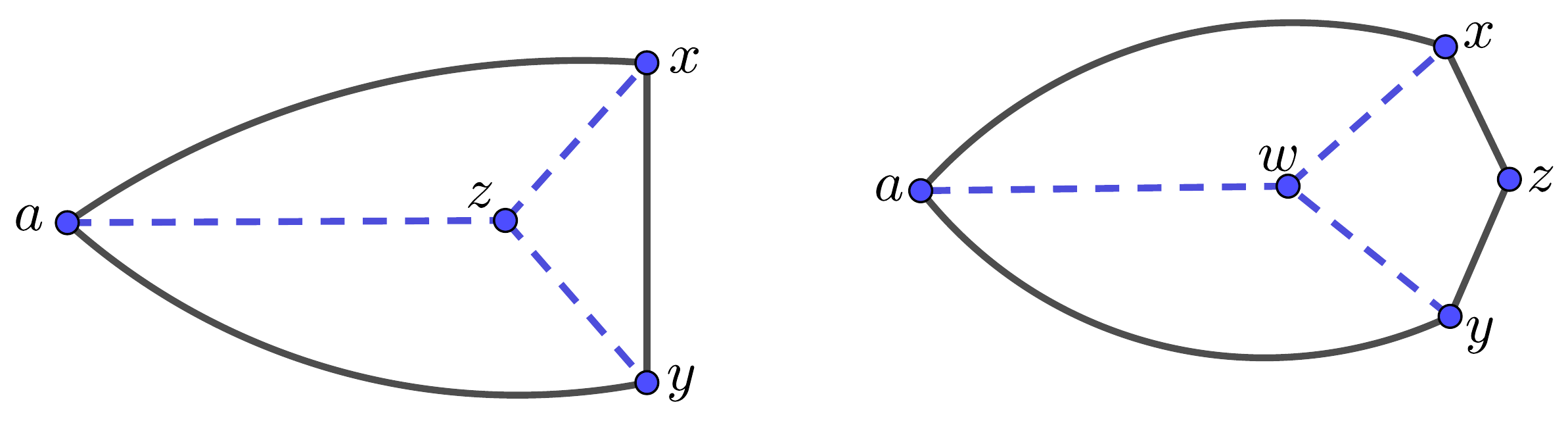}
\caption{Triangle and quadrangle conditions.}
\label{Quadrangle}
\end{center}
\end{figure}

\begin{definition}
Let $X$ be a graph and $Y \subset X$ a subgraph. A vertex $y \in Y$ is a \emph{gate} of an other vertex $x \in X$ if, for every $z \in Y$, there exists a geodesic between $x$ and $z$ passing through $y$. If every vertex of $X$ admits a gate in $Y$, then $Y$ is \emph{gated}.
\end{definition}

\noindent
It is worth noticing that the gate of $x$ in $Y$, when it exists, is unique and minimises the distance to $x$ in $Y$. Gated subgraphs in quasi-median graphs play the role of convex subcomplexes in CAT(0) cube complexes.

\begin{lemma}\label{lem:GatedCriterion}\emph{\cite{Chepoi}}
Let $X$ be a quasi-median graph and $Y \subset X$ a connected induced subgraph. Then $Y$ is gated if and only if it is \emph{convex} (i.e., any geodesic between two vertices of $Y$ lies in $Y$) and it \emph{contains its triangles} (i.e., any triangle having an edge in $Y$ lies entirely in $Y$). 
\end{lemma}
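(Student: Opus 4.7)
The plan is to prove the two directions separately, the backward direction being the main work.

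Forward direction: Assume $Y$ is gated. For convexity, let $x, z \in Y$, let $\gamma$ be a geodesic between them, and let $v \in \gamma$; denoting by $y$ the gate of $v$ in $Y$, the existence of geodesics from $v$ to $x$ and from $v$ to $z$ passing through $y$ gives
\[
d(v,x) + d(v,z) = 2d(v,y) + d(y,x) + d(y,z) \geq 2d(v,y) + d(x,z),
\]
while $v \in \gamma$ forces $d(v,x) + d(v,z) = d(x,z)$, so $d(v,y) = 0$ and $v = y \in Y$. For triangle-containment, given an edge $xy$ of $Y$ and a common neighbour $z$ of $x$ and $y$ in $X$, the gate $w$ of $z$ in $Y$ must satisfy $d(z,w) + d(w,x) = 1 = d(z,w) + d(w,y)$; since $x \neq y$, this forces $w = z$, hence $z \in Y$.

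Backward direction: Assume $Y$ is convex and contains its triangles. Given $v \in X$, I would pick $y \in Y$ minimising $d(v, \cdot)|_Y$ and claim $y$ is the gate of $v$, i.e.\ $d(v, z) = d(v, y) + d(y, z)$ for every $z \in Y$. Among vertices $z \in Y$ where this equality fails, I would choose one with $d(y, z)$ minimum; then $z \neq y$ and convexity of $Y$ provides a neighbour $z' \in Y$ of $z$ satisfying $d(y, z') = d(y, z) - 1$, for which minimality yields $d(v, z') = d(v, y) + d(y, z')$. From $d(v, z) < d(v, z') + 1$ I would deduce $d(v, z) \leq d(v, z')$ and split into cases. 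If $d(v, z) = d(v, z')$, the triangle condition produces a common neighbour $w$ of $z, z'$ with $d(v, w) = d(v, z) - 1$, and triangle-containment puts $w$ in $Y$. If $d(v, z) = d(v, z') - 1$, applying the quadrangle condition to a shortest path $v \to z'$ together with the edge $zz'$ produces a vertex $w$ adjacent both to $z$ and to the predecessor of $z'$ on that path, with $d(v, w) = d(v, z) - 1$, and convexity of $Y$ places $w$ in $Y$. Iterating this descent towards $y$ ultimately produces a vertex of $Y$ strictly closer to $v$ than $y$, contradicting the minimality of $y$.

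The main obstacle is to orchestrate the triangle and quadrangle conditions so that every vertex created by the descent stays inside $Y$: triangles created by the triangle condition lie in $Y$ thanks to triangle-containment, while squares created by the quadrangle condition have their fourth vertex in $Y$ thanks to convexity. Setting up the right auxiliary induction parameter (for instance, a lexicographic pair involving $d(v, z)$ and $d(y, z)$) so that each application of the quasi-median axioms strictly decreases the parameter while remaining inside $Y$, and verifying that the forbidden induced subgraphs $K_4^-$ and $K_{3,2}$ never obstruct the constructions, is the technical heart of the argument.
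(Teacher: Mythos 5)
The paper gives no proof of this lemma: it is imported from \cite{Chepoi}, so your attempt can only be judged on its own terms. Your forward direction is complete and correct. The backward direction, however, is an announced strategy rather than a proof, and it contains a concrete error exactly at the point you defer as ``the technical heart''. In your quadrangle case ($d(v,z)=d(v,z')-1$) you apply the quadrangle condition to the edge $zz'$ together with the predecessor $p$ of $z'$ on a geodesic from $v$ to $z'$. The resulting vertex $w$ is adjacent to $z$ and to $p$, but $p$ has no reason to lie in $Y$, and $w$ is only known to lie on a geodesic from $v$ to $z$ — not on a geodesic between two vertices of $Y$ — so the assertion that ``convexity of $Y$ places $w$ in $Y$'' does not follow. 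The repair is to pair $z$ with the neighbour $t$ of $z'$ on a geodesic from $z'$ to $y$ \emph{inside} $Y$ (it exists by convexity once $d(y,z')\geq 1$; if $z'=y$ the case contradicts the minimality of $y$ outright). Since $d(y,t)<d(y,z)$, minimality gives $d(v,t)=d(v,y)+d(y,t)=d(v,z')-1=d(v,z)$, so the quadrangle condition applies to $(v;z,t,z')$ and produces $w$ adjacent to both $z$ and $t$; now $w\in Y$ either by convexity (if $d(z,t)=2$) or by triangle-containment (if $d(z,t)=1$), and one computes $d(y,w)=d(y,z)-3$ from minimality while $w\sim z$ forces $d(y,w)\geq d(y,z)-1$, a contradiction.

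The triangle case also needs more care than you give it: the vertex $w$ it produces satisfies $d(y,w)\in\{d(y,z)-1,\,d(y,z)\}$, so minimality of $d(y,z)$ alone is not contradicted when $d(y,w)=d(y,z)$. The lexicographic parameter you allude to does work — order failures by $(d(y,\cdot),d(v,\cdot))$; in the first subcase $w$ is a failure with strictly smaller $d(y,\cdot)$, in the second a failure with equal $d(y,\cdot)$ and strictly smaller $d(v,\cdot)$ — but you have not verified this, and together with the corrected quadrangle step this verification is the entire content of the lemma. As submitted, the backward direction is a plausible outline with one step that fails as stated and another that is explicitly left unchecked; you should either carry out the argument along the lines above or, as the paper does, simply cite \cite{Chepoi}.
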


\noindent
Quasi-median graphs can be naturally thought of as prism complexes, but the notion of dimension which interests us is not the usual dimension of this prism complex but the following:

\begin{definition}
Let $X$ be a graph. Its \emph{cubical dimension}, denoted by $\dim_\square(X)$, is the maximal dimension of a cube whose one-skeleton embeds as an induced subgraph into $X$.
\end{definition}

\noindent
Similarly to CAT(0) cube complexes, the cubical dimension turns out to coincides with the maximal cardinality of a collection of pairwise transverse hyperplanes:

\begin{lemma}\emph{\cite[Proposition 2.73]{Qm}}
Let $X$ be a quasi-median graph. The cubical dimension of $X$ coincides with the maximal cardinality of a collection of pairwise transverse hyperplanes.
\end{lemma}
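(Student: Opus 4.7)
The plan is to prove both inequalities between the cubical dimension $\dim_\square(X)$ and the maximal cardinality of a pairwise transverse collection of hyperplanes.

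The easy direction is to show that $\dim_\square(X)$ is at most the maximal size of a pairwise transverse family. Given an induced $n$-cube $Q \hookrightarrow X$, partition its edges into the $n$ parallel classes corresponding to the $n$ coordinate directions of $Q$. Each class is contained in a hyperplane $J_i$ of $X$. For any two indices $i \neq j$, the cube $Q$ contains a 2-face that is an induced square with one edge in $J_i$ and one edge in $J_j$, so $J_i$ and $J_j$ are transverse. Hence the $n$ hyperplanes $J_1,\ldots,J_n$ are pairwise transverse, giving the desired inequality.

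For the other direction, the strategy is induction on $n$, the size of a given pairwise transverse family $J_1,\ldots,J_n$ of hyperplanes, producing an induced $n$-cube. The base case $n=2$ is immediate from the definition of transversality, which yields an induced square. For the inductive step, I would apply the induction hypothesis to $J_1,\ldots,J_{n-1}$ to obtain an induced $(n-1)$-cube $Q \subset X$ whose edges are dual to these hyperplanes. Then I would consider the carrier $N(J_n)$, which is a gated subgraph by standard properties of hyperplanes in quasi-median graphs (available from \cite{Qm}), and project $Q$ onto $N(J_n)$ via the gate projection. Because $J_n$ is transverse to each $J_i$ ($i<n$), applying the triangle and quadrangle conditions inductively to the edges of $Q$ should show that every vertex of $Q$ already lies in $N(J_n)$, so the projection fixes $Q$. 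Each vertex $v \in Q$ then has a ``partner'' $v'$ across $J_n$, and the desired $n$-cube is the union $Q \cup Q'$, where $Q'$ is the translate of $Q$ by crossing $J_n$. The fact that $Q \cup Q'$ is genuinely an \emph{induced} $n$-cube would be verified using Lemma \ref{lem:GatedCriterion}, the forbidden subgraphs $K_4^-$ and $K_{3,2}$, and the gatedness of hyperplane carriers to rule out any diagonal edges.

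The main obstacle will be the inductive step, and more precisely the propagation argument: showing that transversality of $J_n$ with each $J_i$ individually implies $Q \subset N(J_n)$, i.e.~that every vertex of the $(n-1)$-cube already lies in the carrier of $J_n$. Proving this cleanly requires iterating the quadrangle condition across the cube $Q$ starting from a single square witnessing the transversality of $J_n$ with some $J_i$, and using that hyperplanes in quasi-median graphs separate the graph into gated halfspaces. Once this is established, the verification that the union $Q \cup Q'$ carries no extra chords follows quickly from convexity plus the $K_{3,2}$-freeness, and the induction closes.
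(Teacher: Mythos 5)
First, a remark on the comparison you asked for: the paper gives no proof of this lemma at all --- it is imported verbatim from \cite[Proposition 2.73]{Qm} --- so your proposal can only be judged on its own merits. Your easy direction is essentially fine (you should add that the $n$ hyperplanes are pairwise \emph{distinct}, which follows because two adjacent edges of an induced square form a geodesic of length $2$ and a geodesic crosses each hyperplane at most once by Theorem \ref{thm:GeodesicsQM}; the same observation shows the square witnessing transversality is induced, since a chord would create a forbidden $K_4^-$ or force the two hyperplanes to coincide). The genuine gap is in the inductive step, in the sentence claiming that ``every vertex of $Q$ already lies in $N(J_n)$, so the projection fixes $Q$.'' This is false: the $(n-1)$-cube supplied by the induction hypothesis is merely \emph{some} cube dual to $J_1,\dots,J_{n-1}$ and can sit arbitrarily far from $N(J_n)$. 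Already in the standard cubing of $\mathbb{Z}^3$, with $J_1,J_2,J_3$ the three coordinate hyperplanes through a fixed unit cube, a square dual to $J_1,J_2$ lying in the plane $z=5$ is disjoint from $N(J_3)$, and no amount of triangle/quadrangle chasing will place it there. Pairwise transversality is a pairwise condition; the whole content of this direction of the lemma is precisely the localization of all $n$ hyperplanes at a single spot, so it cannot be assumed.

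The repair is to genuinely use the gate projection rather than argue that it is the identity. Since each $J_i$ ($i<n$) is transverse to $J_n$, each $J_i$ crosses the gated subgraph $N(J_n)$, so by Proposition \ref{prop:BigProj}(iii) the projection $p$ onto $N(J_n)$ preserves exactly the hyperplanes separating vertices of $Q$; hence $p(Q)$ is an isomorphic $(n-1)$-cube inside $N(J_n)$, still dual to $J_1,\dots,J_{n-1}$, and it is this copy that you double across $J_n$. The doubling itself is not automatic either: in a quasi-median graph a vertex of $N(J_n)$ may have several neighbours across $J_n$ (cliques need not be edges), so the partners $w'$ must be chosen coherently, which requires the product structure of hyperplane carriers established in \cite{Qm}. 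Alternatively, and closer to the argument of \cite[Proposition 2.73]{Qm}, one can apply the Helly property for gated subgraphs to the pairwise-intersecting carriers $N(J_1),\dots,N(J_n)$ to produce a single vertex in their common intersection and build the cube locally there. Either route supplies the missing localization step, which is the heart of the proof rather than a routine verification.
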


\noindent
Finally, recall that a \emph{clique} is a maximal complete subgraph, and that cliques in quasi-median graphs are gated \cite{quasimedian}.

\paragraph{Hyperplanes.} Similarly to CAT(0) cube complexes, the notion of \emph{hyperplane} is fundamental in the study of quasi-median graphs.

\begin{definition}
Let $X$ be a graph. A \emph{hyperplane} $J$ is an equivalence class of edges with respect to the transitive closure of the relation saying that two edges are equivalent whenever they belong to a common triangle or are opposite sides of a square. We denote by $X \backslash \backslash J$ the graph obtained from $X$ by removing the interiors of all the edges of $J$. A connected component of $X \backslash \backslash J$ is a \emph{sector}. The \emph{neighborhood} of $J$, denoted by $N(J)$, is the subgraph generated by all the edges of $J$. Two hyperplanes $J_1$ and $J_2$ are \emph{transverse} if there exist two edges $e_1 \subset J_1$ and $e_2 \subset J_2$ spanning a square in $X$; and they are \emph{tangent} if they are not transverse but $N(J_1) \cap N(J_2) \neq \emptyset$. 
\end{definition}
\begin{figure}
\begin{center}
\includegraphics[trim={0 16.5cm 10cm 0},clip,scale=0.45]{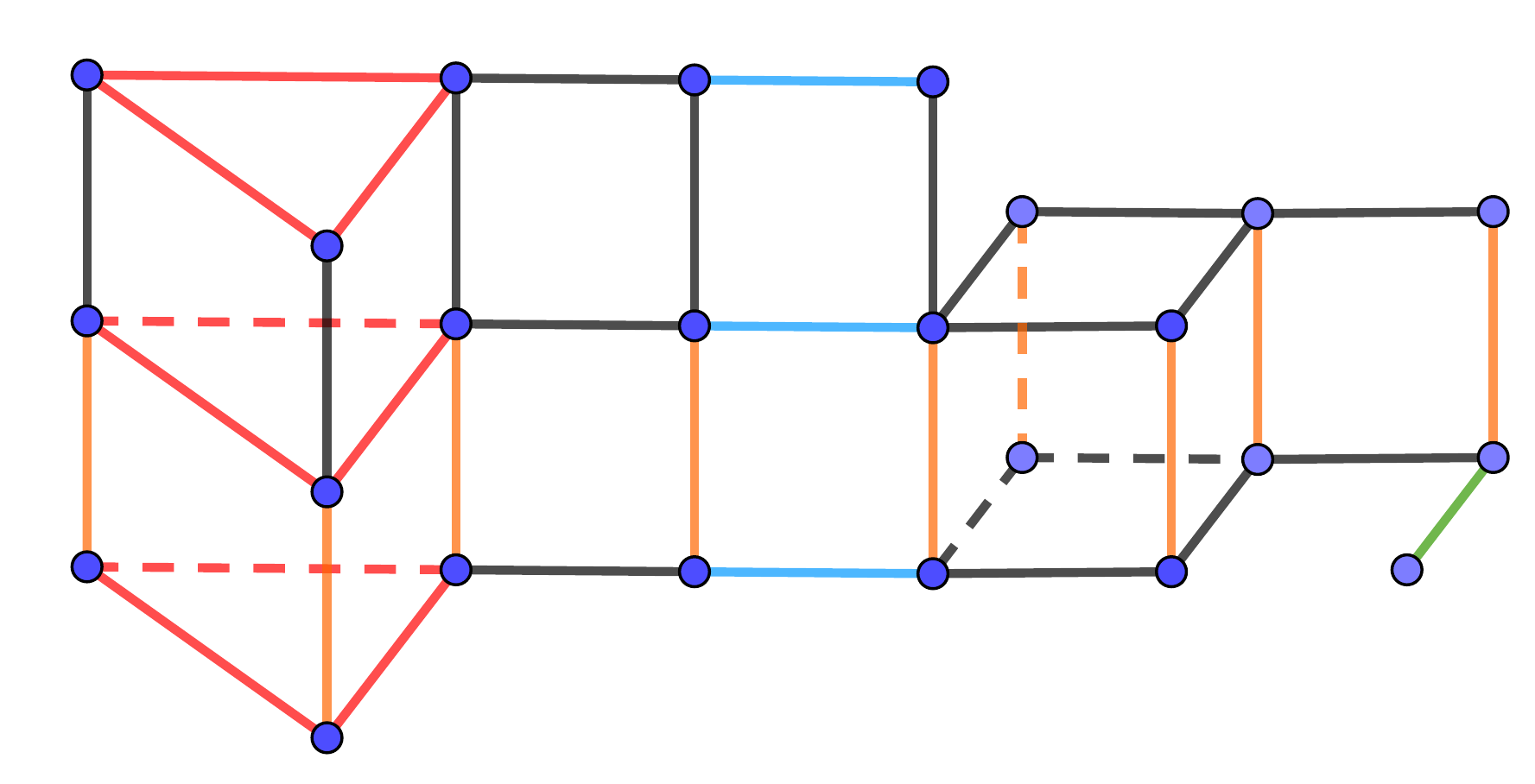}
\caption{A quasi-median graph and some of its hyperplanes.}
\label{figure3}
\end{center}
\end{figure}

\noindent
See Figure \ref{figure3} for examples of hyperplanes in a quasi-median graph. The connection between hyperplanes and geometry is made explicit by the following two theorems:

\begin{thm}\label{thm:BigThmQM}\emph{\cite[Proposition 2.15]{Qm}}
Let $X$ be a quasi-median graph and $J$ a hyperplane. The graph $X \backslash \backslash J$ is disconnected, and the neighborhood and the sectors of $J$ are gated. 
\end{thm}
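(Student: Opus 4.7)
The plan is to establish the three assertions in the order stated: first that $X \backslash\backslash J$ is disconnected, then gatedness of each sector, and finally gatedness of $N(J)$. For the gatedness claims I would invoke Lemma \ref{lem:GatedCriterion}, so the task there reduces to proving convexity together with closure under triangles.

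The central technical lemma I would prove first is the following: \emph{every geodesic of $X$ contains at most one edge from $J$}. The proof proceeds by contradiction. Suppose a geodesic $\gamma$ contains two edges $e_1,e_2$ belonging to $J$. By the very definition of a hyperplane, there is a finite chain of edges $e_1=f_0,f_1,\ldots,f_n=e_2$ in which consecutive edges share a triangle or are opposite sides of a square. I would induct on $n$, using the quadrangle condition in the square case and the triangle condition together with the prohibition of $K_4^-$ in the triangle case to push corners of $\gamma$ across and exhibit a strictly shorter path between its endpoints, contradicting geodesy.

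Granting this key fact, disconnectedness follows: for any edge $[x,y]\in J$, a path in $X\backslash\backslash J$ from $x$ to $y$ concatenated with $[x,y]$ would produce a closed walk whose geodesic straightening uses two distinct edges of $J$, contradicting the key lemma. Convexity of sectors is then immediate, since a geodesic between two vertices of a common sector $S$ cannot use a $J$-edge (otherwise it would change component), and therefore stays entirely inside $S$. Any triangle of $X$ has either all edges in $J$ or none (two edges sharing a triangle are always in the same hyperplane), so if one vertex of a triangle lies in $S$, none of the triangle's edges can be in $J$, and the whole triangle lies in $S$. By Lemma \ref{lem:GatedCriterion}, sectors are gated.

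For gatedness of $N(J)$, closure under triangles is immediate from the same observation: any triangle with an edge in $J$ has all three edges, hence all three vertices, in $N(J)$. Convexity is the main obstacle. Given $x,y\in N(J)$, I would split into two cases according to whether $x$ and $y$ lie in a common sector or not, and in each case construct a geodesic staying in $N(J)$. The crucial structural input is a local product decomposition of $N(J)$: the $J$-edges incident to a fixed vertex $v\in N(J)$ form a clique (using the exclusion of $K_4^-$ and the triangle condition), and each sector meets $N(J)$ in a subgraph isomorphic to a transversal of these cliques, glued together by the $J$-edges. Once this structure is in place, it is routine to lift a geodesic inside a single sector to one inside $N(J)$, and to bridge two sectors via a single $J$-edge. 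The main obstacle throughout is the key single-crossing lemma, whose proof is a delicate induction relying on all of the quasi-median axioms.
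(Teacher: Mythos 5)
First, a point of comparison: the paper does not prove this statement at all --- it is quoted from \cite[Proposition 2.15]{Qm} --- so your proposal can only be judged on its own terms, and on those terms it has two genuine gaps, both located at the load-bearing steps.

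The first gap is the key lemma. The assertion that a geodesic contains at most one edge of $J$ is precisely part of Theorem \ref{thm:GeodesicsQM}, i.e.\ \cite[Proposition 2.30]{Qm}, which in the source is \emph{deduced from} Proposition 2.15, not the other way around; so you are proposing to invert the logical order, and the burden is on you to show this can be done. Your sketch does not do it: the chain $e_1=f_0,\ldots,f_n=e_2$ witnessing that $e_1$ and $e_2$ lie in the same hyperplane lives in the ambient graph and need not meet $\gamma$ anywhere except at $e_1$ and $e_2$, so it is unclear which ``corners of $\gamma$'' are pushed across or why a shorter path results. The standard way to push the subpath of $\gamma$ between two consecutive crossings across $J$, edge by edge, requires knowing that this subpath stays in $N(J)$, or that sectors are convex --- exactly what you are trying to prove --- so the proposed route runs a serious risk of circularity. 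This is where the substance of the cited proof lies: it proceeds in the opposite direction, starting from the gatedness of cliques and analysing the gated projection onto a clique dual to $J$ (showing it is constant on edges outside $J$ and separates the endpoints of edges of $J$), from which disconnectedness, the identification of the sectors, and their gatedness all follow, with the single-crossing property obtained only afterwards.

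The second gap is the deduction of disconnectedness from the key lemma. A closed walk has no ``geodesic straightening'', and the unique geodesic between the endpoints $x,y$ of an edge of $J$ is that edge itself, which crosses $J$ once; a path from $x$ to $y$ in $X\backslash\backslash J$ is simply not a geodesic, so no contradiction with the key lemma arises. Indeed the implication ``every geodesic crosses $J$ at most once $\Rightarrow$ $X\backslash\backslash J$ is disconnected'' is false for general graphs: in a $5$-cycle each edge is its own hyperplane, every geodesic crosses it at most once, yet deleting it does not disconnect the graph. Any correct proof of disconnectedness must therefore reinvoke the quasi-median axioms rather than cite the single-crossing lemma. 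The remaining ingredients of your plan are sound --- all three edges of a triangle lie in a common hyperplane, so sectors and $N(J)$ contain their triangles, and Lemma \ref{lem:GatedCriterion} correctly reduces gatedness to convexity plus this closure; the local product structure of $N(J)$ you describe is also the right picture --- but the two steps above must be replaced before any of this can be assembled into a proof.
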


\begin{thm}\label{thm:GeodesicsQM}\emph{\cite[Proposition 2.30]{Qm}}
Let $X$ be a quasi-median graph and $\gamma$ a path between two vertices $x,y \in X$. The following assertions are equivalent:
\begin{itemize}
	\item $\gamma$ is a geodesic;
	\item $\gamma$ crosses a hyperplane if and only if it separates $x$ and $y$;
	\item $\gamma$ crosses each hyperplane at most once.
\end{itemize}
As a consequence, the distance between $x$ and $y$ coincides with the number of hyperplanes separating them.
\end{thm}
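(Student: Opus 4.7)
The plan is to establish the chain of implications (1) $\Rightarrow$ (3), (3) $\Rightarrow$ (2), (3) $\Rightarrow$ (1), from which the three conditions are mutually equivalent. The quantitative consequence then drops out: any geodesic $\gamma$ has length equal to its number of edges, which (since every edge belongs to exactly one hyperplane) equals its total number of hyperplane-crossings, which by the equivalences is the number of hyperplanes separating $x$ and $y$.

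The implications (3) $\Rightarrow$ (2) and (3) $\Rightarrow$ (1) are the easy half and rest on a counting argument using Theorem \ref{thm:BigThmQM}. Each edge lies in a unique hyperplane, so the length of any path equals the total number of $J$-crossings summed over $J$. Since $X \setminus\!\setminus J$ decomposes into its sectors, reading $\gamma$ as a sequence of sectors shows that a crossing of $J$ is exactly a sector-switch; so if $J$ does not separate $x$ and $y$, any path must cross $J$ an even number of times (to return to the starting sector), while if $J$ does separate them, the path crosses $J$ at least once. Combined with ``at most once per hyperplane'', the non-separating hyperplanes are crossed zero times and the separating ones exactly once, which gives (2). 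The length of $\gamma$ then equals the number of separating hyperplanes, and since every path from $x$ to $y$ crosses each separating hyperplane at least once, this length is the universal lower bound on path length, giving (1).

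The main obstacle is (1) $\Rightarrow$ (3), and my approach is a shortcut argument. Suppose, towards a contradiction, that a geodesic $\gamma$ from $x$ to $y$ crosses some hyperplane $J$ twice, and pick two such crossings $e_1 = (a,b)$ and $e_2 = (c,d)$ that are consecutive along $\gamma$, so that the subpath $\gamma_0$ from $b$ to $c$ contains no edge of $J$. Then $b$ and $c$ lie in a common sector $T$ of $J$, and because sectors are gated by Theorem \ref{thm:BigThmQM} (hence convex by Lemma \ref{lem:GatedCriterion}), the geodesic subpath $\gamma_0$ stays inside $T$. The geometric input I need is a \emph{parallel transport} property for hyperplanes: given the edge $e_1 \in J$ based at $b$ and the path $\gamma_0 \subset T$, I can slide $e_1$ along $\gamma_0$ using repeated applications of the quadrangle condition to produce a path from $a$ to $d$ of length $d(b,c)$. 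Combined with the fact that the subpath of $\gamma$ from $a$ to $d$ has length $d(b,c) + 2$, this contradicts $\gamma$ being a geodesic.

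The most delicate step is establishing this parallel transport, which is really the statement that $N(J)$ has a product-like structure transverse to $J$. I would proceed by induction on $d(b,c)$: if $b' \in T$ is the neighbor of $b$ on $\gamma_0$, then applying the quadrangle condition to the configuration $(a,b,b')$ (with $a$ at distance one more from the far end than $b$ and $b'$) yields a vertex $a'$ adjacent to both $a$ and $b'$; the exclusion of $K_{3,2}$ as an induced subgraph forces the square $(a,b,b',a')$ to be induced, and then the edge $(a,a')$ automatically belongs to $J$ by the square relation defining hyperplanes. Iterating along $\gamma_0$ gives the desired path of length $d(b,c)$ from $a$ to $d$. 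The absence of $K_4^-$ prevents pathological triangle configurations that would otherwise disrupt the induction, while gatedness of $T$ guarantees the constructed vertices remain in the correct sector.
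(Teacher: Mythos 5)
First, a remark on context: the paper does not prove this statement; it quotes it from \cite[Proposition 2.30]{Qm}, so your proposal can only be judged on its own terms. Your architecture is the standard one, and the easy half is essentially fine, but two points there need repair. The parity argument (``a path must cross $J$ an even number of times to return to its starting sector'') is false in quasi-median graphs, where $X \backslash\backslash J$ can have three or more sectors: in a single triangle, the path $u \to v \to w \to u$ crosses its hyperplane three times and returns to its start. Fortunately you do not need parity --- under hypothesis (3) a hyperplane crossed exactly once moves you to a different sector, so any crossed hyperplane separates $x$ and $y$, which is all (2) requires. Separately, your chain $(1)\Rightarrow(3)$, $(3)\Rightarrow(2)$, $(3)\Rightarrow(1)$ never uses (2) as a hypothesis, so it does not make the three conditions ``mutually equivalent''; some implication out of (2) is missing (and note that (2), read without multiplicity, genuinely does not imply (1): in the triangle above, $u \to v \to w$ crosses only the separating hyperplane yet is not a geodesic, so (2) must be interpreted as ``crosses exactly once'').

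The serious gap is in the parallel transport step of $(1)\Rightarrow(3)$. To produce $a'$ adjacent to $a$ and $b'$ from the path $a - b - b'$, the quadrangle condition requires a basepoint $p$ with $d(p,a)=d(p,b')=d(p,b)-1$. No such $p$ is available: $a$ and $b'$ sit on opposite sides of $b$ along the geodesic, so every vertex of $\gamma$ (in particular $c$, $d$, or $y$, your ``far end'') satisfies $|d(p,a)-d(p,b')|=2$, and the existence of a basepoint seeing $a$ and $b'$ as equidistant and closer than $b$ is essentially equivalent to the existence of the square you are trying to build --- the argument is circular as stated. (There is also a sign slip: in the square $(a,b,b',a')$ the edge parallel to $(a,b)$, hence in $J$, is $(a',b')$, not $(a,a')$.) The parallel transport property is true, but it encodes the product structure of hyperplane carriers ($N(J)\cong F\times C$ for a clique $C$ of $J$), which is one of the substantive structural results of \cite[Section 2.2]{Qm} (going back to Chepoi and Bandelt--Mulder--van de Vel) and requires its own induction combining the triangle and quadrangle conditions with gatedness of cliques; it cannot be obtained by the single application you describe. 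Finally, even granting transport, the transported path terminates at some vertex $a_n$ with $(a_n,c)\in J$, which in a quasi-median graph need not equal $d$; you must additionally know that all $J$-edges at $c$ span a common clique to get $d(a_n,d)\le 1$ and hence the contradiction $d(a,d)\le d(b,c)+1 < d(b,c)+2$.
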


\paragraph{Projections onto gated subgraphs.} As mentioned earlier, the gate of a vertex $x$ in a gated subgraph $Y$ coincides with the unique vertex of $Y$ minimising the distance to $x$. From now on, we refer to the gate of $x$ in $Y$ as the \emph{projection} of $x$ onto $Y$. 

\begin{prop}\label{prop:BigProj}
Let $X$ be a quasi-median graph and $Y,Y_1,Y_2 \subset X$ three gated subgraphs. The following assertions hold:
\begin{itemize}
	\item[(i)] \emph{\cite[Lemma 2.34]{Qm}} A hyperplane separating a vertex $x$ from its projection onto $Y$ separates $x$ from $Y$. 
	\item[(ii)] \emph{\cite[Lemma 2.36]{Qm}} If $x_1 \in Y_1$ and $x_2 \in Y_2$ are two vertices minimising the distance between $Y_1$ and $Y_2$ then the hyperplanes separating $x_1$ and $x_2$ are exactly the hyperplanes separating $Y_1$ and $Y_2$.
	\item[(iii)] \emph{\cite[Proposition 2.33]{Qm}} The hyperplanes separating the projections onto $Y$ of two vertices $x,y \in X$ are exactly the hyperplanes separating $x$ and $y$ which cross $Y$. As a consequence, the projection onto $Y$ is $1$-Lipschitz.
\end{itemize}
\end{prop}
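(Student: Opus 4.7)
All three statements translate hyperplane separation into distance and projection statements, so my plan is to reformulate them via the characterisation of Theorem \ref{thm:GeodesicsQM} (a geodesic crosses each hyperplane at most once, and distance equals number of separating hyperplanes) together with the defining property of gates. For (i), I would argue by contradiction: if $J$ separates $x$ from $p := \mathrm{proj}_Y(x)$ but does not separate $x$ from some $z \in Y$, the gate property supplies a geodesic from $x$ to $z$ passing through $p$, whose initial subsegment from $x$ to $p$ must cross $J$; but a geodesic crosses each hyperplane at most once, contradicting the fact that $J$ does not separate $x$ from $z$. Hence $J$ separates $x$ from every vertex of $Y$, i.e., $x$ and $Y$ lie in distinct sectors of $X \backslash \backslash J$.

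For (ii), the inclusion ``hyperplanes separating $Y_1,Y_2$ also separate $x_1,x_2$'' is immediate from $x_i \in Y_i$. For the reverse, I would first note that the minimising property of $(x_1,x_2)$ forces each $x_i$ to realise the distance from the opposite vertex to $Y_i$, so by uniqueness of gates $x_1 = \mathrm{proj}_{Y_1}(x_2)$ and $x_2 = \mathrm{proj}_{Y_2}(x_1)$. Two applications of (i) then show that a hyperplane separating $x_1,x_2$ separates $x_2$ from $Y_1$ and $x_1$ from $Y_2$, which together force $Y_1$ and $Y_2$ into distinct sectors.

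For (iii), let $p = \mathrm{proj}_Y(x)$ and $q = \mathrm{proj}_Y(y)$. If $J$ separates $p$ and $q$, then because $Y$ is convex by Lemma \ref{lem:GatedCriterion} a geodesic from $p$ to $q$ lies in $Y$ and crosses $J$, so $J$ crosses $Y$; in particular $J$ does not separate $x$ from $Y$, so by the contrapositive of (i) it does not separate $x$ from $p$, and similarly for $y,q$. Thus $x$ shares a sector with $p$ and $y$ shares a sector with $q$, and the separation of $p,q$ propagates to a separation of $x,y$. The converse runs along the same lines: a hyperplane crossing $Y$ and separating $x,y$ cannot separate $x$ from $p$ or $y$ from $q$, so $p,q$ end up in distinct sectors. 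The $1$-Lipschitz bound is then a direct counting consequence of Theorem \ref{thm:GeodesicsQM}, since the hyperplanes separating $p,q$ form a subset of those separating $x,y$.

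The main subtlety, and what makes (iii) non-trivial, is the convention that ``$J$ separates $a$ from a subgraph $B$'' requires $B$ to sit in a single sector distinct from that of $a$; this fails precisely when $J$ crosses $B$, which is exactly the dichotomy driving the argument. In the quasi-median setting one must also keep in mind that a hyperplane may cut $X$ into more than two sectors, so ``same side'' statements must be phrased in terms of the sectors of $X \backslash \backslash J$ rather than a binary half-space labelling; the triangle and quadrangle conditions ensure nevertheless that sectors behave rigidly enough for the reasoning above to go through.
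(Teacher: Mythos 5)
The paper does not prove this proposition at all: each item is quoted verbatim from \cite{Qm} (Lemmas 2.34, 2.36 and Proposition 2.33 there), so there is no in-paper argument to compare against. Your self-contained derivation from Theorem \ref{thm:GeodesicsQM} and the gate property is the standard one and is essentially correct: the geodesic $x \to p \to z$ forced by the gate gives (i), the identification $x_1 = \mathrm{proj}_{Y_1}(x_2)$, $x_2 = \mathrm{proj}_{Y_2}(x_1)$ reduces (ii) to two applications of (i), and the sector bookkeeping in (iii) is right.

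One step should be made explicit, since you rely on it in (iii). Your argument for (i) literally shows that $J$ separates $x$ from every vertex $z \in Y$; but in (iii) you use the stronger statement that a hyperplane crossing $Y$ cannot separate $x$ from $p$, which amounts to saying that $Y$ lies in a \emph{single} sector of $J$ (as you note in your final paragraph, with three or more sectors these are not a priori the same). The fix is one line inside your own proof of (i): the geodesic $x \to p \to z$ crosses $J$ at most once by Theorem \ref{thm:GeodesicsQM}, and it does cross $J$ on the subsegment from $x$ to $p$, hence not on the subsegment from $p$ to $z$; so $J$ does not separate $p$ from any $z \in Y$, and $Y$ is entirely contained in the sector of $p$. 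With that sentence added, the contrapositive used in (iii) is licit and the whole proposal goes through.
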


\noindent
The following lemma will be also needed in the sequel:

\begin{lemma}\label{lem:GatedBridge}
Let $X$ be a quasi-median graph and $Y,Z \subset X$ two gated subgraphs. Let $M \subset Y$ denote the subgraph generated by the vertices $y \in Y$ satisfying $d(y,Z)=d(Y,Z)$. Then $M$ is a gated subgraph, and the hyperplanes crossing $M$ are exactly the hyperplanes crossing both $Y$ and $Z$.
\end{lemma}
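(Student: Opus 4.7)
The plan is in three steps: characterize $M$ via hyperplanes, prove its gatedness via Lemma~\ref{lem:GatedCriterion}, then identify the hyperplanes crossing $M$ with those crossing both $Y$ and $Z$. Let $\mathcal{H}$ denote the collection of hyperplanes separating $Y$ from $Z$, so $|\mathcal{H}| = d(Y,Z)$ by Theorem~\ref{thm:GeodesicsQM} and Proposition~\ref{prop:BigProj}(ii). Every hyperplane of $\mathcal{H}$ separates all of $Y$ from all of $Z$, so for any $y \in Y$ the set of hyperplanes separating $y$ from $Z$ contains $\mathcal{H}$; consequently $y \in M$ if and only if the two sets coincide. This reformulation drives every subsequent argument.

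For gatedness I verify convexity and triangle containment. \emph{Convexity:} if $y_1, y_2 \in M$ and $v$ lies on a $Y$-geodesic between them (such a geodesic remains in $Y$ by convexity of $Y$), any candidate hyperplane $H \notin \mathcal{H}$ separating $v$ from $Z$ cannot cross $Z$ (else $H$ would not separate $v$ from all of $Z$) and must cross $Y$ (else it would belong to $\mathcal{H}$); but then $H$ cannot separate $y_1$ or $y_2$ from $Z$, so those two sit on the same side of $H$ as $Z$, and the geodesic --- hence $v$ --- lies on that side too by Theorem~\ref{thm:GeodesicsQM}, a contradiction. \emph{Triangle containment:} for a triangle $y_1 y_2 y_3 \subset Y$ with $y_1, y_2 \in M$ and common hyperplane $J$, one has $J \notin \mathcal{H}$ because $J$ crosses $Y$, so each pair $(y_i, z_i)$ with $z_i := p_Z(y_i)$ lies in a single $J$-sector for $i = 1, 2$. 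Proposition~\ref{prop:BigProj}(iii) then forces $z_1 \neq z_2$, so $J$ must cross $Z$; the same statement applied to the pairs $(y_1, y_3)$ and $(y_2, y_3)$ shows that $z_1, z_2, z_3$ form a triangle of $Z$ spanning $J$, whence $z_3$ sits in the $J$-sector of $y_3$. Thus $J$ does not belong to the hyperplanes separating $y_3$ from $Z$, and the convexity argument applied at $y_3$ excludes every further potential separator, giving $y_3 \in M$. Connectedness of $M$ follows from convexity, so Lemma~\ref{lem:GatedCriterion} applies.

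For the hyperplane characterization, the forward direction reuses the triangle argument: a hyperplane $J$ crossing $M$ clearly crosses $Y$, and must also cross $Z$, since otherwise $p_Z$ would collapse any $J$-edge of $M$ to a single vertex while its endpoints live in distinct $J$-sectors. For the converse, fix a hyperplane $J$ crossing both $Y$ and $Z$; the key reduction is that it suffices to produce a single vertex $y_0 \in M \cap N(J)$. Indeed, the fibre structure of $J$-edges in the gated subgraph $Y$ yields a $J$-neighbour $y_0' \in Y$ of $y_0$, and the sets of hyperplanes separating $y_0$ and $y_0'$ from $Z$ coincide (they differ at most at $J$, which is excluded from both because $J$ crosses $Z$), so $y_0' \in M$ too, producing the desired $J$-edge inside $M$. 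To exhibit such a $y_0$ I would invoke the symmetric bridge $N := \{z \in Z : d(z, Y) = d(Y,Z)\}$, gated by the dual argument, and observe that $p_Z|_M$ and $p_Y|_N$ are mutually inverse graph isomorphisms preserving sides of every hyperplane outside $\mathcal{H}$; then any $J$-edge lying inside the carrier $N(J) \cap Z$ can be transported via $p_Y$ to a $J$-edge with endpoints in $M$. The main obstacle is to ensure that $N(J) \cap Z$ actually meets $N$, which is the symmetric form of the very statement being proved and requires $M$ and $N$ to be handled on the same footing rather than one at a time.
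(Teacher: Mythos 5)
Your hyperplane reformulation of $M$ and your convexity argument are correct, but the proposal has two genuine gaps. The first is in the triangle step: from the fact that $z_1,z_2,z_3$ form a triangle of $Z$ dual to $J$ with $z_1,z_2$ lying in the $J$-sectors of $y_1,y_2$, you conclude ``whence $z_3$ sits in the $J$-sector of $y_3$.'' This does not follow: a hyperplane of a quasi-median graph may have more than three sectors (as many as the size of a clique it crosses), so a priori $z_3=p_Z(y_3)$ could lie in a fourth sector, in which case $J$ would separate $y_3$ from $z_3$ and your subsequent exclusion of further separators would not even get started. What saves the statement is that $Z$, being gated, contains the entire clique spanned by the triangle $z_1z_2z_3$, and a clique dual to $J$ meets every sector of $J$ exactly once; the vertex of that clique lying in the sector of $y_3$ is separated from $y_3$ by exactly the hyperplanes separating $Y$ and $Z$, whence $d(y_3,Z)=d(Y,Z)$. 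This auxiliary vertex is precisely what the paper manufactures by applying the triangle condition to the triple $(y_3,z_1,z_2)$ (in its notation, $(a,p(b),p(c))$), so the step you elide is exactly where the quasi-median axioms enter; it is not cosmetic.

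The second gap you flag yourself, and as written it is fatal: for the converse inclusion (a hyperplane $J$ crossing both $Y$ and $Z$ crosses $M$) you reduce to exhibiting a vertex of $M$ in the carrier of $J$, and you concede that doing so via the symmetric bridge $N\subset Z$ presupposes the very statement being proved. The paper breaks this circularity by a direct construction: take a closest pair $x\in Y$, $z\in Z$ (so $x\in M$), choose a sector $S$ of $J$ containing neither of them (possible since $J$ does not separate them, by Proposition~\ref{prop:BigProj}), and let $x',z'$ be their projections onto $S$, which lie in $Y$ and $Z$ respectively. Any hyperplane separating $x'$ from $z'$ is transverse to $J$ because both points lie in $N(J)$, hence it cannot separate $x$ from $x'$ nor $z$ from $z'$, hence it separates $x$ from $z$ and therefore $Y$ from $Z$; consequently $d(x',z')=d(Y,Z)$, so $x'\in M$, and $J$ separates the two vertices $x,x'$ of the convex subgraph $M$. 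You should replace your final paragraph with an argument of this kind rather than appealing to the unproved symmetric statement for $N$.
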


\begin{proof}
For convenience, we denote by $p : X \to Z$ the projection onto $Z$.

\medskip \noindent
Let $J$ be a hyperplane separating two vertices $x,y \in M$. If $J$ does not separate $p(x)$ and $p(y)$, then it has to separate $x$ and $p(x)$ or $y$ and $p(y)$. But, according to Proposition~\ref{prop:BigProj}, any hyperplane separating $x$ and $p(x)$ or $y$ and $p(y)$ must separate $Y$ and $Z$, which is not the case of $J$ since it crosses $M \subset X$. Therefore, we have proved:

\begin{fact}\label{fact:HypProj}
A hyperplane separating two vertices of $M$ also separates their projections onto $Z$. 
\end{fact}

\noindent
Now, we claim that $Z$ is convex. So let $x,y,z \in M$ be three vertices such that $z$ belongs to a geodesic between $x$ and $y$. Let $J$ be a hyperplane separating $z$ and $p(z)$. According to Proposition \ref{prop:BigProj}, $J$ does not cross $Z$, so that it follows from Fact \ref{fact:HypProj} that $J$ does not separate $x$ from $z$ nor $z$ from $y$. Consequently, $J$ separates $\{x,y,z\}$ and $\{p(x),p(z),p(y)\}$. In particular, $J$ separates $x$ and $p(x)$, so we deduce from Proposition~\ref{prop:BigProj} that $J$ separates $Y$ and $Z$. Thus, we have proved that any hyperplane separating $z$ from $p(z)$ has to separate $Y$ and $Z$. Conversely, any hyperplane separating $Y$ and $Z$ clearly separates $z$ and $p(z)$. Therefore, the distance between $z$ and $p(z)$ must be equal to $d(Y,Z)$, hence $z \in M$, concluding the proof of our claim. 

\medskip \noindent
According to Lemma \ref{lem:GatedCriterion}, in order to show that $M$ is gated, it is sufficient to show that $M$ contains its triangles. So let $a,b,c \in X$ be three pairwise adjacent vertices with $b,c \in M$. Notice that, as $Y$ is a gated subgraph containing $M$, necessarily $a \in Y$. Consequently, $d(a,p(b)) \geq d(Y,Z)$. If we have equality, then $a \in M$ and we are done, so we suppose that $d(a,p(b))> d(Y,Z)$. Because $d(a,p(b)) \leq d(a,b)+d(b,p(b)) = 1+d(Y,Z)$, we must have $d(a,p(b))=d(Y,Z)+1$. Similarly, we may suppose without loss of generality that $d(a,p(c))=d(Y,Z)+1$. Next, notice that $p(b)$ and $p(c)$ are adjacent vertices. Indeed, $d(p(b),p(c)) \leq d(b,c) =1$ according to Proposition \ref{prop:BigProj}; and it follows from Fact \ref{fact:HypProj} that the hyperplane separating $b$ and $c$ also separates $p(b)$ and $p(c)$. Therefore, we can apply the triangle condition to $\{a,p(b),p(c)\}$, and we find that there exists a vertex $p \in X$ which is adjacent to both $p(b)$ and $p(c)$ and such that $d(a,p)=d(a,p(b))-1= d(Y,Z)$. Notice that $p$ belongs to $Z$, since the fact that $Z$ is gated implies that it contains it triangles. We conclude that $a \in M$ as desired.

\medskip \noindent
Thus, we have proved the first assertion of our lemma. It remains to show that the hyperplanes crossing $M$ are exactly the hyperplanes crossing both $Y$ and $Z$. We already know from Fact \ref{fact:HypProj} that a hyperplane crossing $M$ has to cross both $Y$ and $Z$. So let $J$ be a hyperplane crossing both $Y$ and $Z$. 

\medskip \noindent
Fix two vertices $y \in Y$ and $z \in Z$ minimising the distance between $Y$ and $Z$, and let $S$ denote a sector delimited by $J$ which does not contain $y$ and $z$ (notice that $J$ cannot separate $y$ and $z$ because any such hyperplane has to separate $Y$ and $Z$ according to Proposition \ref{prop:BigProj}). Let $x',y'$ denote respectively the projections of $x,y$ onto $S$. By construction, $J$ separates $x$ and $x'$, and $x$ belongs to $M$. Therefore, in order to conclude that $J$ crosses $M$, it is sufficient to show that $x'$ belongs to $M$.

\medskip \noindent
Let $H$ be a hyperplane separating $x'$ and $y'$. As $x'$ and $y'$ both belong to $N(J)$, necessarily $H$ is transverse to $J$. As a consequence, $H$ cannot separate $x$ and $x'$ nor $y$ and $y'$, since otherwise it would disjoint from $S$ according to Proposition \ref{prop:BigProj}. Therefore, $H$ has to separate $x$ and $y$, and we deduce from Proposition \ref{prop:BigProj} that $H$ separates $Y$ and $Z$. 

\medskip \noindent
We conclude that the hyperplanes separating $x'$ and $y'$ are exactly the hyperplane separating $Y$ and $Z$, hence $d(x',y')=d(Y,Z)$ and finally $x' \in M$ as desired.
\end{proof}

\paragraph{Median triangles.} Unlike CAT(0) cube complexes, a triple of vertices does not necessarily admits a median point in quasi-median graphs. But there is a close notion:

\begin{definition}
Let $X$ be a graph and $x,y,z \in X$ three vertices. A triple of vertices $(x',y',z')$ is a \emph{median triangle} if
$$\left\{ \begin{array}{l} d(x,y)=d(x,x')+d(x',y')+d(y',y) \\ d(x,z)= d(x,x')+d(x',z')+d(z',z) \\ d(y,z)= d(y,y')+d(y',z')+d(z',z) \end{array} \right.,$$
if $d(x',y')=d(y',z')=d(y',z')$, and if $d(x',y')$ is as small as possible. 
\end{definition}

\noindent
As shown in \cite{quasimedian}, quasi-median graphs can be defined in terms of median triangles. In particular, in quasi-median graphs, a triple of vertices always admits a unique median triangle. The following observation, which is a consequence of \cite[Proposition 2.84]{Qm}, will be also needed in the sequel:

\begin{lemma}\label{lem:MedianTriangle}
Let $X$ be a quasi-median graph. Fix three vertices $x,y,z \in X$ and let $(x',y',z')$ denote the median triangle of $(x,y,z)$. The hyperplanes separating two vertices of $\{x',y',z'\}$ are pairwise transverse.
\end{lemma}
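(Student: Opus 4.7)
The strategy is to exploit minimality in the definition of a median triangle and to reduce to the structural result \cite[Proposition~2.84]{Qm} cited by the author.

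The first step is a simple hyperplane bookkeeping. By Theorem~\ref{thm:GeodesicsQM}, every hyperplane crossing the triple $\{x',y',z'\}$ separates exactly one vertex from the other two; write
\[
a=\#\{J:J\ \text{separates}\ x'\ \text{from}\ \{y',z'\}\},\quad b,c\ \text{analogously for}\ y',z'.
\]
Since $d(x',y')$ equals the number of hyperplanes separating $x'$ and $y'$, and by definition $d(x',y')=d(y',z')=d(x',z')=\ell$, one obtains $a+b=b+c=a+c=\ell$, hence $a=b=c=\ell/2$. In particular $\ell$ is even, and the set $\mathcal H$ of hyperplanes we must show to be pairwise transverse has cardinality $3\ell/2$.

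Next I would invoke \cite[Proposition~2.84]{Qm}, whose role the excerpt already advertises: it describes the combinatorial skeleton spanned by a median triangle, showing that the three sides $[x',y'],[y',z'],[z',x']$ can be realized inside a common gated ``prism'' in which any two hyperplanes listed above cross a common square. Pairwise transversality of all elements of $\mathcal H$ then drops out immediately from the definition of transverse hyperplanes.

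If instead one wanted a self-contained argument, I would proceed by contradiction: suppose some pair $J_1,J_2\in\mathcal H$ is not transverse. Using gatedness of carriers and sectors (Theorem~\ref{thm:BigThmQM}) together with Proposition~\ref{prop:BigProj}, one projects $(x',y',z')$ successively across $J_1$ and $J_2$ to produce a new triple $(\tilde x,\tilde y,\tilde z)$ that still satisfies the three geodesic-concatenation equalities with respect to the original $(x,y,z)$ but whose common side length is strictly less than $\ell$. The quadrangle condition, applied to the square cases, and the triangle condition, applied to the equidistant adjustments, ensure that the projected points fit together coherently. This contradicts the minimality of $\ell$ in the definition of a median triangle. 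The main obstacle in this alternative route is the bookkeeping: one must verify that all three geodesic decompositions $d(x,y)=d(x,\tilde x)+d(\tilde x,\tilde y)+d(\tilde y,y)$, etc., survive the projection simultaneously, which is exactly what the quadrangle condition is designed to guarantee.
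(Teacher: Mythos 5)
Your core route --- deferring to \cite[Proposition~2.84]{Qm} --- is exactly what the paper does: the lemma is stated there as ``a consequence of \cite[Proposition 2.84]{Qm}'' with no further argument, so citing that proposition is the intended proof. However, your preliminary ``hyperplane bookkeeping'' step is wrong, and the error is worth flagging because it rests on a median-graph intuition that fails in the quasi-median setting. You assume that every hyperplane crossing the triple $\{x',y',z'\}$ separates exactly one vertex from the other two. In a quasi-median graph a hyperplane can have more than two sectors (its edges sweep out cliques, not just single edges), and for a median triangle the correct picture is the opposite of what you wrote: each hyperplane separating two of the vertices $x',y',z'$ in fact places all three in pairwise distinct sectors, hence contributes $1$ to each of the three pairwise distances. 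Your equations $a+b=b+c=a+c=\ell$ therefore do not hold, and the conclusions you draw from them are false. Concretely, take $X=K_3$ with $x,y,z$ its three vertices: the median triangle is $(x,y,z)$ itself with $\ell=1$ (odd, contradicting your parity claim), and the single hyperplane of $X$ separates all three pairs, so the set $\mathcal{H}$ has cardinality $1$, not $3\ell/2=3/2$. This step is not needed for the lemma, but as written it is an incorrect assertion about quasi-median geometry.

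Your proposed self-contained alternative is only a sketch: the key step --- projecting the triple across a pair of non-transverse hyperplanes to produce a new triple that still satisfies the three geodesic-concatenation identities with strictly smaller common side length --- is precisely the hard part, and you acknowledge rather than resolve it. The natural way to make such an argument work is to observe that if $J_{1},J_{2}\in\mathcal{H}$ are not transverse, one of them, say $J_{1}$, has a sector $S$ containing two of the three points (say $y',z'$) but not the third; replacing $x'$ by its gate in $S$ and checking, via Proposition~\ref{prop:BigProj}, that the three decomposition identities persist while $d(x',y')$ strictly decreases contradicts minimality. As it stands, though, the proposal neither completes this route nor correctly describes the structure delivered by \cite[Proposition~2.84]{Qm}, so only the bare citation portion of your argument is sound.
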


\paragraph{Flat rectangles.} Finally, we conclude this subsection by introducing \emph{flat rectangles} in quasi-median graphs, which are useful in the study of hyperbolicity.

\begin{definition}
Let $X$ be a graph. A \emph{flat rectangle} is an isometrically embedded subgraph $R \subset X$ isomorphic to the grid $[0,a] \times [0,b]$ for some $a,b \geq 0$. If $a=b$, $R$ is a \emph{flat square}.
\end{definition}

\noindent
As $R$ is isometrically embedded, for convenience we usually identify $R$ with $[0,a] \times [0,b]$. The relation between flat rectangles and hyperbolicity is made explicit by the following statement:

\begin{lemma}\label{lem:FlatRectangle}\emph{\cite[Proposition 2.113]{Qm}}
A quasi-median graph $X$ is hyperbolic if and only if there exists some $R \geq 0$ such that $X$ does not contain any $R$-thick flat rectangle.
\end{lemma}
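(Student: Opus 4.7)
I would prove the two implications separately.

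For the easy direction, assume $X$ is $\delta$-hyperbolic and consider any flat rectangle $F \simeq [0,a] \times [0,b]$ with $a \leq b$. Its four corners form a geodesic quadrilateral, and the standard $2\delta$-thinness of geodesic quadrilaterals forces the midpoint $p$ of the top side $[0,a]\times\{b\}$ to lie in the $2\delta$-neighbourhood of the union of the other three sides. Since $F$ is isometrically embedded, $p$ is at distance $a/2$ from the left and right sides and at distance $b$ from the bottom side, whence $\min(a/2,b) \leq 2\delta$, so $a \leq 4\delta$. Thus no flat rectangle in $X$ can be $R$-thick for any $R > 4\delta$.

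For the converse, assume $X$ contains no $R$-thick flat rectangle for some fixed $R \geq 0$; I aim to show that median triangles in $X$ have uniformly bounded diameter, from which hyperbolicity follows by a standard argument specific to the quasi-median setting. Fix a triple $(x,y,z) \in X^3$ with median triangle $(x',y',z')$, and set $n := d(x',y') = d(y',z') = d(x',z')$. I claim $n < R$. Write $\mathcal{H}_{xy}$ (resp.\ $\mathcal{H}_{yz}$) for the set of hyperplanes separating $x'$ from $y'$ (resp.\ $y'$ from $z'$); by Lemma \ref{lem:MedianTriangle} every element of $\mathcal{H}_{xy}$ is transverse to every element of $\mathcal{H}_{yz}$. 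Enumerate $\mathcal{H}_{xy}=\{H_1,\dots,H_n\}$ and $\mathcal{H}_{yz}=\{K_1,\dots,K_n\}$ in the order they are crossed by fixed geodesics $[x',y']$ and $[y',z']$, and set $v_{i,0}$ to be the vertex on $[x',y']$ obtained after crossing $H_1,\dots,H_i$, and $v_{n,j}$ the vertex on $[y',z']$ obtained after crossing $K_1,\dots,K_j$. I would then fill in the interior vertices $v_{i,j}$ by induction on $i+j$: having built $v_{i,j}$, $v_{i+1,j}$ and $v_{i,j+1}$ so that the edges $v_{i,j}v_{i+1,j}$ and $v_{i,j}v_{i,j+1}$ cross $H_{i+1}$ and $K_{j+1}$ respectively, the transversality of $H_{i+1}$ with $K_{j+1}$ (together with the forbidden subgraphs $K_4^-$ and $K_{3,2}$) produces a common neighbour $v_{i+1,j+1}$ of $v_{i+1,j}$ and $v_{i,j+1}$ completing a square whose sides cross $H_{i+1}$ and $K_{j+1}$. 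By Theorem \ref{thm:GeodesicsQM}, any monotone path through the resulting grid from $v_{0,0}$ to $v_{i,j}$ crosses the $i+j$ distinct hyperplanes $H_1,\dots,H_i,K_1,\dots,K_j$ exactly once and is therefore a geodesic, so the assignment $(i,j)\mapsto v_{i,j}$ is an isometric embedding of $[0,n]\times[0,n]$ into $X$. This is an $n$-thick flat rectangle, so $n < R$ as claimed.

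\textbf{Main obstacle.} The delicate step is upgrading the pairwise transversality of the two hyperplane families into a genuine isometrically embedded grid. Constructing the vertices $v_{i,j}$ requires showing that, locally at every common neighbourhood vertex of two transverse hyperplanes, the two relevant edges span a square; this is a feature of quasi-median graphs following from the exclusion of $K_4^-$ and $K_{3,2}$ and some bookkeeping with the triangle/quadrangle conditions. Once the grid is in place, verifying that the hyperplanes separating $v_{i,j}$ from $v_{i',j'}$ are exactly the expected $|i-i'|+|j-j'|$ elements of $\mathcal{H}_{xy}\cup\mathcal{H}_{yz}$ is the heart of the matter, and Theorem \ref{thm:GeodesicsQM} is exactly the tool that converts this hyperplane count into a distance equality. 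A secondary subtlety is passing from bounded diameters of median triangles to true $\delta$-hyperbolicity of $X$, but this is a standard consequence of the quasi-median geometry (comparable to the fact that median graphs with bounded median triangles are hyperbolic).
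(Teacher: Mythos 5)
The paper does not actually prove this lemma: it is imported verbatim as \cite[Proposition 2.113]{Qm}, so there is no internal proof to compare against and your argument has to stand on its own. Your first direction (hyperbolic $\Rightarrow$ no $R$-thick flat rectangles for $R>4\delta$) is fine: a flat rectangle is isometrically embedded, its boundary is a geodesic quadrilateral, and $2\delta$-thinness applied to the midpoint of a short side gives the bound. No complaints there.

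The converse direction has a fatal gap, and it is not the grid-filling bookkeeping you flag as the ``main obstacle'' --- it is the reduction itself. Uniformly bounded median triangles do \emph{not} imply hyperbolicity. The square grid (the Cayley graph of $\mathbb{Z}^2$ with its standard generators) is a median graph, hence quasi-median, and every triple of vertices admits a genuine median, so every median triangle is a single vertex ($n=0$); yet this graph is not hyperbolic and contains arbitrarily thick flat rectangles. So the invariant you propose to bound is blind to exactly the phenomenon the lemma is about, and the closing sentence of your plan (``median graphs with bounded median triangles are hyperbolic'') is false. There is also a secondary problem with the claimed bound $n<R$: Lemma \ref{lem:MedianTriangle} makes \emph{all} hyperplanes separating two vertices of $\{x',y',z'\}$ pairwise transverse, and one checks (each such hyperplane must separate at least two of the three pairs) that $\mathcal{H}_{xy}$ and $\mathcal{H}_{yz}$ can overlap --- a hyperplane in the intersection is not transverse to itself, so your grid collapses; worse, since the whole family is pairwise transverse, $n$ is automatically at most $\dim_\square(X)$ whenever that is finite, independently of any hyperbolicity hypothesis. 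The correct route for this direction does not pass through median triangles at all: one shows that geodesic bigons (or triangles) are uniformly thin by arguing that if two geodesics with the same endpoints diverge by $D$, then the hyperplanes separating a far point of one from the other, together with a transverse chain crossing both, form a large grid of hyperplanes, which Proposition \ref{prop:cycle} (or Fact \ref{fact:FlatRectangle}) converts into a $D'$-thick flat rectangle; Papasoglu's thin-bigon criterion then yields hyperbolicity. That is the shape of the argument in \cite{Qm}, and it is what your proof would need to be rebuilt around.
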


\noindent
In practice, flat rectangles are constructed by using the following proposition:

\begin{prop}\label{prop:cycle}
Let $X$ be a quasi-median graph and $(Y_1,Y_2,Y_3,Y_4)$ a cycle of gated subgraphs. There exists a flat rectangle $[0,p] \times [0,q] \subset X$ satisfying $[0,p] \times \{0\} \subset Y_2$, $[0,p] \times \{q\} \subset Y_4$, $\{p\} \times [0,q] \subset Y_3$ and $\{0\} \times [0,q] \subset Y_1$. Moreover, the flat rectangle can be chosen so that any hyperplane intersecting $\{0 \} \times [0,q]$ does not cross $Y_4$. 
\end{prop}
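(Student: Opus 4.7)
My plan is to build the rectangle from its four corners (each sitting in a consecutive intersection $Y_i \cap Y_{i+1}$) and then fill in the interior using the quadrangle condition.

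Pick $a \in Y_1 \cap Y_2$ and set $d := p_{Y_4}(a)$, where $p_W$ denotes projection onto the gated subgraph $W$. I first argue that $d \in Y_1$. By Proposition~\ref{prop:BigProj}(i), every hyperplane separating $a$ from $d$ separates $a$ from $Y_4$; since $Y_1 \cap Y_4 \neq \emptyset$, such a hyperplane cannot entirely separate $Y_1$ from $Y_4$, and so it must cross $Y_1$. As $a \in Y_1$ and $Y_1$ is convex, the geodesic from $a$ to $d$ stays in $Y_1$, hence $d \in Y_1 \cap Y_4$. Analogous arguments give $b := p_{Y_3}(a) \in Y_2 \cap Y_3$ and $c := p_{Y_3}(d) \in Y_4 \cap Y_3$, with the corresponding geodesics lying in $Y_2$ and $Y_4$. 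After replacing $a$ by a vertex of $Y_1 \cap Y_2$ that minimises the perimeter $d(a,b)+d(b,c)+d(c,d)+d(d,a)$ of the resulting quadruple, the remaining geodesic from $b$ to $c$ must also lie in $Y_3$, giving the four sides of the putative rectangle.

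Let $\mathcal{H}$ be the set of hyperplanes crossed by the geodesic $a \to b$ and $\mathcal{V}$ the set crossed by $a \to d$. By Proposition~\ref{prop:BigProj}(i), every $H \in \mathcal{H}$ separates $a$ from $Y_3$ (so $H$ crosses $Y_2$ but not $Y_3$), while every $V \in \mathcal{V}$ separates $a$ from $Y_4$ (so $V$ crosses $Y_1$ but not $Y_4$). The crux of the argument is that every such $H$ and $V$ are transverse. Were they not, they would be either tangent or disjoint; then, using Theorem~\ref{thm:BigThmQM} to identify the sectors of $H$ and $V$ containing each of the four corners, and the $K_{3,2}$-freeness of $X$ to restrict the local configuration of $N(H) \cup N(V)$, one produces a cheaper quadruple of corners, contradicting the perimeter-minimality of $(a,b,c,d)$.

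With pairwise transversality established, iterating the quadrangle condition starting from $a$ populates a vertex at every pair of ``coordinates'' in $\mathcal{H} \times \mathcal{V}$, producing a subgraph isomorphic to the grid $[0,p]\times[0,q]$ where $p=|\mathcal{H}|$ and $q=|\mathcal{V}|$. By Theorem~\ref{thm:GeodesicsQM}, this subgraph is isometrically embedded, since each hyperplane of $\mathcal{H} \cup \mathcal{V}$ is crossed exactly once by any of its geodesics. By construction, the four sides of the rectangle lie in $Y_2$, $Y_3$, $Y_4$, and $Y_1$ respectively, and the final clause of the statement is immediate: the hyperplanes crossing $\{0\} \times [0,q]$ are precisely the members of $\mathcal{V}$, which separate $a$ from $Y_4$ and therefore cannot cross $Y_4$. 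The main obstacle is the transversality of $\mathcal{H}$ and $\mathcal{V}$: everything else is forced either by gatedness or by a routine application of the quadrangle condition, whereas transversality is exactly the place where the cycle hypothesis $Y_i \cap Y_{i+1} \neq \emptyset$ is used in an essential way.
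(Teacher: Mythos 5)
Your setup — corners obtained by successive projections, sides forced into the $Y_i$ by gatedness, and the final clause read off from Proposition~\ref{prop:BigProj} — is the same as the paper's; the difference is that the paper delegates the actual construction of the flat rectangle to the proof of \cite[Proposition 2.111]{Qm}, whereas you attempt to redo it, and that is exactly where your argument has a genuine gap. The entire content of the proposition is the claim you label ``the crux'': that every $H\in\mathcal{H}$ is transverse to every $V\in\mathcal{V}$ (together with $\mathcal{H}\cap\mathcal{V}=\emptyset$ and the fact that opposite sides cross the same hyperplanes, which you do not address at all but which is needed for the grid to close up). For this you offer only a plan — ``one produces a cheaper quadruple of corners, contradicting perimeter-minimality'' — with no actual construction of the cheaper quadruple. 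Worse, the device itself is doubtful: the obstruction to transversality is a hyperplane $J$ separating $a$ from both $Y_3$ and $Y_4$ (two non-transverse hyperplanes both separating $a$ from $c$ are nested, and the inner one then separates $a$ from both $b$ and $d$, hence from both $Y_3$ and $Y_4$; in particular $J\in\mathcal{H}\cap\mathcal{V}$, and a hyperplane is never transverse to itself). Such a $J$ crosses $Y_1$ and $Y_2$, hence crosses $Y_1\cap Y_2$ by Lemma~\ref{lem:GatedBridge}, so $a$ can be pushed across it inside $Y_1\cap Y_2$; but since $b$, $c$, $d$ are all redefined when $a$ moves, this decreases $d(a,b)+d(a,d)$ while possibly increasing $d(b,c)+d(c,d)$, and no contradiction with perimeter-minimality follows. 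This is precisely why the source proof chooses $a\in Y_1\cap Y_2$ minimising $d(a,Y_3\cap Y_4)$ and takes $c=p_{Y_3\cap Y_4}(a)$: pushing $a$ across $J$ then strictly decreases the single quantity $d(a,Y_3\cap Y_4)$, killing all such $J$ at once.

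Two smaller points. First, your justification that $d=p_{Y_4}(a)$ lies in $Y_1$ is circular as written: convexity of $Y_1$ only controls geodesics between two vertices \emph{already known} to lie in $Y_1$. The correct deduction from your (correct) observation that every hyperplane separating $a$ from $d$ crosses $Y_1$ is that no hyperplane separates $d$ from $Y_1$, whence $d\in Y_1$ since $Y_1$ is gated. Second, ``iterating the quadrangle condition'' to populate the grid requires a basepoint and the identity $d(a,c)=|\mathcal{H}|+|\mathcal{V}|$, i.e.\ that every hyperplane separating $a$ from $c$ already lies in $\mathcal{H}\cup\mathcal{V}$; with your choice $c=p_{Y_3}(d)$ this is not automatic and is not verified.
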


\noindent
This statement is essentially contained in the proof of \cite[Proposition 2.111]{Qm}. We include a sketch of proof for the reader's convenience.

\begin{proof}[Sketch of proof of Proposition \ref{prop:cycle}.]
Let $a \in Y_1 \cap Y_2$ be a vertex minimising the distance to $Y_3 \cap Y_4$. Let $b$ (resp. $d$, $c$) denote its projection onto $Y_3$ (resp. $Y_4$, $Y_3 \cap Y_4$). The proof of \cite[Proposition 2.111]{Qm} constructs a flat rectangle $[0,p] \times [0,q] \subset X$ such that $(0,0)= a$, $(p,0)=b$, $(p,q)=c$ and $d=(0,q)$. Notice that, as a consequence of \cite[Lemma 2.39]{Qm}, $b \in Y_2$ and $d \in Y_1$. Therefore, the convexity of our subgraphs implies that $[0,p] \times \{0\} \subset Y_2$, $[0,p] \times \{q\} \subset Y_4$, $\{p\} \times [0,q] \subset Y_3$ and $\{0\} \times [0,q] \subset Y_1$. Moreover, a hyperplane intersecting $\{0\} \times [0,q]$ separates $a$ from its projection $d$ onto $Y_4$. We deduce from Proposition \ref{prop:BigProj} that such a hyperplane does not cross $Y_4$. 
\end{proof}

\subsection{Graph products and their quasi-median graphs}\label{section:GPandQM}

\noindent
Let $\Gamma$ be a simplicial graph and $\mathcal{G}= \{ G_u \mid u \in V(\Gamma) \}$ be a collection of groups indexed by the vertex-set $V(\Gamma)$ of $\Gamma$. The \emph{graph product} $\Gamma \mathcal{G}$ is defined as the quotient
$$\left( \underset{u \in V(\Gamma)}{\ast} G_u \right) / \langle \langle [g,h]=1, g \in G_u, h \in G_v \ \text{if} \ (u,v) \in E(\Gamma) \rangle \rangle$$
where $E(\Gamma)$ denotes the edge-set of $\Gamma$. The groups of $\mathcal{G}$ are referred to as \emph{vertex-groups}. 

\medskip \noindent
\textbf{Convention.} In all the article, we will assume for convenience that the groups of $\mathcal{G}$ are non-trivial. Notice that it is not a restrictive assumption, since a graph product with some trivial factors can be described as a graph product over a smaller graph all of whose factors are non-trivial.

\medskip \noindent
A \emph{word} in $\Gamma \mathcal{G}$ is a product $g_1 \cdots g_n$ for some $n \geq 0$ and, for every $1 \leq i \leq n$, $g_i \in G$ for some $G \in \mathcal{G}$; the $g_i$'s are the \emph{syllables} of the word, and $n$ is the \emph{length} of the word. Clearly, the following operations on a word does not modify the element of $\Gamma \mathcal{G}$ it represents:
\begin{description}
	\item[Cancellation:] delete the syllable $g_i=1$;
	\item[Amalgamation:] if $g_i,g_{i+1} \in G$ for some $G \in \mathcal{G}$, replace the two syllables $g_i$ and $g_{i+1}$ by the single syllable $g_ig_{i+1} \in G$;
	\item[Shuffling:] if $g_i$ and $g_{i+1}$ belong to two adjacent vertex-groups, switch them.
\end{description}
A word is \emph{reduced} if its length cannot be shortened by applying these elementary moves. Every element of $\Gamma \mathcal{G}$ can be represented by a reduced word, and this word is unique up to the shuffling operation. This allows us to define the \emph{length} of an element $g \in \Gamma \mathcal{G}$, denoted by $|g|$, as the length of any reduced word representing $g$. For more information on reduced words, we refer to \cite{GreenGP} (see also \cite{HsuWise,VanKampenGP}). 

\medskip \noindent
The connection between graph products and quasi-median graphs is made explicit by the following statement \cite[Proposition 8.2, Corollary 8.7]{Qm}:

\begin{thm}
Let $\Gamma$ be a simplicial graph and $\mathcal{G}$ a collection of groups indexed by $V(\Gamma)$. The Cayley graph $X(\Gamma, \mathcal{G}):= \mathrm{Cay} \left( \Gamma \mathcal{G}, \bigcup\limits_{G \in \mathcal{G}} G \backslash \{1 \} \right)$ is a quasi-median graph of cubical dimension $\mathrm{clique}(\Gamma)= \max \{ \# V(\Lambda ) \mid \Lambda \subset \Gamma \ \text{clique} \}$. 
\end{thm}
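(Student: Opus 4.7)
The plan is to prove the two assertions separately, with the combinatorics of reduced words in $\Gamma\mathcal{G}$ (as developed by Green) as the main technical tool. I first fix the dictionary between the Cayley graph and the algebra: each edge of $X := X(\Gamma,\mathcal{G})$ carries a \emph{label} $v \in V(\Gamma)$, namely the unique $v$ such that its endpoints differ by an element of $G_v \setminus \{1\}$; and the graph distance coincides with the word length $|\cdot|$ with respect to $S = \bigcup_v G_v \setminus \{1\}$, realised by any reduced expression (unique up to shuffling adjacent commuting syllables). From this, two edges at a common vertex $g$ with labels $u,v$ span a triangle iff $u=v$, because $h_u^{-1}h_v \in S$ iff it is a nontrivial element of a single vertex-group; hence the maximal cliques in $X$ are precisely the cosets $gG_v$, and two edges span an induced square iff $u \ne v$ and $\{u,v\} \in E(\Gamma)$.

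With this local picture, the absence of an induced $K_4^-$ is immediate: the two triangles it would contain share an edge and so lie in a common coset $gG_v$, forcing the configuration to be a $K_4$. Ruling out $K_{3,2}$ needs a little more work: given three pairwise non-adjacent vertices $a_1,a_2,a_3$ sharing two common neighbours $b_1,b_2$, each induced square $b_1 a_i b_2 a_j$ forces the equality $b_1^{-1}b_2 = h_u h_{u'}$ with $h_u,h_{u'}$ lying in two adjacent vertex-groups determined by the square, and comparing the different choices of $a_i$ collapses these elements by uniqueness of reduced expressions and yields $a_i = a_j$, a contradiction. The triangle condition is verified by taking reduced expressions for $a^{-1}x$ and $a^{-1}y$ when $x \sim y$ and $d(a,x) = d(a,y)$: the identity $a^{-1}y = (a^{-1}x)(x^{-1}y)$ together with the length equality forces, after shuffling, the last syllable of a reduced expression for $a^{-1}x$ to lie in the same vertex-group as $x^{-1}y$, and removing it exhibits the required common neighbour $z$. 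The quadrangle condition is handled by an analogous but more intricate bookkeeping: one shows that $a^{-1}z$ admits a reduced expression ending with two shuffable syllables from two adjacent vertex-groups, one accounting for the edge to $x$ and the other for the edge to $y$, so removing both produces $w$. This reduced-word manipulation is the technical heart of the argument and the step I expect to be the main obstacle.

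For the cubical dimension, I would set up a bijection between induced $n$-cubes of $X$ based at $g$ and pairs consisting of an $n$-clique $\{v_1,\ldots,v_n\} \subset \Gamma$ together with nontrivial elements $h_i \in G_{v_i}$. Given such data, the $2^n$ elements $g \cdot h_1^{\varepsilon_1} \cdots h_n^{\varepsilon_n}$ form an induced $n$-cube: they are pairwise distinct by uniqueness of reduced expressions, adjacent precisely when they differ in one coordinate, and any product of nontrivial elements from two or more distinct vertex-groups is reduced of length at least $2$, hence not in $S$. Conversely, the $n$ pairwise transverse hyperplanes of an induced $n$-cube carry labels in $V(\Gamma)$ that must be pairwise distinct and pairwise adjacent in $\Gamma$, producing an $n$-clique. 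Thus $\dim_\square(X) = \mathrm{clique}(\Gamma)$.
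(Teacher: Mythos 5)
Your outline is correct, but note that the paper does not actually prove this statement: it is imported wholesale from \cite[Proposition 8.2, Corollary 8.7]{Qm}, so there is no in-paper proof to match against. The route you take --- direct verification of the quasi-median axioms via Green's normal form --- is the natural one and is essentially the argument carried out in the cited reference. Your local dictionary (maximal cliques are the cosets $gG_v$; edges at a vertex span a triangle iff they carry the same label, an induced square iff their labels are distinct and adjacent in $\Gamma$) is right and does most of the work: it kills $K_4^-$ immediately, and your $K_{3,2}$ argument is sound once one observes that for each of the three vertices $a_i$ the factorisation $b_1^{-1}b_2=g_ih_i$ into two syllables is unique up to swapping the (necessarily commuting) syllables, so by pigeonhole two of the $a_i$ coincide. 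The triangle condition is correctly reduced to the standard fact that if $|gs|\leq|g|$ for a syllable $s\in G_v$, then $g$ admits a reduced expression whose last syllable lies in $G_v$.

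The one place where your sketch leans on an unproved assertion is exactly the step you flag: the quadrangle condition needs the lemma that if $g$ admits reduced expressions ending in a syllable of $G_u$ and in a syllable of $G_v$ with $u\neq v$, then $u$ and $v$ are adjacent in $\Gamma$, these two terminal syllables are uniquely determined, and $g$ admits a reduced expression ending in both (in either order). Equivalently, the ``terminal set'' of an element spans a clique of $\Gamma$. This is a genuine lemma about graph products (it follows from the deletion/shuffling description of reduced words, e.g.\ by induction on $|g|$), and without it the quadrangle verification is only a plausibility argument; with it, setting $w=zst$ finishes the proof as you describe. You should also dispose of the degenerate subcase of the quadrangle condition where the two edges $zx$ and $zy$ carry the same label: then $x$, $y$, $z$ lie in a common clique, $x$ and $y$ are adjacent with $d(a,x)=d(a,y)$, and the required $w$ is supplied by the triangle condition already proved. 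Finally, in the cubical-dimension count, the converse direction should not invoke hyperplanes (whose good behaviour is only established after quasi-medianity); instead argue directly that the $n$ edge-labels at a corner of an induced $n$-cube are pairwise distinct (else two of them span a triangle) and pairwise adjacent in $\Gamma$ (by the uniqueness of the two-syllable reduced expression of the diagonal of each square face), which is the same computation you already use for squares.
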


\noindent
Notice that $\Gamma \mathcal{G}$ naturally acts by isometries on $X(\Gamma, \mathcal{G})$ by left-multiplication and that, as a Cayley graph, the edges of $X(\Gamma, \mathcal{G})$ are naturally labelled by generators, but also by vertices of $\Gamma$ (corresponding to the vertex-group which contains the generator). 

\medskip \noindent
Essentially by construction of the quasi-median graph, we have the following description of its geodesics \cite[Lemma 8.3]{Qm}:

\begin{prop}\label{prop:DistInX}
Let $\Gamma$ be a simplicial graph and $\mathcal{G}$ be a collection of groups indexed by $V(\Gamma)$. Fix two elements $g,h \in \Gamma \mathcal{G}$ and write $g^{-1}h$ as a reduced word $u_1 \cdots u_n$. Then the sequence of vertices $$g,gu_1,gu_1u_2, \ldots, gu_1 \cdots u_n=h$$ defines a geodesic between $g$ and $h$ in $X(\Gamma, \mathcal{G})$. Conversely, any geodesic between $g$ and $h$ is labelled by a reduced word representing $g^{-1}h$.
\end{prop}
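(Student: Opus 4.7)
The plan is to translate the geometric question about geodesics in $X(\Gamma,\mathcal{G})$ into a combinatorial question about word length in $\Gamma\mathcal{G}$ with respect to the generating set $S := \bigcup_{G \in \mathcal{G}} G \setminus \{1\}$, and then invoke the classical theory of reduced words. By definition of the Cayley graph, the edges incident to a vertex $v$ are in bijection with elements of $S$, each going from $v$ to $vs$. Consequently, a path $v_0, v_1, \ldots, v_k$ in $X(\Gamma,\mathcal{G})$ corresponds to the word $s_1 \cdots s_k$ with $s_i = v_{i-1}^{-1}v_i \in S$; this word represents $v_0^{-1}v_k$, and the length of the path equals the length of the word. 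By left-invariance, the distance $d(g,h) = d(1, g^{-1}h)$ therefore coincides with the minimum length of a word in $S$ representing $g^{-1}h$.

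Next, I would invoke the fundamental result on reduced words in graph products (due to Green \cite{GreenGP}; see also \cite{HsuWise,VanKampenGP}): every element of $\Gamma\mathcal{G}$ admits a reduced word representative; any word can be transformed into a reduced one by a sequence of cancellation, amalgamation, and shuffling operations, each of which weakly decreases the length; and any two reduced words representing the same element differ only by shuffling moves, which preserve length. Thus reduced words realise the minimum length of all words representing a given element, and this minimum equals the common length $|g|$ of any reduced expression.

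Putting the two observations together yields both directions. For the forward direction, if $u_1 \cdots u_n$ is reduced, then $n = |g^{-1}h|$ equals the minimum word length, which equals $d(g,h)$, so the sequence $g, gu_1, \ldots, gu_1 \cdots u_n = h$ is a path of length $d(g,h)$, hence a geodesic. For the converse, a geodesic from $g$ to $h$ of length $d(g,h)$ corresponds to a word representing $g^{-1}h$ whose length already equals the minimum, so no elementary move can shorten it; in particular it is reduced.

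The main obstacle is really the input coming from the normal form theory of graph products, namely the statement that reduced words achieve the minimum length; proving this carefully (for instance via Green's normal form theorem, or by showing that the elementary moves generate the equivalence relation on words representing the same element) is the nontrivial step. Once this fact is available, the proposition follows by the translation described above.
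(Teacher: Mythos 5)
Your argument is correct and is essentially the intended one: the paper does not prove this statement but quotes it as \cite[Lemma 8.3]{Qm}, whose proof is exactly the translation you describe between paths in the Cayley graph on the syllable generating set and words in $\Gamma\mathcal{G}$, combined with Green's normal form theorem \cite{GreenGP} guaranteeing that reduced words realise the minimal syllable length. You correctly isolate the normal form theorem as the only nontrivial input, and citing it (as the paper itself does) is appropriate here.
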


\noindent
Notice that, if $\Lambda \subset \Gamma$ is an induced subgraph, then the subgroup $\langle G_u, \ u \in V(\Lambda) \rangle$, which we refer to as a \emph{parabolic subgroup} and which we denote by $\langle \Lambda \rangle$ for short, is naturally isomorphic to the graph product $\Lambda \mathcal{H}$, where $\mathcal{H}= \{ G_u \mid u \in V(\Lambda) \}$. Moreover, thought of as a subgraph of $X(\Gamma, \mathcal{G})$, $\langle \Lambda \rangle$ is naturally isomorphic to $X(\Lambda, \mathcal{H})$.

\paragraph{Hyperplanes of $X(\Gamma, \mathcal{G})$.} For every vertex $u \in V(\Gamma)$, let $J_u$ denote the hyperplane of $X(\Gamma, \mathcal{G})$ containing the clique $\langle u \rangle$. As showed in \cite[Section 8.1]{Qm}, we have the following statement:

\begin{prop}\label{prop:HypInX}
Let $\Gamma$ be a simplicial graph, $\mathcal{G}$ a collection of groups indexed by $V(\Gamma)$ and $J$ a hyperplane of $X(\Gamma, \mathcal{G})$. There exist $g \in \Gamma \mathcal{G}$ and $u \in V(\Gamma)$ such that $J=gJ_u$. Moreover, $N(J)= g \langle \mathrm{star}(u) \rangle$ so that $\mathrm{stab}_{\Gamma \mathcal{G}}( J)= g \langle \mathrm{star}(u) \rangle g^{-1}$.  
\end{prop}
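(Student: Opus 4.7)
The plan rests on the natural labelling of the edges of $X(\Gamma, \mathcal{G})$ by vertices of $\Gamma$: an edge $(h, hs)$ with $s \in G_u \setminus \{1\}$ is labelled $u$. I would first check that both elementary operations generating the hyperplane equivalence relation preserve this labelling. The three edges of any triangle of $X(\Gamma, \mathcal{G})$ lie in a single coset of some vertex-group $G_u$ (they come from three non-trivial elements of a common vertex-group via Proposition \ref{prop:DistInX}), so all three share the label $u$; and the opposite edges of a square correspond to a commutation $st = ts$ with $s \in G_u$, $t \in G_v$, $(u,v) \in E(\Gamma)$, so opposite edges carry identical labels. Hence every hyperplane has a well-defined label $u \in V(\Gamma)$. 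Since the left action of $\Gamma \mathcal{G}$ on $X(\Gamma, \mathcal{G})$ also preserves edge-labels, it permutes the hyperplanes within each label.

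Now, given a hyperplane $J$ labelled by $u$ and an edge $(g, gs) \subset J$ with $s \in G_u \setminus \{1\}$, the translate $g^{-1} J$ contains the edge $(1, s)$, hence the entire clique $\langle u \rangle$ (since the triangles glued along $(1,s)$ within $\langle u \rangle$ keep all edges in the same hyperplane). By definition, $g^{-1} J = J_u$, so $J = g J_u$.

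To identify $N(J_u)$, I would proceed by induction on the number of elementary moves required to reach an edge of $J_u$ from the seed clique $\langle u \rangle$, maintaining the invariant that every edge of $J_u$ has the form $(h, hs)$ with $h \in \langle \mathrm{link}(u) \rangle$ and $s \in G_u \setminus \{1\}$. A triangle move changes $s$ within $G_u$ and preserves the form; a square move pre-multiplies by some $t \in G_v \setminus \{1\}$ with $t$ commuting with $s$, forcing $v \in \mathrm{link}(u)$, so $ht \in \langle \mathrm{link}(u) \rangle$ and the invariant persists. Collecting the endpoints of such edges gives $\langle \mathrm{link}(u) \rangle \cup G_u \langle \mathrm{link}(u) \rangle = \langle \mathrm{star}(u) \rangle$ (using that $G_u$ commutes with every vertex-group of $\mathrm{link}(u)$), and this coincides with the parabolic subgraph $\langle \mathrm{star}(u) \rangle$, which is convex and closed under triangles; thus $N(J_u) = \langle \mathrm{star}(u) \rangle$. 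The stabiliser assertion follows: any element fixing $J_u$ must map $1$ into $N(J_u) = \langle \mathrm{star}(u) \rangle$, and conversely every element of $\langle \mathrm{star}(u) \rangle$ clearly preserves $J_u$. Translating by $g$ gives the announced formulas for $N(J)$ and $\mathrm{stab}(J)$.

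The main subtlety I anticipate is justifying rigorously that triangles of $X(\Gamma, \mathcal{G})$ lie in cosets of single vertex-groups and that squares realise commutations of generators from distinct vertex-groups — a verification that ultimately rests on the uniqueness (up to shuffling) of reduced expressions in $\Gamma \mathcal{G}$, as developed in \cite{GreenGP}.
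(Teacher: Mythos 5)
The paper does not actually prove this proposition: it is quoted from \cite[Section 8.1]{Qm} and used as a black box. Your argument is the standard self-contained one and is essentially correct: the reduction of triangles to cosets of a single vertex-group and of squares to commutations $st=ts$ with $(u,v)\in E(\Gamma)$ does follow from uniqueness of reduced words up to shuffling (a reduced word of length $2$ cannot equal a syllable, and two reduced length-$2$ words are equal only if they differ by a shuffle), so labels descend to hyperplanes, and the identification $J=gJ_u$ and the stabiliser computation are fine.

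Two points should be tightened in the description of $N(J_u)$. First, your invariant ``every edge of $J_u$ has the form $(h,hs)$ with $h\in\langle\mathrm{link}(u)\rangle$'' is literally false already for the seed clique: the edge joining $a,b\in G_u\setminus\{1\}$ has neither endpoint in $\langle\mathrm{link}(u)\rangle$. A triangle move does not just ``change $s$ within $G_u$''; it can also move the anchor inside the coset. The correct invariant is that every edge of $J_u$ lies in a clique of the form $hG_u$ with $h\in\langle\mathrm{link}(u)\rangle$, i.e.\ joins $ha$ to $hb$ with $a\neq b\in G_u$; with this formulation both the triangle move (which stays in $hG_u$) and the square move (which replaces $h$ by $ht$ with $t$ in a vertex-group of $\mathrm{link}(u)$) preserve it, and your endpoint count $\langle\mathrm{link}(u)\rangle G_u=\langle\mathrm{star}(u)\rangle$ goes through. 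Second, this induction only yields $N(J_u)\subseteq\langle\mathrm{star}(u)\rangle$; for the equality $N(J_u)=\langle\mathrm{star}(u)\rangle$ (and for the ``conversely'' half of the stabiliser claim) you also need the reverse inclusion, namely that for every $h=t_1\cdots t_k\in\langle\mathrm{link}(u)\rangle$ the clique $hG_u$ really belongs to $J_u$ --- which follows by applying $k$ successive square moves along the letters $t_i$. Both repairs are routine, so I regard this as a complete proof modulo these clarifications.
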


\noindent
Recall that the \emph{star} of a vertex $u \in \Gamma$ is the subgraph of $\Gamma$ generated by $u$ and its neighbors. 

\medskip \noindent
Given a hyperplane $J$ of $X(\Gamma, \mathcal{G})$, one says that $J$ is \emph{labelled} by the vertex $u \in V(\Gamma)$ if $J$ is a translate of $J_u$. The next lemma shows that labels are related to the combinatorics of hyperplanes. 

\begin{lemma}\label{lem:HypTransverseLabel}\emph{\cite[Lemma 8.12]{Qm}}
Let $\Gamma$ be a simplicial graph and $\mathcal{G}$ a collection of groups indexed by $V(\Gamma)$. Two transverse hyperplanes of $X(\Gamma, \mathcal{G})$ must be labelled by adjacent vertices of $\Gamma$, and two tangent hyperplanes must be labelled by two distinct non-adjacent vertices of $\Gamma$. 
\end{lemma}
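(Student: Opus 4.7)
The plan is to realise both hyperplanes as translates of canonical ones and then read off their relative geometry from the commutation relations defining the graph product. By Proposition \ref{prop:HypInX}, each hyperplane $J_i$ has a well-defined label $u_i \in V(\Gamma)$, and every edge of $J_i$ has the form $\{g, ga\}$ with $a \in G_{u_i} \setminus \{1\}$; moreover, any clique of $X(\Gamma,\mathcal{G})$ is a coset of some vertex-group, and the triangle condition in the hyperplane equivalence forces all its edges to lie in a single hyperplane. These two observations are the only inputs I need.

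For the transverse case, fix edges $e_1 \in J_1$ and $e_2 \in J_2$ spanning an induced square $S$. Since opposite edges of a square lie in a common hyperplane, $e_1$ and $e_2$ must be two consecutive sides of $S$, so they share a vertex. Translating by the group action, I may assume this vertex is $1$, so that $e_1 = \{1, a_1\}$ and $e_2 = \{1, a_2\}$ with $a_i \in G_{u_i} \setminus \{1\}$. The fourth corner of $S$ equals both $a_1a_2$ and $a_2a_1$, forcing $a_1a_2 = a_2 a_1$. By the standard normal form for graph products (equivalently, by Proposition \ref{prop:DistInX}), two non-trivial syllables commute only when they lie in the same vertex-group or in adjacent ones. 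The case $u_1 = u_2$ is excluded: it would put $e_1$ and $e_2$ in the single clique $G_{u_1}$, hence in the same hyperplane $J_1 = J_2$. Thus $u_1, u_2$ are distinct and adjacent.

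For the tangent case, pick a vertex $w \in N(J_1) \cap N(J_2)$ and translate so that $w = 1$. Then each $J_i$ carries an edge $\{1, a_i\}$ at the identity with $a_i \in G_{u_i} \setminus \{1\}$. If $u_1 = u_2$, both edges belong to the clique $G_{u_1}$ and so $J_1 = J_2$, a contradiction. If instead $u_1$ and $u_2$ were adjacent, the vertex-groups would commute and $\{1, a_1, a_2, a_1 a_2\}$ would be an induced square, exhibiting transversality and again contradicting the hypothesis. Hence $u_1, u_2$ are distinct and non-adjacent.

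The only delicate point, and the one I would spell out most carefully, is the implicit appeal to the reduced-word calculus of $\Gamma\mathcal{G}$: two non-trivial elements $a_1 \in G_{u_1}$ and $a_2 \in G_{u_2}$ satisfy $a_1a_2 = a_2a_1$ if and only if $u_1 = u_2$ or $(u_1,u_2) \in E(\Gamma)$. This is standard and is effectively the content of Proposition \ref{prop:DistInX} applied to the element $a_1a_2a_1^{-1}a_2^{-1}$, but everything else in the argument is a direct extraction from the square geometry in $X(\Gamma,\mathcal{G})$.
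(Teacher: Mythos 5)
The paper does not actually prove this lemma: it is quoted directly from \cite[Lemma 8.12]{Qm}, so there is no in-paper argument to compare yours against. Your reconstruction is the standard proof and is correct in outline; the tangent case is complete as written. In the transverse case, however, one step is asserted too quickly: from $e_1=\{1,a_1\}$ and $e_2=\{1,a_2\}$ you claim the fourth corner of the square ``equals both $a_1a_2$ and $a_2a_1$.'' A priori the fourth corner $x$ only satisfies $x=a_1b=a_2c$ for some $b\in G_{u_2}\setminus\{1\}$ and $c\in G_{u_1}\setminus\{1\}$: here one uses that opposite sides of the square lie in a common hyperplane and hence, by Proposition \ref{prop:HypInX} and the labelling convention, are labelled by the same vertex of $\Gamma$ --- but nothing at that stage forces $b=a_2$ or $c=a_1$. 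The repair is exactly the reduced-word calculus you flag at the end, applied to the slightly more general relation $c^{-1}a_2^{-1}a_1b=1$: if $u_1\neq u_2$, this word has four non-trivial syllables alternating between $G_{u_1}$ and $G_{u_2}$, and it can only be reduced to the empty word if a shuffle is available, i.e.\ if $u_1$ and $u_2$ are adjacent. (Once adjacency is known one does recover $b=a_2$ and $c=a_1$, but that is no longer needed.) With that one sentence inserted the argument is complete; everything else --- the reduction of the case $u_1=u_2$ to a single clique carrying a single hyperplane, and the construction of the induced square on $\{1,a_1,a_2,a_1a_2\}$ in the tangent case --- is sound and matches how such statements are proved in \cite{Qm} (compare the sector-matching argument in the proof of Lemma \ref{lem:CrossSameHyp}).
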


\noindent
As mentioned above, the graph product $\Gamma \mathcal{G}$ naturally acts by isometries on $X(\Gamma, \mathcal{G})$ by left-multiplication. However, this action is not minimal in general, motivating the following definition and lemma.

\begin{definition}
A quasi-median graph $X$ is \emph{essential} if, for every hyperplane $J$, no sector delimited by $J$ lies in a neighborhood of $N(J)$. 
\end{definition}

\noindent
Our following lemma determines precisely when the quasi-median graph associated to a graph product is essential.

\begin{lemma}\label{lem:Essential}
Let $\Gamma$ be a simplicial graph and $\mathcal{G}$ a collection of groups indexed by $V(\Gamma)$. The quasi-median graph $X(\Gamma, \mathcal{G})$ is essential if and only if $\Gamma$ is not the star of one of its vertices.
\end{lemma}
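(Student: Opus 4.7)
The plan is to prove both implications. The easy direction is immediate: if $\Gamma$ coincides with $\mathrm{star}(v)$ for some vertex $v$, then Proposition \ref{prop:HypInX} gives $N(J_v) = \langle \mathrm{star}(v) \rangle = \Gamma \mathcal{G}$, so every sector of $J_v$ is trivially contained in $N(J_v)$ and $X(\Gamma, \mathcal{G})$ cannot be essential.

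For the converse, I assume $\Gamma$ is not the star of any vertex and take an arbitrary hyperplane $J$. By Proposition \ref{prop:HypInX} together with the isometric action of $\Gamma \mathcal{G}$ on its Cayley graph, I may assume $J = J_u$ for some $u \in V(\Gamma)$. To reduce to a single sector, I first observe that the clique $G_u \subset N(J_u)$ meets every sector of $J_u$ in exactly one vertex: given $x \in X(\Gamma, \mathcal{G})$ and its projection $q$ onto the gated clique $G_u$, the gate property produces, for any $q' \in G_u \setminus \{q\}$, a geodesic from $x$ to $q'$ factoring through $q$; since the sub-geodesic $q \to q'$ is an edge of $J_u$, Theorem \ref{thm:GeodesicsQM} forces the sub-geodesic $x \to q$ to avoid $J_u$, placing $x$ in the same sector as $q$. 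Consequently, $G_u$ acts isometrically and transitively on the sectors of $J_u$, and it suffices to verify that the sector $S$ containing the identity is not contained in any bounded neighborhood of $N(J_u) = \langle \mathrm{star}(u) \rangle$.

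Since $\Gamma \neq \mathrm{star}(u)$, I fix $w \in V(\Gamma) \setminus \mathrm{star}(u)$ and build arbitrarily long reduced words in $S$ escaping $N(J_u)$. If $|G_w| \geq 3$, I pick distinct nontrivial $w_0, w_1 \in G_w$ and set $g_n := w_0 w_1 w_0 w_1 \cdots$ of length $n$. If $|G_w| = 2$, I apply the hypothesis a second time, now to $w$, to obtain $v \in V(\Gamma) \setminus \mathrm{star}(w)$; choosing nontrivial $w_0 \in G_w$ and $v_0 \in G_v$, I set $g_n := w_0 v_0 w_0 v_0 \cdots$ of length $n$. In both cases consecutive syllables lie in non-commuting vertex-groups, so $g_n$ is reduced.

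Finally, Proposition \ref{prop:DistInX} implies that the natural path $1, w_0, w_0 v_0, \ldots, g_n$ is a geodesic whose edges are labelled only by $w$ (and $v$), never by $u$; thus no edge lies in $J_u$, placing $g_n \in S$. Moreover, at least $\lceil n/2 \rceil$ of the hyperplanes crossed by this geodesic are labelled by $w$, and since $w \notin \mathrm{star}(u)$, none of them contains any edge of $\langle \mathrm{star}(u) \rangle$ (whose edges are labelled by vertices of $\mathrm{star}(u)$), hence none crosses that subgroup. Each such hyperplane therefore separates $g_n$ from every element of $\langle \mathrm{star}(u) \rangle$, yielding $d(g_n, N(J_u)) \geq \lceil n/2 \rceil \to \infty$. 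The most delicate point is the sub-case $|G_w|=2$: it forces a second appeal to the hypothesis, now applied to $w$, in order to secure a non-commuting auxiliary vertex $v$ without which the construction would stall for lack of syllables.
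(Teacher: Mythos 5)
There is a genuine error in your case $|G_w| \geq 3$. The word $w_0 w_1 w_0 w_1 \cdots$ is \emph{not} reduced: all of its syllables lie in the \emph{same} vertex group $G_w$, so the amalgamation move collapses consecutive syllables, and the whole product reduces to a single syllable of $G_w$ (an element of length at most $1$). Your justification --- ``consecutive syllables lie in non-commuting vertex-groups'' --- confuses ``same vertex group'' with ``non-adjacent vertex groups''; the shuffling obstruction is irrelevant here because amalgamation, not shuffling, is what kills the word. Concretely, in $X(\Gamma,\mathcal{G})$ the generating set contains all of $G_w \setminus \{1\}$, so the entire subgroup $G_w$ is a clique of diameter $1$ and your $g_n$ never leaves the $1$-ball around the identity. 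Proposition \ref{prop:DistInX} does not apply since its hypothesis (reducedness) fails, and the element $g_n$ does not escape $N(J_u)$.

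The fix is immediate and makes your case distinction unnecessary: since $w \notin \mathrm{star}(u)$, non-adjacency is symmetric, so $u \notin \mathrm{star}(w)$, and the alternating word $w_0 b w_0 b \cdots$ with $b \in G_u$ nontrivial (or, as in the paper, $a b a b\cdots$ with $a \in G_w$, $b \in G_u$) is reduced for \emph{any} cardinalities of the vertex groups; this is exactly your $|G_w|=2$ construction with $v = u$, which works in general. Note, however, that when $v = u$ your claim that the geodesic's edges are ``never labelled by $u$'' is false; what saves the argument is that the $u$-labelled hyperplanes crossed are translates $w_0 J_u, w_0 b w_0 J_u, \ldots$, none of which equals $J_u$ itself because their translating elements involve a syllable in $G_w$ and hence do not lie in $\mathrm{stab}(J_u) = \langle \mathrm{star}(u) \rangle$; you should say this explicitly to conclude $g_n \in S$. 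Your counting of the $w$-labelled separating hyperplanes and the reduction to the sector containing $1$ (via the gate of the clique $G_u$) are correct and match the paper's argument.
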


\begin{proof}
Assume that $u \in V(\Gamma)$ is a vertex which is adjacent to all the other vertices of $\Gamma$. Then $X(\Gamma, \mathcal{G})$ decomposes as the Cartesian product of the clique $\langle u \rangle$ with $\langle \Gamma \backslash \{u \} \rangle$. So $X(\Gamma, \mathcal{G})$ is not essential. 

\medskip \noindent
Conversely, assume that $\Gamma$ is not the star of one of its vertices. It is sufficient to show that, for every $u \in V(\Gamma)$, a sector delimited by $J_u$ does not lie in a neighborhood of $J_u$. So fix a vertex $u \in V(\Gamma)$ and a sector $S$ delimited by $J_u$. Because $G_u$ acts transitively on the collection of the sectors delimited by $J_u$, we may suppose without loss of generality that $S$ contains $1$. By assumption, there exists a vertex $v \in V(\Gamma) \backslash \{u\}$ which is not adjacent to $u$. Fix two non-trivial elements $a \in G_v$ and $b \in G_u$, and an integer $n \geq 1$. According to Proposition \ref{prop:DistInX}, the path 
$$1, \ a, \ ab, \ aba, \ (ab)^2, \ldots, (ab)^n$$
is a geodesic, so it follows from Theorem \ref{thm:GeodesicsQM} that the hyperplanes it crosses, namely 
$$J_v, \ aJ_u, \ abJ_v, abaJ_u, \ (ab)^2J_v, \ldots, (ab)^{n-1}aJ_u,$$
separates $1$ from $(ab)^n$. But we also know from Lemma \ref{lem:HypTransverseLabel} that these hyperplanes cannot be transverse to $J_u$, so they have to separate $(ab)^n$ to $N(J_u)$. Consequently, the distance from $(ab)^n$ to $N(J_u)$ is at least $n$. Because $n$ can be chosen arbitrarily large, the desired conclusion follows.
\end{proof}

\noindent
Let us also record the following observation, which will be used later:

\begin{lemma}\label{lem:CrossSameHyp}
Let $\Gamma$ be a simplicial graph and $\mathcal{G}$ a collection of groups indexed by $V(\Gamma)$. Fix four vertices $a,b,c,d \in X(\Gamma, \mathcal{G})$, and assume that the hyperplanes separating $a$ and $b$ coincide with the hyperplanes separating $c$ and $d$, and that none of them separates $a$ and $c$. Then $d=c \cdot a^{-1}b$. 
\end{lemma}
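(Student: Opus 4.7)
The plan is to pick a reduced word representation of $a^{-1}b$ and exhibit two parallel geodesics, one from $a$ to $b$ and one from $c$ to the candidate vertex $e := ca^{-1}b$, whose crossing hyperplanes must coincide. Write $a^{-1}b = u_1 \cdots u_n$ as a reduced word with $u_i \in G_{v_i} \setminus \{1\}$. By Proposition \ref{prop:DistInX}, the sequences $a, au_1, \ldots, au_1 \cdots u_n = b$ and $c, cu_1, \ldots, cu_1 \cdots u_n = e$ are both geodesics of length $n$, crossing the hyperplanes $H_i := au_1 \cdots u_{i-1} J_{v_i}$ and $H_i' := cu_1 \cdots u_{i-1} J_{v_i}$, respectively. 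Observe also that, by Theorem~\ref{thm:GeodesicsQM}, $|\mathcal{H}(c,d)| = |\mathcal{H}(a,b)| = n$, so $d(c,d) = n = d(c,e)$.

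The heart of the proof is the identity $H_i = H_i'$ for every $i$. By Proposition \ref{prop:HypInX}, this is equivalent to $u_{i-1}^{-1} \cdots u_1^{-1}(a^{-1}c)u_1 \cdots u_{i-1} \in \langle \mathrm{star}(v_i) \rangle$. I plan to establish this by translating the hypothesis ``$H_i$ does not separate $a$ and $c$'' via left-multiplication by $(au_1 \cdots u_{i-1})^{-1}$ into the condition that $J_{v_i}$ does not separate $1$ from $u_{i-1}^{-1} \cdots u_1^{-1}(a^{-1}c)u_1 \cdots u_{i-1}$; combined with the combinatorial description of the sector of $1$ in $J_{v_i}$ (vertices whose reduced words admit no initial syllable in $G_{v_i}$) and the fact that this non-initiality has to hold simultaneously for all $i$, one extracts a reduced word for $a^{-1}c$ whose syllables all lie in $G_w$ for $w$ compatible with each $v_i$, forcing the conjugated element into $\langle \mathrm{star}(v_i)\rangle$.

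Once $H_i = H_i'$ for all $i$, we have $\mathcal{H}(c, e) = \{H_i'\} = \{H_i\} = \mathcal{H}(a, b) = \mathcal{H}(c, d)$, so both $d$ and $e$ lie at distance $n$ from $c$ and are separated from $c$ by the same hyperplanes. The hypothesis that no $H_i$ separates $a$ from $c$ pins down the sector of $c$ in each $H_i$ as the sector of $a$, and the equality $H_i = H_i'$ places $e$ in the unique sector of $H_i$ entered by the geodesic $c, cu_1, \ldots, e$, namely the sector of $cu_1 \cdots u_i$; this orientation information combined with the set-level equality of separating hyperplanes forces $d$ to sit in the same sectors as $e$ of every hyperplane, whence $d = e = ca^{-1}b$ by Theorem~\ref{thm:GeodesicsQM}. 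I expect the principal obstacle to be the central algebraic identity of the middle paragraph: translating the geometric non-separation condition into membership of a conjugate of $a^{-1}c$ in a star-parabolic subgroup requires careful bookkeeping with reduced words and shuffling, run uniformly over all indices $i$.
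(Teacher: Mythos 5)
Your route is genuinely different from the paper's: the paper fixes an actual geodesic $\nu$ from $c$ to $d$, eliminates ``bad pairs'' of edges by swapping across squares (via Lemma~\ref{lem:HypTransverseLabel}) until $\nu$ crosses its hyperplanes in the same order as a geodesic $\mu$ from $a$ to $b$, and then reads off that corresponding edges carry the same label; you instead posit the candidate endpoint $e=ca^{-1}b$ and try to show the parallel path from $c$ crosses the same hyperplanes as $\mu$. The difficulty is that your central claim $H_i=H_i'$ is derived, in your sketch, from the non-separation hypothesis alone, and that hypothesis does not imply it. ``$H_i$ does not separate $a$ from $c$'' only says that $a$ and $c$ lie in a common sector of $H_i$; sectors are enormous, and this puts no constraint of the form ``a conjugate of $a^{-1}c$ lies in $\langle \mathrm{star}(v_i)\rangle$''. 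Concretely, let $\Gamma$ be the $4$-cycle $p$--$q$--$r$--$s$ with all vertex groups $\mathbb{Z}/2\mathbb{Z}$, and take $a=1$, $b=p$, $c=r$. Then $H_1=J_p$ does not separate $1$ from $r$, yet $H_1'=rJ_p\neq J_p$ because $r\notin\langle \mathrm{star}(p)\rangle=\mathrm{stab}(J_p)$; likewise the only syllable of $a^{-1}c=r$ lies in a vertex group that is neither equal nor adjacent to $v_1=p$, so the ``compatibility'' you hope to extract fails. No admissible $d$ exists for this triple, but that is exactly the point: the identity $H_i=H_i'$ is only true because each $H_i$ also separates $c$ from $d$, a hypothesis your ``heart of the proof'' never invokes, so the program of that paragraph cannot succeed as described. (There is also a bookkeeping slip: left-translating by $(au_1\cdots u_{i-1})^{-1}$ sends $a$ to $u_{i-1}^{-1}\cdots u_1^{-1}$, not to $1$.)

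There is a second gap at the end. Even granting $\mathcal{H}(c,e)=\mathcal{H}(c,d)$ and $d(c,e)=d(c,d)$, this does not force $d=e$: a hyperplane of a quasi-median graph may have more than two sectors, so the set of hyperplanes separating $c$ from a vertex does not determine that vertex. (Take $\Gamma$ a single vertex with group $\mathbb{Z}/3\mathbb{Z}$, $a=c=1$, $b=g$, $d=g^2$: the unique separating hyperplane is the same in both cases, yet $d\neq e=g$.) Your argument locates the sector of each $H_i$ containing $e$, but nothing in it identifies the sector containing $d$ with the sector containing $b$, which is what is actually needed. The paper avoids having to transfer this information by keeping $d$ as the endpoint of the geodesic whose edges it relabels: after the reordering, the edge of $\nu$ dual to $K_i$ is a concrete edge leaving the common sector of $a$ and $c$, and the label comparison is made on that edge rather than on a surrogate path to $e$.
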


\begin{proof}
Fix a geodesic $\mu$ from $a$ to $b$, and a geodesic $\nu$ from $c$ to $d$. Say that a pair of edges $(e,f)$ of $\nu$ is \emph{bad} if $\mu$ crosses the hyperplane dual to $f$ and next the hyperplane dual to $e$. 

\medskip \noindent
Fix two edges $e,f \subset \nu$ such that $(e,f)$ is a bad pair and the length of the subsegment of $\nu$ between $e$ and $f$ has minimal length. Notice that, if $e' \subset \nu$ is an edge between $e$ and $f$, then either $(e,e')$ or $(e',f)$ must be a bad pair. So $e$ and $f$ must be adjacent. The fact that $\mu$ crosses the hyperplanes dual to $e$ and $f$ in a different order than $\nu$ (and the fact that no hyperplane separating $a$ and $b$ separates $a$ and $c$) implies that these two hyperplanes are transverse. It follows from Lemma \ref{lem:HypTransverseLabel} that the generators labelling $e$ and $f$ belong to adjacent vertex-groups, so that $e$ and $f$ have to generate a square. Let $\nu'$ denote the path obtained from $\nu$ by replacing $e \cup f$ with the opposite path of length two $e' \cup f'$ in the square generated by $e$ and $f$. By construction, the pair $(e',f')$ is no longer bad in $\nu'$. Moreover, our process did not create additional bad pairs of edges, so that the number of bad pairs of $\nu'$ is smaller that the number of bad pairs of $\nu$. 

\medskip \noindent
As a consequence, by choosing carefully our geodesic $\nu$, we may suppose that $\mu$ and $\nu$ cross their hyperplanes (i.e., the hyperplanes separating $a$ and $b$) in the same order. So the $n$th edge of $\mu$ and the $n$th edge of $\nu$ are dual to the same hyperplane; but they also link the same sectors delimited by this hyperplane, namely the sector containing $a$ to the sector containing $b$, so these two edges have the be labelled by the same generator. Consequently, the paths $\mu$ and $\nu$ are labelled by the same word, say $w$, so that $b=aw$ and $d=cw$ in $\Gamma \mathcal{G}$. 
\end{proof}

\paragraph{Hyperbolicity of $X(\Gamma, \mathcal{G})$.} According to \cite[Fact 8.33]{Qm}, we have the following characterisation:

\begin{prop}\label{prop:hyp}
Let $\Gamma$ be a finite simplicial graph and $\mathcal{G}$ a collection of groups indexed by $V(\Gamma)$. The quasi-median graph $X(\Gamma, \mathcal{G})$ is hyperbolic if and only if $\Gamma$ does not contain any induced square.
\end{prop}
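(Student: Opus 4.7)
The plan is to invoke Lemma \ref{lem:FlatRectangle}, reducing hyperbolicity of $X(\Gamma, \mathcal{G})$ to the absence of arbitrarily thick flat rectangles, and to analyze the labels of the hyperplanes crossing such rectangles via Lemma \ref{lem:HypTransverseLabel}.

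For the ``induced square $\Rightarrow$ not hyperbolic'' direction, I would start from an induced square $\Lambda = \{u_1, u_2, v_1, v_2\} \subset \Gamma$ with $u_i$ adjacent to $v_j$ and $u_1 \not\sim u_2$, $v_1 \not\sim v_2$. Fixing non-trivial elements $a_i \in G_{u_i}$ and $b_j \in G_{v_j}$, the words $(a_1 a_2)^n$ and $(b_1 b_2)^m$ are reduced, so by Proposition \ref{prop:DistInX} they label geodesics of lengths $2n$ and $2m$ based at $1$. Since every $u_i$ is adjacent to every $v_j$, the letters from these two geodesics commute pairwise. A direct computation via Proposition \ref{prop:DistInX} shows that the collection of vertices $\{(a_1 a_2)^i (b_1 b_2)^j : 0 \leq i \leq 2n, \ 0 \leq j \leq 2m\}$ (with the obvious edges) defines an isometrically embedded $2n \times 2m$ grid, hence an arbitrarily thick flat rectangle.

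For the converse, assume $\Gamma$ has no induced square and consider an arbitrary flat rectangle $R \cong [0,a] \times [0,b]$ in $X(\Gamma, \mathcal{G})$. It is crossed by $a$ ``horizontal'' and $b$ ``vertical'' hyperplanes, with every horizontal hyperplane transverse to every vertical one. Letting $U, V \subset V(\Gamma)$ denote the respective sets of labels, Lemma \ref{lem:HypTransverseLabel} gives that every vertex of $U$ is adjacent to every vertex of $V$. Were both $U$ and $V$ to contain a non-edge, picking such pairs would yield an induced square in $\Gamma$, contradicting the hypothesis. So one of $U, V$, say $U$, is a clique. The bottom side of $R$ is then a geodesic labelled by a reduced word whose syllables lie in the pairwise-commuting vertex-groups $\{G_u : u \in U\}$; using the shuffling moves any two syllables from the same $G_u$ could be brought together and amalgamated, so reducedness forces $a \leq |U| \leq |V(\Gamma)|$. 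Hence $\min(a,b) \leq |V(\Gamma)|$ uniformly, and Lemma \ref{lem:FlatRectangle} yields hyperbolicity.

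The main subtlety I expect is the last clique-plus-reducedness step: one must carefully invoke the elementary moves recalled in Section \ref{section:GPandQM} to confirm that, in a reduced word all of whose letters lie in vertex-groups indexed by a clique, each such vertex-group contributes at most one syllable. The rest of the argument is straightforward bookkeeping of hyperplane labels inside a flat rectangle.
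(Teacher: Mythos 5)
Your proof is correct and follows essentially the route the paper indicates for this statement (which it otherwise delegates to \cite[Fact 8.33]{Qm}): reduce hyperbolicity to the absence of thick flat rectangles via Lemma \ref{lem:FlatRectangle}, build an arbitrarily thick flat grid from an induced square, and bound $\min(a,b)$ in the square-free case. The only cosmetic difference is that instead of quoting Lemma \ref{lem:FlatRinX} you re-derive its relevant consequence directly from Lemma \ref{lem:HypTransverseLabel} and the clique-plus-reducedness argument, which is a sound, self-contained substitute.
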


\noindent
In particular, as an immediate consequence of this observation, we deduce the following statement (which also follows from Moussong's characterisation of hyperbolic Coxeter groups):

\begin{cor}\label{cor:hyp}
Let $\Gamma$ be a simplicial graph and $\mathcal{G}$ a collection of finite groups indexed by $V(\Gamma)$. The graph product $\Gamma \mathcal{G}$ is hyperbolic if and only if $\Gamma$ does not contain any induced square.
\end{cor}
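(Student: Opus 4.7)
The corollary follows almost immediately from Proposition \ref{prop:hyp}; the only real work is bridging the gap between the hyperbolicity of the group $\Gamma \mathcal{G}$ (as a finitely generated group) and the hyperbolicity of the Cayley graph $X(\Gamma, \mathcal{G})$ (as a metric space). My plan is to reduce the statement to Proposition \ref{prop:hyp} by observing that the generating set used to define $X(\Gamma, \mathcal{G})$ is finite under our hypotheses, so that $X(\Gamma, \mathcal{G})$ is quasi-isometric to any other Cayley graph of $\Gamma \mathcal{G}$ built from a finite generating set.

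More precisely, first I would note that the statement implicitly assumes $\Gamma$ is finite: since by our standing convention each $G_u \in \mathcal{G}$ is non-trivial, the group $\Gamma \mathcal{G}$ is finitely generated if and only if $\Gamma$ has finitely many vertices, which is necessary for hyperbolicity to have its usual meaning. Assuming then that $\Gamma$ is finite, the fact that every vertex-group is finite makes $S := \bigcup_{G \in \mathcal{G}} G \setminus \{1\}$ a finite generating set of $\Gamma \mathcal{G}$. Hence $X(\Gamma, \mathcal{G}) = \mathrm{Cay}(\Gamma \mathcal{G}, S)$ is locally finite, and it is bi-Lipschitz equivalent to any other Cayley graph of $\Gamma \mathcal{G}$ with respect to a finite generating set.

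It follows that $\Gamma \mathcal{G}$ is a hyperbolic group if and only if $X(\Gamma, \mathcal{G})$ is Gromov hyperbolic as a metric space. By Proposition \ref{prop:hyp}, this is equivalent to $\Gamma$ not containing any induced square, which is the desired conclusion. No step here is a genuine obstacle: the content is entirely in Proposition \ref{prop:hyp}, and the corollary merely transfers its conclusion from the quasi-median Cayley graph to the group itself, using the finiteness of the vertex-groups to ensure that this transfer respects quasi-isometry.
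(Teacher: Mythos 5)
Your proposal is correct and matches the paper's (unwritten, declared immediate) derivation: since the vertex-groups are finite, $X(\Gamma,\mathcal{G})$ is a Cayley graph of $\Gamma\mathcal{G}$ with respect to a finite generating set, so hyperbolicity of the group is equivalent to hyperbolicity of $X(\Gamma,\mathcal{G})$, and Proposition \ref{prop:hyp} concludes. Your remark that $\Gamma$ must implicitly be finite is a fair observation, as Proposition \ref{prop:hyp} is stated for finite graphs.
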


\noindent
The proof of Proposition \ref{prop:hyp} is based on Lemma \ref{lem:FlatRectangle} and the following description of the flat rectangles in $X(\Gamma, \mathcal{G})$:

\begin{lemma}\label{lem:FlatRinX}\emph{\cite[Lemma 8.13]{Qm}}
Let $\Gamma$ be a finite simplicial graph and $\mathcal{G}$ a collection of groups indexed by $V(\Gamma)$. An induced subgraph $R \subset X(\Gamma, \mathcal{G})$ is a flat rectangle if and only if there exist a join subgraph $\Lambda_1 \ast \Lambda_2 \leq \Gamma$ and syllables $g_1, \ldots, g_n \in \langle \Lambda_1 \rangle$, $h_1, \ldots, h_m \in \langle \Lambda_2 \rangle$ such that the products $g_1 \cdots g_n$ and $h_1 \cdots h_m$ are reduced and such that $R$ is generated by the vertices 
$$ \{ k g_1 \cdots g_i h_1 \cdots h_j \mid 0 \leq i \leq n, 0 \leq j \leq m\}$$
for some $k \in \Gamma \mathcal{G}$. Moreover, if $R$ is $L$-thick for some $L > \mathrm{clique}(\Gamma)$, then $\Lambda_1$ and $\Lambda_2$ are not complete.
\end{lemma}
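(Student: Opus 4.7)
The plan is to prove both implications separately and then derive the moreover clause.

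For the ``if'' direction, the strategy is to define $R$ as the induced subgraph on the prescribed vertex set and to show that it embeds in $X(\Gamma,\mathcal{G})$ as an isometric copy of the grid $[0,n]\times[0,m]$. Because $\Lambda_1 \ast \Lambda_2$ is a join, every element of $\langle \Lambda_1 \rangle$ commutes with every element of $\langle \Lambda_2 \rangle$. Given two prescribed vertices indexed by $(i,j)$ and $(i',j')$ with $i \leq i'$ and $j \leq j'$ (the other cases being analogous), the element linking them may be commuted into the form $g_{i+1}\cdots g_{i'}h_{j+1}\cdots h_{j'}$. This word is reduced: its $g$-part and $h$-part are each subwords of reduced words, hence reduced, while no $g$-syllable can be amalgamated with any $h$-syllable since $V(\Lambda_1)$ and $V(\Lambda_2)$ are disjoint in a join. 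Proposition \ref{prop:DistInX} then identifies the distance in $X(\Gamma,\mathcal{G})$ with the grid distance $(i'-i)+(j'-j)$, which both forces the only edges of $R$ to be the expected ones and witnesses the isometric embedding.

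For the ``only if'' direction, I would fix a flat rectangle $R$, identify it with $[0,a]\times[0,b]$, and analyse the hyperplanes of $X(\Gamma,\mathcal{G})$ meeting $R$. Since the rows and columns of $R$ are geodesics and since opposite edges of an induced square lie in a common hyperplane, the edges of $R$ distribute into $a$ column-separating hyperplanes $H_1,\ldots,H_a$ and $b$ row-separating hyperplanes $V_1,\ldots,V_b$. Each $H_i$ crosses each $V_j$ transversally inside a unit square of $R$, so by Lemma \ref{lem:HypTransverseLabel} their labels are adjacent in $\Gamma$, while two hyperplanes with the same label cannot be transverse (no vertex of $\Gamma$ is self-adjacent). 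Letting $V(\Lambda_1)$ and $V(\Lambda_2)$ be the respective label sets yields disjoint induced subgraphs of $\Gamma$ with $\Lambda_1 \ast \Lambda_2$ forming a join. Iterating the opposite-sides-of-square equivalence further shows that all edges along a given column share a single generator $g_i$ and similarly along rows with $h_j$. Taking $k$ to be the group element sitting at the grid origin, traversing the bottom row then a vertical segment identifies the vertex at position $(i,j)$ with $kg_1\cdots g_ih_1\cdots h_j$; reducedness of $g_1\cdots g_n$ and $h_1\cdots h_m$ then follows from the bottom row and left column being geodesics via Proposition \ref{prop:DistInX}.

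For the moreover clause, I would argue by contradiction: suppose $\Lambda_1$ is complete. Then all vertex groups indexed by $V(\Lambda_1)$ pairwise commute, so any repetition $u_i = u_{i'}$ with $i<i'$ in the reduced word $g_1\cdots g_n$ could be shuffled adjacent and amalgamated, contradicting reducedness. Hence the $u_i$ are pairwise distinct, giving $a = n \leq |V(\Lambda_1)| \leq \mathrm{clique}(\Gamma) < L$, contradicting the $L$-thickness of $R$; the case of $\Lambda_2$ is symmetric. The main technical obstacle is the ``only if'' direction, where the join structure must be recovered from the transversality pattern of the hyperplanes meeting $R$, and each hyperplane's vertex-group label must be upgraded to a uniform generator along every row or column. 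Both steps rely essentially on Lemma \ref{lem:HypTransverseLabel} and on the fact that rows and columns of $R$ are geodesics because $R$ is isometrically embedded.
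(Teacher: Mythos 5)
Your proposal is correct. Note, however, that the paper itself only proves the \emph{moreover} clause: the main equivalence is quoted verbatim from \cite[Lemma 8.13]{Qm} without proof, and for the clause that is actually proved your argument is identical to the paper's (a complete $\Lambda_1$ forces the syllables of the reduced word $g_1\cdots g_n$ to lie in pairwise distinct vertex-groups, whence $n\leq \#V(\Lambda_1)\leq \mathrm{clique}(\Gamma)<L$, contradicting $L$-thickness, and symmetrically for $\Lambda_2$). The additional material you supply --- reconstructing the equivalence by reading off the two families of pairwise transverse hyperplanes dual to the rows and columns of $R$, using Lemma \ref{lem:HypTransverseLabel} to get the join structure and the disjointness of the label sets, and using Proposition \ref{prop:DistInX} together with the normal-form criterion to verify the isometric embedding in the converse direction --- is a sound and standard reconstruction of the cited result; the only step stated a little quickly is that opposite edges of a unit square of $R$ carry the \emph{same generator} (not merely the same hyperplane), which holds because induced squares of $X(\Gamma,\mathcal{G})$ have the form $\{g,ga,gb,gab\}$ with $a,b$ in adjacent vertex-groups, and is worth spelling out.
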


\begin{proof}
The first assertion of our lemma is precisely \cite[Lemma 8.13]{Qm}. Next, because $g_1 \cdots g_n$ is a reduced word, it follows that, if $\Lambda_1$ is a complete subgraph, then $g_1, \ldots, g_n$ have to belong to pairwise distinct vertex-groups, hence $n \leq \# V(\Lambda_1) \leq \mathrm{clique}(\Gamma)$. The same holds for $h_1, \ldots, h_m$, proving the second assertion of the lemma. 
\end{proof}

\paragraph{Hausdorff distances in $X(\Gamma, \mathcal{G})$.} Finally, we would to determine when the Hausdorff distance between two cosets of parabolic subgroups is finite. The answer is provided by the following lemma and its corollary.

\begin{lemma}\label{lem:ParabolicHausdorff}
Let $\Gamma$ be a finite simplicial graph, $\mathcal{G}$ a collection of groups indexed by $V(\Gamma)$ and $\Xi \subset \Gamma$ a subgraph. Decompose $\Xi$ as a join $\Xi_0 \ast \Xi_1$ where $\Xi_1$ is a complete and where $\Xi_0$ is not the star of one of its vertices. If $g\langle \Xi \rangle$ lies in the $K$-neighborhood of $\langle \Lambda \rangle$ for some $K \geq 0$ and $g \in \Gamma \mathcal{G}$, then $\Xi_0 \subset \Lambda$ and there exists some $h \in \langle \Lambda \rangle$ such that the Hausdorff distance between $g \langle \Xi \rangle$ and $h \langle \Xi \rangle$ is at most $K+ 2 \cdot \mathrm{clique}(\Gamma)$. 
\end{lemma}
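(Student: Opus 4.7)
The plan is to first prove the inclusion $\Xi_0 \subset \Lambda$ via an essentialness-type argument, then exhibit $h$ as the projection onto $\langle \Lambda \rangle$ of a closest point of $g\langle \Xi \rangle$, and finally estimate the Hausdorff distance by showing that $t := m^{-1}h$ commutes with $\langle \Xi_0 \rangle$.

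\textbf{Step 1 (inclusion).} Fix $u \in \Xi_0$. Since $\Xi_0$ is not the star of $u$, pick $v \in \Xi_0$ non-adjacent to $u$, non-trivial $a \in G_v$ and $b \in G_u$, and an integer $n > 2K$. By Proposition~\ref{prop:DistInX} the path $g, ga, gab, \ldots, g(ab)^n$ is a geodesic of length $2n$ in $g\langle \Xi_0 \rangle \subset g\langle \Xi \rangle$ crossing $n$ hyperplanes labelled by $u$ interleaved with $n$ labelled by $v$. Both endpoints lie in the $K$-neighborhood of $\langle \Lambda \rangle$, hence their projections onto $\langle \Lambda \rangle$ sit at distance at least $2n-2K$; Proposition~\ref{prop:BigProj}(iii) together with Theorem~\ref{thm:GeodesicsQM} identifies this distance with the number of hyperplanes of the geodesic that cross $\langle \Lambda \rangle$. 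Since $2n - 2K > n$, at least one $u$-labelled hyperplane on the geodesic crosses $\langle \Lambda \rangle$; it is automatically labelled by a vertex of $\Lambda$, so $u \in \Lambda$. In passing, we obtain for each $u \in \Xi_0$ a hyperplane $H_u$ labelled by $u$ crossing both $g\langle \Xi \rangle$ (being dual to an edge of our path) and $\langle \Lambda \rangle$.

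\textbf{Step 2 (structure of $t$).} Set $d_0 := d(g\langle \Xi \rangle, \langle \Lambda \rangle) \leq K$, pick $m \in g\langle \Xi \rangle$ realising this minimum, define $h := p_{\langle \Lambda \rangle}(m) \in \langle \Lambda \rangle$ and $t := m^{-1}h$, so $|t| = d_0$. By Proposition~\ref{prop:BigProj}(ii) the hyperplanes separating $m$ and $h$ form exactly the set $\mathcal{H}_{\mathrm{sep}}$ of hyperplanes separating $g\langle \Xi \rangle$ from $\langle \Lambda \rangle$, and by Proposition~\ref{prop:DistInX} their labels are those of the syllables of any reduced expression of $t$. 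For $H \in \mathcal{H}_{\mathrm{sep}}$ and $u \in \Xi_0$, the hyperplane $H_u$ has edges in $g\langle \Xi \rangle$ and in $\langle \Lambda \rangle$, hence on both sides of $H$. Tracing the triangle/square chain defining $H_u$ shows that a transition between the two sectors of $H$ cannot occur through a triangle (all three edges of a triangle lie in $H_u$ and in a common sector) and, through a square, only when the other two sides are dual to $H$; this produces a square witnessing that $H$ and $H_u$ are transverse. Lemma~\ref{lem:HypTransverseLabel} then shows that $\mathrm{label}(H)$ is adjacent to $u$ in $\Gamma$; letting $u$ range over $\Xi_0$ yields $t \in \langle \mathrm{link}_\Gamma(\Xi_0) \rangle$, which commutes with $\langle \Xi_0 \rangle$.

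\textbf{Step 3 (Hausdorff bound).} Any element of $g\langle \Xi \rangle = m\langle \Xi \rangle$ writes as $m\xi$ with $\xi = \xi_0\xi_1$ coming from the direct-product decomposition $\langle \Xi \rangle = \langle \Xi_0 \rangle \times \langle \Xi_1 \rangle$ induced by the join $\Xi = \Xi_0 \ast \Xi_1$. Using commutation of $t$ with $\xi_0$,
\[
d(m\xi, h\xi) = |\xi^{-1}t\xi| = |\xi_1^{-1} t \xi_1| \leq 2|\xi_1| + |t| \leq K + 2\,\mathrm{clique}(\Gamma),
\]
because $\xi_1$ lies in the direct product of vertex-groups indexed by the complete subgraph $\Xi_1$, whose cardinality is at most $\mathrm{clique}(\Gamma)$. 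Since $h\xi \in h\langle \Xi \rangle$, this gives $g\langle \Xi \rangle \subset N_{K + 2\mathrm{clique}(\Gamma)}(h\langle \Xi \rangle)$, and the symmetric computation produces the reverse inclusion. The main obstacle is the transversality claim of Step~2: one must carefully convert ``$H_u$ has dual edges on both sides of $H$'' into a genuine common square, relying on the triangle/square closure of the hyperplane equivalence relation in quasi-median graphs.
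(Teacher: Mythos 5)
Your proof is correct, and it reaches the conclusion by a genuinely different route from the paper at both key steps. For the inclusion $\Xi_0 \subset \Lambda$, the paper establishes the stronger fact that \emph{every} hyperplane crossing $g\langle \Xi_0 \rangle$ crosses $\langle \Lambda \rangle$, via essentialness (Lemma \ref{lem:Essential}): a hyperplane missing $\langle \Lambda \rangle$ would have a sector disjoint from $\langle \Lambda \rangle$, and essentialness produces points of $g\langle \Xi_0 \rangle$ arbitrarily deep in that sector. Your counting argument along the geodesic $g, ga, \ldots, g(ab)^n$ only produces, for each $u \in \Xi_0$, \emph{some} $u$-labelled hyperplane $H_u$ crossing $\langle \Lambda \rangle$ --- but that is all you need, and it has the advantage of handing you a witness $H_u$ crossing both parabolic subgraphs that you reuse in Step~2. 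For the Hausdorff bound, the paper combines Lemma \ref{lem:GatedBridge} (all of $g\langle \Xi_0 \rangle$ sits at distance exactly $d(g\langle \Xi_0\rangle, \langle\Lambda\rangle)$ from $\langle \Lambda\rangle$) with Lemma \ref{lem:CrossSameHyp} (equivariance $p(g\xi)=p(g)\xi$ of the projection), whereas you show directly that the transporting element $t=m^{-1}p(m)$ lies in the subgroup generated by the common link of $\Xi_0$ and conclude by conjugation; this avoids Lemmas \ref{lem:GatedBridge} and \ref{lem:CrossSameHyp} entirely, at the cost of the fact that a hyperplane crossing the carrier $N(H_u)$ of a distinct hyperplane $H_u$ must be transverse to it. You flag this as the main obstacle, but your parenthetical sketch is essentially a complete proof: an edge of $H_u$ cannot itself be dual to $H$ since hyperplanes partition edges, a triangle cannot straddle $H$ for the same reason, and the first transition square along the chain from an edge of $H_u$ in $g\langle\Xi\rangle$ to one in $\langle\Lambda\rangle$ exhibits transversality (the fact is also available in \cite{Qm}). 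One cosmetic caveat: hyperplanes in quasi-median graphs may delimit more than two sectors, so ``both sides of $H$'' should be read as ``the sector containing $g\langle\Xi\rangle$ and the sector containing $\langle\Lambda\rangle$''; the argument is unaffected.
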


\begin{proof}
Fix a vertex $u \in V(\Xi_0)$. So the hyperplane $gJ_u$ crosses $g \langle \Xi_0 \rangle$. If it does not cross $\langle \Lambda \rangle$, then there exists a sector $S$ delimited by $gJ_u$ which is disjoint from $\langle \Lambda \rangle$. But we know from Lemma \ref{lem:Essential} that $g \langle \Xi_0 \rangle$ is essential, so it contains vertices in $S$ arbitrarily far away from $gJ_u$, and so arbitrarily far away from $\langle \Lambda \rangle$, contradicting the fact that $g \langle \Xi_0 \rangle$ lies in a neighborhood of $\langle \Lambda \rangle$. Thus, we have proved:

\begin{fact}\label{fact:CrossCross}
Any hyperplane crossing $g \langle \Xi_0 \rangle$ crosses $\langle \Lambda \rangle$.
\end{fact}

\noindent
Because the hyperplanes crossing $g \langle \Xi_0 \rangle$ are labelled by vertices of $\Xi_0$, and those crossing $\langle \Lambda \rangle$ are labelled by vertices of $\Lambda$, we conclude that $\Xi_0 \subset \Lambda$.

\medskip \noindent
Next, let $Y \subset g \langle \Xi_0 \rangle$ denote the subgraph generated by the vertices $y \in g \langle \Xi_0 \rangle$ satisfying $d(y,\langle \Lambda \rangle)= d(g \langle \Xi_0 \rangle, \langle \Lambda \rangle)$. If $g \langle \Xi_0 \rangle$ contains $Y$ properly, then, by considering a hyperplane separating a vertex of $g \langle \Xi_0 \rangle \backslash Y$ from $Y$ (which exists according to Proposition~\ref{prop:BigProj} and Lemma \ref{lem:GatedBridge}), we would deduce from Lemma \ref{lem:GatedBridge} that there exists a hyperplane crossing $g \langle \Xi_0 \rangle$ which does not cross $\langle \Lambda \rangle$, contradicting Fact \ref{fact:CrossCross}. Therefore, $Y= g \langle \Xi_0 \rangle$. 

\medskip \noindent
For convenience, let $p : X \to \langle \Lambda \rangle$ denote the projection onto $\langle \Lambda \rangle$. Fix a vertex $x \in g \langle \Xi_0 \rangle$, and write $x=g \xi$ where $\xi \in \langle \Xi_0 \rangle$. It follows from Proposition \ref{prop:BigProj} that the hyperplanes separating $x$ and $p(x)$ (or $g$ and $p(g)$) separates $g \langle \Xi_0 \rangle$ and $\langle \Lambda \rangle$. Consequently, $x$ and $g$ are separated by the same hyperplanes as $p(x)$ and $p(g)$ (and none of them separates $g$ and $p(g)$). It follows from Lemma \ref{lem:CrossSameHyp} that $p(x)=p(g) \xi$. Thus, the map
$$\left\{ \begin{array}{ccc} g \langle \Xi_0 \rangle & \to & \langle \Lambda \rangle \\ g \xi & \mapsto & p(g) \xi \end{array} \right.$$
sends a vertex of $g \langle \Xi_0 \rangle$ to a vertex of $\langle \Lambda \rangle$ within distance $d(g \langle \Xi_0 \rangle, \langle \Lambda \rangle) \leq K$. Moreover, the image of this map is clearly $p(g) \langle \Xi_0 \rangle$, so the Hausdorff distance between $g \langle \Xi_0 \rangle$ and $p(g) \langle \Xi_0 \rangle$ is at most $K$, where $p(g)$ belongs to $\langle \Lambda \rangle$. 

\medskip \noindent
As the Hausdorff distance between $\langle \Xi \rangle$ and $\langle \Xi_0 \rangle$ is $\#V(\Xi_1) \leq \mathrm{clique}(\Gamma)$, since $\langle \Xi \rangle$ decomposes as the Cartesian product of $\langle \Xi_0 \rangle$ with the prism $\langle \Xi_1 \rangle$, we conclude that the Hausdorff distance between $g \langle \Xi \rangle$ and $p(g) \langle \Xi \rangle$ is at most $K+2 \cdot \mathrm{clique}(\Gamma)$.
\end{proof}

\begin{cor}\label{cor:ParabolicHausdorff}
Let $\Gamma$ be a finite simplicial graph, $\mathcal{G}$ a collection of groups indexed by $V(\Gamma)$ and $\Phi, \Psi \leq \Gamma$ two induced subgraphs. Decompose $\Phi$ (resp. $\Psi$) as a join $\Phi_0 \ast \Phi_1$ (resp. $\Psi_0 \ast \Psi_1$) where $\Phi_1$ (resp. $\Psi_1$) is complete and where $\Phi_0$ (resp. $\Psi_0$) is not the star of one its vertices. The Hausdorff distance between $\langle \Phi \rangle$ and $\langle \Psi \rangle$ is finite if and only if $\Phi_0= \Psi_0$. 
\end{cor}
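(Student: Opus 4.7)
The plan is to prove both implications, with $(\Leftarrow)$ being a direct geometric observation and $(\Rightarrow)$ reducing to Lemma \ref{lem:ParabolicHausdorff} followed by a small combinatorial argument on the join decompositions.

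For the easy direction, assume $\Phi_0 = \Psi_0$. Since $\Phi_1$ is complete, any reduced word in $\langle \Phi_1 \rangle$ must have its syllables in pairwise distinct vertex-groups (otherwise two consecutive syllables would amalgamate), and therefore has length at most $\#V(\Phi_1) \leq \mathrm{clique}(\Gamma)$. Because $\Phi = \Phi_0 \ast \Phi_1$, every element of $\langle \Phi \rangle$ factors as $gh$ with $g \in \langle \Phi_0 \rangle$ and $h \in \langle \Phi_1 \rangle$, so $\langle \Phi \rangle$ lies in the $\mathrm{clique}(\Gamma)$-neighborhood of $\langle \Phi_0 \rangle$ (and trivially contains it). The same holds for $\Psi$, and since $\langle \Phi_0 \rangle = \langle \Psi_0 \rangle$, the Hausdorff distance between $\langle \Phi \rangle$ and $\langle \Psi \rangle$ is bounded by $2 \cdot \mathrm{clique}(\Gamma)$.

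For the converse, assume $\langle \Phi \rangle$ and $\langle \Psi \rangle$ lie at finite Hausdorff distance. Applying Lemma \ref{lem:ParabolicHausdorff} with $g = 1$ in both directions immediately yields $\Phi_0 \subset \Psi$ and $\Psi_0 \subset \Phi$. The substantive step, and the place where the standing assumption that $\Phi_0$ and $\Psi_0$ are not stars is actually used, is to promote these inclusions to $\Phi_0 \subset \Psi_0$ (and, by symmetry, $\Psi_0 \subset \Phi_0$). Suppose for contradiction that some $u \in \Phi_0$ lies in $\Psi_1$. Being in $\Psi_1$, the vertex $u$ is adjacent to every other vertex of $\Psi$, hence in particular to every vertex of $\Phi_0 \cap \Psi_0$. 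Moreover, $\Phi_0 \cap \Psi_1 \subset \Psi_1$ is complete, so $u$ is also adjacent to every other vertex of $\Phi_0 \cap \Psi_1$. Since $\Phi_0 \subset \Psi$ forces the partition $\Phi_0 = (\Phi_0 \cap \Psi_0) \sqcup (\Phi_0 \cap \Psi_1)$, we conclude that $u$ is adjacent to all other vertices of $\Phi_0$, making $\Phi_0$ equal to its star at $u$ and contradicting the hypothesis on $\Phi_0$. Symmetrizing yields $\Phi_0 = \Psi_0$, concluding the proof.
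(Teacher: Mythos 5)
Your proof is correct and follows essentially the same route as the paper: both directions reduce to Lemma \ref{lem:ParabolicHausdorff}. The only (harmless) difference is that the paper first replaces $\langle \Phi \rangle, \langle \Psi \rangle$ by $\langle \Phi_0 \rangle, \langle \Psi_0 \rangle$ (which are at finite Hausdorff distance from them) and applies the lemma with $\Xi = \Phi_0$, $\Lambda = \Psi_0$ to get $\Phi_0 \subset \Psi_0$ directly, whereas you apply the lemma to the full subgraphs and then rule out $\Phi_0 \cap \Psi_1 \neq \emptyset$ by the correct observation that such a vertex would make $\Phi_0$ the star of one of its vertices.
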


\begin{proof}
The Hausdorff distance between $\langle \Phi \rangle$ and $\langle \Phi_0 \rangle$ is finite as $\langle \Phi \rangle$ decomposes as the Cartesian product of $\langle \Phi_1 \rangle$ with the prism $\langle \Phi_0 \rangle$. Similarly, the Hausdorff distance between $\langle \Psi \rangle$ and $\langle \Psi_0 \rangle$ is finite. By applying Lemma \ref{lem:ParabolicHausdorff} twice, we know that if the Hausdorff distance between $\langle \Psi_0 \rangle$ and $\langle \Phi_0 \rangle$ is finite then $\Psi_0 \subset \Phi_0$ and $\Phi_0 \subset \Psi_0$. We conclude that, if the Hausdorff distance between $\langle \Phi \rangle$ and $\langle \Psi \rangle$ is finite, then $\Phi_0 = \Psi_0$. The converse is clear.
\end{proof}

\subsection{Gated Morse subgraphs}\label{section:gated}

\noindent
In this subsection, we are interested in Morse subspaces in quasi-median graphs. The first observation is that, up to finite Hausdorff distance, it may always be assumed that the subspace we are looking at is a gated subgraph. More precisely:

\begin{lemma}\label{lem:DistN}
Let $X$ be a quasi-median graph whose cubical dimension is finite and $Y$ a subspace. If $Y$ is Morse then the Hausdorff distance between $Y$ and its gated hull is finite and depends only on $\dim_\square(X)$ and the Morse-gauge of $Y$.
\end{lemma}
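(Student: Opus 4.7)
Since $Y \subset \mathrm{hull}(Y)$, the Hausdorff distance reduces to bounding $d(z, Y)$ uniformly for $z \in \mathrm{hull}(Y)$ in terms of $\dim_\square(X)$ and the Morse gauge $M$ of $Y$. My strategy is to exhibit, for each such $z$, two vertices $y_1, y_2 \in Y$ together with a concatenation of geodesics $y_1 \to z \to y_2$ of length at most $d(y_1, y_2) + C$ for some $C = C(\dim_\square X)$. Such a concatenation is a $(1, C)$-quasi-geodesic between two points of $Y$, so the Morse property yields $d(z, Y) \leq M(1, C)$, which is the desired uniform bound.

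I would take $y_1 \in Y$ minimising $d(z, y_1)$ and fix a geodesic $\alpha$ from $y_1$ to $z$. By Theorem \ref{thm:GeodesicsQM}, the set $\mathcal{H}$ of hyperplanes crossed by $\alpha$ has cardinality $d(y_1, z)$ and each $J \in \mathcal{H}$ separates $y_1$ from $z$. Since $y_1, z \in \mathrm{hull}(Y)$, every $J \in \mathcal{H}$ crosses $\mathrm{hull}(Y)$; and if $Y$ were contained in a single sector of $J$, this sector (gated by Theorem \ref{thm:BigThmQM}) would form a gated subgraph containing $Y$ but not $z$, contradicting the minimality of $\mathrm{hull}(Y)$. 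Hence for each $J \in \mathcal{H}$ there exists a vertex of $Y$ lying outside the sector of $J$ containing $y_1$. I would then select $y_2 \in Y$ greedily, so as to minimise the number of hyperplanes $J \in \mathcal{H}$ for which $y_2$ does not lie in the sector of $J$ containing $z$ — the ``bad'' hyperplanes for $y_2$. A hyperplane count using Theorem \ref{thm:GeodesicsQM} then bounds the excess $d(y_1, z) + d(z, y_2) - d(y_1, y_2)$ by at most twice the number of bad hyperplanes.

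The decisive step, and the main obstacle, is to bound the minimum number of bad hyperplanes by a function of $\dim_\square(X)$. I would argue that under the greedy choice of $y_2$, the bad hyperplanes form a pairwise transverse family: if two bad hyperplanes $J, K$ were not transverse, then using Proposition \ref{prop:BigProj} on projections onto sectors and Lemma \ref{lem:GatedBridge} on bridges between gated subgraphs, one could produce a vertex of $Y$ better than $y_2$ (lying in $z$'s sector of at least one of $J$ or $K$ without losing ground on the others), contradicting minimality. Once pairwise transversality is established, the characterisation of $\dim_\square(X)$ as the maximum cardinality of a pairwise transverse collection of hyperplanes caps the number of bad hyperplanes by $\dim_\square(X)$, giving $C = 2 \dim_\square(X)$. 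The subtlety specific to the quasi-median setting, as opposed to CAT(0) cube complexes, is that hyperplanes may have more than two sectors, so the exchange argument must carefully track sector positions rather than binary ``sides'' and ensure the replacement candidate lies in $Y$ itself, not merely in $\mathrm{hull}(Y)$ — this is where I expect the principal technical difficulty to lie.
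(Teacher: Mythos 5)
The reduction of the Hausdorff distance to the excess $d(y_1,z)+d(z,y_2)-d(y_1,y_2)$, and the bound of this excess by twice the number of bad hyperplanes, are both fine. The fatal gap is your ``decisive step'': it is not true that some $y_2\in Y$ makes the bad hyperplanes pairwise transverse, nor even that the minimal number of bad hyperplanes is bounded by a function of $\dim_\square(X)$ alone. Take $X$ to be the square grid $\mathbb{Z}^2$ (the quasi-median graph of the right-angled Coxeter group over a square, a median graph with $\dim_\square(X)=2$, where gated $=$ convex $=$ intersection of halfspaces), $Y=\{(0,0),(n+1,-n),(-n,n+1)\}$ and $z=(n,n)$. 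The gated hull of $Y$ is $[-n,n+1]\times[-n,n+1]$, so $z$ lies in it; the unique nearest point of $Y$ to $z$ is $y_1=(0,0)$, and $\mathcal{H}$ consists of $n$ vertical and $n$ horizontal walls. For every $y_2\in Y$ the bad set contains either all $n$ vertical or all $n$ horizontal walls --- a chain of $n$ pairwise \emph{non}-transverse hyperplanes. Hence the best concatenation $y_1\to z\to y_2$ is only a $(1,2n)$-quasigeodesic, and $M(1,2n)$ is not a bound of the required form. Optimising over $y_1$ as well does not help: in $\mathbb{Z}^3$ with $Y=\{(n,0,0),(0,n,0),(0,0,n)\}$ and $z=(n,n,n)$ in the hull $[0,n]^3$, every pair $y_1,y_2\in Y$ gives excess $2n$. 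These $Y$ are Morse (any bounded set is), but with gauge growing in $n$; so the lemma is safe, while your dimension-only bound on $C$ is unattainable.

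The underlying obstruction is that the gated hull is not swept out by geodesics between pairs of points of $Y$: it is produced by iterating intervals and median triangles, and each iteration can move you away from $Y$ by an amount that can only be controlled through the quasiconvexity constant of $Y$, i.e.\ through the Morse gauge. That is exactly how the paper argues: a Morse subspace is $K$-quasiconvex with $K$ determined by the Morse gauge, and Lemma \ref{lem:convexhull} shows the gated hull of a $K$-quasiconvex set lies in the $\mathrm{Ram}(\max(\dim_\square(X),K)+1)$-neighbourhood, by using Ramsey to extract a long chain of hyperplanes separating a hull point $x$ from its nearest point $p\in Y$, producing a witness $y\in Y$ beyond the whole chain, and bounding the chain's length via the median triangle of $(x,y,p)$ together with quasiconvexity. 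If you want to rescue your route you must let $C$ depend on the Morse gauge, at which point you are essentially forced back to a quasiconvexity argument of this kind rather than a single two-point quasigeodesic through $z$.
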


\noindent
The proof is an immediate consequence of the following lemma (proved in \cite[Theorem H]{MR2413337} for uniformly locally finite CAT(0) cube complexes and in \cite[Lemma 4.3]{MoiHypCube} in full generality), where $\mathrm{Ram}(\cdot)$ denotes the \emph{Ramsey number}. Recall that, if $n \geq 0$, $\mathrm{Ram}(n)$ is the smallest integer $k \geq 0$ satisfying the following property: if one colors the edges of a complete graph containing at least $k$ vertices with two colors, it is possible to find a monochromatic complete subgraph containing at least $n$ vertices. Often, it is used to find a subcollection of pairwise disjoint hyperplanes in a collection of hyperplanes of some finite-dimensional CAT(0) cube complex (see for instance \cite[Lemma 3.7]{coningoff}); the same can be done for hyperplanes in quasi-median graphs of finite cubical dimension. 

\begin{lemma}\label{lem:convexhull}
Let $X$ be a quasi-median graph of finite cubical dimension and $S \subset X$ a set of vertices which is $K$-quasiconvex. Then the gated hull of $S$ is included into the $\mathrm{Ram}( \max(\dim(X),K)+1)$-neighborhood of $S$.
\end{lemma}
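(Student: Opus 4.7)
The plan is to argue by contradiction. Suppose that $x$ belongs to the gated hull $H(S)$ with $d(x,S) > N := \mathrm{Ram}(\max(\dim_\square(X), K)+1)$; I want to derive a contradiction. First, pick $s \in S$ minimising $d(x,s)$ and a geodesic $\gamma$ from $x$ to $s$ of length $n > N$. By Theorem \ref{thm:GeodesicsQM}, the successive edges of $\gamma$ are dual to pairwise distinct hyperplanes $H_1, \ldots, H_n$, each separating $x$ from $s$. Since $H(S)$ is gated (hence convex by Lemma \ref{lem:GatedCriterion}), the geodesic $\gamma$ lies in $H(S)$, so every $H_i$ crosses $H(S)$.

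Next, I use the characterisation of gated hulls as intersections of sectors: $H(S)$ equals the intersection of all the sectors that contain $S$. Therefore each $H_i$ has at least two distinct sectors meeting $S$, and I can choose $a_i \in S$ lying outside the sector of $H_i$ containing $s$ (so in particular $a_i$ lies on a different side of $H_i$ than $s$).

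I then run a Ramsey argument on $\{H_1, \ldots, H_n\}$, colouring each pair by whether the two hyperplanes are transverse or not. Because $n > \mathrm{Ram}(\max(\dim_\square(X),K)+1)$, I obtain a monochromatic subfamily of cardinality $\max(\dim_\square(X),K) + 1$. A pairwise transverse family of this size exceeds $\dim_\square(X)$, contradicting the lemma immediately preceding the statement, which identifies $\dim_\square(X)$ with the maximal size of a pairwise transverse collection. Hence the monochromatic subfamily is pairwise non-transverse; after relabelling, I obtain hyperplanes $H_{i_1}, \ldots, H_{i_m}$ with $m > K$, appearing in this order along $\gamma$.

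The last step is to extract a contradiction from the $K$-quasiconvexity of $S$. Being pairwise non-transverse and crossed in order by $\gamma$, the sectors of $H_{i_1}, \ldots, H_{i_m}$ containing $s$ form a strictly decreasing chain, so the vertices $a_{i_1}, \ldots, a_{i_m}$ of $S$ (each chosen outside the corresponding $s$-sector) witness that each $H_{i_j}$ separates some pair of vertices of $S$; by $K$-quasiconvexity, a geodesic from $a_{i_j}$ to $s$ stays in $N_K(S)$ and must cross $H_{i_j}$, so it produces a vertex within $K$ of $S$ lying in the neighborhood $N(H_{i_j})$. Tracking these $m > K$ witness vertices along the nested sector chain and combining them with the gatedness of each $N(H_{i_j})$ (Theorem \ref{thm:BigThmQM}) and the projection properties of Proposition \ref{prop:BigProj}, I get either a vertex of $S$ strictly closer to $x$ than $s$, contradicting the choice of $s$, or a chain of $S$-vertices whose spacing violates the $K$-quasiconvexity hypothesis. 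The main obstacle I expect is precisely this bookkeeping step: in a quasi-median graph a hyperplane may carry more than two sectors, so the nested structure of the chosen hyperplanes must be handled with care, in contrast with the CAT(0) cube complex setting where the analogous argument is standard.
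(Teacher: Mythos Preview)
Your setup through the Ramsey step is essentially the paper's: extract from the $n > N$ hyperplanes separating $x$ from its nearest point $s \in S$ a pairwise non-transverse chain $H_{i_1},\ldots,H_{i_m}$ with $m > K$. The gap is the final step, which you yourself flag as ``the main obstacle''. The two alternatives you list (a vertex of $S$ closer to $x$, or a ``spacing'' violation of quasiconvexity) are not derived; in particular, choosing a separate witness $a_{i_j}\in S$ for each hyperplane leads nowhere, and quasiconvexity constrains geodesics, not the spacing of a chain of $S$-vertices.

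The missing idea is twofold. First, a \emph{single} witness suffices: if $H_{i_1}$ is the hyperplane of the chain closest to $x$, then (since $x$ lies in the gated hull) there is $y \in S$ outside the sector of $H_{i_1}$ containing $s$, and nestedness forces every $H_{i_j}$ to separate $y$ from $s$. Second, apply the \emph{median triangle} $(x',y',p')$ of $(x,y,s)$ (Lemma~\ref{lem:MedianTriangle}). The vertex $p'$ lies on a geodesic from $x$ to $s$, so $d(p',s)=d(p',S)$ by minimality of $s$; it also lies on a geodesic from $y$ to $s$, two points of $S$, so $K$-quasiconvexity gives $d(p',s)\leq K$. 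Now each $H_{i_j}$ separates $x$ from $s$ and $y$ from $s$; tracing the geodesics $x\to x'\to p'\to s$ and $y\to y'\to p'\to s$, it must either separate $p'$ from $s$ or separate two vertices of $\{x',y',p'\}$. The latter hyperplanes are pairwise transverse by Lemma~\ref{lem:MedianTriangle}, so at most one of the (pairwise non-transverse) $H_{i_j}$'s can be of this type, forcing $m-1 \leq d(p',s) \leq K$, the desired contradiction. Your concern about multi-sector hyperplanes is handled automatically once you pass to a single $y$ and the median triangle.
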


\begin{proof}
Let $x \in X$ be a vertex which belongs to the gated hull of $S$, and let $p \in S$ be a vertex of $S$ which minimises the distance to $x$. If $d(p,x) \geq \mathrm{Ram}(n)$ for some $n \geq \dim_\square(X)+1$, then there exists a collection of hyperplanes $J_1, \ldots, J_n$ separating $x$ and $p$ such that, for every $2 \leq i \leq n-1$, $J_i$ separates $J_{i-1}$ and $J_{i+1}$. Because $x$ belongs to the gated hull of $S$, no hyperplane separates $x$ from $S$. Therefore, there exists some $y \in S$ such that $J_1, \ldots, J_n$ separate $p$ and $y$. Let $(x',y',p')$ denote the median triangle of $(x,y,p)$. Because $p'$ belongs to a geodesic between $x$ and $p$ and that $d(x,p)=d(x,S)$, necessarily $d(p',p)=d(p',S)$. On the other hand, $p'$ also belongs to a geodesic between $y,p \in S$, so the $K$-quasiconvexity of $S$ implies $d(p',S) \leq K$, hence $d(p',p) \leq K$. 

\medskip \noindent
Notice that, because a geodesic crosses a hyperplane at most once according to Theorem~\ref{thm:GeodesicsQM}, the hyperplanes $J_1, \ldots, J_n$  either separate $p$ and $p'$ or separate $\{x',y',p'\}$. But we know from Lemma \ref{lem:MedianTriangle} that the hyperplanes separating $\{x',y',p'\}$ are pairwise transverse, so at least $n-1$ hyperplanes among $J_1, \ldots, J_n$ has to separate $p$ and $p'$. We conclude that $n \leq K+1$.
\end{proof}

\noindent
The main statement of this subsection is the following criterion, which we proved in \cite{article3} for CAT(0) cube complexes.

\begin{prop}\label{prop:contracting}
Let $X$ be a quasi-median graph of finite cubical dimension and $Y$ a gated subgraph. The following assertions are equivalent:
\begin{itemize}
	\item[(i)] $Y$ is a contracting subgraph;
	\item[(ii)] $Y$ is a Morse subgraph;
	\item[(iii)] there exists a constant $C \geq 0$ such that, for every flat square $[0,r] \times [0,r] \subset X$ satisfying $[0,r] \times \{0 \} \subset Y$, the side $[0,r] \times \{r\}$ lies in the $C$-neighborhood of $Y$;
	\item[(iv)] there exists a constant $C \geq 0$ such that, for every grid of hyperplanes $(\mathcal{H}, \mathcal{V})$ satisfying $\mathcal{V} \subset \mathcal{H}(Y)$ and $\mathcal{H} \cap \mathcal{H}(Y)= \emptyset$, one has $\min(\# \mathcal{H}, \# \mathcal{V}) < C$. 
\end{itemize}
\end{prop}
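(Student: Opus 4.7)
The plan is to establish the cycle of implications (i) $\Rightarrow$ (ii) $\Rightarrow$ (iii) $\Rightarrow$ (iv) $\Rightarrow$ (i), transferring the strategy used in \cite{article3} for CAT(0) cube complexes to the quasi-median framework. The adaptation is essentially mechanical: hyperplanes still record distance (Theorem~\ref{thm:GeodesicsQM}), projections onto gated subgraphs are $1$-Lipschitz and controlled by hyperplane counts (Proposition~\ref{prop:BigProj}), and Ramsey-type extraction of pairwise transverse subfamilies of hyperplanes goes through thanks to the finite cubical dimension hypothesis (compare Lemma~\ref{lem:convexhull}).

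The implication (i) $\Rightarrow$ (ii) is the standard contracting-implies-Morse argument: along an $(A,B)$-quasigeodesic joining two points of $Y$, one projects sampled vertices onto $Y$ and controls the excursion using the contracting constant together with the $1$-Lipschitz property of Proposition~\ref{prop:BigProj}(iii). For (ii) $\Rightarrow$ (iii), I would observe that in any flat square $[0,r]\times[0,r]$ whose bottom $[0,r]\times\{0\}$ lies in $Y$, the concatenation of the three remaining sides is a $(3,0)$-quasigeodesic between the two vertices $(0,0)$ and $(r,0)$ of $Y$; evaluating the Morse gauge of $Y$ at $(3,0)$ then yields a uniform constant $C$ bounding the distance from $[0,r]\times\{r\}$ to $Y$.

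For (iii) $\Rightarrow$ (iv), I would start from a grid $(\mathcal{H},\mathcal{V})$ with $\mathcal{V}\subset\mathcal{H}(Y)$ and $\mathcal{H}\cap\mathcal{H}(Y)=\emptyset$ and feed it into Proposition~\ref{prop:cycle} applied to a cycle of four gated subgraphs built from $Y$ together with the neighborhoods of two extreme hyperplanes, one in $\mathcal{H}$ and one in $\mathcal{V}$. The output is a flat rectangle whose bottom side lies in $Y$, whose transverse sides are each crossed by the hyperplanes of $\mathcal{V}$, and whose opposite side is separated from $Y$ by the hyperplanes of $\mathcal{H}$, by the last assertion of Proposition~\ref{prop:cycle} combined with Proposition~\ref{prop:BigProj}(i). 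Hypothesis~(iii) then bounds one dimension of the rectangle, yielding the required bound on $\min(\#\mathcal{H},\#\mathcal{V})$.

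The main obstacle is (iv) $\Rightarrow$ (i). If $Y$ fails to be contracting, one constructs two vertices $x,y$ whose projections $p,q$ onto $Y$ are arbitrarily far apart while $d(x,y)$ stays small relative to $d(x,Y)$. By Proposition~\ref{prop:BigProj}(iii), every hyperplane separating $p$ from $q$ crosses $Y$, whereas Proposition~\ref{prop:BigProj}(i) ensures that every hyperplane separating $x$ from $p$ is disjoint from $Y$. The delicate step is to show that a definite proportion of these two families must be pairwise transverse: any non-transverse pair produces a nested configuration, and too many such nestings would force $d(x,y)$ to exceed the budget $d(x,p)+d(p,q)+d(q,y)$ after subtracting the grid contribution. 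A Ramsey extraction bounded by $\dim_\square(X)$, in the spirit of Lemma~\ref{lem:convexhull}, then produces a genuine grid of arbitrary size with verticals in $\mathcal{H}(Y)$ and horizontals disjoint from $\mathcal{H}(Y)$, contradicting~(iv). This is the step requiring the most care when transferring from CAT(0) cube complexes, but no new phenomenon arises: hyperplane separation and the gatedness of sectors (Theorem~\ref{thm:BigThmQM}) behave identically in the quasi-median setting, and the presence of triangles affects neither the hyperplane equivalence relation nor the sector geometry used in the argument.
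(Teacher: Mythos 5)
Your proposal is correct and follows essentially the same route as the paper: the same cycle (i)~$\Rightarrow$~(ii)~$\Rightarrow$~(iii)~$\Rightarrow$~(iv)~$\Rightarrow$~(i), with the three free sides of the flat square as a $(3,0)$-quasigeodesic, Proposition~\ref{prop:cycle} applied to a cycle of gated subgraphs for (iii)~$\Rightarrow$~(iv), and a Ramsey extraction of a grid contradicting (iv) for the last step. The only cosmetic difference is in (iv)~$\Rightarrow$~(i): the paper works directly with the hyperplanes separating $\{x,y\}$ jointly from $Y$ (there are at least $L$ of them since $d(x,y)<d(x,Y)-L$, and for these transversality with the verticals is automatic), which is exactly what your counting of non-transverse pairs reduces to.
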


\noindent
Recall that, given a metric space $S$, a subspace $R \subset S$ is \emph{contracting} if there exists some $D \geq 0$ such that the nearest-point projection of any ball disjoint from $R$ onto $R$ has diameter at most $D$. A slight variation of this definition is:

\begin{lemma}\label{lem:ContractingMinus}\emph{\cite[Lemma 2.18]{article3}}
Let $X$ be a geodesic metric space, $S \subset X$ a subspace and $L \geq 0$ a constant. Then $S$ is contracting if and only if there exists $C \geq 0$ such that, for all point $x, y \in X$ satisfying $d(x, y) < d(x, S) - L$, the nearest-point projection of $\{x, y \}$ onto $S$ has diameter at most $C$.
\end{lemma}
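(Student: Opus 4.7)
The plan is to treat the two implications separately, with the forward direction being essentially immediate and the reverse direction being the main content.

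For the forward direction, assume $S$ is contracting with constant $D$. Given $x, y \in X$ with $d(x, y) < d(x, S) - L$, both $x$ and $y$ lie in the open ball $B(x, d(x, S))$, since $L \geq 0$. This ball is disjoint from $S$ by the very definition of $d(x, S)$, so the contracting hypothesis bounds $\mathrm{diam}(\pi(\{x, y\}))$ by $D$, and we may take $C = D$.

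For the reverse direction, assume the existence of $C$ as in the statement. Given a ball $B(x_0, r)$ disjoint from $S$ (so $d(x_0, S) \geq r$), the goal is to bound $\mathrm{diam}(\pi(B(x_0, r)))$ by a constant depending only on $L$ and $C$. I would split into two regimes. When $d(x_0, S) > r + L$, every $x \in B(x_0, r)$ satisfies $d(x_0, x) \leq r < d(x_0, S) - L$, so the hypothesis directly yields $d(\pi(x_0), \pi(x)) \leq C$, and hence the projection has diameter at most $2C$.

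The delicate regime is $d(x_0, S) \leq r + L$, where the hypothesis cannot be applied centred at $x_0$: this is where the main obstacle lies. My plan is to take any $x, y \in B$ and subdivide a geodesic from $x$ to $y$ into successive points $x = z_0, z_1, \ldots, z_n = y$ with $d(z_i, z_{i+1}) < d(z_i, S) - L$ at each step, so that the hypothesis gives $d(\pi(z_i), \pi(z_{i+1})) \leq C$ and therefore $d(\pi(x), \pi(y)) \leq nC$. The key difficulty is to show that $n$ can be chosen depending only on $L$ and $C$, which in particular requires $d(z_i, S)$ to stay bounded below by $L + \varepsilon$ for some $\varepsilon > 0$ so that the step sizes do not shrink to zero. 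To control this one exploits the asymmetry of the hypothesis: the portion of a geodesic within the $L$-neighbourhood of $S$ contributes a projection displacement that is itself bounded in terms of $L$ via the triangle inequality, while the complementary portion lies in $\{d(\cdot,S)>L\}$ where the iteration converges in finitely many steps. Combining both regimes yields a uniform bound $D = D(L, C)$ and completes the equivalence.
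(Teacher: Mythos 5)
The forward direction and the regime $d(x_0,S) > r+L$ are fine, but the remaining regime is not actually proved, and the gap is genuine: the two claims you invoke to close it are unjustified, and one of them fails for the iteration as you set it up. First, for two points $z,z'$ in the $L$-neighbourhood of $S$ the triangle inequality only gives $d(\pi(z),\pi(z'))\le 2L+d(z,z')$, which is \emph{not} bounded in terms of $L$ alone unless you separately bound $d(z,z')$; a priori the portion of your geodesic lying in that neighbourhood could be as long as the whole geodesic. Second, on the region $\{d(\cdot,S)>L\}$ the admissible step size $d(z_i,S)-L$ has no uniform positive lower bound, so the greedy subdivision need not terminate, and even when it does the number of steps $n$ --- hence your bound $nC$ --- depends on the ball and not only on $L$ and $C$, which defeats the purpose. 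There is also a preliminary issue: for an arbitrary pair $x,y\in B(x_0,r)$ you have no control of $d(x,y)$ against $d(x,S)$ (the latter can be arbitrarily small), so you should first reduce to pairs consisting of the centre $x_0$ and a point $x$ of the ball, at the cost of a factor $2$ on the diameter. (The paper does not reprove this lemma, so there is no in-text argument to compare with; the above is what a correct argument must supply.)

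All three problems are repaired by one inequality missing from your sketch: if $\gamma$ is a geodesic from $x_0$ to $x$ with $a=d(x_0,x)<b:=d(x_0,S)$, then $d(\gamma(t),S)\ge b-t$ for all $t$. Consequently no genuine iteration is needed. Assuming $b>L+1$ (otherwise every quantity in sight is at most $4b\le 4L+4$), set $w=\gamma\bigl(\min(a,\,b-L-1)\bigr)$. Then $d(x_0,w)<b-L$, so the hypothesis gives $d(\pi(x_0),\pi(w))\le C$; moreover $d(w,x)<L+1$ and $d(w,S)\ge L+1$. Now split once: if $d(w,S)>2L+1$, the hypothesis applies to the pair $(w,x)$ and gives $d(\pi(w),\pi(x))\le C$; if $d(w,S)\le 2L+1$, then $d(x,S)\le d(x,w)+d(w,S)<3L+2$ and the triangle inequality gives $d(\pi(w),\pi(x))\le d(w,S)+d(w,x)+d(x,S)<6L+4$. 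In all cases $d(\pi(x_0),\pi(x))\le C+\max(C,6L+4)$, a constant depending only on $C$ and $L$, which is exactly what the contraction property requires. Your two-regime skeleton is the right one, but the "delicate regime" needs this explicit two-step argument with the dichotomy on $d(w,S)$, not an unbounded subdivision.
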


\noindent
We are now ready to prove our proposition.

\begin{proof}[Proof of Proposition \ref{prop:contracting}.]
It is proved in \cite[Lemma 3.3]{Sultan} that, in any geodesic metric spaces, a contracting quasi-geodesic always defines a Morse subspace. In fact, the proof does not depend on the fact that the contracting subspace we are looking at is a quasi-geodesic, so that being a contracting subspace always implies being a Morse subspace. In particular, the implication $(i) \Rightarrow (ii)$ holds.

\medskip \noindent
Assume that $Y$ is a Morse subgraph and let $C$ be such that any $(3,0)$-quasigeodesic between two points of $Y$ stays in the $C$-neighborhood of $Y$. Now let $[0,r] \times [0,r] \subset X$ be a flat square satisfying $[0,r] \times \{ 0 \} \subset Y$. Because flat squares are isometrically embedded by definition, it follows that the path
$$\left( \{0\} \times [0,r] \right) \cup \left( [0,r] \times \{r\} \right) \cup \left( \{r\} \times [0,r] \right)$$
defines a $(3,0)$-quasigeodesic between the two vertices $(0,0)$ and $(r,r)$ of $Y$. We conclude that $[0,r] \times \{r\}$ lies in the $C$-neighborhood of $Y$, showing the implication $(ii) \Rightarrow (iii)$. 

\medskip \noindent
Assume that $(iii)$ holds and let $C$ be the corresponding constant. Now, let $(\mathcal{H}, \mathcal{V})$ be a grid of hyperplanes such that $\mathcal{V} \subset \mathcal{H}(Y)$ and $\mathcal{H} \cap \mathcal{H}(Y)= \emptyset$. For convenience, write $\mathcal{V}=\{ V_1, \ldots, V_n \}$ such that $V_i$ separates $V_{i-1}$ and $V_{i+1}$ for every $2 \leq i \leq n-1$; and $\mathcal{H}= \{ H_1, \ldots, H_m \}$ such that $H_i$ separates $H_{i-1}$ and $H_{i+1}$ for every $2 \leq i \leq m-1$, and such that $H_1$ separates $Y$ and $H_m$. Consider the cycle of subgraphs $(N(V_1), Y, N(V_n), N(H_m))$. According to Proposition \ref{prop:cycle}, there exists a flat rectangle $[0,p] \times [0,q] \subset X$ such that $[0,p] \times \{ 0 \} \subset Y$, $\{0 \} \times [0,q] \subset N(V_1)$, $\{p\} \times [0,q] \subset N(V_n)$ and $[0,p] \times \{q \} \subset N(H_m)$, and such that the hyperplanes intersecting $\{0\} \times [0,q]$ do not cross $Y$. 

\medskip \noindent
First, notice that, as the hyperplanes $H_1, \ldots, H_{m-1}$ do not cross $Y$, they must separate $\{0\} \times [0,q]$ from $Y$, hence $q \geq m-1$. Also, as $V_2, \ldots, V_{n-1}$ separate $V_1$ and $V_n$, necessarily $p \geq n-2$. Thus, we have shown the following statement, which we record for future use:

\begin{fact}\label{fact:FlatRectangle}
Let $X$ be a quasi-median graph, $Y \subset X$ a gated subgraph, and $(\mathcal{H}, \mathcal{V})$ a grid of hyperplanes satisfying $\mathcal{V} \subset \mathcal{H}(Y)$ and $\mathcal{H} \cap \mathcal{H}(Y)= \emptyset$. Then there exist two integers $p \geq \# \mathcal{V}-2$, $q \geq \# \mathcal{H} -1$ and a flat rectangle $[0,p] \times [0,q] \subset X$ such that $[0,p] \times \{ 0 \} \subset Y$ and such that the hyperplanes intersecting $\{0\} \times [0,q]$ do not cross $Y$. 
\end{fact}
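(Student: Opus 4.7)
The plan is to apply Proposition \ref{prop:cycle} to a carefully chosen cycle of four gated subgraphs. The naive attempt would be the cycle $(N(V_1), Y, N(V_n), N(H_m))$, which puts $Y$ in the slot $Y_2$ (the bottom). But the non-crossing guarantee of Proposition \ref{prop:cycle} is for the $Y_4$ slot (the top), so the naive cycle only yields that the left-side hyperplanes avoid $N(H_m)$, not $Y$ itself. The remedy is to use the cycle $(N(V_1), N(H_m), N(V_n), Y)$ instead, so that $Y$ occupies the $Y_4$ position, and then to reflect the resulting rectangle vertically to bring $Y$ down to the bottom side.

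First I would enumerate $\mathcal{V} = \{V_1, \ldots, V_n\}$ and $\mathcal{H} = \{H_1, \ldots, H_m\}$ according to the linear orderings they inherit from the grid (each family consists of pairwise non-transverse hyperplanes, which are therefore totally ordered), choosing labels so that $H_1$ lies on the same side of $H_m$ as $Y$. The four consecutive intersections in the cycle are then non-empty: $N(V_1) \cap N(H_m)$ and $N(V_n) \cap N(H_m)$ each contain a square because $V_1, V_n$ are transverse to $H_m$ in the grid, while $N(V_1) \cap Y$ and $N(V_n) \cap Y$ are non-empty since $V_1, V_n$ cross $Y$ by hypothesis.

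Proposition \ref{prop:cycle} then produces a flat rectangle $[0,p] \times [0,q]$ satisfying $[0,p] \times \{0\} \subset N(H_m)$, $[0,p] \times \{q\} \subset Y$, $\{0\} \times [0,q] \subset N(V_1)$, $\{p\} \times [0,q] \subset N(V_n)$, together with the guarantee that every hyperplane crossing $\{0\} \times [0,q]$ fails to cross $Y$. Applying the vertical reflection $(x,y) \mapsto (x, q-y)$ swaps bottom and top while fixing the left side and its crossing hyperplanes, producing a rectangle with the desired form. For the dimensions, each $V_i$ with $2 \leq i \leq n-1$ separates $V_1$ from $V_n$, hence $N(V_1)$ from $N(V_n)$, so it separates left from right and must cross the bottom side, yielding $p \geq n-2$. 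Each $H_i$ with $1 \leq i \leq m-1$ does not cross $Y$ (hypothesis) and does not cross $N(H_m)$ either (in the grid it is distinct from and non-transverse to $H_m$), so it separates $Y$ from $N(H_m)$ and hence crosses the left side, yielding $q \geq m-1$.

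The only delicate step is this choice of cycle: Proposition \ref{prop:cycle}'s non-crossing property is attached specifically to the $Y_4$-slot, and the whole point of the Fact is to get the analogous statement for $Y$. Once this alignment is made and the rectangle is flipped, the remainder is a direct counting of transverse hyperplanes via the linear orderings of $\mathcal{V}$ and $\mathcal{H}$.
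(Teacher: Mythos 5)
Your proof is correct and follows the same strategy as the paper: apply Proposition \ref{prop:cycle} to a four-cycle of gated subgraphs built from $N(V_1)$, $N(V_n)$, $N(H_m)$ and $Y$, then count the hyperplanes forced to cross the bottom and left sides. The one genuine difference is the ordering of the cycle, and here your version is the more careful one. The paper applies Proposition \ref{prop:cycle} to $(N(V_1), Y, N(V_n), N(H_m))$, which places $Y$ in the $Y_2$ slot, and then asserts that the hyperplanes crossing $\{0\}\times[0,q]$ avoid $Y$; but the ``moreover'' clause of Proposition \ref{prop:cycle}, as stated and as justified in its sketch of proof (the left side joins $a$ to its projection onto $Y_4$, so Proposition \ref{prop:BigProj} only gives avoidance of $Y_4$), guarantees avoidance of $Y_4=N(H_m)$, not of $Y_2$. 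Your rotation of the cycle so that $Y$ occupies the $Y_4$ slot, followed by the vertical flip (which preserves the left side and the set of hyperplanes it crosses), is exactly the adjustment needed to make the invocation literal, and it costs nothing since the rotated tuple is still a cycle of gated subgraphs, the four consecutive intersections being non-empty for the reasons you give.

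One caveat, which applies equally to your write-up and to the paper's: the inference ``$H_i$ crosses neither $Y$ nor $N(H_m)$, hence separates them'' is not valid on its own, since $Y$ and $N(H_m)$ could lie on the same side of $H_i$. What is really needed is that the chain $\mathcal{H}$ can be enumerated so that $H_1$ (and hence each $H_i$ with $i<m$) separates $Y$ from $H_m$, i.e.\ that $Y$ sits at one end of the chain; the paper posits such an enumeration without comment, and your normalization ``$H_1$ on the same side of $H_m$ as $Y$'' does not by itself force it. Since you are reproducing an assumption already implicit in the source, this is not a defect of your argument relative to the paper's, but it is the one point where both proofs are silently using more than the stated hypotheses on the grid.
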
 

\noindent
Next, because the hyperplanes intersecting $\{0\} \times [0,q]$ do not cross $Y$, the distance from a vertex of $[0,p] \times \{k\}$ to $Y$ must be $k$ for every $0 \leq k \leq q$. Therefore, by applying $(iii)$ to the flat square $[0,\min(p,q)] \times [0,\min(p,q)]$, we deduce that $\min(p,q) \leq C$. 

\medskip \noindent
Consequently, we have $\min(n,m) \leq \min(p,q) +2 \leq C+2$, concluding the proof of the implication $(iii) \Rightarrow (iv)$. 

\medskip \noindent
Finally, let us turn to the proof of $(iv) \Rightarrow (i)$. So we suppose that there exists a constant $C \geq 0$ such that, for every grid of hyperplanes $(\mathcal{H}, \mathcal{V})$ satisfying $\mathcal{V} \subset \mathcal{H}(Y)$ and $\mathcal{H} \cap \mathcal{H}(Y)= \emptyset$, one has $\min(\# \mathcal{H}, \# \mathcal{V}) < C$. And we fix two vertices $x,y \in X$ satisfying $d(x,y) < d(x,Y)-L$ where $L= \mathrm{Ram}( \max(C, \dim_\square(X))+1)$. We claim that the distance between the projections $x'$ and $y'$ respectively of $x$ and $y$ onto $Y$ are within distance $L$. 

\medskip \noindent
Because there exist $d(x,Y)$ hyperplanes separating $x$ from $Y$, and that there exist less than $d(x,Y)-L$ hyperplanes separating $x$ and $y$, necessarily there must exist at least $L$ hyperplanes separating $\{x,y\}$ from $Y$. Consequently, there exists a collection $\mathcal{H}$ of at least $C+1$ pairwise non-transverse hyperplanes separating $\{x,y\}$ and $Y$. Let $\mathcal{V}_0$ denote the collection of the hyperplanes separating $x'$ and $y'$. If $\# \mathcal{V}_0 \leq \mathrm{Ram}(\dim_\square(X)+1)$ then we deduce from Theorem \ref{thm:GeodesicsQM} that
$$d(x',y')= \# \mathcal{V}_0 \leq \mathrm{Ram}(\dim_\square(X)+1) \leq L$$
and we are done. Otherwise, let $k > \dim_\square(X)$ be the smallest integer such that $\# \mathcal{V}_0 \geq \mathrm{Ram}(k)$. So $\mathcal{V}_0$ contains a subcollection $\mathcal{V}$ of $k$ pairwise non-transverse hyperplanes. Notice that, as a consequence of Proposition \ref{prop:BigProj}, the hyperplanes of $\mathcal{V}$ separates $\{x,x'\}$ and $\{y,y'\}$, which implies that any hyperplane of $\mathcal{H}$ must be transverse to any hyperplane of $\mathcal{V}$. In other words, $(\mathcal{H}, \mathcal{V})$ is a grid of hyperplanes. By applying $(iv)$, we find that $\min(\# \mathcal{H}, \# \mathcal{V}) \leq C$. But we already know that $\# \mathcal{H} \geq C+1$, so $k= \# \mathcal{V} \leq C$. Consequently,
$$d(x',y')= \# \mathcal{V}_0 \leq \mathrm{Ram}(k+1) \leq \mathrm{Ram}(C+1) \leq L.$$
We conclude thanks to Lemma \ref{lem:ContractingMinus} that $Y$ is contracting.
\end{proof}

\noindent
As an application of Proposition \ref{prop:contracting}, we are able to determine precisely when a parabolic subgroup of a graph product defines a Morse subgraph in the corresponding quasi-median graph. More precisely:

\begin{prop}\label{prop:ParabolicMorse}
Let $\Gamma$ be a simplicial graph, $\Lambda \subset \Gamma$ an induced subgraph and $\mathcal{G}$ a collection of groups indexed by $V(\Gamma)$. Then $\langle \Lambda \rangle \subset X(\Gamma, \mathcal{G})$ is a Morse subgraph if and only if $\Lambda$ is square-complete.
\end{prop}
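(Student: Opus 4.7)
The plan is to apply the grid criterion \emph{(iv)} of Proposition \ref{prop:contracting}, together with the description of flat rectangles in $X(\Gamma,\mathcal{G})$ provided by Lemma \ref{lem:FlatRinX}. The basic tool I will use throughout is the following consequence of Proposition \ref{prop:HypInX}: a hyperplane $gJ_v$ crosses $\langle \Lambda\rangle$ if and only if $v\in \Lambda$ and $g\in \langle \Lambda\rangle\langle \mathrm{star}(v)\rangle$ (read off by looking at which edges of $N(gJ_v)=g\langle \mathrm{star}(v)\rangle$ lie in $\langle \Lambda\rangle$).

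For the direction ``Morse $\Rightarrow$ square-complete'', I argue contrapositively. Suppose $\Lambda$ is not square-complete, so there exists an induced 4-cycle $u_1,v_1,u_2,v_2$ of $\Gamma$ with $u_1,u_2\in \Lambda$ opposite, and, without loss of generality, $v_1\notin \Lambda$. Choose non-trivial $a_i\in G_{u_i}$ and $b_i\in G_{v_i}$. Applying Lemma \ref{lem:FlatRinX} to the join $\{u_1,u_2\}\ast\{v_1,v_2\}$ with the (reduced) words $(a_1a_2)^n$ and $(b_1b_2)^n$ produces a flat square $[0,2n]\times[0,2n]$ whose $g$-side lies in $\langle u_1,u_2\rangle\subset \langle \Lambda\rangle$. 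I then check that none of the $2n$ hyperplanes crossing the $h$-side crosses $\langle \Lambda\rangle$: the $v_1$-labelled ones are excluded by label, while a $v_2$-labelled one takes the form $(b_1b_2)^{k-1}b_1J_{v_2}$, whose conjugating element has $v_1$ in its support. Since $v_1$ lies neither in $\Lambda$ nor in $\mathrm{star}(v_2)$ (as $v_1\not\sim v_2$), the support of any element of $\langle \Lambda\rangle\langle \mathrm{star}(v_2)\rangle$ avoids $v_1$, so no such decomposition exists. This yields a grid of hyperplanes violating criterion \emph{(iv)} of Proposition \ref{prop:contracting} for large $n$, and hence $\langle \Lambda\rangle$ is not Morse.

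Conversely, assume $\Lambda$ is square-complete and consider any grid $(\mathcal{H},\mathcal{V})$ with $\mathcal{V}\subset \mathcal{H}(\langle \Lambda\rangle)$ and $\mathcal{H}\cap \mathcal{H}(\langle \Lambda\rangle)=\emptyset$. The Fact inside the proof of Proposition \ref{prop:contracting} produces a flat rectangle $[0,p]\times[0,q]$ with $[0,p]\times\{0\}\subset \langle \Lambda\rangle$, $p\ge \#\mathcal{V}-2$, $q\ge \#\mathcal{H}-1$, and whose $h$-direction hyperplanes do not cross $\langle \Lambda\rangle$. Assuming $\min(p,q)>\mathrm{clique}(\Gamma)$, Lemma \ref{lem:FlatRinX} yields a join $\Lambda_1\ast \Lambda_2$ with both factors non-complete and $\Lambda_1\subset \Lambda$, so that $\Lambda_1$ contains a non-adjacent pair $u_1,u_2\in \Lambda$. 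The key combinatorial step is: every $v\in \Lambda_2\setminus\Lambda$ must be universal in $\Lambda_2$, because any non-neighbour $v'\in \Lambda_2$ would form the induced square $\{u_1,v,u_2,v'\}$ whose opposite pair $u_1,u_2$ lies in $\Lambda$, forcing $v\in \Lambda$. Therefore $\Lambda_2^c:=\Lambda_2\setminus \Lambda$ is a clique of $\Gamma$ of cardinality at most $\mathrm{clique}(\Gamma)$. Writing an arbitrary prefix $h_1\cdots h_{j-1}$ as a product $y\cdot c$ with $y$ supported in $\Lambda_2\cap \Lambda$ and $c$ supported in $\Lambda_2^c$ (possible because $\Lambda_2^c$-letters commute with all of $\langle \Lambda_2\rangle$), one has $y\in \langle \Lambda\rangle$ and $c\in \langle \Lambda_2^c\rangle\subset \langle \mathrm{star}(v_j)\rangle$ (using $\Lambda_2^c\subset \mathrm{star}(v_j)$); hence any $h$-hyperplane labelled by some $v_j\in \Lambda_2\cap \Lambda$ would cross $\langle \Lambda\rangle$, a contradiction. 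Thus all labels of the $h$-word lie in $\Lambda_2^c$, forcing $q\le |\Lambda_2^c|\le \mathrm{clique}(\Gamma)$ and bounding $\#\mathcal{H}$; criterion \emph{(iv)} then gives the Morse property.

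The main obstacle is the combinatorial identification in the converse: recognizing that any vertex of $\Lambda_2$ outside $\Lambda$ is forced by square-completeness to be universal in $\Lambda_2$, and then performing the support decomposition of the prefix that exploits this universality to land inside $\langle \Lambda\rangle\langle \mathrm{star}(v_j)\rangle$. The forward direction, while more elementary, also requires the support-based observation that a $v_2$-labelled hyperplane can fail to cross $\langle \Lambda\rangle$ even when $v_2\in \Lambda$, as soon as a $v_1$-letter has entered the conjugating element.
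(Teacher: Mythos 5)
Your proof is correct and follows essentially the same route as the paper: both directions rest on criterion (iv) of Proposition \ref{prop:contracting} together with Fact \ref{fact:FlatRectangle} and Lemma \ref{lem:FlatRinX}, and the decisive combinatorial point in each is an induced square with one opposite pair coming from the $\Lambda$-side of a flat rectangle and another vertex coming from the perpendicular side. The only differences are organizational: the paper proves ``square-complete $\Rightarrow$ Morse'' contrapositively by exhibiting a single offending square (whereas you argue directly, showing $\Lambda_2\setminus\Lambda$ is a clique of vertices universal in $\Lambda_2$ and bounding $q$), and in the other direction the paper takes only the $a$-labelled hyperplanes in $\mathcal{H}_n$, avoiding the support computation you need for the $v_2$-labelled ones -- but your support criterion for when $gJ_v$ crosses $\langle\Lambda\rangle$ is a correct consequence of Proposition \ref{prop:HypInX}, so both variants go through.
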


\begin{proof}
Suppose that $\langle \Lambda \rangle$ is not a Morse subgraph. As a consequence of Proposition \ref{prop:contracting} and Fact \ref{fact:FlatRectangle}, there exist two integers $p,q > \mathrm{clique}(\Gamma)$ and a flat rectangle $[0,p] \times [0,q] \subset X(\Gamma, \mathcal{G})$ such that $[0,p] \times \{ 0 \} \subset \langle \Lambda \rangle$ and such that a hyperplane intersecting $\{0\} \times [0,q]$ does not cross $\langle \Lambda \rangle$. Up to translating by an element of the subgroup $\langle \Lambda \rangle$, we may suppose without loss of generality that $(0,0)=1$. 

\medskip \noindent
Let $a_1 \cdots a_q$ denote the reduced word labelling the geodesic $\{0\} \times [0,q]$ (from $(0,0)$ to $(0,q)$), where $a_1 \in G_{u_1}, \ldots, a_q \in G_{u_q}$ are elements and $u_1, \ldots, u_q \in V(\Gamma)$ vertices. Because $q > \mathrm{clique}(\Gamma) $, there must exist $1 \leq i <j \leq q$ such that $u_i$ and $u_j$ are not adjacent in $\Gamma$. Without loss of generality, we may suppose that $u_i$ is adjacent to $u_k$ for every $1 \leq k <i$. It follows from Proposition \ref{prop:HypInX} that $a_1 \cdots a_{i-1}J_{u_i}=J_{u_i}$. Since $1=(0,0) \in \langle \Lambda \rangle$ but $J_{u_i} \notin  \mathcal{H}(\langle \Lambda \rangle)$, we must have $u_i \notin \Lambda$. 

\medskip \noindent
Similarly, because $p > \mathrm{clique}(\Gamma)$, there must exist two edges of $[0,p] \times \{ 0 \} \subset \langle \Lambda \rangle$ labelled by non-adjacent vertices of $\Lambda$, say $u$ and $v$. By noticing that any hyperplane intersecting $[0,p] \times \{0\}$ must be transverse to any hyperplane intersecting $\{0\} \times [0,q]$, it follows that $u$ and $v$ are adjacent to both $a_i$ and $a_j$. In other words, $a_i,a_j,u,v$ define an induced square of $\Gamma$ such that $u,v \in \Lambda$ are diametrically opposite but $a_i \notin \Lambda$. Thus, we have proved that $\Lambda$ is not square-complete.

\medskip \noindent
Conversely, suppose that there exists some induced square in $\Gamma$ with two diametrically opposite vertices $u$ and $v$ in $\Lambda$ but with one of its two other vertices, say $a$, not in $\Lambda$. Let $b$ denote the fourth vertex of our square and fix four non-trivial elements $\alpha \in G_a$, $\beta \in G_b$, $\mu \in G_u$, $\nu \in G_v$. Consider the two infinite geodesic rays
$$1, \ \mu, \ \mu \nu, \ (\mu \nu) \mu, \ (\mu \nu)^2, \ldots, \ (\mu \nu)^n, \ldots$$
and
$$1, \ \alpha, \ \alpha \beta, \ (\alpha \beta) \alpha, \ (\alpha \beta)^2, \ldots, \ (\alpha \beta)^n, \ldots$$
say $r_1$ and $r_2$ respectively. Since $u$ and $v$ commute with both $a$ and $b$, it follows that $r_1$ and $r_2$ bound a copy of $[0,+ \infty) \times [0,+ \infty)$ (which is generated by the vertices $\zeta \xi$ where $\zeta$ and $\xi$ are prefixes of the infinite words $(\mu \nu)^\infty$ and $(\alpha \beta)^{\infty}$ respectively). As a consequence, for every $n \geq 1$, any hyperplane of $\mathcal{H}_n= \{ (\alpha \beta)^kJ_a \mid k \leq n \}$ is transverse to any hyperplane of $\mathcal{V}_n = \{ (\mu \nu)^k J_u \mid k \leq n\}$. Moreover, notice that $\mathcal{H}_n$ and $\mathcal{V}_n$ do not contain facing triples since they are collections of hyperplanes transverse to the geodesic rays $r_2$ and $r_1$ respectively; and $\mathcal{H}_n \cap \mathcal{H}(\langle \Lambda \rangle) = \emptyset$ since $a \notin \Lambda$; and of course $\mathcal{V}_n \subset \mathcal{H}(\langle \Lambda \rangle)$. It follows from Proposition \ref{prop:contracting} that $\langle \Lambda \rangle$ is not a Morse subgraph. 
\end{proof}

\noindent
As a particular case, one gets the following statement, which generalises the case of right-angled Coxeter groups proved in \cite[Proposition 4.9]{MoiHypCube}. 

\begin{cor}
Let $\Gamma$ be a simplicial graph, $\Lambda \subset \Gamma$ an induced subgraph and $\mathcal{G}$ a collection of finite groups indexed by $V(\Gamma)$. Then $\langle \Lambda \rangle$ is a Morse subgroup if and only if $\Lambda$ is square-complete.
\end{cor}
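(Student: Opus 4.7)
The plan is to reduce the corollary directly to Proposition \ref{prop:ParabolicMorse} by observing that the two notions of ``Morse'' involved agree in the present setting. Recall that $X(\Gamma,\mathcal{G})$ is defined as the Cayley graph of $\Gamma\mathcal{G}$ with respect to the generating set $\bigcup_{G\in\mathcal{G}} G\setminus\{1\}$. When every group in $\mathcal{G}$ is finite, this generating set is itself finite, so $X(\Gamma,\mathcal{G})$ is a Cayley graph of $\Gamma\mathcal{G}$ built from a finite generating set. In particular, being a Morse subgroup of $\Gamma\mathcal{G}$ is in this case literally the same thing as being a Morse subgraph of $X(\Gamma,\mathcal{G})$ (up to the standard observation that the Morse property of a subset of a geodesic metric space is invariant under quasi-isometry, hence independent of the specific finite generating set chosen).

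Granting this identification, the corollary is immediate from Proposition \ref{prop:ParabolicMorse} applied to $\Lambda\subset\Gamma$ and $\mathcal{G}$: $\langle\Lambda\rangle$, viewed as a subgraph of $X(\Gamma,\mathcal{G})$, is Morse if and only if $\Lambda$ is square-complete.

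Since there is really nothing else to do, I do not expect any obstacle. The only point deserving a short justification is the quasi-isometry invariance of the Morse property, but this is standard: if $f:X\to X'$ is a quasi-isometry between two Cayley graphs of $\Gamma\mathcal{G}$ built from finite generating sets, then the image of any $(A,B)$-quasigeodesic in $X'$ is an $(A',B')$-quasigeodesic in $X$ for constants depending only on $f$, and one transports the Morse gauge accordingly, so Morse subspaces correspond to Morse subspaces under $f$ and finite Hausdorff distance changes nothing.
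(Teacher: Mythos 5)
Your proposal is correct and matches the paper's own argument: the paper also deduces the corollary directly from Proposition \ref{prop:ParabolicMorse}, using the observation that finiteness of the vertex-groups makes $X(\Gamma,\mathcal{G})$ a Cayley graph of $\Gamma\mathcal{G}$ with respect to a finite generating set. Your extra remark on the quasi-isometry invariance of the Morse property is the standard justification left implicit in the paper.
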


\begin{proof}
The corollary is a direct consequence of Proposition \ref{prop:ParabolicMorse} and of the fact that, because vertex-groups are finite, $X(\Gamma, \mathcal{G})$ is a Cayley graph of $\Gamma \mathcal{G}$ constructed from a finite generating set. 
\end{proof}

\noindent
As an other application of Proposition \ref{prop:contracting}, we prove the following observation, which will be fundamental in the next section:

\begin{lemma}\label{lem:InterInclusion}
Let $X$ be a quasi-median graph of finite cubical dimension and $Y$ a Morse gated subgraph. There exists a constant $C \geq 1$ such that, for every gated subgraph $P$ which decomposes as a Cartesian product of two essential unbounded quasi-median graphs, if $| \mathcal{H}(Y) \cap \mathcal{H}(P)| \geq C$ and $Y \cap P \neq \emptyset$ then $P \subset Y$.
\end{lemma}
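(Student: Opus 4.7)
The plan is to derive a contradiction from the characterization of Morse gated subgraphs in Proposition \ref{prop:contracting}(iv) by producing a grid of hyperplanes too large for the Morse gauge of $Y$. Let $C_0$ denote the constant provided by Proposition \ref{prop:contracting}(iv) applied to $Y$, set $R := \mathrm{Ram}(\max(\dim_\square X, C_0)+1)$, take $C := 2R$, and decompose $P = P_1 \times P_2$. Fix $y_0 = (y_0^1,y_0^2) \in Y \cap P$, write $\mathcal{V}_k := \mathcal{H}(Y) \cap \mathcal{H}(P_k)$, and assume towards a contradiction that $P \not\subseteq Y$. If $\mathcal{H}(P) \subseteq \mathcal{H}(Y)$ held, any $P$-geodesic from $y_0$ to a vertex $p \in P$ would cross only hyperplanes of $\mathcal{H}(Y)$, forcing $p = \pi_Y(p) \in Y$ via Proposition \ref{prop:BigProj}(i); hence $\mathcal{H}(P_k) \not\subseteq \mathcal{H}(Y)$ for some $k$. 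Combining this with $|\mathcal{V}_1|+|\mathcal{V}_2| \geq 2R$ and with the fact that each essential unbounded factor carries infinitely many hyperplanes yields distinct indices $i,j \in \{1,2\}$ with $|\mathcal{V}_i| \geq R$ and $\mathcal{H}(P_j) \not\subseteq \mathcal{H}(Y)$.

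Applying Ramsey to $\mathcal{V}_i$ (and ruling out the $(\dim_\square X + 1)$-transverse alternative by the cubical dimension bound) extracts a family $\mathcal{V} \subseteq \mathcal{V}_i$ of $C_0+1$ pairwise non-transverse hyperplanes of $X$. For the orthogonal family $\mathcal{H}$, I work inside the gated slice $B := \{y_0^{3-j}\} \times P_j \cong P_j$, noting that $\mathcal{H}(P_j) \not\subseteq \mathcal{H}(Y)$ forces $B \not\subseteq Y$. Choosing $b_0 \in B \setminus Y$ and using Proposition \ref{prop:BigProj}(i) to see that $\pi_Y(b_0) \in B$ (every hyperplane separating $b_0$ from $Y$ separates two vertices of $B$, hence crosses $B$), the hyperplanes between $b_0$ and $\pi_Y(b_0)$ furnish $H^* \in \mathcal{H}(B) \setminus \mathcal{H}(Y)$. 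Essentialness of $P_j$ ensures the $B$-sector of $H^*$ containing $b_0$ is unbounded; picking $s$ in this sector with $d_B(s, N(H^*) \cap B) \geq R$ and setting $q := \pi_{N(H^*) \cap B}(s)$, the $B$-geodesic from $s$ to $q$ crosses $R$ hyperplanes, none of which crosses the gated subgraph $N(H^*) \cap B$ by Proposition \ref{prop:BigProj}(i). A last Ramsey filtration extracts $C_0+1$ pairwise non-transverse representatives, defining $\mathcal{H}$.

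By the Cartesian-product transversality between $\mathcal{H}(P_i)$ and $\mathcal{H}(P_j)$, the pair $(\mathcal{H},\mathcal{V})$ forms a grid with $\mathcal{V} \subseteq \mathcal{H}(Y)$, $\mathcal{H} \cap \mathcal{H}(Y) = \emptyset$, and $\min(|\mathcal{H}|,|\mathcal{V}|) > C_0$, contradicting Proposition \ref{prop:contracting}(iv). The most delicate verification is that the hyperplanes defining $\mathcal{H}$ truly avoid $\mathcal{H}(Y)$: since each such hyperplane misses $N(H^*) \cap B$ by construction, any witnessing square for transversality with $H^*$ inside $B$ would have all four vertices in $N(H^*) \cap B$, which is impossible; a short projection argument using convexity of $B$ upgrades this non-transversality from $B$ to $X$, placing each hyperplane of $\mathcal{H}$ entirely on the $b_0$-side of $H^*$ in $X$, a half-space disjoint from $Y$ because $H^* \notin \mathcal{H}(Y)$.
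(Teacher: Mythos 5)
Your proof is correct and follows essentially the same route as the paper: both invoke Proposition \ref{prop:contracting}(iv) and build a forbidden grid of hyperplanes, with $\mathcal{V}$ taken from $\mathcal{H}(Y)\cap\mathcal{H}(P_i)$ via Ramsey and $\mathcal{H}$ built from hyperplanes of the other factor that miss $Y$, using essentiality and unboundedness to make both families long enough. The only differences are cosmetic: you argue by contradiction and produce $\mathcal{H}$ from a geodesic to a projection onto $N(H^*)\cap B$, whereas the paper directly shows $\mathcal{H}(P_k)\subset\mathcal{H}(Y)$ for each factor by exhibiting an explicit nested chain $J, J_1, J_2,\ldots$ in the $Y$-free sector.
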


\begin{proof}
Let $C$ denote the constant given by the point (iv) of Proposition \ref{prop:contracting}, and let $P \subset X$ be a gated subgraph which decomposes as a Cartesian product $P_1 \times P_2$ of two essential unbounded quasi-median graphs $P_1,P_2$. Notice that $\mathcal{H}(P)= \mathcal{H}(P_1) \sqcup \mathcal{H}(P_2)$. We assume that $|\mathcal{H}(Y) \cap \mathcal{H}(P)| \geq 2 \cdot \mathrm{Ram}(\max(\dim_\square(X),C)+1)$. There exists $i \in \{ 1,2 \}$ such that 
$$|\mathcal{H}(Y) \cap \mathcal{H}(P_i) | \geq \frac{1}{2} | \mathcal{H}(Y) \cap \mathcal{H}(P)| \geq \mathrm{Ram}(\max(\dim_\square(X),C)+1).$$
As a consequence, there exist hyperplanes $V_0, \ldots, V_C \in \mathcal{H}(Y) \cap \mathcal{H}(P_i)$ such that $V_k$ separates $V_{k-1}$ and $V_{k+1}$ for every $1 \leq k \leq C-1$. Without loss of generality, suppose that $i=1$. Let $J \in \mathcal{H}(P_2)$ be a hyperplane. If $J \notin \mathcal{H}(Y)$, let $J^+$ denote a sector delimited by $J$ which is disjoint from $Y$. Since $P_2$ essential and unbounded, there exist hyperplanes $J_1, J_2, \ldots \in \mathcal{H}(P_2)$ such that $J_k$ separates $J_{k-1}$ and $J_{k+1}$ for every $k \geq 2$ and such that $J_k$ separates $J$ and $J_{k+1}$ for every $k \geq 1$. Notice that, for every $k \geq 1$, the hyperplane $J_k$ is disjoint from $Y$; on the other hand, it is transverse to any hyperplane of $\mathcal{H}(P_1)$ and a fortiori to $V_0, \ldots, V_C$. Consequently, $\mathcal{V}= \{V_0, \ldots, V_C\}$ and $\mathcal{H}=\{J, J_1, \ldots, J_{C} \}$ define a grid of hyperplanes satisfying $\mathcal{V} \subset \mathcal{H}(Y)$, $\mathcal{H} \cap \mathcal{H}(Y) = \emptyset$ and $\min(\# \mathcal{H}, \# \mathcal{V}) \geq C+1$. This contradicts the definition of $C$. Therefore, $J$ necessarily belongs to $\mathcal{H}(Y)$. 

\medskip \noindent
Thus, we have proved that $\mathcal{H}(P_2) \subset \mathcal{H}(Y)$. By switching $P_1$ and $P_2$ in the previous argument, one shows similarly that $\mathcal{H}(P_1) \subset \mathcal{H}(Y)$. Therefore, $\mathcal{H}(P) \subset \mathcal{H}(Y)$ and $Y \cap P \neq \emptyset$. The desired conclusion follows from the following observation:

\begin{fact}
Let $X$ be a quasi-median graph and $P,Y \subset X$ two gated subgraphs. Suppose that $\mathcal{H}(P) \subset \mathcal{H}(Y)$ and that $Y \cap P \neq \emptyset$. Then $P \subset Y$. 
\end{fact}

\noindent
Suppose that $P \nsubseteq Y$, i.e., there exists a vertex $x \in P$ which does not belong to $Y$. Let $J$ be a hyperplane separating $x$ from its projection onto $Y$. According to Proposition~\ref{prop:BigProj}, $J$ separates $x$ from $Y$. Two cases may happen. Either $J$ belongs to $\mathcal{H}(P)$, so that $\mathcal{H}(P) \nsubseteq \mathcal{H}(Y)$; or $J$ does not belong to $\mathcal{H}(P)$, so that $J$ must separate $P$ and $Y$, which implies that $P \cap Y= \emptyset$. 
\end{proof}

\begin{remark}\label{remark:MorseRAAG}
Let $G$ be a group acting properly and cocompactly on a quasi-median graph $X$. Assume that the neighborhood of any hyperplane decomposes as the Cartesian product of two unbounded essential subgraphs, and that the \emph{crossing graph} of $X$ (i.e., the graph whose vertices are the hyperplanes of $X$ and whose edges link two hyperplanes if they are transverse) is connected. Then it follows from Lemma \ref{lem:InterInclusion} that infinite-index Morse subgroups of $G$ are virtually free. 

\medskip \noindent
Indeed, let $H \leq G$ be a Morse subgroup. As a consequence of Lemma \ref{lem:DistN}, there is a gated subgraph $Y \subset X$ on which $H$ acts cocompactly. If all the hyperplanes of $Y$ are bounded (in fact, uniformly bounded as $H$ acts on $Y$ with finitely many orbits of hyperplanes), then it is not difficult to show that $Y$ must be a quasi-tree (for instance, reproduce word for word the proof of \cite[Proposition 3.8]{coningoff} written for CAT(0) cube complexes), and we conclude that $H$ must be virtually free (see \cite[Th\'eor\`eme~7.19]{GhysHyp}). Next, if $Y$ contains an unbounded hyperplane $J$, it follows from Lemma~\ref{lem:InterInclusion} that $N(J) \subset Y$. Similarly, if $J'$ is transverse to $J$, then it follows from Lemma \ref{lem:InterInclusion} again that $N(J') \subset Y$. And so on. Because the crossing graph is connected, we conclude that $Y$ contains the neighborhood of every hyperplane of $X$, hence $X=Y$. Therefore, $H$ must be a finite-index subgroup. 

\medskip \noindent
For instance, this criterion applies to freely irreducible right-angled Artin groups, providing a simple proof of the fact that infinite-index Morse subgroups in these groups must be free. Alternative arguments can be found in \cite{StronglyQC,MoiHypCube}.
\end{remark}

\section{Characterisation of eccentric subspaces}\label{section:eccentric}

\noindent
Recall from the introduction that:

\begin{definition}
Let $X$ be a geodesic metric space. A subspace $Y \subset X$ is \emph{eccentric} if it is Morse, non-hyperbolic, and if, for every map $m : (0,+ \infty) \times [0,+ \infty) \to \mathbb{R}$, the Hausdorff distance between $Y$ and any non-hyperbolic Morse subspace $Z \subset Y$ with Morse-gauge $m$ is bounded above by a finite constant $E(m)$. The map $E$ is referred to as the \emph{eccentric-gauge} of $Y$. 
\end{definition}

\noindent
In this section, our goal is to prove the main result of the article, namely we want to characterise eccentric subspaces in graph products of finite groups. Before stating this characterisation, we need some vocabulary.

\begin{definition}
Let $\Gamma$ be a simplicial graph. A subgraph $\Lambda$ is \emph{square-complete} if it is induced and if every induced square intersecting $\Lambda$ along at least two opposite vertices is included into $\Lambda$. A \emph{minsquare subgraph} is a minimal square-complete subgraph which contains at least one induced square. 
\end{definition}

\noindent
Our main theorem is: 

\begin{thm}\label{thm:eccentricsub}
Let $\Gamma$ be a finite simplicial graph and $\mathcal{G}$ a collection of finite groups indexed by $V(\Gamma)$. A subspace $M \subset \Gamma \mathcal{G}$ is eccentric if and only if there exists a minsquare subgraph $\Lambda \subset \Gamma$ such that $M$ is at finite Hausdorff distance from a coset of $\langle \Lambda \rangle$.
\end{thm}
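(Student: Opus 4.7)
The plan is to prove both directions by combining Proposition \ref{prop:ParabolicMorse}, Lemma \ref{lem:InterInclusion}, and the structure of flat rectangles given by Lemma \ref{lem:FlatRinX}. For the \emph{if} direction, let $\Lambda \subset \Gamma$ be minsquare. Then $\langle \Lambda \rangle$ is Morse by Proposition \ref{prop:ParabolicMorse} (square-completeness of $\Lambda$) and non-hyperbolic by Corollary \ref{cor:hyp} (as $\Lambda$ contains an induced square). To check the eccentric condition, I fix a Morse gauge $m$ and a non-hyperbolic Morse subspace $Z \subset \langle \Lambda \rangle$ with gauge $m$; using Lemma \ref{lem:DistN}, I replace $Z$ by its gated hull at a cost depending only on $m$ and $\mathrm{clique}(\Gamma)$.

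The core of the forward direction then proceeds in three substeps. First, since $Z$ is gated, non-hyperbolic and Morse, Lemma \ref{lem:FlatRectangle} produces arbitrarily thick flat rectangles in $Z$, and Lemma \ref{lem:FlatRinX} places such a rectangle inside a parabolic coset coming from a join $\Lambda_1 \ast \Lambda_2$ with both factors non-complete. Choosing a pair of non-adjacent vertices in each $\Lambda_i$ yields an induced square $\Psi \subset \Lambda$ such that the coset $P = g_0 \langle \Psi \rangle$ splits (by Lemma \ref{lem:Essential}) as a Cartesian product of two essential unbounded quasi-median graphs; picking the rectangle thick enough, in a way depending only on $m$ via the constant of Lemma \ref{lem:InterInclusion}, forces $P \subset Z$. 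Second, I would show that the set $\Theta$ of labels of hyperplanes of $Z$ is square-complete in $\Gamma$: if it failed, I would pick an induced square $\{a,b,c,d\}$ with $a,c \in \Theta$ opposite and $b \notin \Theta$, and mimic the converse of Proposition \ref{prop:ParabolicMorse} to construct a grid of hyperplanes $(\mathcal{V}, \mathcal{H})$ of arbitrary size, with $a,c$-labelled hyperplanes in $\mathcal{V}$ crossing $Z$ and $b,d$-labelled ones in $\mathcal{H}$ avoiding $Z$, contradicting Proposition \ref{prop:contracting}(iv). Combined with $\Theta \supset \Psi$ and $\Theta \subset \Lambda$, the minimality of $\Lambda$ forces $\Theta = \Lambda$. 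Third, I bound the Hausdorff distance between $Z$ and $\langle \Lambda \rangle$: any vertex $x \in \langle \Lambda \rangle$ at large distance from $Z$ would, via Proposition \ref{prop:cycle} and Lemma \ref{lem:GatedBridge}, produce another grid of hyperplanes violating Proposition \ref{prop:contracting}(iv), yielding the required Hausdorff bound.

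For the \emph{only if} direction, let $M$ be eccentric and pass to its gated hull. The substeps above applied to $M$ give a coset $g_0 \langle \Psi_0 \rangle \subset M$ for some induced square $\Psi_0$, and a square-complete label set $\Theta$ for the hyperplanes of $M$. Pick a minsquare subgraph $\Lambda \subset \Theta$, which exists since $\Theta$ is square-complete and contains $\Psi_0$. The forward direction ensures that $\langle \Lambda \rangle$ is eccentric with Morse gauge depending only on $\Gamma$. I would then argue that some coset $g \langle \Lambda \rangle$ sits inside $M$ by iteratively applying Lemma \ref{lem:InterInclusion} to absorb further hyperplanes of $M$, starting from $g_0 \langle \Psi_0 \rangle$ and using that every vertex of $\Lambda$ labels some hyperplane of $M$. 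Once $g \langle \Lambda \rangle \subset M$ is obtained, applying the eccentric-gauge of $M$ to the non-hyperbolic Morse subspace $g \langle \Lambda \rangle$ gives the desired Hausdorff bound.

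The main obstacles I anticipate are the third substep of the forward direction (bounding Hausdorff distance once label sets agree) and the iterative construction of the coset $g \langle \Lambda \rangle$ inside $M$ in the converse. Both require transferring between coarse geometric information and grids of hyperplanes using the machinery of Subsection~\ref{section:gated}, together with the rigidity of minsquare subgraphs captured by Corollary \ref{cor:ParabolicHausdorff}.
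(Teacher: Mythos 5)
Your overall architecture (find a square coset inside the Morse subgraph, enlarge it to a parabolic coset of a square-complete subgraph, then invoke minimality and the eccentric gauge) has the right shape, and your first substep correctly reproduces Lemma \ref{lem:SquareEccentric}. But there are two genuine gaps at the core. First, substep 2 claims that the label set $\Theta$ of the hyperplanes crossing a gated Morse subgraph $Z$ is square-complete, by ``mimicking'' the converse of Proposition \ref{prop:ParabolicMorse}. That construction needs an entire quarter-plane based at $Z$ whose vertical hyperplanes all cross $Z$: in the parabolic case this is automatic because the ray $1,\mu,\mu\nu,\dots$ lies in $\langle \Lambda \rangle$, but for a general gated Morse $Z$ the mere fact that \emph{some} hyperplane labelled $a$ and \emph{some} hyperplane labelled $c$ cross $Z$ produces no such grid --- you would need an actual coset $g\langle a,c\rangle$ contained in $Z$ (as in Lemma \ref{lem:often}), which is containment information, not label information. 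Second, and more seriously, substep 3 --- ``label set equals $\Lambda$ implies bounded Hausdorff distance from $\langle \Lambda \rangle$'' --- is false as a general principle: the Remark following the paper's proof exhibits a gated Morse subgraph whose hyperplanes are labelled by \emph{all} of $\Gamma$ yet which is at infinite Hausdorff distance from every parabolic subgroup. That subgraph satisfies condition (iv) of Proposition \ref{prop:contracting}, so the grid-of-hyperplanes contradiction you propose cannot materialize; and since your argument does not use minsquareness of $\Lambda$ at this stage, it would prove too much.

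What is missing is a mechanism upgrading ``$M$ contains one square coset $g_0\langle \Psi_0\rangle$'' to ``$M$ contains a coset of $\langle \Lambda' \rangle$ for some square-complete $\Lambda'$ containing a square'' --- a genuine containment, not a statement about labels. This is precisely what the paper's slabbed-subgraph machinery supplies: starting from the collection of all induced squares, one repeatedly merges slabbed subgraphs along oriented cycles of the propagation relation (Corollary \ref{cor:CollectionMinus}) until the oriented graph is acyclic; its sinks are square-complete, and following a directed path from the slabbed subgraph containing $\Psi_0$ to a sink yields Fact \ref{fact:ForConverse}. Your ``iterative absorption via Lemma \ref{lem:InterInclusion}'' gestures at this propagation but gives no reason why it terminates at a parabolic coset of a square-complete subgraph, let alone at the particular minsquare $\Lambda$ you fixed in advance; the propagation may overshoot any given minsquare subgraph, and it is only the eccentric hypothesis, via Lemma \ref{lem:ParabolicEccentric}, that cuts the resulting square-complete subgraph back down to the join of a minsquare subgraph with a clique. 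Once Fact \ref{fact:ForConverse} is in hand, both directions do close quickly via Lemma \ref{lem:ParabolicHausdorff} and Corollary \ref{cor:ParabolicHausdorff}, roughly as you describe.
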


\noindent
We begin by proving three preliminary lemmas. The first one is particular case of the theorem for parabolic subgroups.

\begin{lemma}\label{lem:ParabolicEccentric}
Let $\Gamma$ be a finite simplicial graph, $\mathcal{G}$ a collection of groups indexed by $V(\Gamma)$ and $\Lambda \leq \Gamma$ a subgraph. If $\langle \Lambda \rangle$ is an eccentric subspace then $\Lambda$ decomposes as the join of a minsquare subgraph $\Lambda_0$ and a complete subgraph. Moreover, the Hausdorff distance between $\langle \Lambda \rangle$ and $\langle \Lambda_0 \rangle$ is at most $\mathrm{clique}(\Gamma)$.
\end{lemma}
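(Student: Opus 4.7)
The plan is to extract the minsquare subgraph $\Lambda_0$ as a ``canonical'' non-hyperbolic Morse subspace inside $\langle \Lambda \rangle$, push it through the eccentricity hypothesis to obtain a finite Hausdorff distance with $\langle \Lambda \rangle$, and then read off the desired join decomposition from Corollary~\ref{cor:ParabolicHausdorff}. Being eccentric, $\langle \Lambda \rangle$ is a non-hyperbolic Morse subspace: Proposition~\ref{prop:ParabolicMorse} therefore forces $\Lambda$ to be square-complete, and Proposition~\ref{prop:hyp} applied to $\langle \Lambda \rangle \simeq X(\Lambda, \mathcal{G}|_\Lambda)$ forces $\Lambda$ to contain at least one induced square. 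I would then choose $\Lambda_0 \subseteq \Lambda$ to be a square-complete subgraph of $\Gamma$, contained in $\Lambda$, containing an induced square, and minimal with these properties; $\Lambda$ itself is a candidate, so such a $\Lambda_0$ exists, and any strictly smaller square-complete subgraph of $\Gamma$ containing an induced square would sit inside $\Lambda$ and contradict minimality, so in fact $\Lambda_0$ is minsquare in $\Gamma$. Proposition~\ref{prop:ParabolicMorse} and Proposition~\ref{prop:hyp} then tell us that $\langle \Lambda_0 \rangle$ is a non-hyperbolic Morse subspace of $\langle \Lambda \rangle$, so eccentricity of $\langle \Lambda \rangle$ yields that the Hausdorff distance between $\langle \Lambda \rangle$ and $\langle \Lambda_0 \rangle$ is finite.

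The step I expect to be the main obstacle is verifying that $\Lambda_0$ is not the star of any of its vertices, which is what makes Corollary~\ref{cor:ParabolicHausdorff} applicable with a clean decomposition on the $\Lambda_0$ side. Suppose for contradiction that some $v \in V(\Lambda_0)$ is adjacent to every other vertex of $\Lambda_0$. Then $v$ cannot belong to any induced square of $\Lambda_0$, since the vertex opposite to $v$ on such a square would lie in $\Lambda_0$ and would therefore have to be adjacent to $v$, violating induced-squareness. Consequently, every induced square of $\Lambda_0$ already lies in $\Lambda_0 \setminus \{v\}$, and by the same argument $\Lambda_0 \setminus \{v\}$ is still square-complete in $\Gamma$: any induced square of $\Gamma$ with two opposite vertices in $\Lambda_0 \setminus \{v\}$ lies in $\Lambda_0$ by square-completeness, but cannot contain $v$. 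Hence $\Lambda_0 \setminus \{v\}$ contradicts the minimality of $\Lambda_0$.

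Writing $\Lambda = \Lambda^{(0)} \ast \Lambda^{(1)}$ with $\Lambda^{(1)}$ complete and $\Lambda^{(0)}$ not the star of one of its vertices, Corollary~\ref{cor:ParabolicHausdorff} applied to the pair $(\Lambda, \Lambda_0)$---with $\Lambda_0$'s own decomposition being $\Lambda_0 \ast \emptyset$ by the previous paragraph---yields $\Lambda^{(0)} = \Lambda_0$, so that $\Lambda = \Lambda_0 \ast \Lambda^{(1)}$ with $\Lambda^{(1)}$ complete. The Hausdorff bound then follows from the product decomposition $\langle \Lambda \rangle = \langle \Lambda_0 \rangle \cdot \langle \Lambda^{(1)} \rangle$: any $g \in \langle \Lambda \rangle$ factors as $g = hk$ with $h \in \langle \Lambda_0 \rangle$ and $k \in \langle \Lambda^{(1)} \rangle$ of syllable length at most $\#V(\Lambda^{(1)}) \leq \mathrm{clique}(\Gamma)$, giving $d(g, \langle \Lambda_0 \rangle) \leq \mathrm{clique}(\Gamma)$.
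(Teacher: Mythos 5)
Your proof is correct, and it uses the same ingredients as the paper's (Proposition~\ref{prop:ParabolicMorse}, Proposition~\ref{prop:hyp}, the eccentricity hypothesis, and Corollary~\ref{cor:ParabolicHausdorff}), but it organizes them in the reverse order. The paper starts from the canonical join decomposition $\Lambda = \Lambda_0 \ast \Lambda_1$ (non-star factor plus complete factor), gets the $\mathrm{clique}(\Gamma)$ bound immediately from the Cartesian product structure, deduces that $\langle \Lambda_0 \rangle$ is itself eccentric, and then verifies minimality by testing $\langle \Lambda_0 \rangle$ against an arbitrary square-complete $\Xi \subset \Lambda_0$ containing a square, with Corollary~\ref{cor:ParabolicHausdorff} forcing $\Xi = \Lambda_0$. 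You instead build a minimal square-complete subgraph of $\Gamma$ inside $\Lambda$ from scratch, apply eccentricity to it directly, and only then use Corollary~\ref{cor:ParabolicHausdorff} to identify it with the non-star join factor. The price of your route is the extra verification that your extracted $\Lambda_0$ is not the star of one of its vertices --- which the paper gets for free from its choice of decomposition --- and your argument for that step (a dominating vertex lies on no induced square, so it can be deleted without destroying square-completeness, contradicting minimality) is sound. In exchange, your route makes explicit something the paper leaves implicit, namely that the non-star factor really is square-complete and contains a square, since you establish the minsquare property before invoking the join decomposition rather than after.
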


\begin{proof}
Decompose $\Lambda$ as a join $\Lambda = \Lambda_0 \ast \Lambda_1$ where $\Lambda_1$ is complete and where $\Lambda_0$ is not the star of one of its vertices. Notice that, as $\langle \Lambda \rangle$ decomposes as the Cartesian product of $\langle \Lambda_0 \rangle$ with the prism $\langle \Lambda_1 \rangle$, the Hausdorff distance between $\langle \Lambda \rangle$ and $\langle \Lambda_0 \rangle$ is at most $\mathrm{clique}(\Gamma)$. Consequently, $\langle \Lambda_0 \rangle$ must be an eccentric subspace as well. We claim that $\Lambda_0$ is a minsquare subgraph of $\Gamma$. 

\medskip \noindent
So let $\Xi \subset \Lambda_0$ be a square-complete subgraph which contains an induced square. It follows from Propositions \ref{prop:hyp} and \ref{prop:ParabolicMorse} that $\langle \Xi \rangle$ is non-hyperbolic Morse subgraph of $X(\Gamma, \mathcal{G})$, so the Hausdorff distance between $\langle \Xi \rangle$ and $\langle \Lambda_0 \rangle$ must be finite. It follows from Corollary \ref{cor:ParabolicHausdorff} that $\Lambda_0 \subset \Xi$, hence $\Lambda_0= \Xi$. Thus, we have proved that $\Lambda_0$ is a minsquare subgraph. 
\end{proof}

\noindent
Loosely speaking, our second preliminary lemma shows that a Morse subspace in a graph product of finite groups must contain a ``nice flat''.

\begin{lemma}\label{lem:SquareEccentric}
Let $\Gamma$ be a finite simplicial graph, $\mathcal{G}$ a collection of groups indexed by $V(\Gamma)$, and $M \subset X(\Gamma, \mathcal{G})$ a non-hyperbolic Morse subgraph. There exists an induced square $\Lambda \subset \Gamma$ and an element $g \in \Gamma \mathcal{G}$ such that the subgraph $g \langle \Lambda \rangle$ lies in the $K$-neighborhood of $M$, where $K$ is a constant depending only on $\Gamma$ and the Morse-gauge of $M$.
\end{lemma}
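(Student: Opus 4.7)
My plan is to reduce to the gated case, extract a thick flat rectangle from $M$, promote its join structure to an induced square, and apply Lemma \ref{lem:InterInclusion} to embed a coset of the associated parabolic inside $M$.

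\medskip \noindent
First, I apply Lemma \ref{lem:DistN} to replace $M$ by its gated hull at bounded Hausdorff cost, controlled by $\mathrm{clique}(\Gamma) \geq \dim_\square(X(\Gamma, \mathcal{G}))$ and the Morse gauge of $M$. Since $M$ is non-hyperbolic, Lemma \ref{lem:FlatRectangle} provides flat rectangles $R \subset M$ of any thickness $T$; I fix one with $T$ large enough in terms of $|V(\Gamma)|$ and the constant $C$ from Lemma \ref{lem:InterInclusion}. By Lemma \ref{lem:FlatRinX}, $R$ is described by a non-trivial join $\Lambda_1 \ast \Lambda_2 \leq \Gamma$ with both $\Lambda_i$ non-complete, and by reduced words $g_1 \cdots g_n \in \langle \Lambda_1 \rangle$ and $h_1 \cdots h_m \in \langle \Lambda_2 \rangle$ with $n, m \geq T$.

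\medskip \noindent
Second, a pigeonhole argument selects non-adjacent $u, u' \in \Lambda_1$ and non-adjacent $v, v' \in \Lambda_2$ such that each of the four labels appears at least $C+1$ times in the corresponding word. If instead the set of ``abundant'' labels in $\Lambda_1$ (those occurring at least $C+1$ times) were complete, then for each abundant label $a$, reducedness of $g_1 \cdots g_n$ would force between any two consecutive $a$-labeled syllables a blocking syllable of non-commuting label, which completeness of the abundant set forces into the non-abundant labels. Counting these required blockings (one per gap, per abundant label, each non-abundant syllable serving at most $|V(\Lambda_1)|$ blockings) against the total non-abundant syllable count, which is at most $|V(\Gamma)|(C+1)$, bounds $n$ by $O(|V(\Gamma)|^2 C)$, contradicting the choice of $T$. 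By the join structure, $\Lambda := \{u, u', v, v'\}$ is an induced square of $\Gamma$.

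\medskip \noindent
Third, I apply Lemma \ref{lem:InterInclusion} to the pair $M$ and $P = g\langle \Lambda \rangle$, for a suitable vertex $g$ of $R$. The coset $P$ decomposes as a Cartesian product $g\langle u, u' \rangle \times g\langle v, v' \rangle$, each factor being an essential unbounded quasi-median graph by Lemma \ref{lem:Essential} (applied to $\{u,u'\}$ and $\{v,v'\}$, both non-stars, with non-trivial vertex-groups). Clearly $g \in P \cap M$. To obtain $|\mathcal{H}(M) \cap \mathcal{H}(P)| \geq C$, I pigeonhole over the vertices $g$ of $R$: the abundant $\Lambda$-labeled hyperplanes of $R$ all lie in $\mathcal{H}(M)$, and by describing their neighborhoods via Proposition \ref{prop:HypInX}, each crosses $g\langle\Lambda\rangle$ for a positive fraction of vertices $g$, so some $g$ is crossed by at least $C$ of them. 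Lemma \ref{lem:InterInclusion} then gives $g\langle\Lambda\rangle \subseteq M$, placing it in the $K$-neighborhood of the original $M$, with $K$ depending only on $\Gamma$ and the Morse gauge.

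\medskip \noindent
The main obstacle is this last pigeonhole: a priori different $u$-labeled hyperplanes of $R$ sit on distinct cosets of $\langle u, u' \rangle$, so matching enough of them to a single coset $g\langle\Lambda\rangle$ is delicate. An alternative approach, should this pigeonhole prove intractable, is to extract directly from the parametrization of $R$ a sub-grid whose hyperplanes are $\Lambda$-labeled and which lies inside a single coset of $\langle \Lambda \rangle$; its hyperplanes then automatically lie in both $\mathcal{H}(M)$ and $\mathcal{H}(g\langle\Lambda\rangle)$, furnishing the required count.
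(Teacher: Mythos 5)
Your skeleton (pass to the gated hull via Lemma \ref{lem:DistN}, extract a thick flat rectangle, use the join structure from Lemma \ref{lem:FlatRinX} to locate an induced square, conclude with Lemma \ref{lem:InterInclusion}) matches the paper's, but the step you flag as the ``main obstacle'' is a genuine gap, and neither of your two suggested fixes works. Taking $P = g\langle\Lambda\rangle$ for the square $\Lambda=\{u,u',v,v'\}$ is the wrong choice of $P$: the $u$-labelled hyperplanes of $R$ are $wJ_u$ for various prefixes $w$ of $g_1\cdots g_n$, and by Proposition \ref{prop:HypInX} such a hyperplane meets $g\langle\Lambda\rangle$ only if $g^{-1}w\langle\mathrm{star}(u)\rangle$ meets $\langle\Lambda\rangle$. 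Concretely, if $\Lambda_1=\{u,u',z\}$ is totally disconnected and the word in the $\Lambda_1$-direction is $(uzu')^k$, then no two of the $u$-labelled hyperplanes $J_u,\ uzu'J_u,\ (uzu')^2J_u,\ldots$ cross a common coset of $\langle\Lambda\rangle$: two such crossings would produce $p\in\langle\mathrm{star}(u)\rangle$ and $q\in uzu'\langle\mathrm{star}(u)\rangle$ with $p^{-1}q\in\langle\Lambda\rangle$, yet the $z$-syllable of $p^{-1}q=s^{-1}uzu't$ survives reduction since $z\notin\mathrm{star}(u)\cup\Lambda$. So the ``positive fraction'' pigeonhole is false, and for the same reason there is no sub-grid of $R$ carrying more than one $\Lambda_1$-labelled hyperplane inside a single coset of $\langle\Lambda\rangle$. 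Your abundance pigeonhole in the second step is also unnecessary.

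The paper's resolution is to feed a larger product into Lemma \ref{lem:InterInclusion}. Write $\Lambda_1\ast\Lambda_2=\Lambda_1'\ast\Lambda_2'\ast\Lambda_0$ with $\Lambda_0$ a clique and $\Lambda_1',\Lambda_2'$ without cone vertices; these are nonempty and non-complete by the ``moreover'' clause of Lemma \ref{lem:FlatRinX}, since $R$ is $L$-thick with $L>\mathrm{clique}(\Gamma)$. After translating so that $R\subset\langle\Lambda_1\ast\Lambda_2\rangle$, take $P=\langle\Lambda_1'\ast\Lambda_2'\rangle$ and let $M^+$ be the gated hull of the $\mathrm{clique}(\Gamma)$-neighborhood of $M$. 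Then $P$ is a product of two essential unbounded gated subgraphs (Lemma \ref{lem:Essential}), $M^+$ meets $P$ because every vertex of $\langle\Lambda_1\ast\Lambda_2\rangle$ is within $|V(\Lambda_0)|\le\mathrm{clique}(\Gamma)$ of $P$, and
$$|\mathcal{H}(M^+)\cap\mathcal{H}(P)|\ \ge\ |\mathcal{H}(R)|-|V(\Lambda_0)|\ \ge\ L-\mathrm{clique}(\Gamma)\ \ge\ C$$
is automatic because all but at most $|V(\Lambda_0)|$ of the hyperplanes of $R\subset M$ cross $P$ --- no matching of hyperplanes to a common small coset is required. Lemma \ref{lem:InterInclusion} then gives $P\subset M^+$, and any induced square of $\Lambda_1'\ast\Lambda_2'$ (one exists since neither factor is a clique) furnishes the desired $\langle\Lambda\rangle\subset M^+$, hence a coset of $\langle\Lambda\rangle$ in a controlled neighborhood of $M$.
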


\begin{proof} 
Let $M^+$ denote the gated hull of the $\mathrm{clique}(\Gamma)$-neighborhood of $M$; notice that, as a consequence Lemma~\ref{lem:DistN}, the Hausdorff distance between $M$ and $M^+$ is finite and only depends on $\Gamma$ and the Morse-gauge of $M$. Our goal is to construct an induced square $\Lambda \subset \Gamma$ such that $\langle \Lambda \rangle \subset M^+$.

\medskip \noindent
Since $M$ is not hyperbolic, we know from Lemma \ref{lem:FlatRectangle} that $M$ contains an $L$-thick flat rectangle $R$ where $L=C+ \mathrm{clique}(\Gamma)$ with $C$ the constant given by Lemma~\ref{lem:InterInclusion} for $M^+$. According to Lemma \ref{lem:FlatRinX}, there exist two induced subgraphs $\Lambda_1, \Lambda_2 \subset \Gamma$ and an element $g \in \Gamma \mathcal{G}$ such that $R \subset g \langle \Lambda_1 \ast \Lambda_2 \rangle$; up to translating by $g^{-1}$, we will suppose without loss of generality that $g=1$. The subgraph $\Lambda_1 \ast \Lambda_2$ can be written as $\Lambda_1' \ast \Lambda_2' \ast \Lambda_0$ where $\Lambda_0$ is a clique and $\Lambda_1', \Lambda_2'$ two subgraphs which do not decompose as a join with a clique as a factor (and which are not empty according to Lemma \ref{lem:FlatRinX}). Notice that $\langle \Lambda_1 \ast \Lambda_2 \rangle$ decomposes as the Cartesian product $\langle \Lambda_1' \rangle \times \langle \Lambda_2' \rangle \times \langle \Lambda_0 \rangle$ where $\langle \Lambda_1' \rangle$ and $\langle \Lambda_2' \rangle$ are unbounded (because $\Lambda_1'$ and $\Lambda_2'$ are not cliques, so they contain at least two non-adjacent vertices) and essential (as a consequence of Lemma \ref{lem:Essential}). Moreover, every vertex of $\langle \Lambda_1 \ast \Lambda_2 \rangle$ is at distance at most $|V(\Lambda_0)| \leq \mathrm{clique}(\Gamma)$ from a vertex of $\langle \Lambda_1' \ast \Lambda_2' \rangle$, so that, as $M$ intersects non-trivially $\langle \Lambda_1 \ast \Lambda_2 \rangle$ (indeed, the intersection contains the flat rectangle $R$), necessarily $M^+$ has to intersect non-trivially $P:=\langle \Lambda_1' \ast \Lambda_2' \rangle$. Therefore, by noticing that
$$\begin{array}{lcl}| \mathcal{H}(M^+) \cap \mathcal{H}(P) | & \geq & | \mathcal{H}(M) \cap \mathcal{H}(P) | \geq | \mathcal{H}(R)| - |V(\Lambda_0)| \\ \\ & \geq & C+ \mathrm{clique}(\Gamma) - |V(\Lambda_0)| \geq C, \end{array}$$
it follows from Lemma \ref{lem:InterInclusion} that $P \subset M^+$. But, as noticed earlier, $\Lambda_1'$ and $\Lambda_2'$ are not cliques, so $\Lambda_1' \ast \Lambda_2'$ has to contain an induced square $\Lambda$. Therefore,
$$\langle \Lambda \rangle \subset \langle \Lambda_1' \ast \Lambda_2' \rangle = P \subset M^+,$$ 
concluding the proof of our lemma. 
\end{proof}

\noindent
Finally, our third preliminary lemma is an easy observation which we will use many times.

\begin{lemma}\label{lem:often}
Let $\Gamma$ be a finite simplicial graph, $\mathcal{G}$ a collection of groups indexed by $V(\Gamma)$, and $M \subset X(\Gamma,\mathcal{G})$ a gated Morse subgraph. If $\Lambda \subset \Gamma$ is an induced square which contains two non-adjacent vertices $a,b \in V(\Lambda)$ satisfying $\langle a,b \rangle \subset M$, then $\langle \Lambda \rangle \subset M$.
\end{lemma}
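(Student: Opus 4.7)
The plan is to apply Lemma \ref{lem:InterInclusion} with $Y = M$ and $P = \langle \Lambda \rangle$, so I must verify that $\langle \Lambda \rangle$ has the required product structure, intersects $M$, and shares infinitely many hyperplanes with $M$.

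First I would record the product decomposition of $\langle \Lambda \rangle$. Since $\Lambda$ is an induced square in which $a,b$ are two non-adjacent (diametrically opposite) vertices, the remaining two vertices $c,d$ are each adjacent to both $a$ and $b$ but not to one another. Hence $\Lambda$ is the join $\{a,b\} \ast \{c,d\}$, and correspondingly $\langle \Lambda \rangle$ splits as the Cartesian product $\langle a,b \rangle \times \langle c,d \rangle$, each factor being itself a quasi-median graph. Since $\{a,b\}$ consists of two non-adjacent vertices, it is not the star of one of its vertices, so Lemma \ref{lem:Essential} gives that $\langle a,b \rangle$ is essential; and it is unbounded because, by our standing convention, $G_a$ and $G_b$ are non-trivial, so $\langle a,b \rangle = G_a \ast G_b$ is infinite. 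The same holds for $\langle c,d \rangle$.

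Next I would check the hyperplane and non-emptiness conditions. The inclusion $\langle a,b \rangle \subset M$ together with $\langle a,b \rangle \subset \langle \Lambda \rangle$ gives
$$\mathcal{H}(\langle a,b \rangle) \subset \mathcal{H}(M) \cap \mathcal{H}(\langle \Lambda \rangle).$$
Since $\langle a,b \rangle$ is an infinite gated subgraph of $X(\Gamma, \mathcal{G})$, its collection of hyperplanes is infinite (for instance it contains $gJ_a$ for every $g \in \langle a,b \rangle$); in particular this intersection exceeds the constant $C$ furnished by Lemma \ref{lem:InterInclusion}. Moreover $M \cap \langle \Lambda \rangle \supset \langle a,b \rangle \neq \emptyset$.

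All hypotheses of Lemma \ref{lem:InterInclusion} are therefore met, and it delivers $\langle \Lambda \rangle \subset M$ at once. The only point requiring brief justification is the gatedness of the parabolic subgraph $\langle \Lambda \rangle$ inside $X(\Gamma, \mathcal{G})$, which is a standard feature of the quasi-median model of a graph product already tacitly used throughout Section \ref{section:GPandQM}. I do not expect any substantive obstacle: the lemma is essentially a repackaging of the product-structure input needed to apply Lemma \ref{lem:InterInclusion}.
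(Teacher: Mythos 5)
Your proposal is correct and follows essentially the same route as the paper: the paper's proof likewise observes that $\langle \Lambda \rangle$ decomposes as the Cartesian product of the two unbounded leafless trees $\langle a,b \rangle$ and $\langle c,d \rangle$ and then applies Lemma \ref{lem:InterInclusion}. You merely spell out in more detail the verification of essentiality, unboundedness, the hyperplane count, and the non-empty intersection, all of which the paper leaves implicit.
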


\begin{proof}
Notice that $\langle \Lambda \rangle$ decomposes as the Cartesian product of two unbounded leafless trees, one of them being $\langle a,b \rangle$. Consequently, it follows from Lemma \ref{lem:InterInclusion} that $\langle \Lambda \rangle \subset M$ whenever $\langle a,b \rangle \subset M$. 
\end{proof}

\noindent
From now on, we fix a finite simplicial graph $\Gamma$ and a collection of groups $\mathcal{G}$ indexed by $V(\Gamma)$.
The notion of \emph{slabbed subgraph} will be needed (and fundamental) in the proof of Theorem \ref{thm:eccentricsub}.

\begin{definition}
A \emph{slabbed subgraph} $(\Lambda, \mathcal{C})$ of $\Gamma$ is the data of an induced subgraph $\Lambda \leq \Gamma$ and a collection $\mathcal{C}$ of induced squares of $\Lambda$ such that, for every gated Morse subgraph $M \subset X(\Gamma, \mathcal{G})$, if $\langle \Xi \rangle \subset M$ for some $\Xi \in \mathcal{C}$ then $\langle \Lambda \rangle \subset M$. A square of $\mathcal{C}$ is referred to as a \emph{slab} of $\Lambda$.
\end{definition}

\noindent
The following definition will be also needed:

\begin{definition}
Let $\mathscr{C}=\{ (\Lambda_i, \mathcal{C}_i) \mid i \in I \}$ be a collection of slabbed subgraphs. The \emph{set of slabs of $\mathscr{C}$}, denoted by $\mathcal{S}(\mathscr{C})$, is the collection of all the slabs of its slabbed subgraphs. The \emph{oriented graph of $\mathscr{C}$}, denoted by $\mathscr{GC}$, is the graph whose whose vertex-set is $\mathscr{C}$ and whose oriented edges link a vertex $(\Lambda_i, \mathcal{C}_i)$ to a distinct vertex $(\Lambda_j, \mathcal{C}_j)$ if $\mathcal{C}_j$ contains a square having two opposite vertices in $\Lambda_i$. 
\end{definition}

\noindent
Before turning to the proof of Theorem \ref{thm:eccentricsub}, we register a few easy observations about collections of slabbed subgraphs.

\begin{lemma}\label{lem:GSedge}
Let $\mathscr{C}=\{ (\Lambda_i, \mathcal{C}_i) \mid i \in I \}$ be a collection of slabbed subgraphs. Fix two indices $i,j \in I$ and assume that $\mathscr{GC}$ has an oriented edge from $(\Lambda_i, \mathcal{C}_i)$ to $(\Lambda_j, \mathcal{C}_j)$. For every gated Morse subgraph $M \subset X(\Gamma, \mathcal{G})$, if $\langle \Lambda_i \rangle \subset M$ then $\langle \Lambda_j \rangle \subset M$.
\end{lemma}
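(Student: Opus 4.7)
The plan is to unpack the definitions of the oriented graph and of a slabbed subgraph, and bridge them using Lemma \ref{lem:often}.

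First, since $\mathscr{GC}$ has an oriented edge from $(\Lambda_i, \mathcal{C}_i)$ to $(\Lambda_j, \mathcal{C}_j)$, by definition there exists a square $\Xi \in \mathcal{C}_j$ with two opposite vertices $a, b$ in $\Lambda_i$. Note that opposite vertices of a square are non-adjacent in $\Gamma$. Because $\langle \Lambda_i \rangle \subset M$ and $a, b \in V(\Lambda_i)$, we automatically have $\langle a, b \rangle \subset \langle \Lambda_i \rangle \subset M$.

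Next I would invoke Lemma \ref{lem:often}: $\Xi$ is an induced square of $\Gamma$ containing the two non-adjacent vertices $a, b$ with $\langle a, b \rangle \subset M$, and $M$ is a gated Morse subgraph, so the hypotheses of Lemma \ref{lem:often} are met. The conclusion gives $\langle \Xi \rangle \subset M$.

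Finally, since $\Xi \in \mathcal{C}_j$ is a slab of the slabbed subgraph $(\Lambda_j, \mathcal{C}_j)$ and $\langle \Xi \rangle \subset M$ for the gated Morse subgraph $M$, the defining property of a slabbed subgraph yields $\langle \Lambda_j \rangle \subset M$, as desired.

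There is no real obstacle here: the statement is essentially a chaining of two implications, the first one coming from Lemma \ref{lem:often} (which upgrades the inclusion $\langle a,b \rangle \subset M$ of a diagonal to the inclusion of the whole square $\langle \Xi \rangle \subset M$), and the second one coming directly from the definition of slabbed subgraph (which upgrades the inclusion of a slab $\langle \Xi \rangle \subset M$ to the inclusion of the whole $\langle \Lambda_j \rangle \subset M$). The only thing one must be careful about is that the vertices $a, b$ provided by the edge of $\mathscr{GC}$ are genuinely non-adjacent in $\Gamma$, which is automatic since they are opposite vertices of an induced square.
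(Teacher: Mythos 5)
Your proof is correct and follows exactly the same route as the paper's: extract the square $\Xi \in \mathcal{C}_j$ with two opposite vertices in $\Lambda_i$ from the definition of the oriented edge, apply Lemma \ref{lem:often} to get $\langle \Xi \rangle \subset M$, then invoke the defining property of a slabbed subgraph to conclude $\langle \Lambda_j \rangle \subset M$. The extra care you take in checking the hypotheses of Lemma \ref{lem:often} (non-adjacency of the opposite vertices, the inclusion $\langle a,b\rangle \subset \langle \Lambda_i\rangle \subset M$) is exactly what the paper leaves implicit.
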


\begin{proof}
By definition of $\mathscr{GC}$, there exists a square $\Xi \in \mathcal{C}_j$ having two opposite vertices in $\Lambda_i$. If $\langle \Lambda_i \rangle \subset M$, we deduce from Lemma \ref{lem:often} that $\langle \Xi \rangle \subset M$. By definition of a slabbed subgraph, we conclude that $\langle \Lambda_j \rangle \subset M$.
\end{proof}

\begin{lemma}\label{lem:GScycle}
Let $\mathscr{C}=\{ (\Lambda_i, \mathcal{C}_i) \mid i \in I \}$ be a collection of slabbed subgraphs, and let $\{ (\Lambda_j , \mathcal{C}_j) \mid j \in J\}$ denote the vertices of an oriented cycle in $\mathscr{GC}$. Then the implication
$$\left( \exists j \in J, \langle \Lambda_j \rangle \subset M \right) \Rightarrow \left\langle \bigcup\limits_{j \in J} \Lambda_j \right\rangle \subset M$$
holds for every gated Morse subgraph $M \subset X(\Gamma, \mathcal{G})$.
\end{lemma}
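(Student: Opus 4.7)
The plan is a two-stage argument: propagate the hypothesis around the cycle using Lemma \ref{lem:GSedge}, then upgrade the resulting union of parabolic containments to the full parabolic on the union of vertex-sets.

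In the first stage I would enumerate the oriented cycle as $(\Lambda_{j_0}, \Lambda_{j_1}, \ldots, \Lambda_{j_{n-1}}, \Lambda_{j_0})$, with $j_0 \in J$ chosen so that $\langle \Lambda_{j_0} \rangle \subset M$, and with oriented edges $\Lambda_{j_k} \to \Lambda_{j_{k+1 \bmod n}}$. Iterated applications of Lemma \ref{lem:GSedge} along these edges give $\langle \Lambda_{j_k} \rangle \subset M$ for every $k$, hence for every $j \in J$. This first stage is essentially immediate from Lemma \ref{lem:GSedge} and the existence of the cycle.

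The second stage is to upgrade $\bigcup_{j \in J} \langle \Lambda_j \rangle \subset M$ to $\langle \Lambda^\star \rangle \subset M$, where $\Lambda^\star := \bigcup_{j \in J} \Lambda_j$. My approach is to apply the Fact established inside the proof of Lemma \ref{lem:InterInclusion}: if $P, Y \subset X(\Gamma, \mathcal{G})$ are gated subgraphs with $\mathcal{H}(P) \subset \mathcal{H}(Y)$ and $P \cap Y \neq \emptyset$, then $P \subset Y$. I would take $P := \langle \Lambda^\star \rangle$ (which is gated, being parabolic) and $Y := M$. The intersection condition follows at once from $\langle \Lambda_{j_0} \rangle \subset M \cap P$. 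For the hyperplane inclusion $\mathcal{H}(P) \subset \mathcal{H}(M)$, any hyperplane crossing $\langle \Lambda^\star \rangle$ has the form $hJ_v$ with $h \in \langle \Lambda^\star \rangle$ and $v \in V(\Lambda^\star)$. The ``diagonal'' case, where $h$ lies in some $\langle \Lambda_{j_k} \rangle$ with $v \in V(\Lambda_{j_k})$, is handled directly by $\langle \Lambda_{j_k} \rangle \subset M$. The general off-diagonal case I would treat by induction on the length of $h$, using the shared opposite-vertex structure of slabs between consecutive $\Lambda_{j_{k-1}}$ and $\Lambda_{j_k}$ (guaranteed by the oriented-edge condition defining $\mathscr{GC}$) to pass from one $\langle \Lambda_{j_k} \rangle$ to the next, and invoking the Morse property of $M$ through Lemma \ref{lem:InterInclusion} on suitable product substructures.

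The principal obstacle is precisely this upgrade from a union of parabolics to the parabolic generated by their union. Gatedness and the Morse property alone, together with $\bigcup_j \langle \Lambda_j \rangle \subset M$, are not enough to force $\langle \Lambda^\star \rangle \subset M$: the set-theoretic union $\bigcup_j \langle \Lambda_j \rangle$ is strictly smaller than the parabolic subgroup $\langle \Lambda^\star \rangle$ in general, because cross-products of elements coming from different $\Lambda_j$'s with non-adjacent vertex-labels typically lie outside every single $\langle \Lambda_j \rangle$. It is precisely the combination of the cycle's overlap structure (two shared opposite vertices of slabs between consecutive $\Lambda_{j_{k-1}}, \Lambda_{j_k}$) with the Morse property of $M$ that is expected to transmit containment across these cross-products and complete the argument.
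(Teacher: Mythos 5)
Your first stage is correct and is exactly what the paper does: walking around the oriented cycle and applying Lemma \ref{lem:GSedge} repeatedly gives $\langle \Lambda_j \rangle \subset M$ for every $j \in J$. The problem is the second stage. You correctly identify that passing from $\bigcup_{j} \langle \Lambda_j \rangle \subset M$ to $\langle \bigcup_j \Lambda_j \rangle \subset M$ is the real content of the lemma, but your proposed resolution --- an induction on the length of $h$ ``using the shared opposite-vertex structure of slabs \ldots and invoking the Morse property of $M$ through Lemma \ref{lem:InterInclusion} on suitable product substructures'' --- is a statement of intent, not an argument. To verify that a hyperplane $hJ_v$ with $h$ a long mixed product and $v \in V(\Lambda_{j_k})$ crosses $M$, you essentially need to know that the translate $h\langle \Lambda_{j_k}\rangle$ lies in $M$, which is equivalent to the containment you are trying to prove; so your reduction via the Fact inside Lemma \ref{lem:InterInclusion} ($\mathcal{H}(P)\subset\mathcal{H}(Y)$ plus $P\cap Y\neq\emptyset$ implies $P\subset Y$) does not by itself make progress, and the inductive step is left unproved.

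The missing idea is a translation trick. Write an arbitrary $g \in \langle \bigcup_j \Lambda_j\rangle$ as $a_1\cdots a_r$ with each $a_s \in \langle \Lambda_{i_s}\rangle$ for some $i_s \in J$. The cycle-propagation of stage one is a statement valid for \emph{every} gated Morse subgraph, hence in particular for every translate $a_s^{-1}\cdots a_1^{-1}M$, which is again gated and Morse. Since $a_1 \in \langle\Lambda_{i_1}\rangle$ one has $\langle\Lambda_{i_1}\rangle = a_1^{-1}\langle\Lambda_{i_1}\rangle\subset a_1^{-1}M$, and then the cycle-propagation applied to the gated Morse subgraph $a_1^{-1}M$ yields $\langle\Lambda_{i_2}\rangle\subset a_1^{-1}M$, whence $\langle\Lambda_{i_2}\rangle\subset a_2^{-1}a_1^{-1}M$, and so on; after $r$ steps $1 \in \langle\Lambda_{i_r}\rangle\subset a_r^{-1}\cdots a_1^{-1}M$ gives $g \in M$ directly, with no hyperplane analysis needed. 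This is the mechanism your induction on the length of $h$ would have to supply, and without it the second stage of your proof does not close.
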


\begin{proof}
For convenience, we fix an enumeration $J= \{0, \ldots, n-1 \}$ such that $\mathscr{GC}$ contains an oriented edge from $(\Lambda_k, \mathcal{C}_k)$ to $(\Lambda_{k+1}, \mathcal{C}_{k+1})$ for every $k$ mod $n$. We also assume that $\langle \Lambda_0 \rangle \subset M$. 

\medskip \noindent
Fix some $g \in \langle \Lambda_0 \cup \cdots \cup \Lambda_{n-1} \rangle$. Write $g$ as a product $a_1 \cdots a_r$ such that, for every $1 \leq s \leq r$, there exists some index $i_s$ such that $a_s$ belongs to $\langle \Lambda_{i_s} \rangle$. Because $\langle \Lambda_{i_1} \rangle \subset M$ as a consequence of Lemma \ref{lem:GSedge}, we have
$$\langle \Lambda_{i_1} \rangle = a_1^{-1} \langle \Lambda_{i_1} \rangle \subset a_1^{-1} M.$$
Similarly, because $\langle \Lambda_{i_2} \rangle \subset a_1^{-1}M$ as a consequence of Lemma \ref{lem:GSedge}, we have
$$\langle \Lambda_{i_2} \rangle = a_2^{-1} \langle \Lambda_{i_2} \rangle \subset a_2^{-1}a_1^{-1} M.$$
By iterating the argument, it follows that $\langle \Lambda_{i_{r}} \rangle \subset a_{r}^{-1} \cdots a_1^{-1}M$. Since the vertex $1$ clearly belongs to $\langle \Lambda_{i_{r}} \rangle$, we conclude that
$$g = a_1 \cdots a_{r} \in M,$$
as desired.
\end{proof}

\noindent
Lemma \ref{lem:GScycle} allows us to simplify a collection of slabbed subgraphs as soon as its oriented graph contains an oriented cycle, as made explicit by the following statement:

\begin{cor}\label{cor:CollectionMinus}
Let $\mathscr{C}=\{ (\Lambda_i, \mathcal{C}_i) \mid i \in I \}$ be a collection of slabbed subgraphs, and let $\{ (\Lambda_j , \mathcal{C}_j) \mid j \in J\}$ denote the vertices of an oriented cycle in $\mathscr{GC}$. Then
$$\{ (\Lambda_i, \mathcal{C}_i) \mid i \in I \backslash J \} \cup \left\{ \left( \bigcup\limits_{j \in J} \Lambda_j, \bigcup\limits_{j \in J} \mathcal{C}_j \right) \right\}$$
is again a collection of slabbed subgraphs of $\Gamma$. Moreover, $\mathcal{S}(\mathscr{C})= \mathcal{S}(\mathscr{C}_0)$. 
\end{cor}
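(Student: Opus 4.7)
The plan is to verify the two conclusions separately, both of which will follow quite directly from Lemma~\ref{lem:GScycle} and the definitions. Since the statement is a consolidation step — we are simply merging the vertices of an oriented cycle in $\mathscr{GC}$ into a single slabbed subgraph — there is no serious geometric input beyond what Lemma~\ref{lem:GScycle} already provides; the work is bookkeeping.

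To show that $\bigl( \bigcup_{j \in J} \Lambda_j, \bigcup_{j \in J} \mathcal{C}_j \bigr)$ is a slabbed subgraph, I would fix an arbitrary gated Morse subgraph $M \subset X(\Gamma, \mathcal{G})$ and a square $\Xi \in \bigcup_{j \in J} \mathcal{C}_j$ such that $\langle \Xi \rangle \subset M$, and then prove that $\bigl\langle \bigcup_{j \in J} \Lambda_j \bigr\rangle \subset M$. By definition of the union, $\Xi \in \mathcal{C}_{j_0}$ for some $j_0 \in J$. Since $(\Lambda_{j_0}, \mathcal{C}_{j_0})$ is itself a slabbed subgraph by hypothesis, the inclusion $\langle \Xi \rangle \subset M$ forces $\langle \Lambda_{j_0} \rangle \subset M$. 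Because $(\Lambda_{j_0}, \mathcal{C}_{j_0})$ lies on the oriented cycle indexed by $J$, Lemma~\ref{lem:GScycle} applies and yields $\bigl\langle \bigcup_{j \in J} \Lambda_j \bigr\rangle \subset M$, which is precisely what is needed.

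For the equality $\mathcal{S}(\mathscr{C}) = \mathcal{S}(\mathscr{C}_0)$, I would unfold the definition of the set of slabs as the union of the slab collections attached to every slabbed subgraph in the family. For $\mathscr{C}$ this gives $\bigcup_{i \in I} \mathcal{C}_i$, while for $\mathscr{C}_0$ it gives
\[
\bigcup_{i \in I \setminus J} \mathcal{C}_i \; \cup \; \bigcup_{j \in J} \mathcal{C}_j \; = \; \bigcup_{i \in I} \mathcal{C}_i,
\]
so the two sets of slabs coincide. This is a pure set-theoretic observation and requires no further argument.

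The only place where anything nontrivial happens is the appeal to Lemma~\ref{lem:GScycle}, whose hypothesis is exactly what the oriented-cycle structure of $J$ in $\mathscr{GC}$ guarantees; so there is no real obstacle here, just the need to chain the slab-property of $(\Lambda_{j_0}, \mathcal{C}_{j_0})$ with the cycle-propagation lemma. I expect the proof written in the paper to be a couple of lines.
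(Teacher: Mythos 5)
Your proposal is correct and follows essentially the same route as the paper: the paper's proof likewise reduces to checking the merged pair, uses the slab property of $(\Lambda_{j_0},\mathcal{C}_{j_0})$ to get $\langle \Lambda_{j_0}\rangle \subset M$, and then invokes Lemma~\ref{lem:GScycle}. The slab-set equality is indeed just the set-theoretic identity you state.
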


\begin{proof}
We have to show that, if $M \subset X(\Gamma, \mathcal{G})$ is a gated Morse subgraph and $\Xi$ a square which belongs to $\mathcal{C}_k$ for some $k \in J$, then $\langle \Xi \rangle \subset M$ implies $\left\langle \bigcup\limits_{j \in J} \Lambda_j \right\rangle \subset M$. 

\medskip \noindent
But we know by definition of a slabbed subgraph that $\langle \Xi \rangle \subset M$ implies that $\langle \Lambda_k \rangle \subset M$, so that the desired conclusion follows from Lemma \ref{lem:GScycle}. 
\end{proof}

\noindent
We are finally ready to turn to the proof of Theorem \ref{thm:eccentricsub}. 

\begin{proof}[Proof of Theorem \ref{thm:eccentricsub}.]
Let $M_0$ be a non-hyperbolic Morse subspace of $X(\Gamma, \mathcal{G})$. For now, we do not assume that $M_0$ is eccentric.
As a consequence of Lemmas \ref{lem:DistN} and \ref{lem:SquareEccentric}, there exists a non-hyperbolic Morse subgraph $M$ which contains a translate of $\langle \Xi \rangle$ for some induced square $\Xi$ of $\Gamma$ and which is contained into the $K$-neighborhood of $M_0$ for some constant $K$ depending only on $\Gamma$ and the Morse-gauge of $M_0$. From now on, we focus on the subspace $M$. 

\medskip \noindent
Set $\mathscr{C}_0 = \{ (\Lambda, \{ \Lambda \}) \mid \Lambda \leq \Gamma \ \text{induced square} \}$. Clearly, $\mathscr{C}_0$ is a collection of slabbed subgraphs of $\Gamma$. By applying Corollary \ref{cor:CollectionMinus} to $\mathcal{C}_0$ iteratively as many times as possible, one gets after finitely many steps (as the number of slabbed subgraphs decreases when applying Corollary \ref{cor:CollectionMinus}) a new collection of slabbed subgraphs $\mathscr{C}$. By construction, $\mathscr{GC}$ does not contain any oriented cycle. Moreover, $\mathcal{S}(\mathscr{C}_0)= \mathcal{S}(\mathscr{C})$.

\begin{claim}\label{claim:sink}
Assume that $(\Lambda, \mathcal{C})$ is a vertex-sink in $\mathscr{GC}$. Then $\Lambda$ is square-complete. 
\end{claim}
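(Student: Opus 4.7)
The plan is to argue by contradiction using the key invariance $\mathcal{S}(\mathscr{C}_0)=\mathcal{S}(\mathscr{C})$ recorded during the construction of $\mathscr{C}$. Let $\Xi$ be an induced square of $\Gamma$ having two opposite vertices in $\Lambda$; I want to show that $\Xi$ is entirely contained in $\Lambda$. The idea is to locate $\Xi$ among the slabs of $\mathscr{C}$ and to use the sink condition to force it into $(\Lambda,\mathcal{C})$ itself.

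First, observe that the initial collection $\mathscr{C}_0$ contains the slabbed subgraph $(\Xi, \{\Xi\})$ by definition, so $\Xi\in\mathcal{S}(\mathscr{C}_0)$. Since Corollary \ref{cor:CollectionMinus} preserves the set of slabs, iterating it yields $\mathcal{S}(\mathscr{C})=\mathcal{S}(\mathscr{C}_0)$, hence $\Xi\in\mathcal{S}(\mathscr{C})$. Consequently there exists a slabbed subgraph $(\Lambda',\mathcal{C}')\in\mathscr{C}$ with $\Xi\in\mathcal{C}'$.

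Now two cases arise. If $(\Lambda',\mathcal{C}')=(\Lambda,\mathcal{C})$, then $\Xi\in\mathcal{C}$, and since the slabs of a slabbed subgraph are by definition induced squares of that subgraph, we get $\Xi\subset\Lambda$, as desired. Otherwise $(\Lambda',\mathcal{C}')\neq(\Lambda,\mathcal{C})$; but then $\mathcal{C}'$ contains a square, namely $\Xi$, having two opposite vertices in $\Lambda$, so by the very definition of the oriented graph $\mathscr{GC}$ there is an oriented edge from $(\Lambda,\mathcal{C})$ to $(\Lambda',\mathcal{C}')$. This contradicts the assumption that $(\Lambda,\mathcal{C})$ is a vertex-sink, i.e.\ has no outgoing edges in $\mathscr{GC}$.

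There is no serious obstacle here: the whole argument hinges on unwinding the definitions and remembering the identity $\mathcal{S}(\mathscr{C})=\mathcal{S}(\mathscr{C}_0)$ provided by Corollary \ref{cor:CollectionMinus}. The only point worth being careful about is that edges in $\mathscr{GC}$ are between \emph{distinct} vertices, which is exactly why the two cases above are mutually exclusive and together exhaust all possibilities.
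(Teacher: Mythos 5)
Your proof is correct and follows essentially the same route as the paper's: both use the identity $\mathcal{S}(\mathscr{C}_0)=\mathcal{S}(\mathscr{C})$ to find a slabbed subgraph of $\mathscr{C}$ whose slabs contain $\Xi$, and then invoke the definition of $\mathscr{GC}$ together with the sink hypothesis. The only cosmetic difference is that the paper argues directly by contradiction from a square $\Xi\nsubseteq\Lambda$, whereas you split into two cases according to whether the slabbed subgraph containing $\Xi$ is $(\Lambda,\mathcal{C})$ itself; the content is identical.
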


\noindent
If $\Lambda$ is not square-complete, then there exists an induced square $\Xi \leq \Gamma$ having two opposite vertices in $\Lambda$ but which is not included into $\Lambda$. Notice that, as $\Xi$ belongs to $\mathcal{S}(\mathscr{C}_0)= \mathcal{S}(\mathscr{C})$, there must exist $(\Lambda', \mathcal{C}') \in \mathscr{C}$ such that $\Xi \in \mathcal{C}'$. Moreover, $(\Lambda', \mathcal{C'})$ is distinct from $(\Lambda, \mathcal{C})$ as $\mathcal{C}'$ contains a square which is not included into $\Lambda$. Then $\mathscr{GC}$ contains an oriented edge from $(\Lambda, \mathcal{C})$ to $(\Lambda' , \mathcal{C}')$. Consequently, if $(\Lambda, \mathcal{C})$ is a vertex-sink in $\mathscr{GC}$, $\Lambda$ has to be square-complete, concluding the proof of our claim.

\medskip \noindent
Up to translating $M$, we will suppose without loss of generality that $M$ contains $\langle \Xi \rangle$. Because $\Xi \in \mathcal{S}(\mathscr{C}_0)= \mathcal{S}(\mathscr{C})$, there must exist some $(\Lambda_0, \mathcal{C}_0) \in \mathscr{C}$ such that $\Xi \in \mathcal{C}_0$. Notice that, as $\langle \Xi \rangle \subset M$ and $\Xi \in \mathcal{C}_0$, necessarily $\langle \Lambda_0 \rangle \subset M$ by definition of a slabbed subgraph.

\medskip \noindent
Because $\mathscr{GC}$ does not contain any oriented cycle, there must exist a vertex-sink $(\Lambda, \mathcal{C})$ such that $\mathscr{GC}$ contains an oriented path from $(\Lambda_0, \mathcal{C}_0)$ to $(\Lambda, \mathcal{C})$. Notice that, as a consequence of Lemma \ref{lem:GSedge} and of the existence of an oriented path from $(\Lambda_0, \mathcal{C}_0)$ to $(\Lambda, \mathcal{C})$, necessarily $\langle \Lambda \rangle \subset M$. Next, by combining Claim \ref{claim:sink} with Proposition \ref{prop:ParabolicMorse}, we know that $\Lambda$ is square-complete and that $\langle \Lambda \rangle$ is a Morse subgraph of $X(\Gamma, \mathcal{G})$. 

\medskip \noindent
So far, we have proved the following statement:

\begin{fact}\label{fact:ForConverse}
Let $\Gamma$ be a finite simplicial graph and $\mathcal{G}$ a collection of finite groups indexed by $V(\Gamma)$. If $M_0 \subset \Gamma \mathcal{G}$ is a non-hyperbolic Morse subspace, then there exist some element $g \in \Gamma \mathcal{G}$ and some square-complete graph $\Lambda \subset \Gamma$ which contains an induced square such that $g \langle \Lambda \rangle$ lies in the $K$-neighborhood of $M_0$ where $K$ depends only on $\Gamma$ and the Morse gauge of $M_0$. 
\end{fact}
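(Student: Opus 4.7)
The plan is to first reduce to a gated model, then extract an initial square from $M_0$, and finally enlarge it to a square-complete subgraph via a bookkeeping procedure on slabbed subgraphs. Concretely, I would replace $M_0$ by its gated hull $M$: Lemma~\ref{lem:DistN} controls the Hausdorff distance between $M_0$ and $M$ in terms of $\dim_\square(X(\Gamma,\mathcal{G}))$ and the Morse gauge of $M_0$, and $M$ inherits the Morse property with a controlled gauge. Since $M$ is non-hyperbolic, Lemma~\ref{lem:SquareEccentric} provides an element $g \in \Gamma\mathcal{G}$ and an induced square $\Xi \subset \Gamma$ such that $g\langle\Xi\rangle$ sits in a neighborhood of $M$ of controlled radius; after translating by $g^{-1}$ (which only conjugates the final conclusion by $g$), we may assume $\langle\Xi\rangle \subset M$.

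The engine of the enlargement step is Lemma~\ref{lem:often}: whenever $M$ contains the parabolic subgroup generated by two opposite vertices of an induced square of $\Gamma$, it contains the parabolic subgroup of that whole square. This invites an inductive enlargement of $\Xi$, but the main obstacle is termination in a genuinely square-complete subgraph: naively absorbing new squares can cascade indefinitely, and control must be maintained both over which subgroup is forced inside $M$ and over which squares are still waiting to be absorbed. I would organise this through the slabbed-subgraph formalism established just above the statement. Start from the trivial collection $\mathscr{C}_0 = \{(\Lambda,\{\Lambda\}) : \Lambda \leq \Gamma \text{ induced square}\}$, which is a collection of slabbed subgraphs. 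Repeatedly apply Corollary~\ref{cor:CollectionMinus} to collapse any oriented cycle in the oriented graph $\mathscr{GC}$: each step strictly decreases the number of slabbed subgraphs, so after finitely many iterations we reach a collection $\mathscr{C}$ whose oriented graph is acyclic while the set of slabs $\mathcal{S}(\mathscr{C})$ still equals $\mathcal{S}(\mathscr{C}_0)$, i.e.\ contains every induced square of $\Gamma$.

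The payoff is that any vertex-sink $(\Lambda,\mathcal{C})$ of this finite acyclic graph must have $\Lambda$ square-complete: otherwise an induced square $\Xi'$ with two opposite vertices in $\Lambda$ but not contained in $\Lambda$ would belong to $\mathcal{S}(\mathscr{C})$ and hence appear as a slab of some different slabbed subgraph in $\mathscr{C}$, yielding an outgoing edge at $(\Lambda,\mathcal{C})$ and contradicting sinkhood. Now the initial square $\Xi$ appears as a slab of some vertex $(\Lambda_0,\mathcal{C}_0) \in \mathscr{C}$; the definition of slabbed subgraph, applied to $M$, gives $\langle\Lambda_0\rangle \subset M$. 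Finiteness and acyclicity of $\mathscr{GC}$ produce an oriented path from $(\Lambda_0,\mathcal{C}_0)$ to some sink $(\Lambda,\mathcal{C})$, and iterating Lemma~\ref{lem:GSedge} along this path propagates the inclusion to $\langle\Lambda\rangle \subset M$. Since $\Lambda \supset \Xi$ contains an induced square and is square-complete, translating back by $g$ yields the desired coset $g\langle\Lambda\rangle$ in a neighborhood of $M_0$ of radius controlled purely by $\Gamma$ and the Morse gauge, as all bounds accumulated along the way depend only on these data.
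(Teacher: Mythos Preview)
Your approach is essentially identical to the paper's: pass to a gated Morse model, extract an initial square via Lemma~\ref{lem:SquareEccentric}, run the slabbed-subgraph collapsing procedure, and follow an oriented path in $\mathscr{GC}$ to a sink, which is square-complete by Claim~\ref{claim:sink}.

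Two small slips are worth fixing. First, Lemma~\ref{lem:SquareEccentric} only places $g\langle\Xi\rangle$ in a \emph{neighborhood} of your gated hull $M$, not inside it; so after translating you only get $\langle\Xi\rangle$ near $g^{-1}M$, not $\langle\Xi\rangle\subset g^{-1}M$. The paper handles this by taking $M$ to be a further gated enlargement (the $M^+$ from the proof of Lemma~\ref{lem:SquareEccentric}) so that the translate of $\langle\Xi\rangle$ is genuinely contained in $M$; you should do the same, and Lemma~\ref{lem:DistN} keeps the Hausdorff distance to $M_0$ controlled. Second, your final sentence asserts $\Lambda\supset\Xi$, but that is not justified: $\Xi$ is a slab of $(\Lambda_0,\mathcal{C}_0)$, hence $\Xi\subset\Lambda_0$, while the sink $(\Lambda,\mathcal{C})$ may be a different vertex of $\mathscr{GC}$. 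The correct reason $\Lambda$ contains an induced square is simply that its slab set $\mathcal{C}$ is nonempty (it was built by merging nonempty slab sets via Corollary~\ref{cor:CollectionMinus}), and slabs are by definition induced squares of $\Lambda$.
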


\noindent
Now, assume that $M_0$ is not only a non-hyperbolic Morse subspace but an eccentric subspace. A fortiori, $M$ has to be a eccentric subspace as well. Because our subgraph $\langle \Lambda \rangle$ is not hyperbolic as $\Lambda$ contains an induced square, it follows from the definition of an eccentric subspace that the Hausdorff distance between $M$ and $\langle \Lambda \rangle$ is finite and depends only on the Morse-gauge of $\langle \Lambda \rangle$ and the eccentric-gauge of $M$. 

\medskip \noindent
In particular, $\langle \Lambda \rangle$ is an eccentric subspace in its own right. The desired conclusion follows from Lemma \ref{lem:ParabolicEccentric}.

\medskip \noindent
Thus, we have proved the implication of our theorem. Actually, we have proved a slightly stronger statement, which we record for future use:

\begin{fact}\label{fact:Improvement}
Let $\Gamma$ be a finite simplicial graph and $\mathcal{G}$ a collection of finite groups indexed by $V(\Gamma)$. If $M \subset \Gamma \mathcal{G}$ is eccentric, there exist an element $g \in \Gamma \mathcal{G}$ and a minsquare subgraph $\Lambda$ of $\Gamma$ such that the Hausdorff distance between $M$ and $g \langle \Lambda \rangle$ is bounded above by a finite constant depending only on $\Gamma$ and the Morse- and eccentric-gauges of $M$. 
\end{fact}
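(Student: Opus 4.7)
The Fact is a quantitative sharpening of the forward direction of Theorem \ref{thm:eccentricsub} that has essentially been established in the preceding argument. The plan is to revisit that argument while tracking carefully how each constant depends on the data, and to verify that everything can be expressed in terms of $\Gamma$ and the Morse- and eccentric-gauges of $M$.

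First, I would apply Lemma \ref{lem:SquareEccentric} to obtain $g \in \Gamma \mathcal{G}$ and an induced square $\Xi \subset \Gamma$ such that $g\langle \Xi \rangle$ sits in the $K$-neighborhood of $M$, where $K$ depends only on $\Gamma$ and on the Morse-gauge of $M$. After replacing $M$ by its gated hull — which lies at bounded Hausdorff distance, controlled by Lemma \ref{lem:DistN} in terms of $\dim_\square X(\Gamma,\mathcal{G})$ (hence of $\Gamma$) and the Morse-gauge — we may assume $M$ is a gated Morse subgraph containing $g\langle \Xi \rangle$, with Morse-gauge still bounded in terms of $\Gamma$ and the original Morse-gauge.

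Next, I would run the slabbed-subgraph procedure from the preceding argument: start from $\mathscr{C}_0 = \{(\Lambda, \{\Lambda\}) \mid \Lambda \leq \Gamma \text{ induced square}\}$ and iterate Corollary \ref{cor:CollectionMinus} until the oriented graph is acyclic. Since $\Gamma$ is finite, the resulting collection $\mathscr{C}$ depends only on $\Gamma$. Pick $(\Lambda_0,\mathcal{C}_0) \in \mathscr{C}$ with $\Xi \in \mathcal{C}_0$, so $g \langle \Lambda_0 \rangle \subset M$, and follow a directed path in $\mathscr{GC}$ to a sink $(\Lambda, \mathcal{C})$; Lemma \ref{lem:GSedge} applied along this path gives $g\langle \Lambda \rangle \subset M$, and the sink claim in the preceding proof shows $\Lambda$ is square-complete. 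By Propositions \ref{prop:hyp} and \ref{prop:ParabolicMorse}, $\langle \Lambda \rangle$ is then a non-hyperbolic Morse subgraph of $X(\Gamma,\mathcal{G})$ with Morse-gauge determined by $\Gamma$ alone.

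Finally, I would invoke eccentricity: since $g\langle \Lambda \rangle$ is a non-hyperbolic Morse subspace contained in $M$ with Morse-gauge depending only on $\Gamma$, the eccentric-gauge of $M$ bounds the Hausdorff distance between $g\langle \Lambda \rangle$ and $M$ by a constant depending only on $\Gamma$ and on this eccentric-gauge. Being at finite Hausdorff distance from the eccentric subspace $M$, the subgraph $\langle \Lambda \rangle$ is itself eccentric (with a gauge adjusted by this bound), so Lemma \ref{lem:ParabolicEccentric} provides a decomposition $\Lambda = \Lambda_0 \ast \Lambda_1$ with $\Lambda_0$ minsquare and $\Lambda_1$ complete, together with $d_H(\langle \Lambda \rangle, \langle \Lambda_0 \rangle) \leq \mathrm{clique}(\Gamma)$. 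Summing all these estimates yields the uniform bound required by the Fact. The main point to verify is that the slabbed-subgraph combinatorics depends only on $\Gamma$ and not on $M$; this is immediate from the construction, which operates purely on the graph, so the rest is bookkeeping.
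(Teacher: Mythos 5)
Your proposal is correct and follows essentially the same route as the paper: Lemma \ref{lem:SquareEccentric} plus the gated-hull reduction of Lemma \ref{lem:DistN} to plant a square, the slabbed-subgraph iteration (Corollary \ref{cor:CollectionMinus}, Lemma \ref{lem:GSedge}, and the sink claim) to promote it to a square-complete $\Lambda$ with $g\langle \Lambda \rangle \subset M$, then the eccentricity of $M$ and Lemma \ref{lem:ParabolicEccentric} to pass to a minsquare factor at controlled Hausdorff distance. Your observation that the slabbed-subgraph combinatorics is independent of $M$, so that all constants are governed by $\Gamma$ and the two gauges, is exactly the point the paper relies on.
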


\noindent
Conversely, let $\Lambda \leq \Gamma$ be a minsquare subgraph. We want to show that $\langle \Lambda \rangle$ is an eccentric subspace. So let $Y \subset \langle \Lambda \rangle$ be a Morse subgraph which is not hyperbolic. As a consequence of Fact \ref{fact:ForConverse}, there must exist a square-complete subgraph $\Xi \leq \Gamma$ (which contains at least one induced square) and an element $g \in \Gamma \mathcal{G}$ such that $g \langle \Xi \rangle$ is contained into the $K$-neighborhood of $Y$, where $K$ is a constant which depends only on $\Gamma$ and the Morse-gauge of $Y$. It follows from Lemma \ref{lem:ParabolicHausdorff} that $\Xi$ decomposes as a join $\Xi_0 \ast \Xi_1$, where $\Xi_1$ is complete and where $\Xi_0$ is not the star of one of its vertices, and that $\Xi_0 \subset \Lambda$. As $\Lambda \cap \Xi$ is square-complete and contains an induced square (indeed, any induced square of $\Xi$ must be contained into $\Xi_0$, which is included into $\Lambda$), we deduce from the fact that $\Lambda$ is a minsquare subgraph of $\Gamma$ that $\Lambda= \Xi \cap \Lambda$. Notice that, since we have  $\Xi_0 \subset \Lambda \subset \Xi$, necessarily the Hausdorff distance between $\langle \Lambda \rangle$ and $\langle \Xi \rangle$ is at most $\mathrm{clique}(\Gamma)$. Therefore, we have:
$$\langle \Lambda \rangle = h\langle \Lambda \rangle \subset h \langle \Xi \rangle^{+ \mathrm{clique}(\Gamma)} \subset g \langle \Xi \rangle^{K+3 \mathrm{clique}(\Gamma)} \subset Y^{+2K+ 3 \mathrm{clique}(\Gamma)}$$
where $h \in \langle \Lambda \rangle$ is the element given by Lemma \ref{lem:ParabolicHausdorff}. Thus, we have proved that the Hausdorff distance between $Y$ and $\langle \Lambda \rangle$ is at most $2K+ 3\mathrm{clique}(\Gamma)$, which is a constant depend only on $\Gamma$ and the Morse-gauge of $Y$. We conclude that $\langle \Lambda \rangle$ is an eccentric subspace, as desired.
\end{proof}

\begin{remark}
In order to prove Theorem \ref{thm:eccentricsub}, we first showed that, given an eccentric subspace $M \subset X(\Gamma, \mathcal{G})$, up to finite Hausdorff distance and translation $M$ contains $\langle \Lambda \rangle$ for some induced square $\Lambda \leq \Gamma$ (Lemma \ref{lem:SquareEccentric}). Next, and it was the difficult part of the proof, we showed that the ``Morse hull'' of $\langle \Lambda \rangle$ is again a parabolic subgroup (and we determined it explicitly). In this remark, we would like to emphasize that the ``Morse hull'' of a parabolic subgroup is not always a parabolic group as well. For instance, let $\Gamma$ be the second graph given by Figure \ref{Five} below. Let $\Lambda$ denote the subgraph of $\Gamma$ obtained by removing the single vertex of degree two having two adjacent orange edges, and $\Xi$ the orange square. Because $\Lambda \cap \Xi$ contains two non-adjacent vertices, it follows from Lemma~\ref{lem:often} that any gated Morse subgraph containing $\langle \Lambda \rangle$ has to contain $\langle \Xi \rangle$ as well. The same holds for $g \langle \Xi \rangle$ if $g \in \langle \Lambda \rangle$. Now, it can be proved (by decomposing $X(\Gamma, \mathcal{G})$ as a tree of spaces whose vertex-spaces are copies of $\langle \Lambda \rangle$ and $\langle \Xi \rangle$, and by using Proposition \ref{prop:contracting}) that the union $\langle \Lambda \rangle \cup \bigcup\limits_{g \in \langle \Lambda \rangle} g \langle \Xi \rangle$ defines a gated Morse subgraph of $X(\Gamma, \mathcal{G})$, defining the ``Morse hull'' of $\langle \Lambda \rangle$. But it is not at finite Hausdorff distance from a parabolic subgroup. 
\end{remark}

\noindent
We conclude this section by proving some of the easy consequences of Theorem \ref{thm:eccentricsub} mentioned in the introduction.

\begin{proof}[Proof of Theorem \ref{thm:IntroMain}.]
Let $\Lambda_1$ be a minsquare subgraph of $\Gamma_1$. According to Theorem~\ref{thm:eccentricsub}, $\langle \Lambda_1 \rangle$ is an eccentric subspace of $\Gamma_1 \mathcal{G}_1$, so that $\Phi(\langle \Lambda_1 \rangle)$ must be an eccentric subspace of $\Gamma_2 \mathcal{G}_2$. Once again according to Theorem \ref{thm:eccentricsub}, there exist an element $g \in \Gamma_2 \mathcal{G}_2$ and a minsquare subgraph $\Lambda_2$ of $\Gamma_2$ such that $\Phi ( \langle \Lambda_1 \rangle)$ is at finite Hausdorff distance from $g \langle \Lambda_2 \rangle$, concluding the proof. 
\end{proof}

\begin{proof}[Proof of Proposition \ref{thm:IntroQIinv}.]
Assume that $\Gamma_1$ is a minsquare graph. As a consequence of Theorem \ref{thm:eccentricsub}, any eccentric subspace of $\Gamma_1 \mathcal{G}_1$ must be quasi-dense, so the same must be true for $\Gamma_2 \mathcal{G}_2$. Therefore, if we fix a minsquare subgraph $\Lambda_2$ in $\Gamma_2$, then $\langle \Lambda_2 \rangle$ must be quasi-dense in $\Gamma_2 \mathcal{G}_2$. The desired conclusion follows from Corollary \ref{cor:ParabolicHausdorff}.

\medskip \noindent
Next, assume that $\Gamma_1$ contains an induced square all of whose vertices are labelled by $\mathbb{Z}/2 \mathbb{Z}$'s and which is square complete. As a consequence of Theorem \ref{thm:eccentricsub}, $\Gamma_1 \mathcal{G}_1$ contains an eccentric subspace quasi-isometric to $\mathbb{Z}^2$, so that the same must be true for $\Gamma_2 \mathcal{G}_2$. By applying Theorem \ref{thm:eccentricsub} once again, it follows that there exists a minsquare subgraph $\Lambda$ in $\Gamma_2$ such that $\langle \Lambda \rangle$ is quasi-isometric to $\mathbb{Z}^2$. As a consequence of Proposition \ref{prop:hyp}, $\Lambda$ necessarily contains an induced square $\Xi$. If one of the vertices of $\Xi$ is labelled by a group of cardinality $>2$, then $\langle \Xi \rangle$ is quasi-isometric to $\mathbb{F}_2 \times \mathbb{Z}$ or $\mathbb{F}_2 \times \mathbb{F}_2$, which is not possible (since $\mathbb{Z}^2$ has polynomial growth). Therefore, $\Xi$ is an induced square all of whose vertices are labelled by $\mathbb{Z}/2\mathbb{Z}$'s. Next, because $\langle \Xi \rangle \subset \langle \Lambda \rangle$ and because $\langle \Xi \rangle$ and $\langle \Lambda \rangle$ are both quasi-isometric to $\mathbb{Z}^2$, necessarily the Hausdorff distance between $\langle \Lambda \rangle$ and $\langle \Xi \rangle$ must be finite. We conclude from Corollary \ref{cor:ParabolicHausdorff} that $\Lambda= \Xi$, so that the square $\Xi$ must be square-complete.
\end{proof}

\section{A few words about relative hyperbolicity}\label{section:RH}

\noindent
This section is dedicated to the proof of Theorem \ref{thm:IntroRH}. We begin by extracting a few statements from the study of relatively hyperbolic graph products of groups in \cite{Qm}. For information about relatively hyperbolic groups in full generality, we refer the reader to \cite{HruskaRH, OsinRelativeHyp}.

\medskip \noindent
In order to motivate the general criterion determining when a graph product of groups is relatively hyperbolic, we begin by mentioning the following sufficient condition:

\begin{prop}\label{prop:RH}
Let $\Gamma$ be a finite simplicial graph and $\mathcal{G}$ a collection of finite groups indexed by $V(\Gamma)$. Fix a collection $\mathcal{C}$ of subgraphs of $\Gamma$, and assume that:
\begin{itemize}
	\item the intersection between any two subgraphs of $\mathcal{C}$ is complete;
	\item any induced square of $\Gamma$ is contained into a subgraph of $\mathcal{C}$;
	\item for every $\Lambda \in \mathcal{C}$ and $u \in V(\Gamma) \backslash V(\Lambda)$, $\mathrm{link}(u) \cap \Lambda$ is complete.
\end{itemize}
Then $\Gamma \mathcal{G}$ is hyperbolic relative to $\mathcal{H}= \{ \langle \Lambda \rangle \mid \Lambda \in \mathcal{C} \}$. 
\end{prop}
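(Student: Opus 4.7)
The plan is to exploit the quasi-median geometry of $X(\Gamma, \mathcal{G})$ and invoke a criterion for relative hyperbolicity of a group acting on a quasi-median graph (as developed in \cite{Qm}), applied to the $\Gamma \mathcal{G}$-invariant collection of gated subgraphs $\{ g \langle \Lambda \rangle \mid g \in \Gamma \mathcal{G}, \Lambda \in \mathcal{C} \}$. Concretely, such a criterion reduces the task to two points: first, that the graph obtained from $X(\Gamma, \mathcal{G})$ by coning off every coset $g \langle \Lambda \rangle$ is hyperbolic; second, that the family of these cosets is uniformly almost-disjoint, in the sense that any two distinct cosets have uniformly bounded coarse intersection.

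The key geometric input behind the first point is the following claim: every induced join subgraph $\Lambda_1 \ast \Lambda_2 \leq \Gamma$ in which both $\Lambda_1$ and $\Lambda_2$ are non-complete is contained in some $\Lambda \in \mathcal{C}$. Indeed, choose non-adjacent pairs $\{a,b\} \subset \Lambda_1$ and $\{c,d\} \subset \Lambda_2$; they span an induced square of $\Gamma$, which by hypothesis (2) lies in some $\Lambda \in \mathcal{C}$. If some $v \in V(\Lambda_1) \backslash V(\Lambda)$ existed, then $v$ would be adjacent to both $c$ and $d$, so $\{c,d\}$ would be a non-adjacent pair in $\mathrm{link}(v) \cap \Lambda$, contradicting hypothesis (3). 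Hence $\Lambda_1 \subset \Lambda$, and symmetrically $\Lambda_2 \subset \Lambda$. Combined with Lemma \ref{lem:FlatRinX}, this shows that every flat rectangle of $X(\Gamma, \mathcal{G})$ which is thicker than $\mathrm{clique}(\Gamma)$ is contained in a coset of some $\langle \Lambda \rangle$ with $\Lambda \in \mathcal{C}$. Coning off these cosets therefore destroys all thick flat rectangles in a controlled way, and Lemma \ref{lem:FlatRectangle} then yields hyperbolicity of the coned-off graph.

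For bounded coarse intersection, hypothesis (1) does the work. Two cosets $g \langle \Lambda_1 \rangle$ and $h \langle \Lambda_2 \rangle$ with $\Lambda_1 \neq \Lambda_2$ either are disjoint or meet in a coset of $\langle \Lambda_1 \cap \Lambda_2 \rangle$; since $\Lambda_1 \cap \Lambda_2$ is complete and the vertex-groups are finite, this intersection is a finite subgroup, hence uniformly bounded. For cosets of the same $\langle \Lambda \rangle$ in distinct $\langle \Lambda \rangle$-orbits, the hyperplane criterion of Proposition \ref{prop:BigProj} combined with Lemma \ref{lem:GatedBridge} reduces the analysis to hyperplanes crossing both cosets; condition (3) forces these to be labelled in $\Lambda$ itself, whence one controls the coarse intersection by the diameter of a finite subgroup.

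The main obstacle is the precise statement and invocation of the relative-hyperbolicity criterion from \cite{Qm}: one must check that the chosen family of gated subgraphs satisfies the almost-malnormality and fineness assumptions, and that the bounds on coarse intersections are uniform across the orbit. Once those technical verifications are in place, combining the hyperbolicity of the cone-off with a standard characterisation of relative hyperbolicity (in the spirit of Bowditch's fineness or Drutu--Sapir asymptotic cones) delivers the claimed conclusion that $\Gamma \mathcal{G}$ is hyperbolic relative to $\mathcal{H} = \{ \langle \Lambda \rangle \mid \Lambda \in \mathcal{C} \}$.
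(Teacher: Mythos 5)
Your outline is essentially sound, but it takes a much more hands-on route than the paper. The paper's proof is three lines: it enlarges $\mathcal{C}$ to a collection $\mathcal{C}^+$ by adding, as singletons, the vertices of $\Gamma$ not covered by any member of $\mathcal{C}$, observes that $\mathcal{C}^+$ satisfies the hypotheses of \cite[Proposition 8.37]{Qm} (a ready-made relative hyperbolicity criterion for graph products), and then discards the finite peripheral subgroups coming from the singletons. What you propose is, in effect, to reprove that citation from the quasi-median geometry: cone off the cosets $g \langle \Lambda \rangle$, show the cone-off is hyperbolic, and verify almost-malnormality of the family. Your key geometric claim --- that any induced join $\Lambda_1 \ast \Lambda_2 \leq \Gamma$ with both factors non-complete is absorbed into a single $\Lambda \in \mathcal{C}$, via hypotheses (2) and (3) --- is correct, and it is exactly the verification the paper carries out later (for the collection of minsquare subgraphs) in the proof of Proposition \ref{prop:ElecHyp}. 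Your route buys self-containedness and makes visible why the three hypotheses are what they are; the paper's route buys brevity at the cost of an external reference. Two caveats on your sketch. First, Lemma \ref{lem:FlatRectangle} characterises hyperbolicity of the quasi-median graph itself, not of a cone-off, so the tool you actually need for the hyperbolicity step is the cone-off criterion \cite[Proposition 8.38]{Qm}, the one invoked in the proof of Proposition \ref{prop:ElecHyp}. Second, your treatment of coarse intersections is only a gesture: passing from ``the set-theoretic intersection of two cosets is a coset of the finite group $\langle \Lambda_1 \cap \Lambda_2 \rangle$'' to ``the coarse intersection is uniformly bounded'' genuinely requires the bridge/hyperplane argument (Lemma \ref{lem:GatedBridge}, plus the observation that a gated subgraph crossed only by hyperplanes labelled by a clique has diameter at most $\mathrm{clique}(\Gamma)$, since geodesics are labelled by reduced words); you name the relevant lemmas but do not carry this out, and it is here that hypothesis (3) is really consumed.
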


\begin{proof}
Let $\mathcal{C}^+$ denote the collection of subgraphs of $\Gamma$ obtained from $\mathcal{C}$ by adding the singletons of the vertices of $\Gamma$ which do not belong to a subgraph of $\mathcal{C}$. Then \cite[Proposition 8.37]{Qm} applies, showing that $\Gamma \mathcal{G}$ is hyperbolic relative to $\mathcal{H}^+= \{ \langle \Lambda \rangle \mid \Lambda \in \mathcal{C}^+ \}$. Because $\mathcal{H}$ is obtained from $\mathcal{H}$ by removing finite subgroups, we conclude that $\Gamma \mathcal{G}$ is hyperbolic relative to $\mathcal{H}$ as well. 
\end{proof}

\noindent
Among all the collections of subgraphs satisfying the assumptions of Proposition \ref{prop:RH}, there is one ``minimal'' choice, namely:

\begin{definition}\label{decomposition}
Let $\Gamma$ be a finite graph. For every subgraph $\Lambda \subset \Gamma$, let $\mathrm{cp}(\Lambda)$ denote the subgraph of $\Gamma$ generated by $\Lambda$ and the vertices $v \in \Gamma$ such that $\mathrm{link}(v) \cap \Lambda$ is not complete. Now, define the collection of subgraphs $\mathfrak{J}^n(\Gamma)$ of $\Gamma$ by induction in the following way:
\begin{itemize}
	\item $\mathfrak{J}^0(\Gamma)$ is the collection of all the induced squares of $\Gamma$;
	\item if $C_1, \ldots, C_k$ denote the connected components of the graph whose set of vertices is $\mathfrak{J}^n(\Gamma)$ and whose edges link two subgraphs with non-complete intersection, we set $\mathfrak{J}^{n+1}(\Gamma)= \left( \mathrm{cp} \left( \bigcup\limits_{ \Lambda \in C_1} \Lambda \right), \ldots , \mathrm{cp} \left( \bigcup\limits_{\Lambda \in C_k} \Lambda \right) \right)$.
\end{itemize}
Because $\Gamma$ is finite, the sequence $(\mathfrak{J}^n(\Gamma))$ must be eventually constant to some collection $\mathfrak{J}^{\infty}(\Gamma)$.
\end{definition}

\noindent
Thanks to Proposition \ref{prop:RH}, it is not difficult to shows that this collection of subgraphs turns out to define a ``minimal'' collection of peripheral subgroups. More precisely:

\begin{thm}\label{thm:RHGP}
Let $\Gamma$ be a finite simplicial graph and $\mathcal{G}$ a collection of finite groups indexed by $V(\Gamma)$. Then $\Gamma \mathcal{G}$ is hyperbolic relative to $\mathcal{H}=\{ \langle \Lambda \rangle \mid \Lambda \in \mathfrak{J}^\infty(\Gamma) \}$. Moreover, if $\Gamma \mathcal{G}$ is hyperbolic relative to a collection of groups $\mathcal{K}$, then every subgroup of $\mathcal{H}$ must be contained into a conjugate of a subgroup of $\mathcal{K}$. 
\end{thm}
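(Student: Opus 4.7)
The plan is to handle the two assertions separately. For the first, I would apply Proposition~\ref{prop:RH} with $\mathcal{C}=\mathfrak{J}^{\infty}(\Gamma)$. Stability of the iteration defining $\mathfrak{J}^{\infty}(\Gamma)$ (in the sense that $\mathfrak{J}^{n+1}(\Gamma)=\mathfrak{J}^{n}(\Gamma)$ once $n$ is large enough) immediately encodes the three hypotheses of the proposition. Any two distinct elements $\Lambda_{1},\Lambda_{2}\in\mathfrak{J}^{\infty}(\Gamma)$ must have complete intersection, otherwise they would lie in the same connected component of the auxiliary graph at the next step of Definition~\ref{decomposition} and would be merged, contradicting stability. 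Similarly, $\mathrm{cp}(\Lambda)=\Lambda$ for every $\Lambda\in\mathfrak{J}^{\infty}(\Gamma)$, so that for every $u\notin\Lambda$ the intersection $\mathrm{link}(u)\cap\Lambda$ is complete. Finally, every induced square of $\Gamma$ belongs to $\mathfrak{J}^{0}(\Gamma)$, and the iteration only enlarges or merges subgraphs, so every induced square is contained in some element of $\mathfrak{J}^{\infty}(\Gamma)$.

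For the minimality assertion, fix $\Lambda\in\mathfrak{J}^{\infty}(\Gamma)$. By construction $\Lambda$ contains an induced square $\{a,b,c,d\}$, with $\{a,c\}$ and $\{b,d\}$ the pairs of non-adjacent vertices, so $\langle\Lambda\rangle$ contains the subgroup $(G_{a}\ast G_{c})\times(G_{b}\ast G_{d})$. Because all vertex-groups are finite and non-trivial, each free product factor contains an infinite cyclic subgroup, so $\langle\Lambda\rangle$ contains an undistorted copy of $\mathbb{Z}^{2}$. By the quasi-isometric rigidity of peripheral subgroups in relatively hyperbolic groups~\cite{BDM}, this flat must lie in a bounded neighborhood of a coset of some $K\in\mathcal{K}$, which forces $\langle a,b,c,d\rangle$ into a conjugate $gKg^{-1}$ (up to finite index). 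One then propagates the containment to all of $\langle\Lambda\rangle$ by induction along the levels of the construction of $\mathfrak{J}^{\infty}(\Gamma)$. At each level, the $\mathrm{cp}$ operation or merging of connected components is triggered by a new non-complete configuration which itself exhibits an additional $\mathbb{Z}^{2}$ flat, anchoring a new vertex-group to the same peripheral $gKg^{-1}$.

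The main obstacle is precisely this bootstrapping step: going from a single flat $\mathbb{Z}^{2}\subset\langle a,b,c,d\rangle$ contained in $gKg^{-1}$ to the containment of the full $\langle\Lambda\rangle$ in $gKg^{-1}$. The combinatorial design of Definition~\ref{decomposition}, in particular the $\mathrm{cp}$ closure, is tailored for exactly this purpose: each enlargement in the iterative construction corresponds to a generator that commutes with a non-commuting pair already known to lie in the peripheral, and therefore must lie in it as well. A cleaner alternative would be to establish once and for all that each $\langle\Lambda\rangle$ is \emph{thick} in the sense of Behrstock--Drutu--Mosher; this would force it into a single conjugate of an element of $\mathcal{K}$ by the general theory of relatively hyperbolic groups, bypassing the step-by-step propagation entirely.
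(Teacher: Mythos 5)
Your overall strategy matches the paper's: the first assertion is obtained by feeding $\mathcal{C}=\mathfrak{J}^\infty(\Gamma)$ into Proposition~\ref{prop:RH} and reading off the three hypotheses from the stability of the iteration, and the minimality assertion is proved by anchoring an induced square in a peripheral conjugate and then propagating along the levels of Definition~\ref{decomposition}. Two points where your write-up diverges from (and is weaker than) the paper's argument. First, the base case: the detour through an undistorted $\mathbb{Z}^2$ flat and \cite{BDM} is unnecessary and leaves you with only ``up to finite index'' containment, which is not enough to start the induction. The paper instead applies the statement that \emph{any} subgroup isomorphic to a direct product of two infinite groups lies in a single peripheral conjugate (\cite[Theorems 4.16 and 4.19]{OsinRelativeHyp}) directly to the whole group $(G_a\ast G_c)\times(G_b\ast G_d)=\langle a,b,c,d\rangle$, giving the genuine inclusion $\langle a,b,c,d\rangle\subset gKg^{-1}$. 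Second, the bootstrapping step that you correctly identify as the crux is not resolved by ``the combinatorial design'' or by exhibiting further flats; the mechanism is almost malnormality of the peripheral collection. Concretely: (a) if $\mathrm{link}(u)\cap\Lambda$ is not complete and $\langle\Lambda\rangle\subset K$, then $K\cap uKu^{-1}$ contains the infinite subgroup generated by two non-adjacent vertices of $\mathrm{link}(u)\cap\Lambda$, so almost malnormality forces $\langle\Lambda,u\rangle\subset K$ (this handles the $\mathrm{cp}$ closure); (b) if $\Lambda_1\cap\Lambda_2$ is not complete and $\langle\Lambda_i\rangle\subset K_i$, then $K_1\cap K_2$ contains an infinite free product of two vertex-groups, so $K_1=K_2$ (this handles the merging of connected components, which your sketch does not really address). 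With these two observations in place your induction goes through exactly as in the paper, and the appeal to thickness is not needed.
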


\begin{proof}
As a consequence of Proposition \ref{prop:RH}, it is clear that $\Gamma \mathcal{G}$ is hyperbolic relative to $\mathcal{H}$. Now, assume that $\Gamma \mathcal{H}$ is hyperbolic relative to another collection of subgroups $\mathcal{K}$. The fact that any subgroup of $\mathcal{H}$ must be contained into a conjugate of a subgroup of $\mathcal{K}$ is a consequence of the following three observations:
\begin{itemize}
	\item[(i)] If $\Lambda \leq \Gamma$ is an induced square, then $\langle \Lambda \rangle$ is contained into a conjugate of a subgroup of $\mathcal{K}$.
	\item[(ii)] Let $\Lambda_1, \Lambda_2 \leq \Gamma$ be two subgraphs such that $\Lambda_1 \cap \Lambda_2$ is not complete. Assume that there exist two conjugates $K_1$ and $K_2$ of subgroups of $\mathcal{K}$ such that $\langle \Lambda_1 \rangle \leq K_1$ and $\langle \Lambda_2 \rangle \leq K_2$. Then $K_1=K_2$. 
	\item[(iii)] Let $\Lambda \leq \Gamma$ be a subgraph and $u \in V(\Gamma)$ be a vertex such that $\mathrm{link}(u) \cap \Lambda$ is not complete. If there exists a conjugate $K$ of a subgroup of $\mathcal{K}$ containing $\langle \Lambda \rangle$, then $\langle \Lambda,u \rangle \subset K$.
\end{itemize}
The first observation follows from the fact that any subgroup isomorphic to a direct product of two infinite groups has to be included into a peripheral subgroup \cite[Theorems 4.16 and 4.19]{OsinRelativeHyp}. The second observation follows from the fact that peripheral subgroups define an almost malnormal collection \cite[Theorems 1.4 and 1.5]{OsinRelativeHyp}. This also implies the third observation because the intersection $K \cap uKu^{-1}$ contains the infinite subgroup $\langle \mathrm{link}(u) \rangle \cap \Lambda \rangle$. 
\end{proof}

\noindent
For instance, the right-angled Coxeter groups defined in Examples \ref{ex:five} and \ref{ex:NotQT} are not relatively hyperbolic, but those defined in Remark \ref{Charney} are relatively hyperbolic. In Figure~\ref{Intro}, only the fourth and seventh graph products are not relatively hyperbolic. 

\medskip \noindent
A consequence of Theorem \ref{thm:RHGP}, which will be useful in the proof of Theorem \ref{thm:IntroRH}, is the following:

\begin{cor}\label{cor:RH}
Let $\Gamma$ be a finite simplicial graph and $\mathcal{G}$ a collection of finite groups indexed by $V(\Gamma)$. Assume that $\Gamma \mathcal{G}$ is hyperbolic relative to a collection $\mathcal{K}$ of groups which are not relatively hyperbolic. Then each subgroup of $\mathcal{K}$ is a conjugate of a $\langle \Lambda \rangle$ for some $\Lambda \in \mathfrak{J}^\infty(\Gamma)$. 
\end{cor}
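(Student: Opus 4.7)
The plan is to identify the two peripheral structures on $\Gamma \mathcal{G}$: the canonical one $\mathcal{H} := \{ \langle \Lambda \rangle \mid \Lambda \in \mathfrak{J}^\infty(\Gamma) \}$ given by Theorem \ref{thm:RHGP}, and the hypothesised one $\mathcal{K}$. Theorem \ref{thm:RHGP} already gives one half of the comparison: every $\langle \Lambda \rangle$ with $\Lambda \in \mathfrak{J}^\infty(\Gamma)$ is contained in a conjugate of some subgroup of $\mathcal{K}$. The corollary asks for the reverse direction, namely that each $K \in \mathcal{K}$ coincides with a conjugate of some $\langle \Lambda \rangle$. The strategy is to first establish this as an inclusion and then upgrade it to an equality via almost malnormality.

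For the inclusion, fix $K \in \mathcal{K}$ and view $K$ as a subgroup of $\Gamma \mathcal{G}$ endowed with the $\mathcal{H}$-structure. The standard fact I would invoke from the theory of relatively hyperbolic groups (see e.g.\ \cite{OsinRelativeHyp}) states that any subgroup of a relatively hyperbolic group $(G, \mathcal{H})$ which is neither virtually cyclic nor itself non-trivially relatively hyperbolic must be contained in a conjugate of some peripheral subgroup. By assumption $K$ is not relatively hyperbolic; and being a peripheral subgroup for the $\mathcal{K}$-structure, it is infinite and not virtually cyclic. Applying the fact to the $\mathcal{H}$-structure, one obtains $g \in \Gamma \mathcal{G}$ and $\Lambda \in \mathfrak{J}^\infty(\Gamma)$ such that $K \subset g \langle \Lambda \rangle g^{-1}$.

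The second step is a short squeeze. Applying Theorem \ref{thm:RHGP} to $\langle \Lambda \rangle$ produces $h \in \Gamma \mathcal{G}$ and $K' \in \mathcal{K}$ with $\langle \Lambda \rangle \subset h K' h^{-1}$. Composing the two inclusions gives
$$K \subset g \langle \Lambda \rangle g^{-1} \subset gh K' h^{-1} g^{-1}.$$
Since $K$ is infinite, the intersection $K \cap gh K' h^{-1} g^{-1}$ is infinite. Peripheral subgroups of a relatively hyperbolic group form an almost malnormal collection (this was the key tool in the proof of Theorem \ref{thm:RHGP}), so the two conjugates $K$ and $gh K' h^{-1} g^{-1}$ of elements of $\mathcal{K}$ must coincide. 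Chasing the chain of containments then forces $K = g \langle \Lambda \rangle g^{-1}$.

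The main obstacle is the first step: correctly importing the external result that a non-(relatively-)hyperbolic, non-virtually-cyclic subgroup of a relatively hyperbolic group sits inside a conjugate of a peripheral subgroup. Once that is in hand, the remainder of the argument is purely formal and uses only Theorem \ref{thm:RHGP} together with the almost malnormality that already underlies it.
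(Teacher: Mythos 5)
Your proposal is correct and follows essentially the same route as the paper: contain $K$ in a conjugate $g\langle\Lambda\rangle g^{-1}$ using the fact that non-relatively-hyperbolic subgroups lie in peripherals, squeeze via Theorem \ref{thm:RHGP} to get $K\subset ghK'h^{-1}g^{-1}$, and conclude equality by almost malnormality. The only quibble is bibliographic: the key external fact in the first step is due to Dru\c{t}u--Sapir (the paper cites their Theorem 1.8), not Osin's memoir, which supplies only the malnormality used in the second step.
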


\begin{proof}
Fix a $K \in \mathcal{K}$. Because, $K$ is not relatively hyperbolic, it follows from \cite[Theorem 1.8]{DrutuSapir} and Theorem \ref{thm:RHGP} that $K \subset g \langle \Lambda \rangle g^{-1}$ for some $g \in \Gamma \mathcal{G}$ and $\Lambda \in \mathfrak{J}^\infty(\Gamma)$. But we also know from Theorem \ref{thm:RHGP} that $g \langle \Lambda \rangle g^{-1} \subset hK'h^{-1}$ for some $h \in \Gamma \mathcal{G}$ and $K' \in \mathcal{K}$. The inclusion $K \subset hK'h^{-1}$ implies that $K=hK'h^{-1}$ \cite[Theorems 1.4 and 1.5]{OsinRelativeHyp}, hence $K=g \langle \Lambda \rangle g^{-1}$, as desired.
\end{proof}

\noindent
The quasi-isometric rigidity contained in Theorem \ref{thm:IntroRH} comes from the following statement:

\begin{thm}\label{thm:BDM}\emph{\cite{BDM}}
Let $G$ be a finitely generated group hyperbolic relative to a finite collection of finitely generated subgroups $\mathcal{G}$ for which each $G \in \mathcal{G}$ is not relatively hyperbolic. Fix a finitely generated group $H$ and assume that there exists a quasi-isometry $\Phi : G \to H$. Then $H$ hyperbolic relative to a finite collection of finitely generated subgroups $\mathcal{H}$ such that $\Phi$ sends each subgroup in $\mathcal{H}$ at finite Hausdorff distance from one of the subgroups in $\mathcal{G}$.
\end{thm}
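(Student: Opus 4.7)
The natural approach is to pass to asymptotic cones and exploit the Dru\c{t}u--Sapir dictionary between relative hyperbolicity and tree-graded geometry. First, fix a non-principal ultrafilter $\omega$ and a scaling sequence, so that $\Phi$ induces a bi-Lipschitz homeomorphism $\Phi_\omega : \mathrm{Cone}_\omega(G) \to \mathrm{Cone}_\omega(H)$. Since $G$ is hyperbolic relative to $\mathcal{G}$, the cone $\mathrm{Cone}_\omega(G)$ is tree-graded with pieces given by ultralimits of left cosets of peripherals \cite{DrutuSapir}.

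Next, I would exploit the hypothesis that every $P \in \mathcal{G}$ is \emph{not} relatively hyperbolic. By the Dru\c{t}u--Sapir dichotomy this translates into the pieces of $\mathrm{Cone}_\omega(G)$ being \emph{unconstricted}, meaning they contain no global cut-point. The crucial lemma is then that in a tree-graded space whose non-trivial pieces are all unconstricted, the pieces are intrinsically characterised as the maximal subsets that contain more than one point and avoid every global cut-point of the ambient cone. This characterisation being bi-Lipschitz invariant, $\Phi_\omega$ must send pieces of $\mathrm{Cone}_\omega(G)$ onto pieces of a tree-graded structure on $\mathrm{Cone}_\omega(H)$ whose pieces are again unconstricted. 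Applying Dru\c{t}u--Sapir in reverse, this means $H$ is relatively hyperbolic with respect to some finite collection $\mathcal{H}$ of subgroups whose cosets give precisely these pieces.

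To descend from the cones back to the groups, I would argue by contradiction: if the image under $\Phi$ of some coset $gP$ (with $P \in \mathcal{G}$) were arbitrarily far from every coset of a subgroup of $\mathcal{H}$ as $g$ varies, one could choose a sequence of basepoints producing, in the ultralimit, a piece of $\mathrm{Cone}_\omega(G)$ whose image under $\Phi_\omega$ would not lie in any piece of $\mathrm{Cone}_\omega(H)$, contradicting the conclusion of the previous paragraph. This yields a uniform finite-Hausdorff-distance matching from $\mathcal{G}$-cosets to $\mathcal{H}$-cosets; applying the same argument to a quasi-inverse of $\Phi$ produces the matching in both directions, which in particular sends each subgroup of $\mathcal{H}$ at finite Hausdorff distance from the $\Phi$-image of some subgroup in $\mathcal{G}$.

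The principal obstacle is the intrinsic detection of pieces used in the second paragraph: one must verify that unconstricted pieces are precisely the maximal cut-point-free subsets of the ambient cone, and that no extraneous pieces appear in $\mathrm{Cone}_\omega(H)$ due to, say, degenerate pieces or transversal trees. This is the technical heart of \cite{BDM} and rests on a delicate analysis of cut-points in tree-graded spaces and of the thick/wide dichotomy introduced in \cite{DrutuSapir}; for our purposes we simply quote the resulting statement.
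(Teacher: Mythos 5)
The paper gives no proof of this statement: it is imported verbatim from \cite{BDM} and used as a black box. Your sketch is a faithful outline of the asymptotic-cone argument actually carried out in \cite{BDM} (passing to cones, using the Dru\c{t}u--Sapir tree-graded dictionary, detecting unconstricted pieces intrinsically via cut-points, and descending by an ultralimit contradiction), so it is consistent with---and considerably more detailed than---what the paper itself provides.
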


\noindent
We are now ready to prove Theorem \ref{thm:IntroRH}.

\begin{proof}[Proof of Theorem \ref{thm:IntroRH}.]
If $\Lambda \in \mathfrak{J}^\infty(\Gamma_1)$, then it follows from Theorem \ref{thm:RHGP} and from the construction of $\mathfrak{J}^\infty (\Gamma_1)$ that the graph product $\langle \Lambda \rangle$ is not relatively hyperbolic. As $\Gamma_1 \mathcal{G}_1$ is hyperbolic relative to $\mathcal{H}= \{ \langle \Lambda \rangle \mid \Lambda \in \mathfrak{J}^\infty(\Gamma_1) \}$, it follows from Theorem \ref{thm:BDM} that $\Gamma_2 \mathcal{G}_2$ is hyperbolic relative to some collection of subgroups $\mathcal{K}$ such that $\Phi$ sends each subgroup in $\mathcal{H}$ at finite Hausdorff distance from one of the subgroups in $\mathcal{K}$. Notice that, as being relatively hyperbolic is invariant under quasi-isometries \cite{RHandQI}, the subgroups of $\mathcal{K}$ are not relatively hyperbolic. We conclude from Corollary \ref{cor:RH} and Theorem \ref{thm:RHGP} that, if $\Lambda_1 \in \mathfrak{J}^\infty(\Gamma_1)$, there exist an element $g \in \Gamma_2 \mathcal{G}_2$ and a subgraph $\Lambda_2 \in \mathfrak{J}^\infty(\Gamma_2)$ such that $\Phi$ sends $\langle \Lambda_1 \rangle$ at finite Hausdorff distance from $g \langle \Lambda_2 \rangle$, as desired.
\end{proof}

\section{Electrification, quasi-isometries, hyperbolicity}\label{section:elec}

\subsection{The electrification}\label{subsection:elec}

\noindent
Now that we know that some subgroups in graph products of finite groups are quasi-isometrically rigid, it is natural to introduce a space which essentially encodes the way these subgroups are organised inside the entire group.

\begin{definition}
Let $\Gamma$ be simplicial graph and $\mathcal{G}$ a collection of groups indexed by $V(\Gamma)$. The \emph{electrification} $\mathbb{E}(\Gamma, \mathcal{G})$ is the Cayley graph of $\Gamma \mathcal{G}$ with respect to the generating set obtained from the union of the vertex-groups and $\bigcup\limits_{\Lambda \subset \Gamma \ \text{minsquare}} \langle \Lambda \rangle$. 
\end{definition}

\noindent
An equivalent point of view is to define $\mathbb{E}(\Gamma, \mathcal{G})$ as the \emph{cone-off} of $X(\Gamma, \mathcal{G})$ over $\mathcal{C}=\{ g \langle \Lambda \rangle \mid g \in \Gamma \mathcal{G} , \Lambda \leq \Gamma \ \text{minsquare} \}$, i.e., as the graph obtained from $X(\Gamma, \mathcal{G})$ by adding an edge between two vertices whenever they belong to a subgraph of $\mathcal{C}$. 

\medskip \noindent
The interesting point is that the quasi-isometric rigidity of eccentric subgroups implies that the electrification defines a quasi-isometric invariant of the group. More precisely:

\begin{prop}\label{prop:ElecQI}
Let $\Gamma_1, \Gamma_2$ be two finite simplicial graphs and $\mathcal{G}_1, \mathcal{G}_2$ two collections of finite groups indexed by $V(\Gamma_1), V(\Gamma_2)$ respectively. Any quasi-isometry $\Gamma_1 \mathcal{G}_1 \to \Gamma_2 \mathcal{G}_2$ induces a quasi-isometry $\mathbb{E}(\Gamma_1, \mathcal{G}_1) \to \mathbb{E} (\Gamma_2, \mathcal{G}_2)$. 
\end{prop}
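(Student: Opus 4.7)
The plan is to view $\mathbb{E}(\Gamma_i, \mathcal{G}_i)$ as the \emph{cone-off} of $X(\Gamma_i, \mathcal{G}_i)$ over the family
$\mathcal{C}_i := \{ g \langle \Lambda \rangle \mid g \in \Gamma_i \mathcal{G}_i, \; \Lambda \leq \Gamma_i \text{ minsquare}\}$,
and then exploit Theorem \ref{thm:IntroEccentric}, which identifies $\mathcal{C}_i$ up to uniformly bounded Hausdorff distance with the collection of eccentric subspaces. Since eccentricity is a purely metric notion, a quasi-isometry $\Phi : \Gamma_1 \mathcal{G}_1 \to \Gamma_2 \mathcal{G}_2$ should automatically respect the two cone-off structures, and the electrifications should be quasi-isometric.

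First, I would record the quantitative fact: there exists a constant $D = D(\Phi) \geq 0$ such that, for every coset $g \langle \Lambda \rangle \in \mathcal{C}_1$, the image $\Phi(g \langle \Lambda \rangle)$ lies within Hausdorff distance $D$ of some $h \langle \Lambda' \rangle \in \mathcal{C}_2$ (and symmetrically for a chosen quasi-inverse $\bar\Phi$). This \emph{uniformity} in $g$ and $\Lambda$ is the key step, and it follows from Fact \ref{fact:Improvement}: left-translations are isometries, so the Morse- and eccentric-gauges of $g \langle \Lambda \rangle$ coincide with those of $\langle \Lambda \rangle$; there are only finitely many minsquare subgraphs in the finite graph $\Gamma_1$, so finitely many gauges occur; and quasi-isometries distort Morse- and eccentric-gauges by a function of the quasi-isometry constants alone. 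Combined with Fact \ref{fact:Improvement}, this produces the uniform $D$.

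Given $D$, it is straightforward to check that $\Phi$ is coarsely Lipschitz as a map $\mathbb{E}(\Gamma_1, \mathcal{G}_1) \to \mathbb{E}(\Gamma_2, \mathcal{G}_2)$. It suffices to bound $d_{\mathbb{E}_2}(\Phi(x), \Phi(y))$ when $d_{\mathbb{E}_1}(x,y) = 1$. Either $x,y$ span an edge of $X(\Gamma_1, \mathcal{G}_1)$, and then $d_{X_2}(\Phi(x),\Phi(y))$ is controlled by the quasi-isometry constants and $d_{\mathbb{E}_2} \leq d_{X_2}$ does the job; or $x,y$ both lie in some $g \langle \Lambda \rangle \in \mathcal{C}_1$, in which case the previous step provides $h \langle \Lambda' \rangle \in \mathcal{C}_2$ within Hausdorff distance $D$ of $\Phi(g\langle \Lambda \rangle)$, and we conclude by the triangle inequality using that $h \langle \Lambda' \rangle$ has diameter $1$ in $\mathbb{E}_2$.

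Applying the same argument to a quasi-inverse $\bar\Phi$ yields that $\bar\Phi$ is coarsely Lipschitz $\mathbb{E}_2 \to \mathbb{E}_1$, and together with $\bar\Phi \circ \Phi \sim \mathrm{id}$ and the quasi-density of $\Phi$ this gives the desired quasi-isometry of electrifications. The only real obstacle is the uniformity of $D$ discussed in the second paragraph; every other step is a routine cone-off argument once that uniform bound is in hand.
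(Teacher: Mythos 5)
Your proposal is correct and follows essentially the same route as the paper: the key point in both is the uniform Hausdorff-distance constant obtained from Fact \ref{fact:Improvement} together with the observation that $\mathcal{C}_1$ has finitely many orbits (hence uniform Morse- and eccentric-gauges, preserved up to controlled distortion by $\Phi$). The paper merely packages your ``routine cone-off argument'' into two standalone lemmas (Lemmas \ref{lem:QIone} and \ref{lem:QItwo}), whereas you carry it out directly; the mathematical content is the same.
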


\noindent
The proposition will be an immediate consequence of Theorem \ref{thm:eccentricsub} and the following two lemmas (well-known by the experts).

\begin{lemma}\label{lem:QIone}
Let $X,Y$ be two graphs, $f : X \to Y$ a quasi-isometry and $\mathcal{P}$ a collection of subspaces of $X$. Let $\dot{X}$ denote the cone-off of $X$ over $\mathcal{P}$ and $\dot{Y}$ the cone-off of $Y$ over the image of $\mathcal{P}$ under $f$. Then $f$ induces a quasi-isometry $\dot{X} \to \dot{Y}$.
\end{lemma}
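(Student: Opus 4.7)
The plan is to check the three standard conditions for a quasi-isometry (quasi-Lipschitz, quasi-isometric embedding, quasi-surjective) by reducing everything to two elementary facts about the cone-off: $d_{\dot{X}} \leq d_X$ (coning only adds edges), and if $x,x'$ lie in a common $P \in \mathcal{P}$ then $d_{\dot{X}}(x,x') \leq 1$, and likewise in $\dot{Y}$ for $f(P)$. Quasi-surjectivity of $f : \dot{X} \to \dot{Y}$ is immediate from quasi-surjectivity of $f : X \to Y$ combined with $d_{\dot Y} \leq d_Y$, so the real work is the two-sided distance estimate. Fix quasi-isometry constants $(A,B)$ for $f$.

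For the Lipschitz direction, I would take a $\dot{X}$-geodesic $x = x_0, x_1, \ldots, x_n = x'$ and analyse it edge by edge. Each $X$-edge $(x_i,x_{i+1})$ satisfies $d_X(x_i,x_{i+1}) = 1$, so $d_{\dot{Y}}(f(x_i), f(x_{i+1})) \leq d_Y(f(x_i), f(x_{i+1})) \leq A+B$. Each cone edge has $x_i, x_{i+1}$ both lying in some $P \in \mathcal{P}$, whence $f(x_i), f(x_{i+1}) \in f(P)$ and so $d_{\dot{Y}}(f(x_i), f(x_{i+1})) \leq 1$. Summing the contributions gives $d_{\dot{Y}}(f(x), f(x')) \leq (A+B)\, d_{\dot{X}}(x, x')$, with no additive term needed.

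For the reverse direction I would fix a quasi-inverse $g : Y \to X$ of $f$ with $d_X(g f(x), x) \leq K$ and $d_Y(f g(y), y) \leq K$, and show that $g : \dot{Y} \to \dot{X}$ is quasi-Lipschitz by the same edge-by-edge argument. The only nontrivial input is that cone edges of $\dot{Y}$ — connecting two points $y, y' \in f(P)$, which may be arbitrarily far apart in $Y$ — are mapped by $g$ to pairs of bounded $\dot{X}$-diameter. For this, pick $p, p' \in P$ with $f(p) = y$, $f(p') = y'$; then $d_X(g(y), p) \leq K$ and $d_X(g(y'), p') \leq K$ by the quasi-inverse property, while $d_{\dot{X}}(p, p') \leq 1$ because $p, p'$ both lie in $P$, so $d_{\dot{X}}(g(y), g(y')) \leq 2K+1$. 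This yields a global estimate $d_{\dot{X}}(g(y), g(y')) \leq C'\, d_{\dot{Y}}(y, y')$. Combined with $d_{\dot{X}}(x, gf(x)) \leq K$, the triangle inequality then gives $d_{\dot{X}}(x, x') \leq C'\, d_{\dot{Y}}(f(x), f(x')) + 2K$, completing the quasi-isometric embedding estimate.

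The only real subtlety — and it is mild — is precisely the one just addressed: cone edges in $\dot{Y}$ can collapse arbitrarily large $Y$-distances to $1$, so a naive pushforward of geodesics only works in one direction. Using the quasi-inverse to pull back cone edges (and exploiting that $f(P)$-diameters collapse in $\dot{Y}$ while $P$-diameters collapse in $\dot{X}$) resolves this cleanly, and everything else is routine bookkeeping of constants.
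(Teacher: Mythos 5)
Your proposal is correct and follows essentially the same route as the paper's proof: the same edge-by-edge analysis of a $\dot{X}$-geodesic for the Lipschitz direction, and the same key observation for the reverse direction, namely that a cone edge of $\dot{Y}$ joining $y,y'\in f(P)$ pulls back under the quasi-inverse to a pair within $\dot{X}$-distance $2K+1$ because preimage points $p,p'\in P$ are joined by a cone edge of $\dot{X}$. The only cosmetic difference is that the paper records the multiplicative constant as $\max(1,A+B)$ to cover the case $A+B<1$.
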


\begin{proof}
Fix two constants $A>0$ and $B \geq 0$ such that
$$d_Y(f(x),f(y)) \leq A \cdot d_X(x,y)+B$$
for every $x,y \in X$. Let $x,y \in X$ be two vertices and let $x_0, \ldots, x_n $ be the vertices of some geodesic in $\dot{X}$ between $x$ and $y$. For every $1 \leq i \leq n$, either $x_i$ and $x_{i+1}$ belong to a common subspace of $\mathcal{P}$, so that $d_{\dot{Y}}(f(x_i),f(x_{i+1})) \leq 1$; or $x_i$ and $x_{i+1}$ are adjacent in $X$, so that 
$$d_{\dot{Y}}(f(x_i),f(x_{i+1})) \leq d_Y(f(x_i),f(x_{i+1})) \leq A \cdot d_X(x_i,x_{i+1})+B= A+B.$$ 
Therefore,
$$\begin{array}{lcl} d_{\dot{Y}}(f(x),f(y)) & \leq & \displaystyle \sum\limits_{i=0}^{n-1} d_{\dot{Y}}(f(x_i),f(x_{i+1})) \leq \max(1,A+B) \cdot n \\ \\ & = & \max(1,A+B) \cdot d_{\dot{X}}(x,y). \end{array}$$
We argue similarly for the other inequality. Let $x_0, \ldots, x_n$ be the vertices of some geodesic between $f(x)$ and $f(y)$ in $\dot{Y}$. Let $f^{-1}$ denote some quasi-inverse of $f$ and fix three constants $R>0$ and $K,S \geq 0$ such that 
$$ d_X \left( f^{-1}(x), f^{-1}(y) \right) \leq R \cdot d_Y(x,y) + S \ \text{and} \ d_X \left( z, f^{-1} f (z) \right) \leq K$$
for every $x,y \in Y$ and $z \in X$. Notice that

\begin{fact}
If $a \in f(P)$ for some $P \in \mathcal{P}$, then $d_X(f^{-1}(a),P) \leq K$.
\end{fact}

\noindent
Indeed, there exists some $b \in P$ such that $f(b)=a$, and 
$$d_X \left( f^{-1}(a),P \right) \leq d_X \left( f^{-1}(a),b \right) \leq d_X \left( f^{-1}(a),f^{-1}f(b) \right) + K=K,$$
which proves the fact. As a consequence, if for some $0 \leq i \leq n-1$ the vertices $x_i$ and $x_{i+1}$ belong to $f(P)$ for some $P \in \mathcal{P}$, then 
$$\begin{array}{lcl} d_{\dot{X}}\left( f^{-1}(x_i),f^{-1}(x_{i+1}) \right) & \leq & d_{X}\left( f^{-1}(x_i),f^{-1}(x_{i+1}) \right) \\ \\ & \leq & d_X \left( f^{-1}(x_i), P \right) +1+ d_X \left( f^{-1}(x_{i+1}, P \right) \leq 2K+1. \end{array}$$
Next, if the vertices $x_i$ and $x_{i+1}$ are adjacent in $\dot{Y}$, then 
$$d_{\dot{X}} \left( f^{-1}(x_i),f^{-1}(x_{i+1}) \right) \leq d_X \left( f^{-1}(x_i),f^{-1}(x_{i+1}) \right) \leq R \cdot d_Y(x_i,x_{i+1})+S=R+S.$$ 
Therefore,
$$\begin{array}{lcl} d_{\dot{X}}(x,y) & \leq & \displaystyle 2K+ d_{\dot{X}} \left( f^{-1}f(x),f^{-1}f(y) \right) \leq 2K+ \sum\limits_{i=0}^{n-1} d_{\dot{X}} \left( f^{-1}(x_i),f^{-1}(x_{i+1}) \right) \\ \\ & \leq & 2K+ \max (2K+1,R+S) \cdot n = 2K+ \max(2K+1,R+S) \cdot d_{\dot{Y}}(f(x),f(y)). \end{array}$$
Thus, we have proved that $f$ induces a quasi-isometric embedding $\dot{X} \to \dot{Y}$. It is clear that the image of $f$ is quasi-dense in $\dot{Y}$, which concludes the proof. 
\end{proof}

\begin{lemma}\label{lem:QItwo}
Let $X$ be a graph and $\mathcal{P}_1, \mathcal{P}_2$ two collections of subspaces of $X$. Assume that there exists some $D \geq 0$ such that, for every $P_1 \in \mathcal{P}_1$ (resp. $P_2 \in \mathcal{P}_2$), there exists $P_2 \in \mathcal{P}_2$ (resp. $P_1 \in \mathcal{P}_1$) such that the Hausdorff distance between $P_1$ and $P_2$ is $\leq D$. Let $X_1$ (resp. $X_2$) denote the cone-off of $X$ over $\mathcal{P}_1$ (resp. $\mathcal{P}_2$). The canonical map $X_1^{(0)} \to X_2^{(0)}$ induces a quasi-isometry $X_1 \to X_2$. 
\end{lemma}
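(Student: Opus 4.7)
The plan is to exploit the fact that $X_1$ and $X_2$ have the \emph{same} vertex set (namely $V(X)$), so that the canonical map $X_1^{(0)} \to X_2^{(0)}$ is the identity. Thus, up to checking that the vertex set is quasi-dense in both graphs (which is immediate since any point of a graph lies within distance $1/2$ of a vertex), it suffices to show that the two combinatorial distances $d_{X_1}$ and $d_{X_2}$ are bilipschitz-equivalent on $V(X)$.

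First, I would show that every edge of $X_1$ corresponds to a path of length at most $2D+1$ in $X_2$. So let $e$ be an edge of $X_1$ with endpoints $x,y$. Two cases arise: either $e$ is already an edge of $X$, in which case $d_{X_2}(x,y) \leq 1$; or $e$ is a cone-off edge coming from some $P_1 \in \mathcal{P}_1$ containing both $x$ and $y$. In the latter case, the hypothesis provides $P_2 \in \mathcal{P}_2$ with Hausdorff distance at most $D$ from $P_1$, so there exist $x',y' \in P_2$ with $d_X(x,x') \leq D$ and $d_X(y,y') \leq D$. Concatenating a path of length $\leq D$ from $x$ to $x'$ in $X \subset X_2$, the cone-off edge from $x'$ to $y'$ in $X_2$, and a path of length $\leq D$ from $y'$ to $y$, one gets $d_{X_2}(x,y) \leq 2D+1$.

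By choosing a geodesic $x_0, x_1, \ldots, x_n$ in $X_1$ between two arbitrary vertices $x,y \in V(X)$ with $n = d_{X_1}(x,y)$ and concatenating the paths produced above, I deduce $d_{X_2}(x,y) \leq (2D+1) \cdot d_{X_1}(x,y)$. The symmetric statement, with the roles of $\mathcal{P}_1$ and $\mathcal{P}_2$ swapped, follows from the other half of the hypothesis and gives $d_{X_1}(x,y) \leq (2D+1) \cdot d_{X_2}(x,y)$. Combined with the quasi-density of $V(X)$ in each $X_i$, this shows that the identity on vertices induces a quasi-isometry $X_1 \to X_2$. There is no real obstacle here; the argument is an entirely routine ``change of generating set'' computation, where the constant $2D+1$ measures how costly it is to simulate a cone-off edge of one collection by traveling through $X$ to reach the nearby cone-off edge of the other collection.
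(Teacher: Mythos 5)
Your proposal is correct and follows essentially the same route as the paper's proof: sum over the edges of an $X_1$-geodesic, bounding each cone-off edge by a path of length at most $2D+1$ in $X_2$ via the Hausdorff-close subspace, and argue symmetrically. The only difference is that you spell out the $D + 1 + D$ concatenation explicitly, which the paper leaves implicit.
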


\begin{proof}
Fix two vertices $a,b \in X$ and let $x_0, \ldots, x_n$ denote the vertices of a geodesic in $X_1$ between $a$ and $b$. For every $0 \leq i \leq n-1$, either $x_i$ and $x_{i+1}$ are adjacent in $X$, so that $d_{X_2}(x_i,x_{i+1})=1$; or $x_i$ and $x_{i+1}$ belong to a subspace of $\mathcal{P}_1$, so that $d_{X_2}(x_i,x_{i+1}) \leq 2D+1$. Consequently,
$$d_{X_2}(a,b) \leq \sum\limits_{i=0}^{n-1} d_{X_2}(x_i,x_{i+1}) \leq (2D+1) \cdot n = (2D+1) \cdot d_{X_1}(a,b).$$
A symmetric argument shows that $d_{X_1} \leq (2D+1) \cdot d_{X_2}$. Thus, $X_1$ and $X_2$ must be quasi-isometric.
\end{proof}

\begin{proof}[Proof of Proposition \ref{prop:ElecQI}.]
Fix a quasi-isometry $\Phi : \Gamma_1 \mathcal{G}_1 \to \Gamma_2 \mathcal{G}_2$. By definition, the electrification $\mathbb{E}(\Gamma_1, \mathcal{G}_1)$ is the cone-off of $X(\Gamma_1, \mathcal{G}_2)$ over $\mathcal{P}_1= \{ g \langle \Lambda \rangle \mid g \in \Gamma_1 \mathcal{G}_1, \Lambda \leq \Gamma_1 \ \text{minsquare} \}$. Notice that $\mathcal{P}_1$ contains only finitely many $\Gamma_1 \mathcal{G}_1$-orbits of subspaces, so that the subspaces of $\mathcal{P}_1$ are \emph{uniformly eccentric} (resp. \emph{uniformly Morse}), i.e., they share a common eccentric-gauge (resp. Morse-gauge). Necessarily, the same holds for $\Phi (\mathcal{P}_1)$, so that we deduce from Fact \ref{fact:Improvement} and Lemma \ref{lem:QItwo} that the cone-off of $X(\Gamma_2, \mathcal{G}_2)$ over $\Phi(\mathcal{P}_1)$ is quasi-isometric to the electrification $\mathbb{E}(\Gamma_2, \mathcal{G}_2)$. The desired conclusion follows from Lemma \ref{lem:QIone}.
\end{proof}

\subsection{When is the electrification hyperbolic?}\label{subsection:elechyp}

\noindent
As the electrification turns out to be a quasi-isometric invariant, it is natural to study its geometry. In this section, we focus on its hyperbolicity by proving the following proposition:

\begin{prop}\label{prop:ElecHyp}
Let $\Gamma$ be a finite simplicial graph and $\mathcal{G}$ a collection of groups indexed by $V(\Gamma)$. Then $\mathbb{E}(\Gamma, \mathcal{G})$ is hyperbolic if and only if every induced square of $\Gamma$ is included into some minsquare subgraph.
\end{prop}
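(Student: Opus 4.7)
The plan is to prove both implications separately, with the parabolic subgroup $\langle \Xi \rangle$ associated with an induced square $\Xi$ playing the central role in each direction.

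For the direction ``$\mathbb{E}(\Gamma, \mathcal{G})$ hyperbolic implies every induced square lies in a minsquare'', I argue the contrapositive. Suppose $\Xi \leq \Gamma$ is an induced square contained in no minsquare. Label its two pairs of opposite (non-adjacent) vertices $\{a,c\}$ and $\{b,d\}$, and pick non-trivial $\alpha \in G_a$, $\beta \in G_b$, $\gamma \in G_c$, $\delta \in G_d$. The parabolic subgraph $F := \langle \Xi \rangle \subseteq X(\Gamma, \mathcal{G})$ is gated and decomposes as the Cartesian product of the two infinite leafless trees $\langle a,c \rangle$ and $\langle b,d \rangle$, so the orbit map $(m,n) \mapsto (\alpha\gamma)^m (\beta\delta)^n$ gives a quasi-isometric embedding $\mathbb{Z}^2 \hookrightarrow F$; in particular $F$ is not hyperbolic. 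The goal is to show that $F \hookrightarrow \mathbb{E}(\Gamma, \mathcal{G})$ is still a quasi-isometric embedding, forcing $\mathbb{E}(\Gamma, \mathcal{G})$ to contain a quasi-flat and hence to be non-hyperbolic.

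For any minsquare $\Lambda$, square-completeness together with $\Xi \not\subseteq \Lambda$ prevents $\Xi \cap \Lambda$ from containing any pair of opposite vertices of $\Xi$, so $\Xi \cap \Lambda$ has at most two vertices, which must be adjacent. Applying Lemma \ref{lem:GatedBridge} with $Y = F$ and $Z = g\langle\Lambda\rangle$, the gated bridge $M \subseteq F$ has its hyperplanes labelled by $\Xi \cap \Lambda$; it is a translate inside $F$ of $\langle \Xi \cap \Lambda \rangle$ and therefore has diameter at most $2$ in $X(\Gamma,\mathcal{G})$. By Proposition \ref{prop:BigProj}(iii), the nearest-point projection $\pi : X(\Gamma, \mathcal{G}) \to F$ sends the entire coset $g\langle\Lambda\rangle$ into $M$: the hyperplanes separating the images of two points of $g\langle\Lambda\rangle$ cross both $F$ and $g\langle\Lambda\rangle$ and hence cross $M$, which is gated in $F$. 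Consequently every cone-off edge of $\mathbb{E}(\Gamma, \mathcal{G})$ projects to a subset of $F$ of diameter at most $2$, while edges of $X(\Gamma,\mathcal{G})$ project $1$-Lipschitz. Thus $\pi$ extends to a coarsely $2$-Lipschitz retraction $\mathbb{E}(\Gamma, \mathcal{G}) \to F$, giving $d_F \leq 2\, d_{\mathbb{E}}$ on $F$ and the claimed embedding.

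For the reverse direction ``every induced square lies in a minsquare implies $\mathbb{E}(\Gamma,\mathcal{G})$ is hyperbolic'', I would adapt the coning-off techniques of \cite{coningoff} to the quasi-median setting. By Lemma \ref{lem:FlatRinX}, any $L$-thick flat rectangle $R \subseteq X(\Gamma, \mathcal{G})$ with $L > \mathrm{clique}(\Gamma)$ lies inside $g\langle \Lambda_1 \ast \Lambda_2 \rangle$ with $\Lambda_1, \Lambda_2$ both non-complete; picking non-adjacent vertices in each factor produces an induced square $\Xi \subseteq \Lambda_1 \ast \Lambda_2$, which by hypothesis sits inside some minsquare $\Lambda$. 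A hyperplane analysis analogous to the one above then shows that $R$ is covered by a controlled number of neighbourhoods of cosets of minsquare parabolic subgroups, so its image in $\mathbb{E}(\Gamma,\mathcal{G})$ has bounded diameter. One then upgrades this ``no unbounded flat rectangles'' property to genuine hyperbolicity of $\mathbb{E}(\Gamma, \mathcal{G})$ by a combing argument built from reduced-word geodesics in $X(\Gamma, \mathcal{G})$ together with shortcuts through minsquare cosets, verifying the thin-triangle condition.

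The main obstacle is precisely this converse direction. The join $\Lambda_1 \ast \Lambda_2$ produced by Lemma \ref{lem:FlatRinX} need not be contained in any single minsquare, so one cannot simply collapse $R$ into one cone. Instead, the rectangle has to be partitioned into boundedly many pieces, each absorbed into a distinct minsquare coset, and the transitions between these pieces must be controlled uniformly in terms of $\Gamma$ and the thickness of $R$. Turning this decomposition into a combing argument yielding thin triangles in $\mathbb{E}(\Gamma, \mathcal{G})$, while respecting the non-cubical combinatorics of quasi-median graphs, is where most of the technical work lies.
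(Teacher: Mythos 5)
Your forward direction (an uncovered square forces non-hyperbolicity) is correct, and it takes a genuinely different route from the paper. You build a coarsely Lipschitz retraction of $\mathbb{E}(\Gamma,\mathcal{G})$ onto $F=\langle \Xi\rangle$ by showing, via Lemma \ref{lem:GatedBridge} and Proposition \ref{prop:BigProj}, that every minsquare coset projects onto $F$ with diameter at most $2$ (since $\Xi\cap\Lambda$ is complete, a reduced word supported on it has length at most $2$). The paper instead proves a standalone statement (Lemma \ref{lem:QIembed}): for any subgraph meeting each coned-off subgraph in a complete or empty subgraph, the parabolic embeds quasi-isometrically into the cone-off; the proof there runs through a maximal chain of pairwise tangent hyperplanes and Dilworth's theorem rather than projections. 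Your retraction argument is arguably more direct for this specific application; the paper's lemma is more general and is reused elsewhere (e.g.\ in Example \ref{ex:NotQT} and Remark \ref{remark:Charney}).

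The converse direction, however, has a genuine gap, and the obstacle you flag as the crux is in fact not an obstacle. After writing $\Lambda_1'\ast\Lambda_2'=\Lambda_1\ast\Lambda_2\ast\Lambda_0$ with $\Lambda_0$ a clique and $\Lambda_1,\Lambda_2$ containing non-adjacent pairs $u_1,v_1$ and $u_2,v_2$, the hypothesis gives a minsquare $\Xi$ containing the square $u_1,u_2,v_1,v_2$, and then square-completeness of $\Xi$ \emph{propagates}: any $w\in\Lambda_1$ admits a non-neighbour $w^*\in\Lambda_1$, and the square $w,u_2,w^*,v_2$ shares the opposite pair $u_2,v_2$ with a square already inside $\Xi$, forcing $w\in\Xi$. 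Hence $\Lambda_1\ast\Lambda_2\subset\Xi$, so the entire non-clique part of the join collapses into a \emph{single} minsquare coset and every $(1+\mathrm{clique}(\Gamma))$-thick flat rectangle has diameter at most $1+\mathrm{clique}(\Gamma)$ in $\mathbb{E}(\Gamma,\mathcal{G})$. Your proposed partition of $R$ into several minsquare pieces with controlled transitions is therefore unnecessary, and as written it is only a plan, not a proof. Likewise, the upgrade from ``thick flat rectangles have bounded image in the cone-off'' to hyperbolicity of the cone-off is not something you supply: the paper invokes \cite[Proposition~8.38]{Qm} for exactly this step, whereas your ``combing argument built from reduced-word geodesics together with shortcuts through minsquare cosets'' is left entirely unexecuted. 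As it stands, the converse implication is not proved.
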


\noindent
The hyperbolicity will be obtained from a criterion proved in \cite{Qm}. The following lemma, which we think to be of independent interest, will be used to prove non-hyperbolicity.

\begin{lemma}\label{lem:QIembed}
Let $\Gamma$ be a finite simplicial graph, $\mathcal{G}$ a collection of groups indexed by $V(\Gamma)$, and $\mathcal{S}$ be a collection of subgraphs of $\Gamma$. Let $\dot{X}$ denote the cone-off of $X(\Gamma, \mathcal{G})$ over $\{g \langle \Lambda \rangle \mid g \in \Gamma \mathcal{G}, \Lambda \in \mathcal{S} \}$. If $\Lambda \subset \Gamma$ is a subgraph whose intersection with any subgraph of $\mathcal{S}$ is either empty or complete, then $\langle \Lambda \rangle$ quasi-isometrically embeds into $\dot{X}$. 
\end{lemma}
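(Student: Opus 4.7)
The plan is to show that the inclusion $\langle \Lambda \rangle \hookrightarrow \dot{X}$ is a quasi-isometric embedding, by exploiting the projection onto the gated subgraph $\langle \Lambda \rangle$. The easy direction is that $d_{\dot{X}}(a,b) \leq d_{\langle \Lambda \rangle}(a,b)$ for all $a,b \in \langle \Lambda \rangle$: indeed $\langle \Lambda \rangle$ is gated in $X(\Gamma, \mathcal{G})$ (this follows from Proposition \ref{prop:DistInX} together with Lemma \ref{lem:GatedCriterion}), hence isometrically embedded, and $\dot{X}$ is obtained from $X(\Gamma, \mathcal{G})$ by adding edges.

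The heart of the argument is the following claim: letting $p : X(\Gamma, \mathcal{G}) \to \langle \Lambda \rangle$ denote the projection onto $\langle \Lambda \rangle$, for every $g \in \Gamma \mathcal{G}$ and $\Xi \in \mathcal{S}$, the image $p(g \langle \Xi \rangle)$ has diameter at most $\mathrm{clique}(\Gamma)$ in $\langle \Lambda \rangle$. To prove it, I fix two vertices $x, y \in g \langle \Xi \rangle$. By Proposition \ref{prop:BigProj}(iii), the hyperplanes separating $p(x)$ and $p(y)$ are exactly those separating $x, y$ which cross $\langle \Lambda \rangle$; since $g \langle \Xi \rangle$ is gated, every hyperplane separating $x, y$ crosses $g \langle \Xi \rangle$. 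Therefore every hyperplane separating $p(x)$ from $p(y)$ crosses both $g\langle \Xi \rangle$ and $\langle \Lambda \rangle$, and is thus labelled by a vertex of $\Lambda \cap \Xi$. By hypothesis, $\Lambda \cap \Xi$ is either empty or complete. If empty, then $p(x) = p(y)$. If complete, a reduced word representing $p(x)^{-1}p(y)$ (obtained by reading off a geodesic from $p(x)$ to $p(y)$ via Proposition \ref{prop:DistInX}) has all its syllables in vertex groups indexed by vertices of $\Lambda \cap \Xi$, so $p(x)^{-1}p(y) \in \langle \Lambda \cap \Xi \rangle$. Since $\Lambda \cap \Xi$ is complete, $\langle \Lambda \cap \Xi \rangle$ is a direct product in which every element admits a reduced form with at most $|V(\Lambda \cap \Xi)| \leq \mathrm{clique}(\Gamma)$ syllables, so $d_{\langle \Lambda \rangle}(p(x), p(y)) \leq \mathrm{clique}(\Gamma)$.

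Granted the claim, the other direction follows easily. Given $a, b \in \langle \Lambda \rangle$, I consider a geodesic $a = x_0, x_1, \ldots, x_n = b$ in $\dot{X}$ and apply $p$. An edge of $X(\Gamma, \mathcal{G})$ projects to a pair of vertices at distance at most $1$ (since $p$ is $1$-Lipschitz by Proposition \ref{prop:BigProj}(iii)), while a cone-edge, whose endpoints lie in some coset $g \langle \Xi \rangle$ with $\Xi \in \mathcal{S}$, projects to a pair at distance at most $\mathrm{clique}(\Gamma)$ in $\langle \Lambda \rangle$ by the claim. Since $p(a) = a$ and $p(b) = b$, summing gives
$$d_{\langle \Lambda \rangle}(a,b) \leq \mathrm{clique}(\Gamma) \cdot d_{\dot{X}}(a,b),$$
which together with the easy inequality establishes the quasi-isometric embedding.

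The main technical obstacle is the claim: one must translate the combinatorial constraint that $\Lambda \cap \Xi$ be complete (or empty) into a metric bound on projections, through the correspondence between geodesics, reduced words, and hyperplane labels. Once this bookkeeping is in place, the rest is a standard geodesic-chasing argument, and no further input on $\mathcal{S}$ is needed.
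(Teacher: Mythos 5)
Your proof is correct, but it takes a genuinely different route from the paper. You build a coarse Lipschitz retraction: you project onto the gated subgraph $\langle \Lambda \rangle$ and show that each coned coset $g\langle \Xi \rangle$ has image of diameter at most $\mathrm{clique}(\Gamma)$, using Proposition \ref{prop:BigProj}(iii) to identify the hyperplanes separating $p(x)$ and $p(y)$ as those crossing both $g\langle\Xi\rangle$ and $\langle\Lambda\rangle$, hence labelled by $\Lambda \cap \Xi$, and then the normal form for elements of the parabolic subgroup over a complete subgraph to bound the distance. The paper instead works with a maximal chain of pairwise non-transverse hyperplanes $J_1, \ldots, J_n$ separating $x$ and $y$: consecutive ones are tangent, so by Lemma \ref{lem:HypTransverseLabel} their labels are non-adjacent vertices of $\Lambda$, and a single cone-edge cannot jump over both $J_i$ and $J_{i+1}$ without forcing two non-adjacent vertices into $\Lambda \cap \Xi$; this gives $d_{\dot{X}}(x,y) \geq n$, and Dilworth's theorem converts $n$ into the lower bound $d_X(x,y)/\mathrm{clique}(\Gamma)$. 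Both arguments yield the same multiplicative constant. Your approach is the more standard ``bounded coset projections imply undistortion in the cone-off'' argument and actually proves slightly more (the projection is a $\mathrm{clique}(\Gamma)$-Lipschitz coarse retraction of $\dot{X}$ onto $\langle\Lambda\rangle$ with its intrinsic metric), at the cost of invoking the gatedness of the cosets $g\langle\Xi\rangle$ and the projection formula; the paper's argument stays entirely at the level of separating hyperplanes and tangency, needing only the combinatorial fact that tangent hyperplanes carry non-adjacent labels. One small point worth making explicit in your write-up: the step ``every hyperplane crossing $\langle\Lambda\rangle$ (resp.\ $g\langle\Xi\rangle$) is labelled by a vertex of $\Lambda$ (resp.\ $\Xi$)'' and the step identifying the syllables of the geodesic from $p(x)$ to $p(y)$ with the labels of its dual hyperplanes both rest on Proposition \ref{prop:HypInX}; the paper uses these facts freely (e.g.\ in Lemma \ref{lem:ParabolicHausdorff}), so this is not a gap, but a citation would tighten the argument.
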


\begin{proof}
Let $x,y \in \langle \Lambda \rangle$ be two vertices, and let $J_1, \ldots, J_n$ be a collection of pairwise non-transverse hyperplanes separating $x$ and $y$ which has maximal cardinality. Notice that, for every $1 \leq i \leq n-1$, the hyperplanes $J_i$ and $J_{i+1}$ are tangent, since otherwise there would be a new hyperplane separating $J_i$ and $J_{i+1}$ (as a consequence of Proposition~\ref{prop:BigProj}), contracting the maximality of our collection. Let $x_1, \ldots, x_r$ denote the vertices of some geodesic in $\mathbb{E}(\Gamma, \mathcal{G})$ between $x$ and $y$. 

\medskip \noindent
Suppose that there exists some $1 \leq i \leq n-1$ such that no $x_j$ belongs to the subspace delimited by $J_i$ and $J_{i+1}$. Let $x_a$ denote the last $x_j$ which does not belong to the sector delimited by $J_{i}$ containing $y$; by definition $x_{a+1}$ belongs to the sector delimited by $J_{i+1}$ containing $y$. Consequently, $J_i$ and $J_{i+1}$ separate $x_a$ and $x_{a+1}$. Since $d_{\dot{X}}(x_a,x_{a+1})=1$ but $d_{X}(x_a,x_{a+1}) \geq 2$, necessarily there exist $g \in \Gamma \mathcal{G}$ and $\Xi \in \mathcal{S}$ such that $x_a$ and $x_{a+1}$ both belong to $g \langle \Xi \rangle$. A fortiori, $J_i$ and $J_{i+1}$ intersect $g \langle \Xi \rangle$. Let $u$ and $v$ denote the two vertices of $\Gamma$ labelling $J_i$ and $J_{i+1}$ respectively. Because $J_i$ and $J_{i+1}$ cross $g \langle \Xi \rangle$, necessarily $u,v \in \Xi$; and because $J_i$ and $J_{i+1}$ separate $x,y \in \langle \Lambda \rangle$, necessarily $u,v \in \Lambda$. Thus, $\Lambda \cap \Xi$ contains $u$ and $v$, which we know to be non-adjacent as a consequence of Lemma \ref{lem:HypTransverseLabel} since they label two tangent hyperplanes. This contradicts our assumptions. 

\medskip \noindent
Thus, we have proved that, for every $1 \leq i \leq n-1$, there exists some $1 \leq j \leq r$ such that $x_j$ belongs to the subspace delimited by $J_i$ and $J_{i+1}$. It follows that $r \geq n+1$. 

\begin{claim}
We have $n \geq d_X(x,y) / \mathrm{clique}(\Gamma)$. 
\end{claim}

\noindent
Let $\mathcal{S}$ denote the collection of all the sectors containing $y$ but not $x$, partially ordered by the inclusion. Notice that $n$ coincides with the maximal cardinality of a chain in $\mathcal{S}$, and that any antichain in $\mathcal{S}$ has cardinality at most $\dim_\square(X)$. It follows from Dilworth's theorem that
$$d_X(x,y)= \# \mathcal{S} \leq \dim_\square(X) \cdot n,$$
concluding the proof of our claim.

\medskip \noindent
Finally, we have
$$\frac{1}{\mathrm{clique}(\Gamma)} \cdot d_X(x,y) \leq n \leq r-1= d_{\dot{X}}(x,y) \leq d_X(x,y),$$
concluding the proof.
\end{proof}

\begin{proof}[Proof of Proposition \ref{prop:ElecHyp}.]
If there exists some induced square $\Lambda \subset \Gamma$ which is not included into any minsquare subgraph, then, because a minsquare subgraph is necessarily square-complete, $\Lambda$ cannot intersect such a subgraph along at least two non-adjacent vertices. It follows from Lemma \ref{lem:QIembed} that $\langle \Lambda \rangle$ quasi-isometrically embeds into the electrification $\mathbb{E}(\Gamma, \mathcal{G})$. As $\langle \Lambda \rangle$ is isometric to a product of two unbounded trees, we conclude that $\mathbb{E}(\Gamma, \mathcal{G})$ cannot be hyperbolic.

\medskip \noindent
From now on, suppose that any induced square of $\Gamma$ is contained into some minsquare subgraph. Let $R \subset X$ be an $L$-thick flat rectangle where $L> \mathrm{clique}(\Gamma)$. According to Lemma \ref{lem:FlatRinX}, $R \subset g \langle \Lambda_1' \ast \Lambda_2' \rangle$ for some element $g \in \Gamma \mathcal{G}$ and some subgraphs $\Lambda_1',\Lambda_2'$. The join $\Lambda_1' \ast \Lambda_2'$ can be written as $\Lambda_1 \ast \Lambda_2 \ast \Lambda_0$ where $\Lambda_1 \subset \Lambda_1'$ and $\Lambda_2 \subset \Lambda_2'$ are two subgraphs without any vertex whose star covers all their vertices, and where $\Lambda_0$ is a complete subgraph. Because $R$ is $L$-thick with $L> \mathrm{clique}(\Gamma)$, both $\Lambda_1'$ and $\Lambda_2'$, a fortiori both $\Lambda_1$ and $\Lambda_2$, must contain two non-adjacent vertices, say $u_1,v_1$ and $u_2,v_2$ respectively. By assumption, there exists some minsquare subgraph $\Xi$ which contains the induced square defined by $u_1,u_2,v_1,v_2$. Now let $w$ be a vertex of $\Lambda_1$. By construction of $\Lambda_1$, there exists a vertex $w^* \in \Lambda_1$ which is not adjacent to $w$. By noticing that the two squares defined respectively by $u_1,u_2,v_1,v_2$ and $w,u_2,w^*,v_2$ share two non adjacent vertices, it follows that $w$ belongs to $\Xi$. Therefore, $\Lambda_1 \subset \Xi$, and one proves similarly that $\Lambda_2 \subset \Xi$. Consequently, the subgraph $\langle \Lambda_1 \ast \Lambda_2 \rangle$ has diameter one in $\mathbb{E}(\Gamma, \mathcal{G})$, and it follows that
$$g \langle \Lambda_1' \ast \Lambda_2' \rangle = g \langle \Lambda_0 \rangle \times g \langle \Lambda_1 \ast \Lambda_2 \rangle$$
has diameter at most $1+ \mathrm{clique}(\Gamma)$ in $\mathbb{E}(\Gamma, \mathcal{G})$. A fortiori, our flat rectangle $R$ has diameter at most $1+ \mathrm{clique}(\Gamma)$ in $\mathbb{E}(\Gamma, \mathcal{G})$.

\medskip \noindent
Thus, we have proved that any $(1+ \mathrm{clique}(\Gamma))$-thick flat rectangle of $X(\Gamma, \mathcal{G})$ has diameter at most $(1+ \mathrm{clique}(\Gamma))$ in $\mathbb{E}(\Gamma, \mathcal{G})$. We conclude from \cite[Proposition 8.38]{Qm} that $\mathbb{E}(\Gamma, \mathcal{G})$ is hyperbolic.
\end{proof}

\begin{remark}
It is worth noticing that, when proving Proposition \ref{prop:ElecHyp}, we have shown the following general statement: Let $\Gamma$ be a finite simplicial graph, $\mathcal{G}$ a collection of groups indexed by $V(\Gamma)$, and $\mathcal{S}$ a collection of subgraphs of $\Gamma$. If every join $\Lambda_1 \ast \Lambda_2 \leq \Gamma$, where $\Lambda_1$ and $\Lambda_2$ have no vertex whose star covers all their vertices, is included into some subgraph of $\mathcal{S}$, then the cone-off of $X(\Gamma, \mathcal{G})$ over $\{ g \langle \Lambda \rangle \mid g \in \Gamma \mathcal{G}, \Lambda \in \mathcal{S} \}$ is hyperbolic. However, such a criterion is not optimal. For instance, if $\Gamma$ is a square and if $\mathcal{S}$ is reduced to a single graph, namely the disjoint union of two opposite vertices of $\Gamma$, then the previous criterion does not apply but our cone-off is hyperbolic (and is quasi-isometric to a tree).
\end{remark}

\begin{figure}
\begin{center}
\includegraphics[scale=0.4]{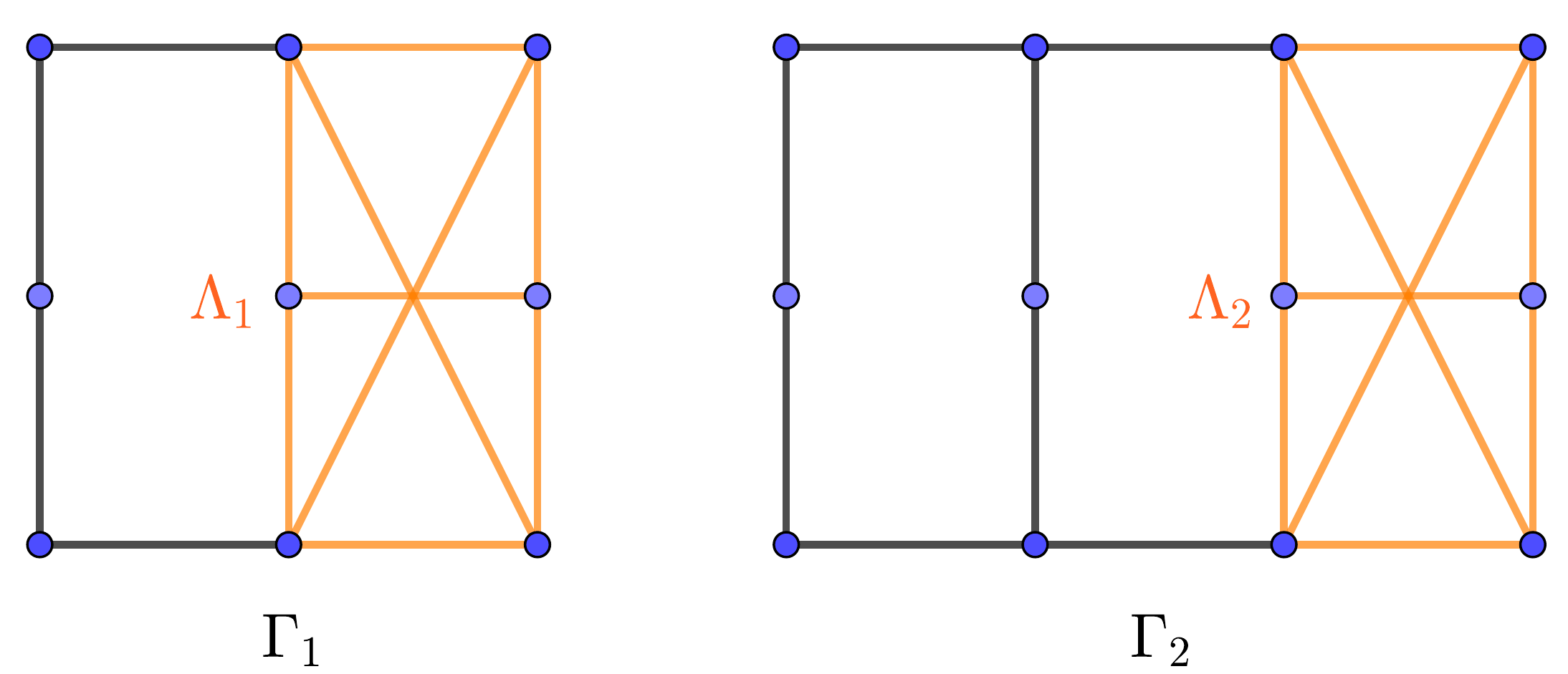}
\caption{Two relatively hyperbolic right-angled Coxeter groups which are not quasi-isometric.}
\label{Charney}
\end{center}
\end{figure}

\begin{remark}\label{remark:Charney}
In \cite[Section 4.2]{ContractingBoundary}, it is proved that the two right-angled Coxeter groups $C(\Gamma_1)$ and $C(\Gamma_2)$ defined by the graphs $\Gamma_1$ and $\Gamma_2$ of Figure \ref{Charney} are not quasi-isometric by looking at their contracting boundaries and by noticing that they are not homeomorphic. As an application of our results, it is possible to reprove this fact. 

\medskip \noindent
A sketch of proof goes as follows. The graphs $\Gamma_1$ and $\Gamma_2$ contain a unique minsquare subgraph, namely $\Lambda_1$ and $\Lambda_2$ respectively. It follows from Proposition \ref{prop:ElecHyp} that the two electrifications $\mathbb{E}(\Gamma_1)$ and $\mathbb{E}(\Gamma_2)$ are hyperbolic, but we claim that one is a quasi-tree and not the other one. Indeed, if $\Xi_2$ denote the left pentagon of $\Gamma_2$, then it follows from Lemma \ref{prop:ElecHyp} that $\langle \Xi_2 \rangle$ quasi-isometrically embed into the electrification. But $\langle \Xi_2 \rangle$ is quasi-isometric to $\mathbb{H}^2$, so that $\mathbb{E}(\Gamma_2)$ cannot be a quasi-tree. In order to show that $\mathbb{E}(\Gamma_1)$ is a quasi-tree, the idea is to decompose $C(\Gamma_1)$ as an amalgamated product of $\langle \Xi_1 \rangle$ (where $\Xi_1$ is the left pentagon of $\Gamma_1$) and $\langle \Lambda_1 \rangle$ over the virtually cyclic subgroup $\langle \Lambda_1 \cap \Xi_1 \rangle$, and to notice that the cone-off of $\langle \Xi_1 \rangle$ (which is quasi-isometric to a hyperbolic plane) over the cosets of $\langle \Lambda_1 \cap \Xi_1 \rangle$ is a quasi-tree. 

\medskip \noindent
However, our two right-angled Coxeter groups are relatively hyperbolic. More precisely, $C(\Gamma_1)$ (resp. $C(\Gamma_2)$) is hyperbolic relative to $\langle \Lambda_1 \rangle$ (resp. $\langle \Lambda_2 \rangle$). As we are mainly interested in groups which are not relatively hyperbolic (since the quasi-isometric rigidity of their peripheral subgroups is already known \cite{RHandQI}), we do not give further details here. 
\end{remark}

\section{Hyperbolicity of infinite-index Morse subgroups}\label{section:MorseSubHyp}

\noindent
As an other application of Theorem \ref{thm:eccentricsub}, we are able to determine precisely when a graph product of finite groups has all its infinite-index Morse subgroup hyperbolic. Namely:

\begin{thm}\label{thm:MorseAllHyp}
Let $\Gamma$ be a simplicial graph and $\mathcal{G}$ a collection of finite groups indexed by $V(\Gamma)$. The infinite-index Morse subgroups of $\Gamma \mathcal{G}$ are all hyperbolic if and only if $\Gamma$ is square-free or if it decomposes as the join of a minsquare subgraph and a complete graph.  
\end{thm}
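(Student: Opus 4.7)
The plan is to combine Fact \ref{fact:ForConverse} (any non-hyperbolic Morse subspace contains a coset $g\langle \Xi \rangle$ for some square-complete $\Xi$ with a square in a bounded neighborhood) with Corollary \ref{cor:ParabolicHausdorff} (which detects when cosets of parabolic subgroups lie at finite Hausdorff distance). The key preliminary observation, used in both directions, is that \emph{a minsquare subgraph $\Lambda \leq \Gamma$ admits no non-trivial join decomposition $\Lambda = \Lambda_0 \ast \Lambda_1$ with $\Lambda_1$ a non-empty complete graph.} Indeed, a vertex $v \in \Lambda_1$ is adjacent in $\Lambda$ to every other vertex of $\Lambda$, so if $v$ belonged to an induced square $\Xi \subset \Lambda$ it would have three neighbors in $\Xi$, contradicting $\Xi \cong C_4$; thus every induced square of $\Lambda$ lies in $\Lambda_0$. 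The same argument shows that $\Lambda_0$ remains square-complete in $\Gamma$ (any induced $C_4$ of $\Gamma$ with two opposite vertices in $\Lambda_0$ lies in $\Lambda$ by square-completeness of $\Lambda$, hence in $\Lambda_0$ by the previous observation). Consequently, $\Lambda_1 \neq \emptyset$ would make $\Lambda_0 \subsetneq \Lambda$ a strictly smaller square-complete subgraph of $\Gamma$ containing an induced square, contradicting minimality.

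For the backward implication, the square-free case is immediate: Corollary \ref{cor:hyp} makes $\Gamma \mathcal{G}$ hyperbolic, and Morse subgroups of hyperbolic groups are quasiconvex, hence hyperbolic. Suppose instead $\Gamma = \Lambda \ast K$ with $\Lambda$ minsquare and $K$ complete, so that $\Gamma \mathcal{G} = \langle \Lambda \rangle \times \langle K \rangle$ with $\langle K \rangle$ finite; in particular $\langle \Lambda \rangle$ has finite index. Let $H \leq \Gamma \mathcal{G}$ be an infinite-index Morse subgroup and assume for contradiction that $H$ is non-hyperbolic. Fact \ref{fact:ForConverse} supplies $g \in \Gamma \mathcal{G}$ and a square-complete $\Xi \leq \Gamma$ containing an induced square with $g\langle \Xi \rangle$ in a bounded neighborhood of $H$. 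Decomposing $\Xi = \Xi_0 \ast \Xi_1$ canonically (with $\Xi_1$ complete and $\Xi_0$ not a star), the join $\Gamma = \Lambda \ast K$ forces $\Xi_0 \subset \Lambda$: a vertex of $\Xi_0 \cap K$ would be adjacent to every other vertex of $\Xi_0 \subset \Lambda \ast K$, contradicting that $\Xi_0$ is not a star. A direct check as in the preliminary observation shows $\Xi_0$ is still square-complete in $\Gamma$ and contains an induced square, so the minimality of $\Lambda$ gives $\Xi_0 = \Lambda$. Thus $g \langle \Lambda \rangle$, a coset of the finite-index subgroup $\langle \Lambda \rangle$, lies at bounded Hausdorff distance from $g\langle \Xi \rangle$ and hence in a bounded neighborhood of $H$, forcing $H$ to be quasi-dense in $\Gamma \mathcal{G}$, i.e.\ of finite index. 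Contradiction.

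For the forward implication, I assume every infinite-index Morse subgroup of $\Gamma \mathcal{G}$ is hyperbolic and that $\Gamma$ is not square-free. I pick any minsquare subgraph $\Lambda \leq \Gamma$. By Proposition \ref{prop:ParabolicMorse}, $\langle \Lambda \rangle$ is Morse, and by Corollary \ref{cor:hyp} it is non-hyperbolic, so the hypothesis forces $\langle \Lambda \rangle$ to have finite index. In $X(\Gamma, \mathcal{G})$ this means that $\langle \Gamma \rangle = \Gamma \mathcal{G}$ lies at finite Hausdorff distance from $\langle \Lambda \rangle$, and Corollary \ref{cor:ParabolicHausdorff} (applied with $\Phi = \Gamma$ and $\Psi = \Lambda$) yields $\Gamma_0 = \Lambda_0$ in the canonical decompositions $\Gamma = \Gamma_0 \ast \Gamma_1$ and $\Lambda = \Lambda_0 \ast \Lambda_1$. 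The preliminary observation forces $\Lambda_1 = \emptyset$, so $\Lambda = \Lambda_0 = \Gamma_0$ and $\Gamma = \Lambda \ast \Gamma_1$ is the required decomposition into a minsquare subgraph joined with a complete graph.

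The hard part is the preliminary observation about minsquare subgraphs not admitting a non-trivial join decomposition with a complete factor, together with the ensuing stability statement that the non-star factor $\Xi_0$ of a square-complete $\Xi$ remains square-complete and inherits an induced square; once these structural facts are in hand, each direction is a direct application of Fact \ref{fact:ForConverse} and Corollary \ref{cor:ParabolicHausdorff}.
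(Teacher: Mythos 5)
Your proof is correct and follows essentially the same route as the paper: the square-free case via hyperbolicity of $\Gamma\mathcal{G}$, the join case via the structure of non-hyperbolic Morse subspaces, and the forward direction by testing a minsquare parabolic against Corollary \ref{cor:ParabolicHausdorff}. If anything, your version is slightly more careful than the paper's, which invokes Theorem \ref{thm:eccentricsub} for a Morse subgroup that is only assumed non-hyperbolic rather than eccentric, whereas you correctly go through Fact \ref{fact:ForConverse} and make explicit the (needed, but implicit in the paper) observation that a minsquare subgraph has no non-trivial complete join factor.
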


\begin{proof}
If $\Gamma$ is square-free, then $\Gamma \mathcal{G}$ is hyperbolic according to Corollary \ref{cor:hyp}, so that Morse subgroups are quasiconvex and so are hyperbolic.

\medskip \noindent
Now, assume that $\Gamma$ decomposes as the join $\Gamma_0 \ast \Gamma_1$ of a minsquare subgraph $\Gamma_0$ and a complete graph $\Gamma_1$. Let $M \subset \Gamma \mathcal{G}$ be a non-hyperbolic Morse subgroup. As a consequence of Theorem \ref{thm:eccentricsub}, there exists a minsquare subgraph $\Lambda \subset \Gamma$ and an element $g \in \Gamma \mathcal{G}$ such that the Hausdorff distance between $g \langle \Lambda \rangle$ and $M$ in $X(\Gamma, \mathcal{G})$ is finite. Because $\Lambda$ contains an induced square and that any induced square in $\Gamma_0 \ast \Gamma_1$ must be included into $\Gamma_0$, it follows that $\Lambda \cap \Gamma_0$ contains an induced square, hence $\Lambda= \Gamma_0$ by definition of minsquare subgraphs. Thus, we have proved that the Hausdorff distance between $M$ and $g \langle \Gamma_0 \rangle$ is finite. As $X(\Gamma, \mathcal{G})$ decomposes as the Cartesian product of $g \langle \Gamma_0 \rangle$ and the prism $g \langle \Gamma_1 \rangle$, we conclude that $M$ is quasi-dense in the Cayley graph $X(\Gamma, \mathcal{G})$. Therefore, $M$ must be a finite-index subgroup.

\medskip \noindent
Conversely, suppose that the infinite-index Morse subgroups of $\Gamma \mathcal{G}$ are hyperbolic. If $\Gamma$ is square-free, there is nothing to prove, so suppose that $\Gamma$ contains at least one induced subgraph. Fix a minsquare subgraph $\Lambda \subset \Gamma$. As a consequence of Propositions \ref{prop:hyp} and~\ref{prop:ParabolicMorse}, $\langle \Lambda \rangle$ is a non-hyperbolic Morse subgroup. By assumption, $\langle \Lambda \rangle$ must be a finite-index subgroup of $\Gamma \mathcal{G}$. In other words, the Hausdorff distance between $\langle \Lambda \rangle$ and $\Gamma \mathcal{G}= \langle \Gamma \rangle$ is finite, so that the desired conclusion follows from Corollary \ref{cor:ParabolicHausdorff}. 
\end{proof}

\section{Examples}\label{section:ex}

\noindent
For simplicity, all the examples we give in this section are right-angled Coxeter groups, so that we do not need to label vertices with groups as they are all automatically labelled by $\mathbb{Z}/2 \mathbb{Z}$. We also emphasize that none of the right-angled Coxeter groups mentioned here (except in Example \ref{ex:PhiPsi}) is relatively hyperbolic.
\begin{figure}
\begin{center}
\includegraphics[scale=0.4]{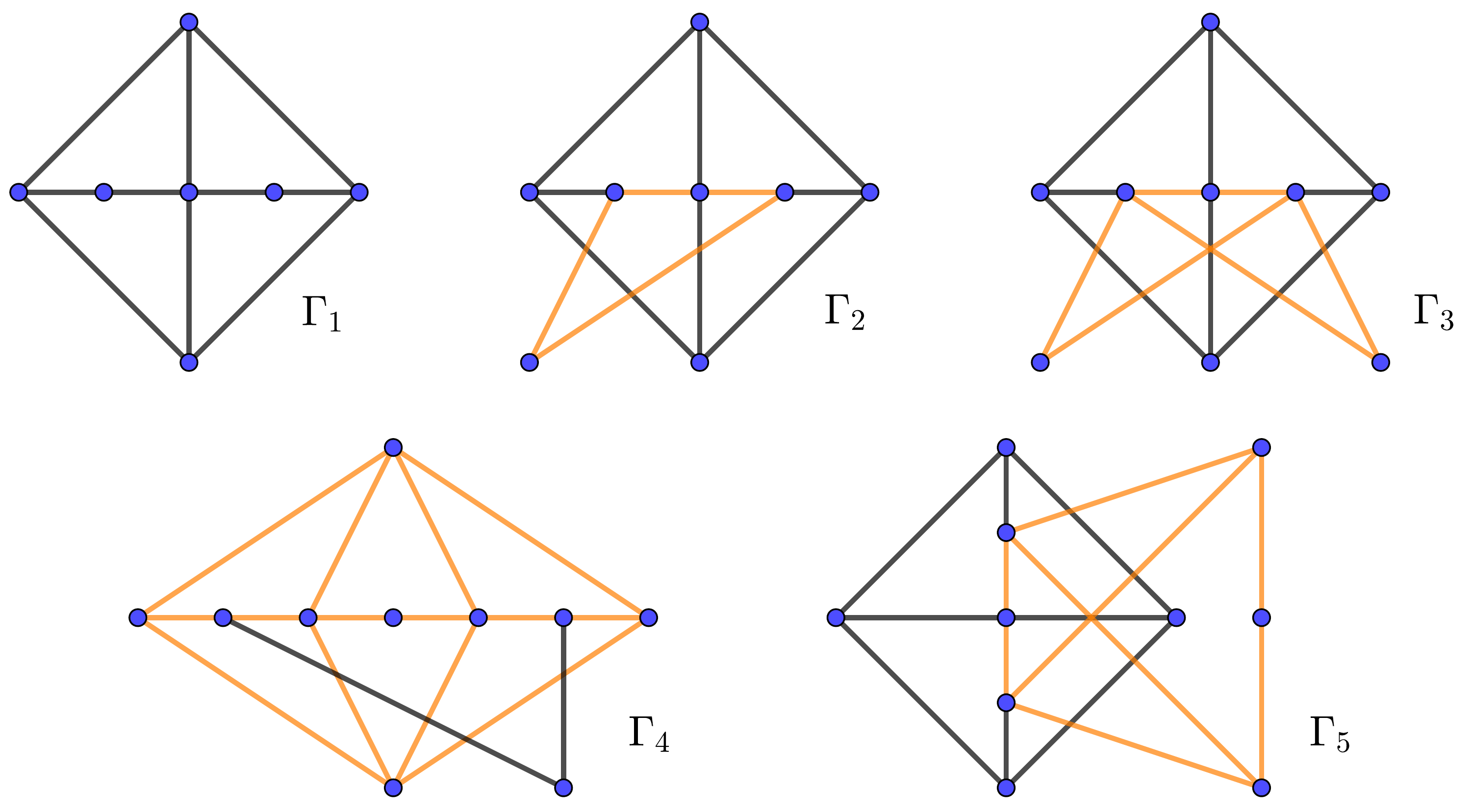}
\caption{Five graphs defining five pairwise non-quasi-isometric right-angled Coxeter groups. In orange are minsquare subgraphs.}
\label{Five}
\end{center}
\end{figure}

\begin{ex}\label{ex:five}
We claim that the right-angled Coxeter groups defined by the five graphs given by Figure \ref{Five} are pairwise non-quasi-isometric. By applying Theorem \ref{thm:eccentricsub} and Proposition \ref{prop:ElecHyp}, we find quasi-isometric invariants which allow us to distinguish all these groups:
\begin{itemize}
	\item $\Gamma_1$ is a minsquare graph, so any eccentric subspace in $C(\Gamma_1)$ must be quasi-dense and $\mathbb{E}(\Gamma_1)$ is bounded.
	\item The eccentric subspaces of $C(\Gamma_2)$ are quasi-isometric to $\mathbb{Z}^2$ and $\mathbb{E}(\Gamma_2)$ is not hyperbolic.
	\item The eccentric subspaces of $C(\Gamma_3)$ are quasi-isometric to $\mathbb{F}_2\times \mathbb{Z}$ and $\mathbb{E}(\Gamma_3)$ is not hyperbolic.
	\item The eccentric subspaces of $C(\Gamma_4)$ have superlinear divergence \cite{RACGdiv} and $\mathbb{E}(\Gamma_4)$ is hyperbolic.
	\item The eccentric subspaces of $C(\Gamma_5)$ have superlinear divergence \cite{RACGdiv} and $\mathbb{E}(\Gamma_5)$ is not hyperbolic.
\end{itemize}
\end{ex}

\begin{ex}\label{ex:NotQT}
A natural question is to ask whether the electrification may be hyperbolic but without being quasi-isometric to a tree. For hyperbolic graph products of finite groups, such examples clearly exist as the electrification turns out to coincide with the group itself. But it is not an interesting example. A more interesting example is given in Remark \ref{remark:Charney}, but the corresponding right-angled Coxeter group turns out to be relatively hyperbolic. Finding an example which is not relatively hyperbolic seems to be more delicate. An example is given by Figure \ref{NotQT}.

\medskip \noindent
Let $\Gamma$ be the graph given by Figure \ref{NotQT}. The minsquare subgraphs of $\Gamma$ are the five copies of $\Phi$. We deduce from Proposition \ref{prop:ElecHyp} that $\mathbb{E}(\Gamma)$ is hyperbolic. If $\Lambda$ denote the central pentagon of $\Gamma$, then it follows from Lemma \ref{lem:QIembed} that $\langle \Lambda \rangle$ embeds quasi-isometrically into $\mathbb{E}(\Gamma)$. As $\langle \Lambda \rangle$ is quasi-isometric to $\mathbb{H}^2$, we conclude that $\mathbb{E}(\Gamma)$ cannot be a quasi-tree.

\medskip \noindent
As a consequence, $C(\Gamma)$ is not quasi-isometric to any of the five right-angled Coxeter groups defined by the graphs of Figure \ref{Five}. The only non-trivial point to check is that the electrification $\mathbb{E}(\Gamma_4)$ is a quasi-tree. A strategy similar to that sketched in Remark~\ref{remark:Charney} can be applied. We do not give more details here.
\begin{figure}
\begin{center}
\includegraphics[scale=0.4]{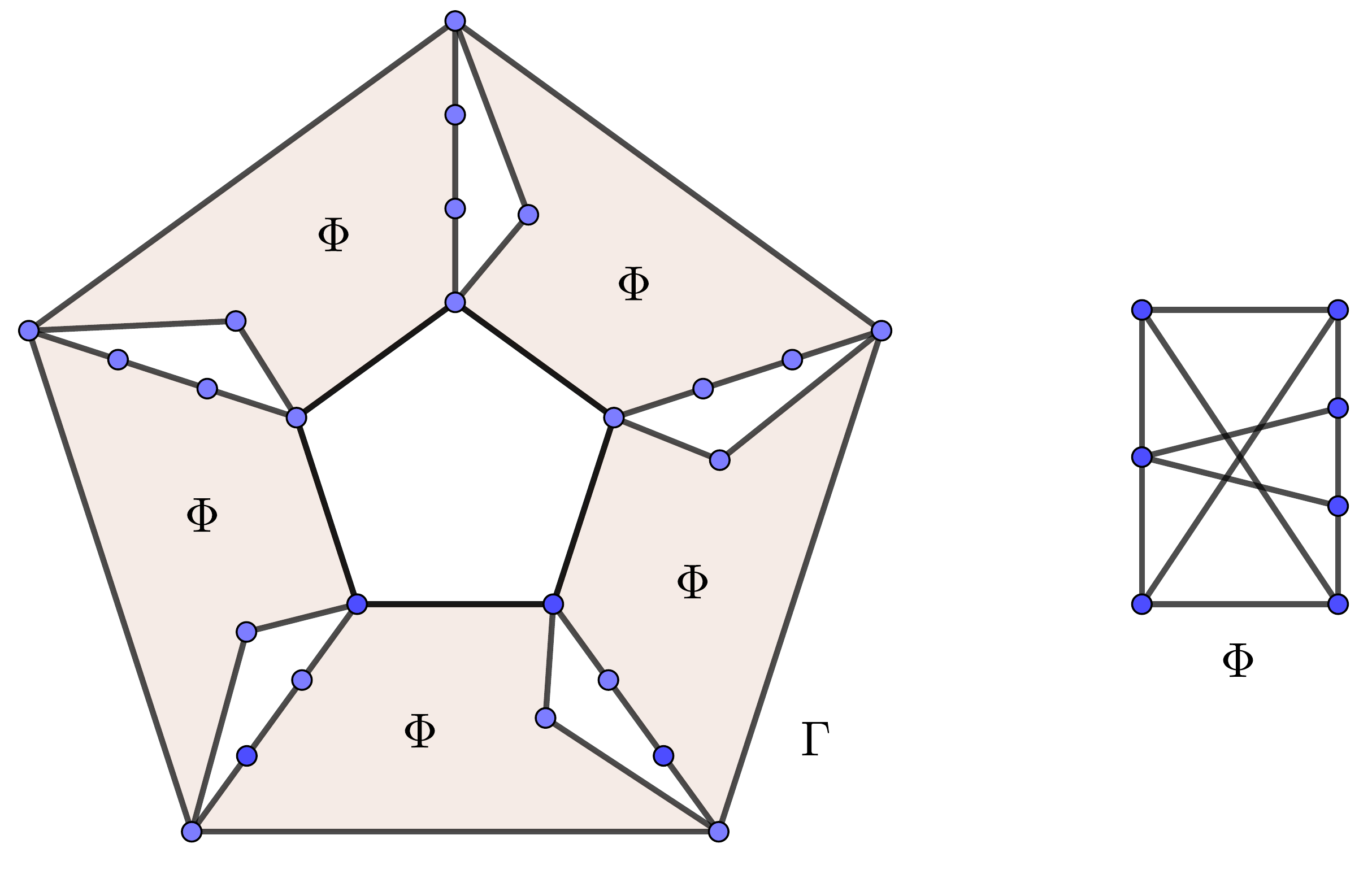}
\caption{A graph $\Gamma$ such that $\mathbb{E}(\Gamma)$ is hyperbolic but not a quasi-tree.}
\label{NotQT}
\end{center}
\end{figure}
\end{ex}

\begin{ex}
In some sense, minsquare graphs (i.e., graphs which do not contain proper minsquare subgraphs) are constructed from squares, so it is natural to compare the family of minsquare graphs with the family of $\mathcal{CFS}$ graphs introduced in \cite{RACGdiv} (characterising right-angled Coxeter groups with quadratic divergence). Recall that a graph $\Gamma$ is $\mathcal{CFS}$ if there exists a sequence of induced squares $C_1, \ldots, C_n$ covering all the vertices of $\Gamma$ such that, for every $1 \leq i \leq n-1$, the $C_i \cap C_{i+1}$ contains two non-adjacent vertices. Figure \ref{NotCFS} gives two examples of graphs, the first one being minsquare but not $\mathcal{CFS}$ and the second one $\mathcal{CFS}$ but not minsquare. Consequently, none of our two families of graphs contains the other.
\begin{figure}
\begin{center}
\includegraphics[scale=0.4]{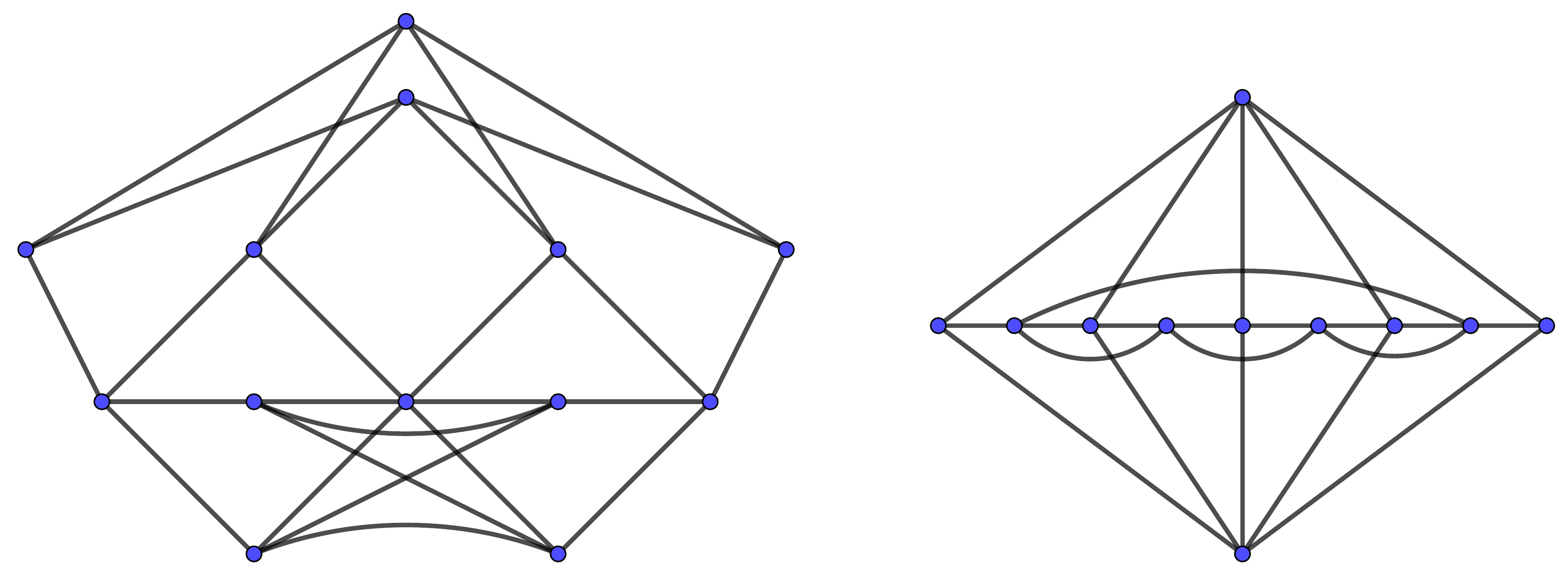}
\caption{A minsquare graph which is not $\mathcal{CFS}$; and a $\mathcal{CFS}$ graph which is not minsquare.}
\label{NotCFS}
\end{center}
\end{figure}
\end{ex}

\begin{ex}\label{ex:PhiPsi}
Let $\Gamma$ be a finite simplicial graph. Assume that $\Gamma$ is the union of two minsquare subgraphs $\Gamma_1$ and $\Gamma_2$. As a consequence, the right-angled Coxeter group can be thought of as constructed from rigid pieces isometric to $\langle \Gamma_1 \rangle$ and $\langle \Gamma_2 \rangle$ whose organisation is encoded by the electrification $\mathbb{E}(\Gamma)$. Do the data of $\langle \Gamma_1 \rangle$, $\langle \Gamma_2 \rangle$, $\mathbb{E}(\Gamma)$ up to quasi-isometry determine the quasi-isometry class of $C(\Gamma)$? Our example below shows that it may not be the case.

\medskip \noindent
Fix a graph $\Gamma$ containing two vertices $a,b$ at distance three apart and such that any two induced squares are connected by a sequence of induced squares such that two consecutive squares share two opposite vertices. See for instance Figure \ref{PhiPsi}. Given two copies $\Gamma_1,\Gamma_2$ of $\Gamma$, let $\Phi$ denote the graph obtained from $\Gamma_1$ and $\Gamma_2$ by identifying $a,b \in V(\Gamma_1)$ respectively with $a,b \in V(\Gamma_2)$; and let $\Psi$ denote the graph obtained from $\Gamma_1$ and $\Gamma_2$ by identifying $a \in V(\Gamma_1)$ with $a \in V(\Gamma_2)$. By construction, the two right-angled Coxeter groups $C(\Phi)$ and $C(\Psi)$ have the same eccentric subgroups (up to isomorphism). We claim that the electrifications $\mathbb{E}(\Phi)$ and $\mathbb{E}(\Psi)$ are quasi-isometric.

\medskip \noindent
Notice that $C(\Phi)$ decomposes as an amalgamated product $\langle \Gamma_1 \rangle \underset{\langle a,b \rangle}{\ast} \langle \Gamma_2 \rangle$, and $C(\Psi)$ as $\langle \Gamma_1 \rangle \underset{\langle a \rangle}{\ast} \langle \Gamma_2 \rangle$. A general fact is that the Cayley graph of an amalgamated product $A \underset{C}{\ast} B$ with respect to the generating set $A \cup B$ is quasi-isometric to the associated Bass-Serre tree (see for instance the proof of \cite[Theorem 1.3]{OsinWeakRH}). Consequently, $\mathbb{E}(\Phi)$ and $\mathbb{E}(\Psi)$ are quasi-isometric to the Bass-Serre trees $T_1$ and $T_2$ corresponding to the two amalgamated products above. But $T_1$ and $T_2$ are two simplicial trees all of whose vertices have infinite (countable) degree, so they must be isomorphic. Therefore, $\mathbb{E}(\Phi)$ and $\mathbb{E}(\Psi)$ must be quasi-isometric, as claimed.

\medskip \noindent
However, $C(\Phi)$ and $C(\Psi)$ are not quasi-isometric. Indeed, as a consequence of \cite[Theorems 8.7.2 and 8.7.4]{DavisCoxeter},  $C(\Phi)$ has just one end but $C(\Psi)$ has infinitely many ends.

\begin{figure}
\begin{center}
\includegraphics[scale=0.4]{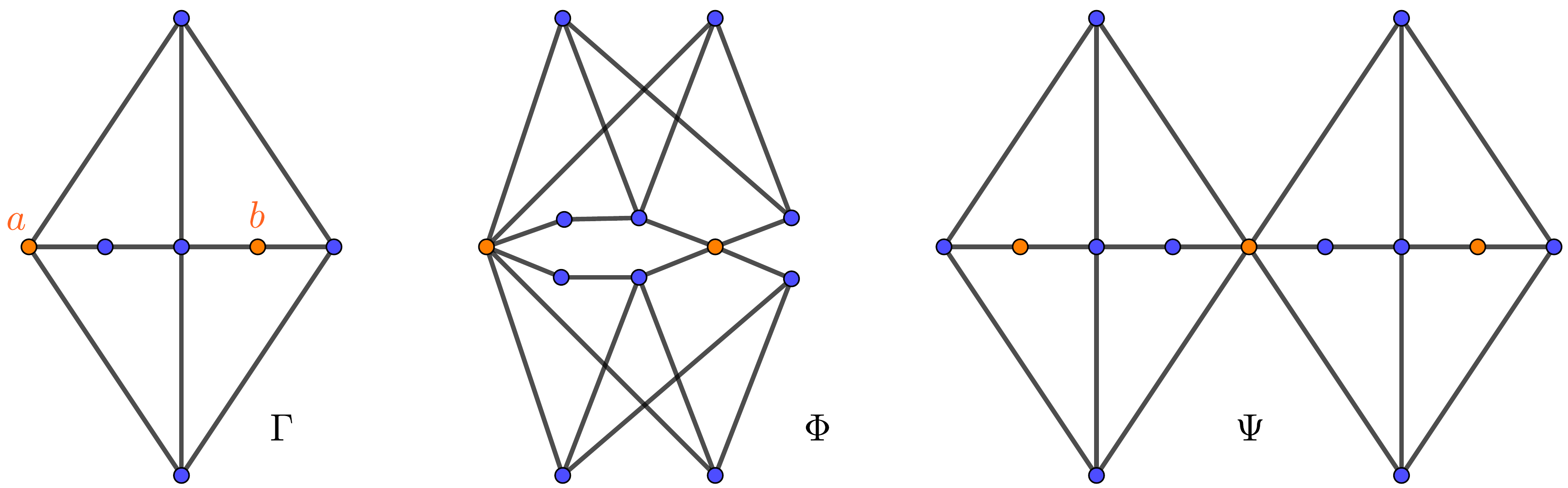}
\caption{Possible graphs $\Gamma$, $\Phi$ and $\Psi$ from Example \ref{ex:PhiPsi}.}
\label{PhiPsi}
\end{center}
\end{figure}
\end{ex}

\section{Open questions}\label{section:questions}

\noindent
A natural but probably very difficult problem regarding Theorem \ref{thm:IntroMain} is:

\begin{problem}
Classify up to quasi-isometry the right-angled Coxeter groups / graph products of finite groups defined by minsquare graphs.
\end{problem}


\noindent
Next, many interesting questions can be asked about the geometry of the electrification. For instance, given a finite simplicial graph $\Gamma$ and a collection of finite groups $\mathcal{G}$ indexed by $V(\Gamma)$:

\begin{question}
How many ends has $\mathbb{E}(\Gamma, \mathcal{G})$?
\end{question}


\begin{question}
When is $\mathbb{E}(\Gamma, \mathcal{G})$ bounded? Does it happen if and only if $\Gamma$ decomposes as the join of a minsquare subgraph and a complete graph?
\end{question}

\begin{question}
When is $\mathbb{E}(\Gamma, \mathcal{G})$ a quasi-line? Does it happen if and only if $\Gamma \mathcal{G}$ is virtually cyclic?
\end{question}

\begin{question}
When is $\mathbb{E}(\Gamma, \mathcal{G})$ a quasi-tree?
\end{question}

\noindent
In view of Theorem \ref{thm:MorseAllHyp}, a natural question to ask is:

\begin{question}\label{question:MorseFree}
When are all infinite-index Morse subgroups of a graph product of finite groups virtually free?
\end{question}

\noindent
(Notice that Remark \ref{remark:MorseRAAG} allows us to construct many examples of such graph products. However, these examples are very specific.)

\medskip \noindent
As mentioned in the introduction, it is known that freely irreducible right-angled Artin groups have all their infinite-index Morse subgroups free. Loosing speaking, they have only few Morse subgroups. The situation seems quite different for graph products of finite groups (or even for right-angled Coxeter). Motivated by this observation, we ask:

\begin{question}
Let $G$ be a (cocompact) special group. Does there exist a graph products of finite groups $H$ such that $G$ embeds into $H$ as a Morse subgroup?
\end{question}

\noindent
It is well-known that a special group always embeds into a right-angled Coxeter group \cite{SpecialGroups}, but the image of the embedding is in general far from being a Morse subspace. 

\medskip \noindent
Finally, it would be interesting to find other families of groups where the strategy of our article also applies.

\begin{question}
Do there exist other families of groups all of whose eccentric subspaces are at finite Hausdorff distance from subgroups?
\end{question}

\addcontentsline{toc}{section}{References}

\bibliographystyle{alpha}
{\footnotesize\bibliography{EccentricRACG}}

\def\polhk#1{\setbox0=\hbox{#1}{\ooalign{\hidewidth
  \lower1.5ex\hbox{`}\hidewidth\crcr\unhbox0}}}
\begin{thebibliography}{BMW94}

\bibitem[BDM09]{BDM}
J.~Behrstock, C.~Dru{\c{t}}u, and L.~Mosher.
\newblock Thick metric spaces, relative hyperbolicity, and quasi-isometric
  rigidity.
\newblock {\em Math. Ann.}, 344(3):543--595, 2009.

\bibitem[BHS17]{BHSC}
J.~Behrstock, M.~Hagen, and A.~Sisto.
\newblock Thickness, relative hyperbolicity, and randomness in {C}oxeter
  groups.
\newblock {\em Algebr. Geom. Topol.}, 17(2):705--740, 2017.
\newblock With an appendix written jointly with P.-E. Caprace.

\bibitem[BMW94]{quasimedian}
H.-J. Bandelt, H.M. Mulder, and E.~Wilkeit.
\newblock Quasi-median graphs and algebras.
\newblock {\em J. Graph Theory}, 18(7):681--703, 1994.

\bibitem[Bou97]{Bourdon}
M.~Bourdon.
\newblock Immeubles hyperboliques, dimension conforme et rigidit\'{e} de
  {M}ostow.
\newblock {\em Geom. Funct. Anal.}, 7(2):245--268, 1997.

\bibitem[Che89]{Chepoi}
V.~Chepo\u{\i}.
\newblock Classification of graphs by means of metric triangles.
\newblock {\em Metody Diskret. Analiz.}, (49):75--93, 96, 1989.

\bibitem[CS15]{ContractingBoundary}
R.~Charney and H.~Sultan.
\newblock Contracting boundaries of {$\rm CAT(0)$} spaces.
\newblock {\em J. Topol.}, 8(1):93--117, 2015.

\bibitem[Dan18]{QIandRACG}
P.~Dani.
\newblock The large-scale geometry of right-angled {C}oxeter groups.
\newblock In {\em Handbook of Group Actions (to appear)}, volume~V.
  International Press and Higher Education Press, 2018.

\bibitem[Dav15]{DavisCoxeter}
M.~Davis.
\newblock The geometry and topology of {C}oxeter groups.
\newblock In {\em Introduction to modern mathematics}, volume~33 of {\em Adv.
  Lect. Math. (ALM)}, pages 129--142. Int. Press, Somerville, MA, 2015.

\bibitem[DGO17]{DGO}
F.~Dahmani, V.~Guirardel, and D.~Osin.
\newblock {\em Hyperbolically embedded subgroups and rotating families in
  groups acting on hyperbolic spaces}, volume 245.
\newblock 2017.

\bibitem[Dru09]{RHandQI}
C.~Dru\c{t}u.
\newblock Relatively hyperbolic groups: geometry and quasi-isometric
  invariance.
\newblock {\em Comment. Math. Helv.}, 84(3):503--546, 2009.

\bibitem[DS05]{DrutuSapir}
C.~Dru\c{t}u and M.~Sapir.
\newblock Tree-graded spaces and asymptotic cones of groups.
\newblock {\em Topology}, 44(5):959--1058, 2005.
\newblock With an appendix by Denis Osin and Mark Sapir.

\bibitem[DT15a]{RACGdiv}
P.~Dani and A.~Thomas.
\newblock Divergence in right-angled {C}oxeter groups.
\newblock {\em Trans. Amer. Math. Soc.}, 367(5):3549--3577, 2015.

\bibitem[DT15b]{StableSub}
M.~Durham and S.~Taylor.
\newblock Convex cocompactness and stability in mapping class groups.
\newblock {\em Algebr. Geom. Topol.}, 15(5):2839--2859, 2015.

\bibitem[GdlH90]{GhysHyp}
\'{E}. Ghys and P.~de~la Harpe, editors.
\newblock {\em Sur les groupes hyperboliques d'apr\`es {M}ikhael {G}romov},
  volume~83 of {\em Progress in Mathematics}.
\newblock Birkh\"{a}user Boston, Inc., Boston, MA, 1990.
\newblock Papers from the Swiss Seminar on Hyperbolic Groups held in Bern,
  1988.

\bibitem[Gen16a]{coningoff}
A.~Genevois.
\newblock Coning-off \rm{CAT(0)} cube complexes.
\newblock {\em arXiv:1603.06513}, 2016.

\bibitem[Gen16b]{article3}
A.~Genevois.
\newblock Contracting isometries of \rm{CAT}(0) cube complexes and
  acylindricaly hyperbolicity of diagram groups.
\newblock {\em arXiv:1610.07791}, 2016.

\bibitem[Gen17a]{Qm}
A.~Genevois.
\newblock Cubical-like geometry of quasi-median graphs and applications.
\newblock {\em In preparation}, 2017.

\bibitem[Gen17b]{MoiHypCube}
A.~Genevois.
\newblock Hyperbolicities in cat(0) cube complexes.
\newblock {\em arXiv:1709.08843}, 2017.

\bibitem[Gen19]{VanKampenGP}
A.~Genevois.
\newblock On the geometry of van kampen diagrams of graph products of groups.
\newblock {\em arXiv:1901.04538}, 2019.

\bibitem[Gre90]{GreenGP}
E.~Green.
\newblock Graph products of groups.
\newblock {\em PhD Thesis}, 1990.

\bibitem[Hag08]{MR2413337}
F.~Haglund.
\newblock Finite index subgroups of graph products.
\newblock {\em Geom. Dedicata}, 135:167--209, 2008.

\bibitem[Hru10]{HruskaRH}
C.~Hruska.
\newblock Relative hyperbolicity and relative quasiconvexity for countable
  groups.
\newblock {\em Algebr. Geom. Topol.}, 10(3):1807--1856, 2010.

\bibitem[HW99]{HsuWise}
T.~Hsu and D.~Wise.
\newblock On linear and residual properties of graph products.
\newblock {\em Michigan Math. J.}, 46:251--259, 1999.

\bibitem[HW08]{SpecialGroups}
F.~Haglund and D.~Wise.
\newblock Special cube complexes.
\newblock {\em Geom. Funct. Anal.}, 17(5):1551--1620, 2008.

\bibitem[LS07]{VirtuallyFreeGP}
M.~Lohrey and G.~S\'{e}nizergues.
\newblock When is a graph product of groups virtually-free?
\newblock {\em Comm. Algebra}, 35(2):617--621, 2007.

\bibitem[MO15]{MinasyanOsin}
A.~Minasyan and D.~Osin.
\newblock Acylindrical hyperbolicity of groups acting on trees.
\newblock {\em Math. Ann.}, 362(3-4):1055--1105, 2015.

\bibitem[Osi04]{OsinWeakRH}
D.~Osin.
\newblock Weak hyperbolicity and free constructions.
\newblock In {\em Group theory, statistics, and cryptography}, volume 360 of
  {\em Contemp. Math.}, pages 103--111. Amer. Math. Soc., Providence, RI, 2004.

\bibitem[Osi06]{OsinRelativeHyp}
D.~Osin.
\newblock Relatively hyperbolic groups: intrinsic geometry, algebraic
  properties, and algorithmic problems.
\newblock {\em Mem. Amer. Math. Soc.}, 179(843):vi--100, 2006.

\bibitem[Osi16]{OsinAcyl}
D.~Osin.
\newblock Acylindrically hyperbolic groups.
\newblock {\em Trans. Amer. Math. Soc.}, 368:851--888, 2016.

\bibitem[RST18]{ConvexityHHS}
J.~Russell, D.~Spriano, and H.~C. Tran.
\newblock Convexity in hierarchically hyperbolic spaces.
\newblock {\em arXiv:1809.09303}, 2018.

\bibitem[Sis16]{Sistohypembed}
A.~Sisto.
\newblock Quasi-convexity of hyperbolically embedded subgroups.
\newblock {\em Math. Z.}, 283(3-4):649--658, 2016.

\bibitem[Sul14]{Sultan}
H.~Sultan.
\newblock Hyperbolic quasi-geodesics in {CAT}(0) spaces.
\newblock {\em Geom. Dedicata}, 169:209--224, 2014.

\bibitem[Tra19]{StronglyQC}
H.~C. Tran.
\newblock On strongly quasiconvex subgroups.
\newblock {\em Geom. Topol.}, 23(3):1173--1235, 2019.

\end{thebibliography}

\end{document}